\def\Hy@Warning#1{}\makeatother
\newtheorem{theorem}{Theorem}[section]
\newtheorem{lemma}[theorem]{Lemma}
\newtheorem{proposition}[theorem]{Proposition}
\newtheorem{corollary}[theorem]{Corollary}
\theoremstyle{definition}
\newtheorem{definition}[theorem]{Definition}
\newtheorem{example}[theorem]{Example}
\newtheorem{remark}[theorem]{Remark}
\numberwithin{equation}{section}
\newcommand{\naturals}{\ensuremath{\mathbb{N}}}
\newcommand{\rationals}{\ensuremath{\mathbb{Q}}}
\newcommand{\Reals}{\ensuremath{\mathbb{R}}}
\newcommand{\set}{\ensuremath{\mathcal}}
\newcommand{\OneTo}[1]{[#1]}
\newcommand{\eqdef}{\triangleq}    
\newcommand{\card}[1]{|#1|}
\DeclareMathOperator{\Vertex}{\mathsf{V}}
\DeclareMathOperator{\Edge}{\mathsf{E}}
\DeclareMathOperator{\Adjacency}{\mathbf{A}}
\DeclareMathOperator{\Laplacian}{\mathbf{L}}
\DeclareMathOperator{\SignlessLaplacian}{\mathbf{Q}}
\DeclareMathOperator{\AllOne}{\mathbf{J}}
\DeclareMathOperator{\Identity}{\mathbf{I}}
\DeclareMathOperator{\Independentset}{\set{I}}
\DeclareMathOperator{\Kneser}{\mathsf{K}}
\DeclareMathOperator{\Complete}{\mathsf{K}}
\DeclareMathOperator{\Empty}{\mathsf{E}}
\DeclareMathOperator{\Path}{\mathsf{P}}
\DeclareMathOperator{\Cycle}{\mathsf{C}}
\DeclareMathOperator{\SRG}{\mathsf{srg}}
\DeclareMathOperator{\Sp}{\mathsf{Sp}}
\DeclareMathOperator{\Clique}{\omega}
\DeclareMathOperator{\Chromatic}{\chi}
\newcommand{\Gr}[1]{\mathsf{#1}}                          
\newcommand{\CGr}[1]{\overline{\mathsf{#1}}}              
\newcommand{\V}[1]{\Vertex(#1)}                           
\newcommand{\E}[1]{\Edge(#1)}                             
\newcommand{\A}{\Adjacency}                               
\newcommand{\LM}{\Laplacian}                              
\newcommand{\Q}{\SignlessLaplacian}                       
\newcommand{\D}{\mathbf{D}}                               
\newcommand{\J}[1]{\AllOne_{#1}}                          
\newcommand{\I}[1]{\Identity_{#1}}                        
\newcommand{\indset}[1]{\Independentset(#1)}              
\newcommand{\indsetmax}[1]{\Independentset_{\max}(#1)}    
\newcommand{\indnum}[1]{\alpha(#1)}                       
\newcommand{\findnum}[1]{\alpha_{\mathrm{f}}(#1)}         
\newcommand{\clnum}[1]{\Clique(#1)}                       
\newcommand{\fclnum}[1]{\Clique_{\mathrm{f}}(#1)}         
\newcommand{\chrnum}[1]{\Chromatic(#1)}                   
\newcommand{\fchrnum}[1]{\Chromatic_{\mathrm{f}}(#1)}     
\newcommand{\vchrnum}[1]{\Chromatic_{\mathrm{v}}(#1)}     
\newcommand{\svchrnum}[1]{\Chromatic_{\mathrm{sv}}(#1)}     
\newcommand{\Eigval}[2]{\lambda_{#1}(#2)}  
\newcommand{\CoG}[1]{\Complete_{#1}}       
\newcommand{\CoBG}[2]{\Complete_{#1,#2}}   
\newcommand{\EmG}[1]{\Empty_{#1}}          
\newcommand{\KG}[2]{\Kneser(#1,#2)}        
\newcommand{\PathG}[1]{\Path_{#1}}         
\newcommand{\CG}[1]{\Cycle_{#1}}           
\newcommand{\srg}[4]{\SRG(#1,#2,#3,#4)}    
\newcommand{\LSG}[2]{\mathrm{L}_{#1}(#2)}  
\newcommand{\PLG}[2]{\mathrm{PL}_{#1}(#2)} 
\newcommand{\NS}{\, \underline{\vee} \,}   
\newcommand{\NNS}{\, \uuline{\vee} \,}     
\DeclareMathOperator{\Degree}{\text{d}}
\newcommand{\dgr}[1]{\Degree_{#1}}   
\renewcommand{\dcases}
 {
  \MT_start_cases:nnnn
    {\quad}
    {$\m@th\displaystyle##$\hfil}
    {$\m@th\displaystyle##$\hfil}
    {\lbrace}
 }
\DeclareMathOperator{\Entr}{H}
\newcommand{\EntCond}[2]{\Entr(#1 | \kern0.1em #2)}
\newcommand{\bigEntCond}[2]{\Entr\bigl(#1 | \kern0.1em #2\bigr)}
\newcommand{\BigEntCond}[2]{\Entr\Bigl(#1 \kern-0.1em \bigm| \kern-0.1em #2 \Bigr)}
\newcommand{\biggEntCond}[2]{\Entr\biggl(#1 \kern-0.1em \Bigm| \kern-0.1em #2 \biggr)}
\newcommand{\BiggEntCond}[2]{\Entr\Biggl(#1 \kern-0.1em \biggm| \kern-0.1em #2 \Biggr)}
\begin{document}
\setcounter{page}{1}

\vspace*{2cm}
\title[Observations on Graph Invariants with the Lov\'{a}sz $\vartheta$-Function]
{Observations on Graph Invariants with the Lov\'{a}sz $\vartheta$-Function}
\author[IGAL SASON]{Igal Sason}
\maketitle

\vspace*{-0.6cm}
\begin{center}
{\footnotesize

The Viterbi Faculty of Electrical and Computer Engineering,
and the Department of Mathematics, \\Technion - Israel Institute of Technology, Haifa 3200003, Israel.}
\end{center}

\begin{center}
{\footnotesize Dedicated to Professor Emeritus Abraham (Avi) Berman for his eightieth birthday.}
\end{center}

\vskip 4mm {\footnotesize \noindent {\bf Abstract.}
This paper delves into three research directions, leveraging the Lov\'{a}sz $\vartheta$-function
of a graph. First, it focuses on the Shannon capacity of graphs, providing new results that
determine the capacity for two infinite subclasses of strongly regular graphs, and extending prior
results. The second part explores cospectral and nonisomorphic graphs, drawing on a work by Berman
and Hamud (2024), and it derives related properties of two types of joins of graphs. For every even
integer such that $n \geq 14$, it is constructively proven that there exist connected, irregular, cospectral,
and nonisomorphic graphs on $n$ vertices, being jointly cospectral with respect to their adjacency,
Laplacian, signless Laplacian, and normalized Laplacian matrices, while also sharing identical independence,
clique, and chromatic numbers, but being distinguished by their Lov\'{a}sz $\vartheta$-functions.
The third part focuses on establishing bounds on graph invariants, particularly emphasizing strongly
regular graphs and triangle-free graphs, and compares the tightness of these bounds to existing ones.
The paper derives spectral upper and lower bounds on the vector and strict vector chromatic numbers of
regular graphs, providing sufficient conditions for the attainability of these bounds. Exact closed-form
expressions for the vector and strict vector chromatic numbers are derived for all strongly regular graphs
and for all graphs that are vertex- and edge-transitive, demonstrating that these two types of chromatic
numbers coincide for every such graph. This work resolves a query regarding the variant of the
$\vartheta$-function by Schrijver and the identical function by McEliece {\em et al.} (1978).
It shows, by a counterexample, that the $\vartheta$-function variant by Schrijver does not possess the property of
the Lov\'{a}sz $\vartheta$-function of forming an upper bound on the Shannon capacity of a graph. This research
paper also serves as a tutorial of mutual interest in zero-error information theory and
algebraic graph theory.

\noindent {\bf Keywords.}
Graph invariants; Lov\'{a}sz $\vartheta$-function; Shannon capacity of graphs; spectral graph theory; zero-error information theory.

\noindent {\bf 2020 Mathematics Subject Classification.} 05C15, 05C35, 05C50, 05C60, 05C62, 05C69, 05C72, 05C76, 94-02, 94A15, 97-02.}

\renewcommand{\thefootnote}{}
\footnotetext{
E-mail address: eeigal@technion.ac.il (I. Sason). Cite: I. Sason, Observations on graph invariants with the Lov\'{a}sz $\vartheta$-function,
{\em AIMS-Mathematics}, 9 (2024), no. 6, pp.~15385--15468, April 2024. \url{https://doi.org/10.3934/math.2024747}}

\section{Introduction}
\label{section: introduction}

The concept of the graph capacity, as introduced in Shannon's seminal 1956 paper on zero-error communication
\cite{Shannon56}, plays a key role for understanding the synergy and interaction between
the fields of information theory and graph theory. This importance is highlighted in various
survey papers \cite{Alon02,Alon19,Jurkiewicz14,KornerO98}.

The behavior of the Shannon capacity of a graph, along with its convergence, is quite erratic as is demonstrated
by various studies \cite{Alon98,AlonL06,GuoW90}. The Shannon capacity of a graph and the zero-error
capacity of a discrete memoryless channel are generally difficult to compute or even approximate,
and notions of their computability have been explored \cite{AlonL06,BocheD20,BocheD21}. Despite these studies,
some computability aspects of the capacities in question remain unresolved. Using computationally feasible
upper bounds on the Shannon capacity of a graph therefore provides valuable insights
\cite{AlipourG23,BiT19,BukhC19,Haemers79,HuTS18,Knuth94,Lovasz79_IT,GuruswamiR_ISIT21}.
These bounds, including the easily computable Lov\'{a}sz $\vartheta$-function \cite{Lovasz79_IT,Knuth94}
and others that may be more challenging to compute, offer informative results. In some cases, these bounds
also prove to be tight.

The concept of the asymptotic spectrum of graphs, introduced by Zuiddam \cite{Zuiddam19}, delineates a
space of graph parameters that remain invariant under graph isomorphisms. This space is characterized by
the following unique properties: additivity under disjoint union of graphs, multiplicativity under
strong product of graphs, normalization for a simple graph with a single vertex, and monotonicity
under graph complement homomorphisms. Building upon Strassen's theory of the asymptotic spectra \cite{Strassen88},
a novel dual characterization of the Shannon capacity of a graph is derived in \cite{Zuiddam19}, expressed
as the minimum over the elements of its asymptotic spectrum. By confirming that various graph invariants,
including the Lov\'{a}sz $\vartheta$-function \cite{Lovasz79_IT} and the fractional Haemers bound \cite{BukhC19},
are elements of the asymptotic spectrum of a graph (spectral points), it can be deduced that these elements
indeed serve as upper bounds on the Shannon capacity of a graph. For further exploration, the comprehensive
paper by Wigderson and Zuiddam \cite{WigdersonZ23} provides a survey on Strassen’s theory of the asymptotic
spectra and its application areas, including the Shannon capacity of graphs.

The Lov\'{a}sz $\vartheta$-function of a graph, introduced in \cite{Lovasz79_IT}, has important and fascinating
properties. It serves as an upper bound on the independence number, and even on the Shannon capacity
of a graph, while concurrently functioning as a lower bound on the chromatic number and even on the
fractional chromatic number of the complement graph. It found many other applications in graph theory,
such as providing a novel lower bound on the max-cut of the complement graph \cite{BallaJS24}. Remarkably,
the Lov\'{a}sz $\vartheta$-function, which is a combinatorial characteristic of graphs, reveals intriguing
connections with probabilistic properties, particularly in lower bounding the decoding error probability in
classical and quantum information theory, as explored in \cite{Dalai_IT13, Dalai_ISIT13}.
Additionally, this function was independently extended in \cite{DuanSW_IT13,BorelandTW21} within the framework
of zero-error communication via general quantum channels.

The Lov\'{a}sz $\vartheta$-function of a graph is efficiently computable by the numerical solution
of semidefinite optimization problems \cite{GrotschelLS81,GrotschelLS84,Knuth94,Lovasz79_IT,Lovasz19}.
Its computational efficiency is notable, particularly in light of the NP-hard complexity inherent
in computing other graph invariants, such as the independence, clique, and chromatic numbers of a
graph, as well as their fractional counterparts \cite{GareyJ79,GrotschelLS81}.

This paper explores three research directions, using the properties of the Lov\'{a}sz
$\vartheta$-function of a graph. Its main results and structure are outlined as follows.
\begin{itemize}
\item Section~\ref{section: preliminaries} presents introductory content and notation.
Additionally, every subsequent section (Sections~\ref{section: on the Shannon capacity of graphs}
to \ref{section: bounds on graph invariants}) is equipped with specialized preliminaries tailored
to its unique focus, offered at the outset of each respective section.

\item Section~\ref{section: on the Shannon capacity of graphs} relies on the Lov\'{a}sz
$\vartheta$-function for the derivation of new results on the Shannon capacity of graphs
(Theorems~\ref{thm:extension of Thm. 12 by Lovasz}--\ref{thm:on the symplectic graphs}).
This includes the calculation of the Shannon capacity for two infinite subclasses
of strongly regular graphs, along with extensions of previously established findings. The
subsequent discussions and illustrative examples further elucidate these results.

\item Section~\ref{section: cospectral and nonisomorphic graphs} presents new results
on cospectral and nonisomorphic irregular graphs. For every even integer $n \geq 14$, it offers
a construction of connected, irregular, cospectral, and nonisomorphic graphs on $n$ vertices,
sharing identical independence, clique, and chromatic numbers, but being distinguished by their
Lov\'{a}sz $\vartheta$-functions (Theorem~\ref{theorem: existence of NICS graphs}). The result
relies in part on \cite{vanDamH03,HamudB24}, and the constructed graphs have identical spectra
with respect to each of the following matrices: the adjacency, Laplacian, signless Laplacian,
and normalized Laplacian matrices. This section further provides exact expressions or bounds
on graph invariants such as the independence number, clique number, chromatic number,
Shannon capacity, and the Lov\'{a}sz $\vartheta$-function for the two types of joins of graphs
introduced in \cite{Hamud23,HamudB24} (see Theorems~\ref{theorem: on Graph Invariants of NS/NNS Joins of Graphs}
and~\ref{theorem: Shannon Capacity of NS and NNS Join Graphs} in this paper).
Section~\ref{section: cospectral and nonisomorphic graphs} concludes with discussions
and numerical experiments elaborating on these results.

\item Section~\ref{section: bounds on graph invariants} relies on the properties of the Lov\'{a}sz
$\vartheta$-function for the derivation of bounds on graph invariants. The tightness of these bounds is then
compared to existing ones. In light of the analysis presented in \cite{Sason23}, closed-form bounds on
graph invariants for strongly regular graphs are provided in Corollary~\ref{corollary:bounds on parameters of SRGs}.
This section also presents spectral upper and lower bounds on the vector and strict vector chromatic
numbers of regular graphs, along with sufficient conditions for their attainability, as stated in
Theorem~\ref{theorem: bounds on vchrnum and svchrnum for regular graphs}. Further, exact closed-form
expressions for the vector and strict vector chromatic numbers are derived in
Theorems~\ref{theorem: vchrnum and svchrnum of srg} and~\ref{theorem: vchrmum and svchrnum of vt+et graphs},
covering all strongly regular graphs and all graphs that are both vertex- and edge-transitive.
This establishes that these two types of chromatic numbers coincide for all such regular graphs.
The fractional chromatic number of a graph is known to be lower-bounded by the Lov\'{a}sz $\vartheta$-function
of the complement graph. That lower bound on the fractional chromatic number is studied in the context of
perfect graphs (Theorem~\ref{theorem: graph invariants of perfect graphs}), and triangle-free graphs. For
triangle-free graphs on $n$ vertices, a lower bound on the Lov\'{a}sz $\vartheta$-function is provided in
Theorem~\ref{theorem: LB for triangle-free graphs}, serving as a counterpart to an upper bound for a subfamily
of regular and triangle-free graphs in \cite{Alon94}. These bounds match up to a constant scaling factor
and scale proportionally to $n^{\frac{2}{3}}$. Theorem~\ref{thm: lower bound on the independence number} further
provides lower bounds on the independence and clique numbers of a graph, derived from the Lov\'{a}sz $\vartheta$-function of
the graph or its complement. Additionally, a counterexample demonstrates that, in contrast to the Lov\'{a}sz
$\vartheta$-function, which provides an upper bound on the Shannon capacity of a graph, the variant of the $\vartheta$-function
by Schrijver does not possess that property (Example~\ref{example: counterexample}). This resolves a query regarding
the variant of the $\vartheta$-function proposed by Schrijver and the identical function presented by McEliece {\em et al.} (1978)
\cite{McElieceRR78,Schrijver79}, which was also posed as an open question in \cite{BiT19}.
Throughout, the presented results in Section~\ref{section: bounds on graph invariants}
are substantiated by numerical findings and are discussed in connection with other existing bounds.

\item Section~\ref{section: summary} furnishes a concise overview and delineates related open problems.
\end{itemize}

\section{Preliminaries}
\label{section: preliminaries}

\subsection{Graphs and Matrices}
\label{subsection: preliminaries on graphs}
Essential terms and standard notation are presented as follows.

\subsubsection{General Terminology and Notation}
\label{subsubsection: general}
Let $\Gr{G} = (\Vertex, \Edge)$ be a graph.
\begin{itemize}
\item $\Vertex=\V{\Gr{G}}$ is the {\em vertex set} of $\Gr{G}$, and $\Edge=\E{\Gr{G}}$ is the {\em edge set} of $\Gr{G}$.
\item An {\em undirected graph} is a graph whose edges are undirected.
\item A {\em self-loop} is an edge that connects a vertex to itself.
\item A {\em simple graph} is a graph having no self-loops and no multiple edges between any pair of vertices.
\item A {\em finite graph} is a graph with a finite number of vertices.
\item The {\em order} of a finite graph is the number of its vertices, $\card{\V{\Gr{G}}} = n$.
\item The {\em size} of a finite graph is the number of its edges, $\card{\E{\Gr{G}}} = m$.
\item Vertices $i, j \in \V{\Gr{G}}$ are {\em adjacent} if they are the endpoints of an edge
in $\Gr{G}$; it is denoted by $\{i,j\} \in \E{\Gr{G}}$ or $i \sim j$.
\item An {\em empty graph} is a graph without edges, so its size is equal to zero.
\item The {\em degree of a vertex} $v$ in $\Gr{G}$ is the
number of adjacent vertices to $v$ in $\Gr{G}$, denoted by $\dgr{v} = \dgr{v}(\Gr{G})$.
\item A graph is {\em regular} if all its vertices have an identical degree.
\item A {\em $d$-regular} graph is a regular graph whose all vertices have a fixed degree $d$.
\item A {\em walk} in a graph $\Gr{G}$ is a sequence of vertices in $\Gr{G}$, where
every two consecutive vertices in the sequence are adjacent in $\Gr{G}$.
\item A {\em trail} in a graph is a walk with no repeated edges.
\item A {\em path} in a graph is a walk with no repeated vertices; consequently, a path
has no repeated edges, so every path is a trail but a trail is not necessarily a path.
\item A {\em cycle} $\Cycle$ in a graph $\Gr{G}$ is obtained by adding an edge
to a path $\Path$~such that it gives a closed walk.
\item The {\em length of a path or a cycle} is equal to its number of edges. A {\em triangle}
is a cycle of length~3.
\item The length of a shortest cycle in a graph $\Gr{G}$ is called the {\em girth} of $\Gr{G}$.
\item A {\em connected graph} is a graph where every two distinct vertices are connected by a path.
\item A {\em tree} is a connected graph that has no cycles (i.e., it is a connected and {\em acyclic} graph).
\item An {\em $r$-partite graph} is a graph whose vertex set is a disjoint union of $r$ subsets
such that no two vertices in the same subset are adjacent. If $r=2$, then $\Gr{G}$ is a {\em bipartite graph}.
\item A {\em complete graph} on $n$ vertices, denoted by $\CoG{n}$, is a graph whose all $n$
distinct vertices are pairwise adjacent. Hence, $\CoG{n}$ is an $(n-1)$-regular graph of order $n$.
\item A {\em path graph} on $n$ vertices is denoted by $\PathG{n}$, and its size is equal to $n-1$.
\item A {\em cycle graph} on $n$ vertices is called an $n$-cycle, and it
is denoted by $\CG{n}$ with an integer $n \geq 3$. The order and size of $\CG{n}$ are equal to $n$,
and $\CG{n}$ is a bipartite graph if and only if $n \geq 4$ is even.
\item A {\em complete $r$-partite graph}, denoted by $\Complete_{n_1, \ldots, n_r}$ with
$n_1, \ldots n_r \in \naturals$, is an $r$-partite graph
whose vertex set is partitioned into $r$ disjoint subsets of cardinalities $n_1, \ldots, n_r$,
such that every two vertices in the same subset are not adjacent, and every two vertices in
distinct subsets are adjacent.
\item The {\em line graph} of $\Gr{G}$, denoted by $\ell(\Gr{G})$,
is a graph whose vertices are the edges in $\Gr{G}$, and two vertices are adjacent in
$\ell(\Gr{G})$ if the corresponding edges are incident in $\Gr{G}$.
\item The {\em Mycielskian graph} of a simple graph $\Gr{G}$, denoted by $\mathrm{M}(\Gr{G})$,
is defined on the vertex set $\V{\mathrm{M}(\Gr{G})} \eqdef \bigl\{\V{\Gr{G}} \times \{0,1\}\bigr\} \cup \{z_{\mathrm{M}}(\Gr{G})\}$,
with the edge set
$$\hspace*{0.6cm} \vspace*{-0.14cm} \E{\mathrm{M}(\Gr{G})} \eqdef \bigl\{\{(v,0), (w,i)\}: \{v,w\} \in \E{\Gr{G}}, i \in \{0,1\}\bigr\} \cup
\bigl\{\{z_{\mathrm{M}(\Gr{G})}, (v,1)\}: v \in \V{\Gr{G}}\bigr\}.$$
\end{itemize}
Throughout this paper, the graphs under consideration are finite, simple, and undirected.
The standard notation $\OneTo{n} \eqdef \{1, \ldots, n\}$, for every $n \in \naturals$, is also used.

\begin{definition}[Subgraphs and graph connectivity]
\label{definition:subgraphs}
A graph $\Gr{F}$ is a {\em subgraph} of a graph $\Gr{G}$, and it is
denoted by $\Gr{F} \subseteq \Gr{G}$, if
$\V{\Gr{F}} \subseteq \V{\Gr{G}}$ and $\E{\Gr{F}} \subseteq \E{\Gr{G}}$.
\begin{itemize}
\item A {\em spanning subgraph} of $\Gr{G}$ is obtained by edge deletions
from $\Gr{G}$, while its vertex set is left unchanged.
A {\em spanning tree} in $\Gr{G}$ is a spanning subgraph of $\Gr{G}$ that forms a tree.
\item An {\em induced subgraph} is obtained by removing vertices
from the original graph, followed by the deletion of their incident edges.
\item A {\em component} of $\Gr{G}$ is a maximal connected induced subgraph of $\Gr{G}$;
it is $\Gr{G}$ if connected.
\item A connected graph is said to be {\em $k$-connected}, with $k \in \naturals$, if
the induced subgraph resulting from the deletion of any subset of $k-1$ vertices
remains connected.
\end{itemize}
\end{definition}

\begin{definition}[Maximum-cut problem]
\label{definition: maximum-cut problem}
Let $\Gr{G}$ be a finite, simple, and undirected graph. Then,
\begin{itemize}
\item A {\em maximum cut} of $\Gr{G}$ is a partition of the vertex
set $\V{\Gr{G}}$ into two disjoint subsets $\set{S}$ and $\set{T}$ that maximizes the number
of edges between $\set{S}$ and $\set{T}$. The {\em max-cut} of $\Gr{G}$,
denoted by $\mathrm{mc}(\Gr{G})$, is the maximum number of edges between $\set{S}$ and $\set{T}$
for any such partition of $\V{\Gr{G}}$. In other words, $\mathrm{mc}(\Gr{G})$ is the maximum
number of edges among all bipartite spanning subgraphs of $\Gr{G}$.
\item A simple greedy algorithm shows that $\mathrm{mc}(\Gr{G}) \geq \tfrac12 \, \card{\E{\Gr{G}}}$
holds for every graph $\Gr{G}$.
\item The {\em surplus} of a graph $\Gr{G}$, denoted by $\mathrm{sp}(\Gr{G})$, is given by
$\mathrm{sp}(\Gr{G}) \eqdef \mathrm{mc}(\Gr{G}) - \tfrac12 \, \card{\E{\Gr{G}}} \geq 0$.
\end{itemize}
\end{definition}

\begin{definition}[Isomorphic graphs]
\label{definition:isomorphic graphs}
Graphs $\Gr{G}$ and $\Gr{H}$ are {\em isomorphic} if there exists a bijection
$f \colon \V{\Gr{G}} \to \V{\Gr{H}}$ (i.e., a one-to-one and onto mapping) such that
$\{i,j\} \in \E{\Gr{G}}$ if and only if $\{f(i), \, f(j)\} \in \E{\Gr{H}}$.
It is denoted by $\Gr{G} \cong \Gr{H}$, and $f$ is said to be an {\em isomorphism}
from $\Gr{G}$ to $\Gr{H}$.
\end{definition}

\begin{definition}[Complement and self-complementary graphs]
\label{definition:complement and s.c. graphs}
The {\em complement} of a graph $\Gr{G}$, denoted by $\CGr{G}$, is a graph
whose vertex set is $\V{\Gr{G}}$, and its edge set is the complement set $\overline{\E{\Gr{G}}}$.
Every vertex in $\V{\Gr{G}}$ is nonadjacent to itself in $\Gr{G}$ and $\CGr{G}$, so
$\{i,j\} \in \E{\CGr{G}}$ if and only if $\{i, j\} \notin \E{\Gr{G}}$ with $i \neq j$.
A graph $\Gr{G}$ is {\em self-complementary} if $\Gr{G} \cong \CGr{G}$ (i.e., $\Gr{G}$
is isomorphic to $\CGr{G}$).
\end{definition}

\begin{example}
\label{example: self-complementary}
It can be verified that $\PathG{4}$ and $\CG{5}$ are self-complementary graphs.
\end{example}

\begin{definition}[Disjoint union of graphs]
\label{def:disjoint_union_graphs}
Let $\Gr{G}_1, \ldots, \Gr{G}_k$ be graphs.
If the vertex sets in these graphs are not pairwise disjoint,
let $\Gr{G}'_2, \ldots, \Gr{G}'_k$ be isomorphic copies of
$\Gr{G}_2, \ldots, \Gr{G}_k$, respectively, such that none
of the graphs $\Gr{G}_1, \Gr{G}'_2, \ldots \Gr{G}'_k$ have
a vertex in common. The disjoint union of these graphs,
denoted by $\Gr{G} = \Gr{G}_1 + \ldots + \Gr{G}_k$,
is a graph whose vertex and edge sets are equal to the disjoint
unions of the vertex and edge sets of
$\Gr{G}_1, \Gr{G}'_2, \ldots, \Gr{G}'_k$
[$\Gr{G}$ is defined up to an isomorphism].
\end{definition}

\subsubsection{Graph invariants under isomorphism}
\label{subsubsection:graph invariants under isomorphism}

\begin{definition}[Independent sets and independence number]
\label{definition: independent sets}
An {\em independent set} in a graph $\Gr{G}$ is a subset of its vertices such that no two vertices
in that subset are adjacent in $\Gr{G}$ (i.e., it is an induced empty subgraph of $\Gr{G}$).
The largest number of vertices in an independent set of $\Gr{G}$ is called the {\em independence number}
of $\Gr{G}$, and it is denoted by $\indnum{\Gr{G}}$.
\end{definition}

\begin{definition}[Cliques and clique number]
\label{definition: cliques}
A {\em clique} in a graph $\Gr{G}$ is a subset of its vertices such that every two vertices
in that subset are adjacent in $\Gr{G}$ (i.e., it is an induced complete subgraph of $\Gr{G}$).
The largest number of vertices in a clique of $\Gr{G}$ is called the {\em clique number} of $\Gr{G}$,
and it is denoted by $\clnum{\Gr{G}}$. Consequently, every independent set in $\Gr{G}$ is a clique
in the complement $\CGr{G}$, and every clique in $\Gr{G}$ is an independent set in $\CGr{G}$; in
particular, it follows that $\indnum{\Gr{G}} = \clnum{\CGr{G}}$.
\end{definition}

\begin{definition}[Chromatic number]
\label{definition: chromatic number}
A {\em proper vertex coloring} in a graph $\Gr{G}$ is an assignment of colors to its vertices
such that no two adjacent vertices get the same color.
The smallest number of colors required for such a vertex coloring is called the
{\em chromatic number} of $\Gr{G}$, denoted by $\chrnum{\Gr{G}}$.
\end{definition}
The chromatic number of a graph $\Gr{G}$ is equal to the smallest number of independent sets
in $\Gr{G}$ that partition the vertex set $\V{\Gr{G}}$. Indeed, this holds since all vertices
in an independent set can be assigned the same color. Consequently, for every graph $\Gr{G}$,
\begin{align}
\label{eq:18.04.2024}
\indnum{\Gr{G}} \, \chrnum{\Gr{G}} \geq \card{\V{\Gr{G}}}.
\end{align}

\begin{proposition}[Graph invariants]
\label{proposition: graph invariants}
If $\Gr{G} \cong \Gr{H}$, then $\indnum{\Gr{G}} = \indnum{\Gr{H}}$, $\clnum{\Gr{G}} = \clnum{\Gr{H}}$,
$\chrnum{\Gr{G}} = \chrnum{\Gr{H}}$.
\end{proposition}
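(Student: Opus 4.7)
The plan is to exploit the defining property of an isomorphism $f\colon \V{\Gr{G}} \to \V{\Gr{H}}$, namely that $f$ is a bijection on vertex sets that preserves \emph{and} reflects adjacency. Once this is in place, all three equalities reduce to showing that $f$ induces a size-preserving correspondence between the combinatorial structures whose extremal sizes define $\indnum{\cdot}$, $\clnum{\cdot}$, and $\chrnum{\cdot}$.

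First I would handle the independence number. Given $S \subseteq \V{\Gr{G}}$, let $f(S) \eqdef \{f(v) : v \in S\} \subseteq \V{\Gr{H}}$. For distinct $u,v \in S$, the isomorphism property gives $\{u,v\} \in \E{\Gr{G}} \iff \{f(u), f(v)\} \in \E{\Gr{H}}$, so $S$ is independent in $\Gr{G}$ if and only if $f(S)$ is independent in $\Gr{H}$. Since $f$ is a bijection, $\card{S} = \card{f(S)}$, so $f$ maps independent sets of $\Gr{G}$ bijectively to independent sets of $\Gr{H}$ of the same cardinality, yielding $\indnum{\Gr{G}} = \indnum{\Gr{H}}$. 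For the clique number, one can either repeat the same argument with ``independent'' replaced by ``clique,'' or observe that $\Gr{G} \cong \Gr{H}$ implies $\CGr{G} \cong \CGr{H}$ (the map $f$ itself is an isomorphism of the complements, since it reflects non-adjacency too), and then apply the identity $\clnum{\cdot} = \indnum{\overline{\,\cdot\,}}$ recorded in Definition~\ref{definition: cliques}.

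For the chromatic number, I would transport proper colorings across $f$. If $c \colon \V{\Gr{G}} \to \OneTo{k}$ is a proper coloring of $\Gr{G}$ using $k = \chrnum{\Gr{G}}$ colors, define $c' \colon \V{\Gr{H}} \to \OneTo{k}$ by $c'(w) \eqdef c(f^{-1}(w))$. For any edge $\{w_1,w_2\} \in \E{\Gr{H}}$, the isomorphism property gives $\{f^{-1}(w_1), f^{-1}(w_2)\} \in \E{\Gr{G}}$, hence $c'(w_1) \neq c'(w_2)$; thus $c'$ is proper, showing $\chrnum{\Gr{H}} \leq \chrnum{\Gr{G}}$. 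The reverse inequality follows symmetrically using $f^{-1}$, which is itself an isomorphism from $\Gr{H}$ to $\Gr{G}$, so $\chrnum{\Gr{G}} = \chrnum{\Gr{H}}$.

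There is no real obstacle here: the result is immediate from the definition of graph isomorphism, and the only thing to be careful about is invoking \emph{both} directions of the equivalence $\{u,v\} \in \E{\Gr{G}} \iff \{f(u),f(v)\} \in \E{\Gr{H}}$ at the appropriate places (preservation is needed when pushing a structure forward along $f$, and reflection is needed when pulling it back). The proof can be presented in a few lines and requires no machinery beyond Definitions~\ref{definition:isomorphic graphs}, \ref{definition: independent sets}, \ref{definition: cliques}, and \ref{definition: chromatic number}.
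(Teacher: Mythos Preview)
Your proof is correct and complete. The paper states this proposition without proof, treating it as an elementary fact that follows directly from the definition of graph isomorphism; your argument supplies exactly the standard verification.
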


Fractional graph theory converts integer-based definitions of graph invariants as above
into their fractional analogues, which are shown to be useful in solving theoretical
and practical problems \cite{ScheinermanU08}.
Essential definitions and results on fractional graph theory are next provided.
\begin{definition}[Fractional graph coloring]
\label{definition: $b$-fold coloring}
A {\em fractional graph coloring} assigns a set of colors to each vertex in a graph such
that adjacent vertices have no colors in common. The following terms are used:
\begin{enumerate}
\item A {\em $b$-fold coloring} of a graph is an assignment of a set of $b$ colors to each
vertex such that adjacent vertices have no colors in common.
\item An $a \colon \hspace*{-0.1cm} b$-coloring is a $b$-fold coloring out of $a$
available colors.
It is a {\em homomorphism to the Kneser graph} $\KG{a}{b}$ since its vertices
are in one-to-one correspondence with all the $b$-element subsets of the given $a$ colors, and two
vertices are adjacent if their corresponding sets of colors are disjoint.
\item The {\em $b$-fold chromatic number} of $\Gr{G}$, denoted by $\chi_b(\Gr{G})$, is
the smallest natural number $a$ such that an $a \colon \hspace*{-0.1cm} b$-coloring of $\Gr{G}$ exists
(by Definition~\ref{definition: chromatic number}, $\chi_1(\Gr{G}) = \chrnum{\Gr{G}}$).
\item The {\em fractional chromatic number} of $\Gr{G}$, denoted by $\fchrnum{\Gr{G}}$, is
defined as
\begin{align}
\label{eq1: fchrnum}
\fchrnum{\Gr{G}} &\eqdef \inf_{b \in \naturals} \frac{\chi_b(\Gr{G})}{b} \\
\label{eq2: fchrnum}
&= \lim_{b \to \infty} \frac{\chi_b(\Gr{G})}{b},
\end{align}
where \eqref{eq2: fchrnum} holds since $\{\chi_b(\Gr{G})\}_{b=1}^{\infty}$
is a subadditive sequence, i.e.,
\begin{align}
\label{eq: subadditivity}
\chi_{b_1+b_2}(\Gr{G}) \leq \chi_{b_1}(\Gr{G}) + \chi_{b_2}(\Gr{G}), \; \forall \, b_1, b_2 \in \naturals,
\end{align}
and, by Fekete's lemma, if $\{x_k\}_{k \geq 1}$ is a nonnegative sequence such that
$x_{m+n} \leq x_m +x_n$ for all $m,n \in \naturals$, then the equality
$\underset{n \to \infty}{\lim} \, \dfrac{x_n}{n} = \underset{n \in \naturals}{\inf} \, \dfrac{x_n}{n}$ holds.
\end{enumerate}
\end{definition}

\begin{theorem}[The fractional chromatic number of a graph]
\label{theorem: fractional chromatic number}
Let $\Gr{G}$ be a graph, and let $\indset{\Gr{G}}$ and $\indsetmax{\Gr{G}}$ denote, respectively, the sets of all
the independent sets and maximal independent sets in $\Gr{G}$.
Then, the fractional chromatic number of $\Gr{G}$ is the solution of the linear programming (LP) problem
\begin{eqnarray}
\label{eq: fractional chromatic number}
\mbox{\fbox{$
\begin{array}{l}
\text{minimize} \; \; \underset{\set{I} \in \indset{\Gr{G}}}{\sum} x_{\set{I}} \\
\text{subject to} \\[0.1cm]
\begin{dcases}
\, \forall \, v \in \V{\Gr{G}}, \quad \underset{\set{I} \in \indset{\Gr{G}}: \, v \in \set{I}}{\sum} \, x_{\set{I}} \geq 1, \\
\, \forall \, \set{I} \in \indset{\Gr{G}}, \quad x_{\set{I}} \in [0,1].
\end{dcases}
\end{array}$}}
\end{eqnarray}
Furthermore,
\begin{enumerate}[(1)]
\item
The solution of the LP problem \eqref{eq: fractional chromatic number} is not affected
by restricting $\indset{\Gr{G}}$ to $\indsetmax{\Gr{G}}$,
which requires that only maximal independent sets in $\Gr{G}$ can get positive weights.
\item The fractional chromatic number of $\Gr{G}$ is a rational number, i.e., $\fchrnum{\Gr{G}} \in \rationals$.
\item The following inequality holds by \cite{Lovasz75}{\em :}
\begin{align}
\label{eq: Lovasz75}
\frac{1}{1 + \ln \indnum{\Gr{G}}} \leq \frac{\fchrnum{\Gr{G}}}{\chrnum{\Gr{G}}} \leq 1.
\end{align}
\end{enumerate}
\end{theorem}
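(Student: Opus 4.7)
The plan is to derive the LP in \eqref{eq: fractional chromatic number} by first reinterpreting the $b$-fold chromatic number as an integer program and then passing to the continuous relaxation through the limit defining $\fchrnum{\Gr{G}}$ in \eqref{eq1: fchrnum}--\eqref{eq2: fchrnum}. I would observe that a $b$-fold coloring of $\Gr{G}$ with $a = \chi_b(\Gr{G})$ colors is exactly a sequence of $a$ independent sets $\set{I}_1, \ldots, \set{I}_a$ in $\Gr{G}$---its color classes, possibly with repetition---such that each vertex $v \in \V{\Gr{G}}$ lies in at least $b$ of them. Encoding multiplicities as non-negative integer weights $n_{\set{I}}$ indexed by $\set{I} \in \indset{\Gr{G}}$, this yields an integer program with objective $\sum_{\set{I}} n_{\set{I}} = a$ and covering constraints $\sum_{\set{I} \ni v} n_{\set{I}} \geq b$ for every $v$. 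Setting $x_{\set{I}} \eqdef n_{\set{I}}/b$ and dividing the objective by $b$ expresses $\chi_b(\Gr{G})/b$ as an LP-type problem over rationals with denominator $b$; as $b$ varies over $\naturals$ these denominators become arbitrarily fine, so the infimum (which equals the limit by \eqref{eq2: fchrnum}) coincides with the continuous LP \eqref{eq: fractional chromatic number}. The cap $x_{\set{I}} \leq 1$ may be imposed without loss of generality by standard LP arguments, since the uncapped LP admits an optimal basic feasible solution respecting it.

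For part (1), if an optimal $x^*$ has $x_{\set{I}}^* > 0$ for some non-maximal $\set{I}$, I would transfer its weight to any maximal $\set{I}' \supseteq \set{I}$; this only strengthens the covering constraints at vertices of $\set{I}' \setminus \set{I}$, weakens none, and leaves the objective unchanged. Iterating yields an optimum supported on $\indsetmax{\Gr{G}}$. Part (2) follows from the general principle that every linear program with rational coefficients attains its optimum at a rational vertex of its feasible polytope, so since \eqref{eq: fractional chromatic number} has $\{0,1\}$-valued constraint coefficients, $\fchrnum{\Gr{G}} \in \rationals$.

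For \eqref{eq: Lovasz75}, the upper bound $\fchrnum{\Gr{G}} \leq \chrnum{\Gr{G}}$ is immediate: any proper $\chrnum{\Gr{G}}$-coloring of $\Gr{G}$ yields a feasible $\{0,1\}$-valued point of \eqref{eq: fractional chromatic number} with objective $\chrnum{\Gr{G}}$. The main obstacle is the lower bound on $\fchrnum{\Gr{G}}/\chrnum{\Gr{G}}$, for which I would follow Lov\'asz's greedy set-cover argument \cite{Lovasz75}. Any optimal $x^*$ of \eqref{eq: fractional chromatic number} defines an instance of fractional set cover on the ground set $\V{\Gr{G}}$ whose sets are the independent sets of $\Gr{G}$ (each of cardinality at most $\indnum{\Gr{G}}$) and whose fractional optimum equals $\fchrnum{\Gr{G}}$. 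The classical greedy algorithm---iteratively selecting the independent set covering the largest number of yet-uncovered vertices---produces an integer cover, hence a proper vertex coloring, of cardinality at most $H_{\indnum{\Gr{G}}} \, \fchrnum{\Gr{G}}$, where $H_k \eqdef \sum_{j=1}^{k} 1/j \leq 1 + \ln k$. This upper bound on $\chrnum{\Gr{G}}$ rearranges to give \eqref{eq: Lovasz75}.
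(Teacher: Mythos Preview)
The paper does not give a formal proof of this theorem: it is presented as preliminary background, followed only by an informal paragraph explaining that the LP \eqref{eq: fractional chromatic number} is the continuous relaxation of the integer program for $\chrnum{\Gr{G}}$, with part~(3) simply attributed to \cite{Lovasz75}. Your proposal therefore supplies considerably more than the paper does, and the argument you outline is the standard one and is correct in substance.

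One small point worth tightening: your justification that the cap $x_{\set{I}} \leq 1$ is harmless via ``an optimal basic feasible solution respecting it'' is not quite the right mechanism. The cleaner argument is direct: if some $x^*_{\set{I}_0} > 1$ in an optimal solution of the uncapped LP, then for every $v \in \set{I}_0$ the covering sum already satisfies $\sum_{\set{I} \ni v} x^*_{\set{I}} \geq x^*_{\set{I}_0}$, so lowering $x^*_{\set{I}_0}$ to $1$ keeps all constraints feasible while strictly decreasing the objective, contradicting optimality. This also handles the potential overflow in your part~(1) argument (when weight transferred to a maximal $\set{I}'$ pushes $x_{\set{I}'}$ past~$1$). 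Otherwise your derivations for parts (1)--(2) and the Lov\'asz greedy set-cover bound for \eqref{eq: Lovasz75} are exactly the intended arguments.
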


The LP problem \eqref{eq: fractional chromatic number} serves as a relaxation of the linear integer-programming problem
aimed at determining the chromatic number $\chrnum{\Gr{G}}$. The chromatic number represents the smallest number
of (maximal) independent sets required to partition the vertex set of graph $\Gr{G}$. This relaxation is justified
by the observation that all vertices belonging to an independent set in $\Gr{G}$ can be assigned identical colors.
Consequently, the optimization variables $\{x_{\set{I}}\}$ in \eqref{eq: fractional chromatic number},
corresponding to each independent set $\set{I} \in \indset{\Gr{G}}$,
are relaxed to lie within the interval $[0,1]$ rather than being binary variables. This relaxation defines the fractional
chromatic number $\fchrnum{\Gr{G}}$. Notably, the minimization in the LP problem \eqref{eq: fractional chromatic number}
is achieved by constraining the independent sets in $\Gr{G}$ to be maximal.
The computational complexity of determining the fractional chromatic number of a graph is NP-hard, as demonstrated in Section~7
of \cite{GrotschelLS81}. Additionally, it is noteworthy that the number of maximal independent sets in a graph of fixed order
$n$ grows exponentially with $n$. Specifically, Theorem~1 in \cite{MoonM65} bounds this number between $3^{\lfloor n/3 \rfloor}$
and $2 \cdot 3^{\lfloor n/3 \rfloor}$.

The dual LP of \eqref{eq: fractional chromatic number}
is a maximization of the sum of the nonnegative weights that are assigned to the vertices in $\Gr{G}$ such
that the total weight of the vertices in every maximal independent set in $\Gr{G}$ is at most~1.
It is given by the LP problem
\begin{eqnarray}
\label{eq: fractional clique number}
\mbox{\fbox{$
\begin{array}{l}
\text{maximize} \; \; \underset{v \in \V{\Gr{G}}}{\sum} x_v \\[-0.1cm]
\text{subject to} \\[0.1cm]
\begin{cases}
\forall \, \set{I} \in \indsetmax{\Gr{G}}, \quad \underset{v \in \set{I}}{\sum} x_v \leq 1, \\
\forall \, v \in \V{\Gr{G}}, \quad x_v \geq 0.
\end{cases}
\end{array}$}}
\end{eqnarray}
The dual LP problem \eqref{eq: fractional clique number} forms a relaxation of the integer programming problem for
the clique number $\clnum{\Gr{G}}$. It is defined to be the {\em fractional clique number} of the graph $\Gr{G}$,
denoted by $\fclnum{\Gr{G}}$. By the strong duality in linear programming, which states that the optimal values
of the primal and dual LP problems are identical provided that these LP problems are feasible, it follows that
\begin{align}
\label{eq1:14.10.23}
\fclnum{\Gr{G}} = \fchrnum{\Gr{G}}.
\end{align}

\begin{definition}[Fractional independence number]
\label{definition: fractional independence number}
The {\em fractional independence number} of a graph $\Gr{G}$ is defined to be the fractional
clique number of the complement $\CGr{G}$, i.e.,
\begin{align}
\label{eq2:14.10.23}
\findnum{\Gr{G}} \eqdef \fclnum{\CGr{G}} = \fchrnum{\CGr{G}}.
\end{align}
\end{definition}

\begin{corollary}
\label{corollary: rational fractional graph invariants}
The fractional independence, clique, and chromatic numbers of every graph $\Gr{G}$ are rational numbers.
These are graph invariants, i.e., if $\Gr{G} \cong \Gr{H}$, then
\begin{align}
\label{eq3:14.10.23}
\findnum{\Gr{G}} = \findnum{\Gr{H}}, \quad \fclnum{\Gr{H}} = \fclnum{\Gr{G}} = \fchrnum{\Gr{G}} = \fchrnum{\Gr{H}}.
\end{align}
\end{corollary}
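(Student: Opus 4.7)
The plan is to harvest facts already assembled in the excerpt: the LP representations \eqref{eq: fractional chromatic number} and \eqref{eq: fractional clique number}, the duality identity \eqref{eq1:14.10.23}, the definition \eqref{eq2:14.10.23}, and the rationality of $\fchrnum{\Gr{G}}$ from Theorem~\ref{theorem: fractional chromatic number}(2). Nothing further should be required beyond verifying behavior under isomorphism.

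First I would handle rationality. By Theorem~\ref{theorem: fractional chromatic number}(2), $\fchrnum{\Gr{G}} \in \rationals$ for every graph. Applying this to the complement gives $\fchrnum{\CGr{G}} \in \rationals$, and \eqref{eq2:14.10.23} then yields $\findnum{\Gr{G}} \in \rationals$. The identity $\fclnum{\Gr{G}} = \fchrnum{\Gr{G}}$ from \eqref{eq1:14.10.23} (strong LP duality between \eqref{eq: fractional chromatic number} and \eqref{eq: fractional clique number}) transfers rationality to $\fclnum{\Gr{G}}$ as well.

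Next I would handle invariance under isomorphism. Given an isomorphism $f \colon \V{\Gr{G}} \to \V{\Gr{H}}$, the map $\set{I} \mapsto f(\set{I})$ is a bijection from $\indset{\Gr{G}}$ onto $\indset{\Gr{H}}$ that preserves maximality (since $f$ preserves adjacency and nonadjacency, it carries independent sets to independent sets and reflects maximality). Relabeling the variables $\{x_{\set{I}}\}$ and constraints of the LP \eqref{eq: fractional chromatic number} through $f$ and $f^{-1}$ shows that the LPs for $\Gr{G}$ and $\Gr{H}$ have identical feasible regions and objective, hence the same optimal value; therefore $\fchrnum{\Gr{G}} = \fchrnum{\Gr{H}}$. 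Since $\Gr{G} \cong \Gr{H}$ implies $\CGr{G} \cong \CGr{H}$, the same argument applied to the complements, combined with \eqref{eq2:14.10.23}, gives $\findnum{\Gr{G}} = \fchrnum{\CGr{G}} = \fchrnum{\CGr{H}} = \findnum{\Gr{H}}$. Finally, chaining \eqref{eq1:14.10.23} with the invariance just obtained yields $\fclnum{\Gr{G}} = \fchrnum{\Gr{G}} = \fchrnum{\Gr{H}} = \fclnum{\Gr{H}}$, which is the required chain of equalities in \eqref{eq3:14.10.23}.

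There is no genuine obstacle here: the corollary is essentially a bookkeeping consequence of the preceding LP machinery together with the elementary observation that graph isomorphisms induce bijections on the lattices of (maximal) independent sets. The only point warranting care is noting that both constructs on the primal side (\eqref{eq: fractional chromatic number}, restricted to $\indsetmax{\Gr{G}}$ by Theorem~\ref{theorem: fractional chromatic number}(1)) and the dual side (\eqref{eq: fractional clique number}) are intrinsic to the isomorphism class of $\Gr{G}$, so that no ambiguity is introduced by the vertex labeling.
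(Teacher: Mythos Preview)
Your proposal is correct and matches the paper's intent: the corollary is stated in the paper without an explicit proof, as it is an immediate consequence of Theorem~\ref{theorem: fractional chromatic number}(2), the LP duality identity~\eqref{eq1:14.10.23}, and the definition~\eqref{eq2:14.10.23}. Your write-up simply makes this implicit reasoning explicit, including the routine check that isomorphisms transport the LP~\eqref{eq: fractional chromatic number} bijectively.
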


\subsubsection{Matrices Associated with Graphs}
\label{subsubsection: Matrices associated with graphs}

Spectral graph theory delves into relations between the structure of a graph and the eigenvalues
of matrices that are associated with the graph. This field constitutes significant
aspects of algebraic graph theory, as studied in textbooks such as
\cite{BrouwerH12,ChartrandLZ15,CioabaM22,CvetkovicRS09,GodsilR,Lovasz19,Nica18,Stanic15}.
Next, we provide essential background in spectral graph theory for this paper.
The interested reader is further referred to a recent survey paper \cite{LiuN23}, which presents
twenty open problems in spectral graph theory, along with accompanying historical notes.

\begin{definition}[Adjacency matrix]
\label{definition: adjacency matrix}
Let $\Gr{G}$ be a simple undirected graph on $n$ vertices. The {\em adjacency matrix} of $\Gr{G}$,
denoted by $\A = \A(\Gr{G})$, is an $n \times n$ symmetric matrix $\A = (\mathrm{A}_{i,j})$
where $\mathrm{A}_{i,j} = 1$ if $\{i,j\} \in \E{\Gr{G}}$, and $\mathrm{A}_{i,j}=0$ otherwise
(so, the entries in the principal diagonal of $\A$ are zeros).
\end{definition}

Let $d_i$ denote, for $i \in \OneTo{n}$, the degree of the vertex $i \in \V{\Gr{G}}$,
and let $\D = \D(\Gr{G})$ be the diagonal matrix with the diagonal entries $d_1, \ldots, d_n$
(hence, for a $d$-regular graph on $n$ vertices, $\D = d \I{n}$).
\begin{definition}[Laplacian and signless Laplacian matrices]
\label{definition: Laplacian and Signless Laplacian Matrices}
Let $\Gr{G}$ be a simple undirected graph on $n$ vertices with an adjacency matrix $\A$ and degree
matrix $\D$. Then, the {\em Laplacian} and {\em signless Laplacian} matrices of $\Gr{G}$
are the symmetric matrices that are, respectively, given by
\begin{align}
\label{eq: laplacian}
& \LM \eqdef \D - \A, \\
\label{eq: signless laplacian}
& \Q \eqdef \D + \A.
\end{align}
\end{definition}

\begin{definition}[Normalized Laplacian matrix]
\label{definition: Normalized Laplacian matrix}
The normalized Laplacian matrix ${\bf{\mathcal{L}}} = {\bf{\mathcal{L}}}(\Gr{G})$ of a simple undirected graph
$\Gr{G}$ on $n$ vertices, with the adjacency and degree matrices $\A$ and $\D$, respectively,
is defined to be
\begin{align}
\label{eq: Normalized Laplacian matrix}
{\bf{\mathcal{L}}} \eqdef \D^{-\frac12} \LM \D^{-\frac12}
= \I{n}- \D^{-\frac12} \A \D^{-\frac12}.
\end{align}
The entries of the normalized Laplacian matrix ${\bf{\mathcal{L}}} = (\mathcal{L}_{i,j})$ are given by
\begin{align}
\label{eq2: Normalized Laplacian matrix}
\mathcal{L}_{i,j} =
\begin{dcases}
\begin{array}{cl}
1, \quad & \mbox{if $i=j$ and $d_i \neq 0$,} \\[0.2cm]
-\dfrac{1}{\sqrt{d_i d_j}}, \quad & \mbox{if $i \neq j$ and $\{i,j\} \in \E{\Gr{G}}$}, \\[0.5cm]
0, \quad & \mbox{otherwise},
\end{array}
\end{dcases}
\end{align}
with the convention that if $i \in \V{\Gr{G}}$ is an isolated vertex in $\Gr{G}$ (i.e., $d_i=0$), then $d_i^{-\frac12} = 0$.
\end{definition}

The next theorems show that the spectra of a graph with respect to the above four matrices
(i.e., the sets of eigenvalues of these matrices) give information about the structure of the graph.

\begin{theorem}[Number of walks of a given length]
\label{thm: number of walks of a given length}
Let $\Gr{G} = (\Vertex, \Edge)$ be a finite, simple, and undirected graph with an adjacency matrix $\A = \A(\Gr{G})$,
let $i,j \in \Vertex$, and let $\ell \in \naturals$. Then, the number
of walks of length $\ell$ in $\Gr{G}$, with the fixed endpoints $i$ and $j$, is equal to $(\A^\ell)_{i,j}$.
\end{theorem}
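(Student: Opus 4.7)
The plan is a straightforward induction on the walk length $\ell$, exploiting the combinatorial meaning of matrix multiplication together with the defining property of the adjacency matrix.

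For the base case $\ell = 1$, I would simply observe that a walk of length~$1$ from~$i$ to~$j$ is exactly an edge joining these two vertices, so there is one such walk if $\{i,j\} \in \E{\Gr{G}}$ and none otherwise. By Definition~\ref{definition: adjacency matrix}, this count is exactly $\A_{i,j} = (\A^1)_{i,j}$, establishing the base.

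For the inductive step, assume the statement holds for walks of length $\ell - 1$. I would decompose any walk $(v_0, v_1, \ldots, v_\ell)$ of length~$\ell$ from $v_0 = i$ to $v_\ell = j$ into its initial segment $(v_0, \ldots, v_{\ell-1})$, a walk of length $\ell - 1$ from $i$ to some intermediate vertex $k = v_{\ell-1}$, followed by a single edge from $k$ to $j$. Summing over all possible $k \in \V{\Gr{G}}$ and applying the induction hypothesis to the initial segment, the total count of walks is
\begin{align*}
\sum_{k \in \V{\Gr{G}}} (\A^{\ell-1})_{i,k} \, \A_{k,j} = (\A^{\ell-1} \A)_{i,j} = (\A^{\ell})_{i,j},
\end{align*}
where the final edge from $k$ to $j$ contributes the factor $\A_{k,j} \in \{0,1\}$ (which correctly encodes whether the extension is a legal walk step).

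There is no real obstacle here: the argument is a textbook bijective-counting induction, and the only subtlety worth flagging is that walks are allowed to repeat both vertices and edges, which is precisely what makes the decomposition into ``walk of length $\ell-1$'' plus ``one edge'' unambiguous and bijective with pairs $(\text{shorter walk}, \text{final edge})$. No combinatorial overcounting arises, and the identity $\A^{\ell} = \A^{\ell - 1} \A$ closes the induction.
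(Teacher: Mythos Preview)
Your induction argument is correct and is the standard textbook proof of this result. The paper does not actually supply a proof of this theorem: it appears in the preliminaries section (Section~\ref{subsubsection: Matrices associated with graphs}) as a well-known background fact, stated without proof and immediately followed by its corollaries on closed walks and edge/triangle counts. So there is nothing to compare against; your write-up is exactly what one would expect for this classical statement.
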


\begin{corollary}[Number of closed walks of a given length]
\label{corollary: Number of Closed Walks of a Given Length}
Let $\Gr{G} = (\Vertex, \Edge)$ be a simple undirected graph on $n$ vertices with an adjacency matrix
$\A = \A(\Gr{G})$, and let its spectrum (with respect to $\A$) be given by $\{\lambda_j\}_{j=1}^n$. Then,
for all $\ell \in \naturals$, the number of closed walks of length $\ell$ in $\Gr{G}$ is equal to
$\sum_{j=1}^n \lambda_j^{\ell}$.
\end{corollary}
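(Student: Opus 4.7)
The plan is to reduce the counting of closed walks to a trace computation, and then evaluate that trace via the spectral decomposition of the adjacency matrix. A closed walk of length $\ell$ in $\Gr{G}$ is simply a walk of length $\ell$ whose endpoints coincide. Hence, if $W_\ell$ denotes the total number of closed walks of length $\ell$ in $\Gr{G}$, then summing over all possible starting (= ending) vertices $i \in \V{\Gr{G}}$ gives
\begin{align*}
W_\ell = \sum_{i=1}^n (\text{number of walks of length $\ell$ from $i$ to $i$}).
\end{align*}

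By Theorem~\ref{thm: number of walks of a given length}, applied with $j = i$, the $i$-th summand equals $(\A^\ell)_{i,i}$, so
\begin{align*}
W_\ell = \sum_{i=1}^n (\A^\ell)_{i,i} = \tr(\A^\ell).
\end{align*}
It remains to evaluate $\tr(\A^\ell)$ in terms of the spectrum $\{\lambda_j\}_{j=1}^n$ of $\A$. Since $\Gr{G}$ is simple and undirected, $\A$ is a real symmetric matrix, hence diagonalizable by an orthogonal matrix: there exists an orthogonal $U$ such that $\A = U \Lambda U^{\mathrm{T}}$, where $\Lambda = \mathrm{diag}(\lambda_1, \ldots, \lambda_n)$. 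Then $\A^\ell = U \Lambda^\ell U^{\mathrm{T}}$, and by the cyclic invariance of the trace,
\begin{align*}
\tr(\A^\ell) = \tr\bigl(U \Lambda^\ell U^{\mathrm{T}}\bigr) = \tr\bigl(\Lambda^\ell U^{\mathrm{T}} U\bigr) = \tr(\Lambda^\ell) = \sum_{j=1}^n \lambda_j^\ell,
\end{align*}
which combined with the previous display yields the claimed identity.

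There is essentially no obstacle here: the result is an immediate consequence of Theorem~\ref{thm: number of walks of a given length} together with the spectral theorem for real symmetric matrices. The only point worth emphasizing carefully in the write-up is that the trace identity $\tr(\A^\ell) = \sum_j \lambda_j^\ell$ hinges on diagonalizability, which in turn relies on $\A$ being symmetric — a property that requires the graph to be undirected, as assumed in the statement.
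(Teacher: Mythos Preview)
Your proof is correct and is the standard argument: the paper states this corollary as background material in the preliminaries section without supplying a proof, and the route you take---summing the diagonal entries of $\A^\ell$ via Theorem~\ref{thm: number of walks of a given length} and then evaluating $\tr(\A^\ell)$ by orthogonal diagonalization of the symmetric matrix $\A$---is exactly the expected derivation. One small remark: the identity $\tr(\A^\ell)=\sum_j \lambda_j^\ell$ in fact holds for any square matrix over $\Reals$ (via triangularization or the Newton identities), so while invoking symmetry is natural here and makes the argument transparent, diagonalizability is not strictly required for that step.
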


A graph $\Gr{G}$ is bipartite if and only if it does not include odd cycles. In light of
Corollary~\ref{corollary: Number of Closed Walks of a Given Length}, a graph $\Gr{G}$ is bipartite
if and only if its spectrum $\{\lambda_j\}_{j=1}^n$ is symmetric around zero.

\begin{corollary}[Number of edges and triangles in a graph]
\label{corollary: number of edges and triangles in a graph}
Let $\Gr{G}$ be a simple undirected graph with $n = \card{\V{\Gr{G}}}$ vertices, $e = \card{\E{\Gr{G}}}$ edges, and $t_3$
triangles. Let $\A = \A(\Gr{G})$ be the adjacency matrix of $\Gr{G}$, and let $\{\lambda_j\}_{j=1}^n$ be its spectrum. Then,
\begin{align}
\label{eq:14.09.23-c1}
& \sum_{j=1}^n \lambda_j = \mathrm{tr}(\A) = 0,  \\
\label{eq:14.09.23-c2}
& \sum_{j=1}^n \lambda_j^2 = \mathrm{tr}(\A^2) = 2 e, \\
\label{eq:14.09.23-c3}
& \sum_{j=1}^n \lambda_j^3 = \mathrm{tr}(\A^3) = 6 t_3.
\end{align}
\end{corollary}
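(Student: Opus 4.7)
The plan is to combine two ingredients for each of the three identities. First, for any real symmetric matrix the trace equals the sum of the eigenvalues (counted with multiplicity); applying this to $\A^\ell$, whose eigenvalues are $\lambda_j^\ell$, immediately yields $\mathrm{tr}(\A^\ell) = \sum_{j=1}^n \lambda_j^\ell$. Second, Theorem~\ref{thm: number of walks of a given length} identifies $(\A^\ell)_{i,j}$ with the number of walks of length $\ell$ from $i$ to $j$, so $\mathrm{tr}(\A^\ell) = \sum_{i=1}^n (\A^\ell)_{i,i}$ counts the closed walks of length $\ell$ in $\Gr{G}$ (this is precisely Corollary~\ref{corollary: Number of Closed Walks of a Given Length}). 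The left-hand equalities in \eqref{eq:14.09.23-c1}--\eqref{eq:14.09.23-c3} are thus automatic, and the remaining task is to identify the number of closed walks of lengths $1$, $2$, and $3$ with the claimed combinatorial quantities.

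For \eqref{eq:14.09.23-c1}, I would simply note that since $\Gr{G}$ is simple it has no self-loops, so the diagonal of $\A$ vanishes and $\mathrm{tr}(\A) = 0$ (equivalently, no closed walks of length~$1$ exist). For \eqref{eq:14.09.23-c2}, every closed walk of length~$2$ has the form $i \to j \to i$ with $\{i,j\} \in \E{\Gr{G}}$; each of the $e$ edges thus contributes exactly two such walks (one per endpoint), giving $\mathrm{tr}(\A^2) = 2e$.

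For \eqref{eq:14.09.23-c3}, a closed walk $i \to j \to k \to i$ of length~$3$ forces $\{i,j\}, \{j,k\}, \{k,i\} \in \E{\Gr{G}}$, and simplicity of $\Gr{G}$ (no self-loops and no multi-edges) excludes $i=j$, $j=k$, and $i=k$, so $\{i,j,k\}$ is a triangle of $\Gr{G}$. Conversely, each of the $t_3$ triangles yields exactly $3 \cdot 2 = 6$ closed walks of length~$3$, corresponding to the $3$ choices of base vertex and the $2$ choices of orientation; summing gives $\mathrm{tr}(\A^3) = 6 t_3$. The only mildly delicate point in the whole argument is ensuring the pairwise distinctness of $i$, $j$, $k$ in this last case analysis, and that is the sole obstacle worth flagging; beyond it, the derivation is direct.
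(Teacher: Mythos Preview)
Your proof is correct and follows exactly the approach the paper implicitly intends: the corollary is placed immediately after Theorem~\ref{thm: number of walks of a given length} and Corollary~\ref{corollary: Number of Closed Walks of a Given Length} precisely because it specializes the closed-walk count to $\ell=1,2,3$, which is what you do. The paper gives no explicit proof, but your argument is the standard one and matches the paper's framing.
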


\begin{theorem}[On the Laplacian matrix]
\label{theorem: On the Laplacian matrix of a graph}
Let $\Gr{G}$ be a finite, simple, and undirected graph, and let $\LM$ be
the Laplacian matrix of $\Gr{G}$. Then,
\begin{enumerate}
\item \label{Item 1: Laplacian matrix of a graph}
The matrix $\LM$ is a positive semidefinite matrix.
\item \label{Item 2: Laplacian matrix of a graph}
The smallest eigenvalue of $\, \LM$ is equal to zero, and its multiplicity is equal
to the number of components in $\Gr{G}$.
\item \label{Item 3: Laplacian matrix of a graph}
The size of $\Gr{G}$, $\card{\E{\Gr{G}}}$, is equal to one-half the sum of the eigenvalues of $\, \LM$ (with multiplicities).
\end{enumerate}
\end{theorem}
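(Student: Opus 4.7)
The plan is to prove all three parts from a single algebraic identity for the Laplacian quadratic form:
\[
x^T \LM x = \sum_{\{i,j\} \in \E{\Gr{G}}} (x_i - x_j)^2 \qquad \forall \, x \in \Reals^n,
\]
which follows from $\LM = \D - \A$ by rewriting the diagonal contribution as $\sum_i d_i x_i^2 = \sum_{\{i,j\} \in \E{\Gr{G}}} (x_i^2 + x_j^2)$ and combining it with the off-diagonal contribution $-2 \sum_{\{i,j\} \in \E{\Gr{G}}} x_i x_j$. Once this identity is in hand, part~(1) is immediate: the right-hand side is a nonnegative sum of squares, so $\LM$ is positive semidefinite.

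For part~(2), I would observe that the quadratic form above vanishes at $x$ if and only if $x_i = x_j$ for every edge $\{i,j\} \in \E{\Gr{G}}$, i.e., if and only if $x$ is constant on each connected component of $\Gr{G}$. Letting $c$ denote the number of components and $\set{V}_1, \ldots, \set{V}_c$ their vertex sets, the characteristic vectors $\mathbf{1}_{\set{V}_1}, \ldots, \mathbf{1}_{\set{V}_c}$ therefore form a basis of $\ker \LM$: they are linearly independent because their supports are pairwise disjoint and nonempty, and by the previous characterization they span the nullspace. Hence $\dim \ker \LM = c$, which is the multiplicity of the eigenvalue $0$; by part~(1), $0$ is indeed the smallest eigenvalue of $\LM$.

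Part~(3) is a direct trace computation: the diagonal entries of $\LM = \D - \A$ are exactly $d_1, \ldots, d_n$ (since $\A$ has zero diagonal), so by the handshake lemma
\[
\tr(\LM) = \sum_{i=1}^n d_i = 2 \, \card{\E{\Gr{G}}}.
\]
Because $\LM$ is symmetric, its trace equals the sum of its eigenvalues counted with multiplicity, and dividing by two gives the assertion.

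The main obstacle is minor: the entire argument reduces to a short exercise once the sum-of-squares identity for $x^T \LM x$ is available. The one point that deserves a little care is the linear-independence step in part~(2), which is where the connectivity structure of $\Gr{G}$ enters; the disjoint-support argument for the component indicator vectors handles it cleanly.
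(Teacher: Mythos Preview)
Your proof is correct and follows the standard approach via the quadratic-form identity $x^{\mathrm{T}} \LM x = \sum_{\{i,j\} \in \E{\Gr{G}}} (x_i - x_j)^2$. The paper itself states this theorem as background material in the preliminaries section without giving a proof, so there is no paper proof to compare against; your argument is the textbook one (see, e.g., the references the paper cites such as \cite{BrouwerH12,GodsilR,Nica18}).
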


\begin{theorem}[On the signless Laplacian matrix]
\label{theorem: On the signless Laplacian matrix of a graph}
Let $\Gr{G}$ be a finite, simple, and undirected graph, and let $\Q$ be
the signless Laplacian matrix of $\Gr{G}$. Then,
\begin{enumerate}
\item \label{Item 1: signless Laplacian matrix of a graph}
The matrix $\Q$ is a positive semidefinite matrix.
\item \label{Item 2: signless Laplacian matrix of a graph}
The least eigenvalue of the matrix $\Q$ is equal to zero if and only if $\Gr{G}$ is a bipartite graph.
\item \label{Item 3: signless Laplacian matrix of a graph}
The multiplicity of 0 as an eigenvalue of $\Q$ is equal to the number of bipartite components in $\Gr{G}$.
\item \label{Item 4: signless Laplacian matrix of a graph}
The size of $\Gr{G}$ is equal to one-half the sum of the eigenvalues of~$\Q$ (with multiplicities).
\end{enumerate}
\end{theorem}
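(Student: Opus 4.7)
The plan is to deduce all four parts from a single factorization together with the associated quadratic form. First I would introduce the unsigned incidence matrix $B \in \{0,1\}^{n \times m}$ whose column corresponding to an edge $\{i,j\}$ has a $1$ in rows $i$ and $j$ and zeros elsewhere. A direct computation shows $\Q = B B^{T}$: the diagonal entry $(BB^{T})_{ii}$ counts the number of edges incident to $i$, which is $d_i$, and for $i \neq j$ the entry $(BB^{T})_{ij}$ equals the number of edges between $i$ and $j$, which is $1$ if $\{i,j\} \in \E{\Gr{G}}$ and $0$ otherwise. Equivalently, for every $x \in \Reals^{n}$,
\begin{align*}
x^{T} \Q x \;=\; \sum_{\{i,j\} \in \E{\Gr{G}}} (x_i + x_j)^{2} \;\geq\; 0,
\end{align*}
which immediately establishes part~(1), the positive semidefiniteness of $\Q$.

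For parts~(2) and~(3) I would analyze the kernel of $\Q$ using the same quadratic form. Since $\Q$ is positive semidefinite, $0$ is its least eigenvalue if and only if there exists a nonzero $x$ satisfying $x_i + x_j = 0$ for every edge $\{i,j\} \in \E{\Gr{G}}$. Working component by component (the matrix $\Q$ is block diagonal along the components of $\Gr{G}$ up to a permutation of vertices), fixing $x$ at one vertex of a connected component $\Gr{H}$ forces the sign-alternation $x_v = (-1)^{\text{length of walk from } v_0 \text{ to } v} \, x_{v_0}$. This assignment is globally consistent (and yields a nonzero element of the kernel) exactly when every closed walk in $\Gr{H}$ has even length, which, by the standard characterization, is equivalent to $\Gr{H}$ being bipartite. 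In that case the kernel restricted to the component is one-dimensional, and otherwise it is trivial. Summing over components yields part~(3), and part~(2) then follows as the special case that this multiplicity is positive.

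For part~(4) I would invoke $\sum_{j=1}^{n} \lambda_j(\Q) = \tr(\Q)$. Because the principal diagonal of $\A$ is zero by Definition~\ref{definition: adjacency matrix}, the diagonal entries of $\Q = \D + \A$ are precisely $d_1, \ldots, d_n$, so $\tr(\Q) = \sum_{i=1}^{n} d_i = 2 \card{\E{\Gr{G}}}$ by the handshake lemma; dividing by two gives the claim.

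The main obstacle I anticipate is the bookkeeping in part~(3): one must carefully verify that the sign-alternation constraint on a connected component yields a one-dimensional kernel in the bipartite case and only the zero vector in the non-bipartite case. Once the equivalence "consistency of the alternating assignment $\Leftrightarrow$ absence of odd closed walks $\Leftrightarrow$ bipartiteness" is used, the remaining assertions are immediate computations from $\Q = B B^{T}$.
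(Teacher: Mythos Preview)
Your proof is correct and follows the standard textbook approach (incidence-matrix factorization $\Q = BB^{T}$, quadratic-form analysis of the kernel, and a trace computation). Note, however, that the paper does not actually supply its own proof of this theorem: it is stated in the preliminaries section as known background from spectral graph theory, with the reader implicitly referred to standard references such as \cite{BrouwerH12,CvetkovicRS07}. Your argument is precisely the one found in those sources, so there is nothing to compare against and nothing to correct.
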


\begin{theorem}[On the normalized Laplacian matrix]
\label{theorem: On the normalized Laplacian matrix of a graph}
Let $\Gr{G}$ be a finite, simple, and undirected graph, and let ${\bf{\mathcal{L}}}$ be
the normalized Laplacian matrix of $\Gr{G}$. Then,
\begin{enumerate}
\item \label{Item 1: normalized Laplacian matrix of a graph}
The eigenvalues of ${\bf{\mathcal{L}}}$ lie in the interval $[0,2]$.
\item \label{Item 2: normalized Laplacian matrix of a graph}
The number of components in $\Gr{G}$ is equal to the multiplicity of~0 as an eigenvalue of ${\bf{\mathcal{L}}}$.
\item \label{Item 3: normalized Laplacian matrix of a graph}
The number of the bipartite components in $\Gr{G}$ is equal to the multiplicity of~2 as an eigenvalue of~${\bf{\mathcal{L}}}$.
\end{enumerate}
\end{theorem}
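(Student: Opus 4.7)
My plan is to reduce each of the three items to the corresponding property of $\LM$ or $\Q$ established in Theorems~\ref{theorem: On the Laplacian matrix of a graph} and~\ref{theorem: On the signless Laplacian matrix of a graph}, via the two congruence identities
\[
\mathcal{L} \;=\; \D^{-1/2}\,\LM\,\D^{-1/2}, \qquad 2\,\I{n}-\mathcal{L} \;=\; \D^{-1/2}\,\Q\,\D^{-1/2},
\]
interpreted under the convention $d_i^{-1/2}:=0$ for every isolated vertex $i$, as in Definition~\ref{definition: Normalized Laplacian matrix}. The first identity is just the defining formula \eqref{eq: Normalized Laplacian matrix}; the second follows by subtracting from $2\I{n}$ and using $\Q=\D+\A$. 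To finesse the convention I would split $\Gr{G}$ into its isolated vertices and its non-isolated part $\Gr{G}'$: this induces a simultaneous block decomposition of $\mathcal{L}$, $\LM$, and $\Q$ in which the isolated-vertex block is the zero matrix, so each isolated vertex contributes exactly one eigenvalue~$0$ to $\mathcal{L}$ (and none equal to~$2$) and one component to $\Gr{G}$.

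For item~(\ref{Item 1: normalized Laplacian matrix of a graph}) I would obtain both bounds simultaneously from positive semi-definiteness. For arbitrary $x\in\Reals^n$, setting $y:=\D^{-1/2}x$ yields
\[
x^{\top}\mathcal{L}\,x \;=\; y^{\top}\LM\,y \;\geq\;0, \qquad x^{\top}(2\,\I{n}-\mathcal{L})\,x \;=\; y^{\top}\Q\,y \;\geq\;0,
\]
by Theorems~\ref{theorem: On the Laplacian matrix of a graph}(\ref{Item 1: Laplacian matrix of a graph}) and~\ref{theorem: On the signless Laplacian matrix of a graph}(\ref{Item 1: signless Laplacian matrix of a graph}). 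Hence $0\preceq\mathcal{L}\preceq 2\,\I{n}$, and every eigenvalue of $\mathcal{L}$ lies in $[0,2]$.

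For items~(\ref{Item 2: normalized Laplacian matrix of a graph}) and~(\ref{Item 3: normalized Laplacian matrix of a graph}) I would work on the non-isolated block, where $\D^{-1/2}$ is an invertible diagonal matrix. Since $\mathcal{L}\succeq 0$ on that block, $y\in\ker\mathcal{L}$ iff $y^{\top}\mathcal{L}\,y=0$ iff $\LM(\D^{-1/2}y)=0$, so $y\mapsto\D^{-1/2}y$ is a linear bijection $\ker\mathcal{L}\to\ker\LM$. By Theorem~\ref{theorem: On the Laplacian matrix of a graph}(\ref{Item 2: Laplacian matrix of a graph}) the common dimension equals the number of components of $\Gr{G}'$, which combined with the isolated vertices recovers the total number of components of $\Gr{G}$, establishing~(\ref{Item 2: normalized Laplacian matrix of a graph}). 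The identical argument applied to $2\,\I{n}-\mathcal{L}$ in place of $\mathcal{L}$, now using $\Q$ and Theorem~\ref{theorem: On the signless Laplacian matrix of a graph}(\ref{Item 3: signless Laplacian matrix of a graph}), yields~(\ref{Item 3: normalized Laplacian matrix of a graph}). The only genuinely delicate point --- and the step I expect to require the most care --- is this book-keeping around isolated vertices: the congruence identities literally hold only on the non-isolated block, and the null-space bijection collapses on isolated coordinates, so isolating them into their own direct summand at the outset is what makes the reduction go through cleanly.
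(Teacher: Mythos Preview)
The paper does not actually prove this theorem; it is stated in Section~\ref{subsubsection: Matrices associated with graphs} as a standard preliminary result from spectral graph theory, alongside Theorems~\ref{theorem: On the Laplacian matrix of a graph} and~\ref{theorem: On the signless Laplacian matrix of a graph}, with no proof supplied. So there is no ``paper's own proof'' to compare against.

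Your argument is correct and is essentially the standard one: reduce to the known properties of $\LM$ and $\Q$ via the congruences $\mathcal{L}=\D^{-1/2}\LM\,\D^{-1/2}$ and $2\I{n}-\mathcal{L}=\D^{-1/2}\Q\,\D^{-1/2}$, and use the invertibility of $\D^{-1/2}$ on the non-isolated block to transport kernels. Your handling of isolated vertices by block decomposition is the right move. One small caveat worth flagging explicitly in a final write-up: for Item~(\ref{Item 3: normalized Laplacian matrix of a graph}), an isolated vertex contributes eigenvalue~$0$ (not~$2$) to $\mathcal{L}$ under the convention in Definition~\ref{definition: Normalized Laplacian matrix}, so the statement as written implicitly treats an isolated vertex as \emph{not} a bipartite component---this is a convention issue in the theorem statement itself rather than a flaw in your reduction, but it is worth noting since some sources count a single vertex as trivially bipartite.
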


\begin{definition}[Characteristic polynomial]
\label{definition: Characteristic Polynomial}
The characteristic polynomial of an $n \times n$ matrix ${\bf{M}}$ is given by
$f_{{\bf{M}}}(x) \eqdef \det(x \I{n} - {\bf{M}})$,
where $\I{n}$ denotes the identity matrix of order $n$. Additionally,
\end{definition}
\begin{enumerate}
\item  \label{Item 1: Characteristic Polynomial}
$f_{X}(\cdot)$ denotes the $X$-characteristic polynomial of a graph $\Gr{G}$, with $X \in \{A, L, Q, \mathcal{L}\}$.
\item  \label{Item 2: Characteristic Polynomial}
The zeros of the $X$-characteristic polynomial of a graph $\Gr{G}$ are the $X$-eigenvalues of $\Gr{G}$.
\item  \label{Item 3: Characteristic Polynomial}
The collection of $X$-eigenvalues of $\Gr{G}$, including multiplicities, is the {\em $X$-spectrum} of $\Gr{G}$.
\end{enumerate}
Let $\Gr{G}$ be a graph on $n$ vertices, and let
\begin{align}
\label{eq2:26.09.23}
& \Eigval{1}{\Gr{G}} \geq \Eigval{2}{\Gr{G}} \geq \ldots \geq \Eigval{n}{\Gr{G}}, \\
\label{eq3:26.09.23}
& \mu_1(\Gr{G}) \leq \mu_2(\Gr{G}) \leq \ldots \leq \mu_n(\Gr{G}), \\
\label{eq4:26.09.23}
& \nu_1(\Gr{G}) \geq \nu_2(\Gr{G}) \geq \ldots \geq \nu_n(\Gr{G}), \\
\label{eq5:26.09.23}
& \delta_1(\Gr{G}) \leq \delta_2(\Gr{G}) \leq \ldots \leq \delta_n(\Gr{G})
\end{align}
be, respectively, the $\{A, L, Q, \mathcal{L}\}$-eigenvalues of $\Gr{G}$ (including multiplicities). Then,
\begin{align}
\label{eq6:11.10.23}
& |\lambda_\ell(\Gr{G})| \leq \lambda_1(\Gr{G}), \quad \forall \, \ell \in \{2, \ldots, n\},\\
\label{eq7:11.10.23}
& \mu_1(\Gr{G}) = 0, \\
\label{eq8:11.10.23}
& \nu_n(\Gr{G}) \geq 0,  \\
\label{eq9:11.10.23}
& \delta_1(\Gr{G}) = 0, \\
\label{eq9b:11.10.23}
& \delta_n(\Gr{G}) \leq 2,
\end{align}
with equality in \eqref{eq9b:11.10.23} if and only if $\Gr{G}$ contains a bipartite component. The multiplicity
of~2 as an eigenvalue of the normalized Laplacian matrix is equal to the number of bipartite components of $\Gr{G}$
(see Item~\ref{Item 3: normalized Laplacian matrix of a graph} of Theorem~\ref{theorem: On the normalized Laplacian matrix of a graph}).

\begin{theorem}[The number of spanning trees]
\label{theorem: number of spanning trees}
The number of spanning trees in a graph $\Gr{G}$ on $n$ vertices is determined by the eigenvalues
of the Laplacian matrix, and it is equal to $\frac{1}{n} \overset{n}{\underset{\ell=2}{\prod}} \, \mu_\ell(\Gr{G})$.
\end{theorem}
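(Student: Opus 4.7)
The plan is to derive the formula via Kirchhoff's Matrix-Tree Theorem, combined with the expansion of the characteristic polynomial of $\LM$ in terms of its principal minors.

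First, I would establish the Matrix-Tree Theorem: the number of spanning trees $\tau(\Gr{G})$ equals every cofactor of the Laplacian, i.e., $\tau(\Gr{G}) = \det(\LM_v)$ for each $v \in \V{\Gr{G}}$, where $\LM_v$ denotes the $(n-1) \times (n-1)$ matrix obtained by deleting the row and column of $\LM$ indexed by $v$. The cleanest route is via the Cauchy-Binet formula applied to a (signed) incidence matrix $\mathbf{B}$ of an arbitrary orientation of $\Gr{G}$: one verifies $\LM = \mathbf{B} \mathbf{B}^{\mathrm{T}}$, so that $\det(\LM_v) = \det(\mathbf{B}_v \mathbf{B}_v^{\mathrm{T}}) = \sum_{S} \det(\mathbf{B}_v[S])^{2}$, where $S$ ranges over $(n-1)$-subsets of $\E{\Gr{G}}$. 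The standard combinatorial fact that $\det(\mathbf{B}_v[S]) \in \{-1,0,+1\}$, with nonzero value if and only if $S$ induces a spanning tree of $\Gr{G}$, then yields $\det(\LM_v) = \tau(\Gr{G})$.

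Next, I would relate this cofactor to the spectrum. Since the $L$-eigenvalues of $\Gr{G}$ are $0 = \mu_1(\Gr{G}) \leq \mu_2(\Gr{G}) \leq \ldots \leq \mu_n(\Gr{G})$ (Items~\ref{Item 1: Laplacian matrix of a graph} and~\ref{Item 2: Laplacian matrix of a graph} of Theorem~\ref{theorem: On the Laplacian matrix of a graph}, together with the fact that $\Gr{G}$ has a single component), the characteristic polynomial factors as
\begin{align*}
f_{L}(x) = \det(x \I{n} - \LM) = x \prod_{\ell=2}^{n} \bigl(x - \mu_\ell(\Gr{G})\bigr),
\end{align*}
so the coefficient of $x^{1}$ equals $(-1)^{n-1} \prod_{\ell=2}^{n} \mu_\ell(\Gr{G})$. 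On the other hand, the general identity for the characteristic polynomial of an $n \times n$ matrix states that the coefficient of $x^{k}$ in $\det(x \I{n} - \LM)$ is $(-1)^{n-k}$ times the sum of the $(n-k) \times (n-k)$ principal minors of $\LM$. Applied with $k=1$, this gives
\begin{align*}
(-1)^{n-1} \prod_{\ell=2}^{n} \mu_\ell(\Gr{G}) = (-1)^{n-1} \sum_{v \in \V{\Gr{G}}} \det(\LM_v).
\end{align*}

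Finally, combining the two steps: each summand $\det(\LM_v)$ equals $\tau(\Gr{G})$ by the Matrix-Tree Theorem, so $\sum_{v} \det(\LM_v) = n \, \tau(\Gr{G})$, and hence $\prod_{\ell=2}^{n} \mu_\ell(\Gr{G}) = n \, \tau(\Gr{G})$, which rearranges to the claimed identity. The main obstacle in this plan is the Matrix-Tree Theorem itself, which is a nontrivial statement; the subsequent translation to eigenvalues is a routine consequence of the principal-minor expansion of the characteristic polynomial.
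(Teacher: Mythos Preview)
Your argument is correct and is the standard proof via the Matrix-Tree Theorem combined with the principal-minor expansion of the characteristic polynomial. The paper, however, does not supply a proof of this statement at all: Theorem~\ref{theorem: number of spanning trees} appears in the preliminaries (Section~\ref{subsubsection: Matrices associated with graphs}) as a known background result, stated without proof and followed only by Remark~\ref{remark: number of spanning trees} discussing its consequences. So there is nothing to compare against; your proposal simply fills in a proof the paper omits.

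One small point worth tightening: you invoke ``the fact that $\Gr{G}$ has a single component'' to write $f_L(x) = x \prod_{\ell=2}^{n}(x-\mu_\ell(\Gr{G}))$, but the theorem as stated does not assume connectedness. This is harmless, since the factorization $f_L(x) = \prod_{\ell=1}^{n}(x-\mu_\ell(\Gr{G}))$ together with $\mu_1(\Gr{G})=0$ already gives that the coefficient of $x^1$ equals $(-1)^{n-1}\prod_{\ell=2}^{n}\mu_\ell(\Gr{G})$ (any term of $e_{n-1}$ containing $\mu_1$ vanishes), and the Matrix-Tree identity $\det(\LM_v)=\tau(\Gr{G})$ holds for disconnected $\Gr{G}$ as well, both sides being zero. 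You can simply drop the connectivity remark.
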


\begin{remark}[The number of spanning trees]
\label{remark: number of spanning trees}
There are no spanning trees in a disconnected graph, which is consistent with
Theorem~\ref{theorem: number of spanning trees} and the fact that if $\Gr{G}$
is disconnected, then the multiplicity of 0 as the smallest eigenvalue
of the Laplacian matrix of $\Gr{G}$ is at least~2 (by Item~\ref{Item 2: Laplacian matrix of a graph}
of Theorem~\ref{theorem: On the Laplacian matrix of a graph}). Moreover, Cayley's
formula, which states that the number of trees on $n$ vertices is equal to $n^{n-2}$
(see, e.g., proofs of this renowned formula in Chapter~33 of \cite{AignerZ18}),
can be derived from Theorem~\ref{theorem: number of spanning trees}.  This is facilitated
by the fact that $\mu_1(\CoG{n}) = 0$ and $\mu_2(\CoG{n}) = \ldots = \mu_n(\CoG{n}) = n$.
\end{remark}

The primary focus in spectral graph theory revolves around examining the spectra of graphs with respect
to their adjacency and Laplacian matrices. Furthermore, \cite{Butler16} and \cite{CvetkovicRS07} provide
surveys on the properties of the spectra of finite graphs with respect to their normalized Laplacian
and signless Laplacian matrices, respectively. An intriguing connection is next given between the
$A$-eigenvalues of a graph's line graph and the $Q$-eigenvalues of the original graph,
as presented in Proposition~1.4.1 of \cite{BrouwerH12}.

\begin{theorem}[$A$-eigenvalues of a line graph]
\label{theorem: A-eigenvalues of a line graph}
Let $\Gr{G}$ be a graph with $n$ vertices and $m$ edges, and let $\ell(\Gr{G})$
be the line graph of $\Gr{G}$. The $A$-eigenvalues of $\ell(\Gr{G})$ satisfy the
following properties:
\begin{enumerate}
\item \label{Item 1: A-eigenvalues of a line graph}
If $m \geq n$, then
$\lambda_i\bigl(\ell(\Gr{G})\bigr) = \nu_i(\Gr{G})-2$ for all $i \in \OneTo{n}$, and
$\lambda_i\bigl(\ell(\Gr{G})\bigr) = -2$ for all $i \in \{n+1, \ldots, m\}$.
\item \label{Item 2: A-eigenvalues of a line graph}
Otherwise, if $n>m$, then it is needed to remove $n-m$ numbers of $(-2)$ from the
numbers listed first to get the $A$-spectrum of $\ell(\Gr{G})$.
\end{enumerate}
\end{theorem}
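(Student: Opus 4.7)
The plan is to relate the $A$-spectrum of $\ell(\Gr{G})$ to the $Q$-spectrum of $\Gr{G}$ through the (unsigned) vertex-edge incidence matrix. Let $\Gr{G}$ have vertex set $\V{\Gr{G}} = \{v_1, \ldots, v_n\}$ and edge set $\E{\Gr{G}} = \{e_1, \ldots, e_m\}$, and define the $n \times m$ matrix $\B = (b_{i,j})$ by $b_{i,j} = 1$ if $v_i$ is an endpoint of $e_j$, and $b_{i,j} = 0$ otherwise. The whole argument rests on two identities that I would verify first by direct entrywise computation.

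The first identity is $\B \B^T = \Q(\Gr{G})$. Indeed, $(\B\B^T)_{i,k} = \sum_{j=1}^m b_{i,j} b_{k,j}$, which counts the number of edges simultaneously incident with $v_i$ and $v_k$. On the diagonal this equals $d_i$, and off-diagonal it equals $1$ if $\{v_i,v_k\} \in \E{\Gr{G}}$ and $0$ otherwise, so $\B\B^T = \D + \A = \Q$. The second identity is $\B^T \B = \A(\ell(\Gr{G})) + 2\,\I{m}$: for edges $e_j, e_k$, the entry $(\B^T \B)_{j,k} = \sum_{i=1}^n b_{i,j} b_{i,k}$ counts the number of common endpoints, which is $2$ when $j=k$, equals $1$ when $e_j$ and $e_k$ share exactly one endpoint (i.e., are adjacent in $\ell(\Gr{G})$), and equals $0$ otherwise; so $(\B^T\B)_{j,k} = \A(\ell(\Gr{G}))_{j,k} + 2\,(\I{m})_{j,k}$.

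Next I would invoke the standard linear-algebra fact that for any real $n \times m$ matrix $\B$, the matrices $\B\B^T$ and $\B^T\B$ share the same nonzero eigenvalues with the same multiplicities; equivalently, if $r = \mathrm{rank}(\B)$, they share $r$ common positive eigenvalues and the remaining eigenvalues of $\B\B^T$ and $\B^T\B$ are zero (with multiplicities $n-r$ and $m-r$, respectively). Applying this to our two identities, and using that both $\Q(\Gr{G})$ and $\B^T\B$ are positive semidefinite, the multiset of eigenvalues of $\A(\ell(\Gr{G})) + 2\,\I{m}$ coincides with the multiset $\{\nu_1(\Gr{G}), \ldots, \nu_n(\Gr{G})\}$ padded or trimmed by zeros to reach size $m$. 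Shifting by $-2$ and using the ordering convention in \eqref{eq4:26.09.23} then gives the claim.

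Concretely, in the case $m \geq n$ the spectrum of $\A(\ell(\Gr{G})) + 2\,\I{m}$ consists of $\nu_1(\Gr{G}) \geq \cdots \geq \nu_n(\Gr{G})$ together with $m-n$ extra zeros, so after subtracting $2$ from each entry we obtain $\lambda_i(\ell(\Gr{G})) = \nu_i(\Gr{G}) - 2$ for $i \in \OneTo{n}$ and $\lambda_i(\ell(\Gr{G})) = -2$ for $i \in \{n+1, \ldots, m\}$. In the case $n > m$ we must have $\mathrm{rank}(\B) \leq m < n$, hence at least $n-m$ of the $\nu_i(\Gr{G})$ vanish; discarding exactly $n-m$ of these zeros (which correspond to $-2$'s after the shift) produces the $m$-element $A$-spectrum of $\ell(\Gr{G})$. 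The only delicate point, and the main thing to be careful about, is the accounting of zero eigenvalues in the rectangular-matrix identity $\mathrm{spec}(\B\B^T)\setminus\{0\} = \mathrm{spec}(\B^T\B)\setminus\{0\}$; once this is tracked properly, both items of the theorem follow immediately.
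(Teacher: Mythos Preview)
Your argument is correct and is precisely the standard proof via the vertex--edge incidence matrix $\B$, the identities $\B\B^{\mathrm{T}}=\Q(\Gr{G})$ and $\B^{\mathrm{T}}\B=\A(\ell(\Gr{G}))+2\,\I{m}$, and the fact that $\B\B^{\mathrm{T}}$ and $\B^{\mathrm{T}}\B$ share their nonzero spectrum. The paper does not give its own proof of this theorem; it simply cites Proposition~1.4.1 of Brouwer--Haemers \cite{BrouwerH12}, whose proof is exactly the one you wrote.
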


\begin{remark}[Size of a graph]
\label{remark: size of a graph}
In contrast to the $A$-spectrum, $L$-spectrum, and $Q$-spectrum of a graph $\Gr{G}$,
the $\mathcal{L}$-spectrum of $\Gr{G}$ does not determine the size of $\Gr{G}$ (see
\eqref{eq:14.09.23-c2}, Item~\ref{Item 3: Laplacian matrix of a graph} in Theorem~\ref{theorem: On the Laplacian matrix of a graph},
and Item~\ref{Item 4: signless Laplacian matrix of a graph} in Theorem~\ref{theorem: On the signless Laplacian matrix of a graph}
for the first three spectra).
\end{remark}

Unless explicitly specified, the spectrum refers to the $A$-spectrum of a graph \eqref{eq2:26.09.23}.

\subsubsection{Vertex- and edge-transitive graphs}
\label{subsubsection: vertex- and edge-transitive graphs}

Vertex- and edge-transitivity, defined as follows, play an important role in characterizing graphs.

\begin{definition}[Automorphism]
\label{definition:automorphism}
An {\em automorphism} of a graph $\Gr{G}$ is an isomorphism from $\Gr{G}$ to itself.
\end{definition}

\begin{definition}[Vertex-transitivity]
\label{definition:vertex-transitive graphs}
A graph $\Gr{G}$ is said to be {\em vertex-transitive} if, for every two vertices $i, j \in \V{\Gr{G}}$,
there is an automorphism $f \colon \V{\Gr{G}} \to \V{\Gr{G}}$ such that $f(i) = j$.
\end{definition}

\begin{definition}[Edge-transitivity]
\label{definition:edge-transitive graphs}
A graph $\Gr{G}$ is {\em edge-transitive} if, for every two edges $e_1, e_2 \in \E{\Gr{G}}$,
there is an automorphism $f \colon \V{\Gr{G}} \to \V{\Gr{G}}$ that maps the endpoints of $e_1$ to
the endpoints of $e_2$.
\end{definition}

\begin{example}
\label{example: vertex- and edge-transitivity}
A vertex-transitive graph is necessarily regular, but the converse is false.
For example, the Frucht graph is a 3-regular graph on 12 vertices that lacks
vertex-transitivity (and it also lacks edge-transitivity) \cite{SageMath}.
In contrast, an edge-transitive graph is not necessarily regular. As an example, consider a
star graph on any number $n \geq 3$ of vertices, which is an irregular and edge-transitive graph.
\end{example}

\subsubsection{Strongly regular graphs}
\label{subsubsection: strongly regular graphs}
Strongly regular graphs form an important subfamily of the class of regular graphs \cite{BrouwerM22}.
Essential definitions and properties of these graphs are next given.

\begin{definition}[Strongly regular graphs]
\label{definition: strongly regular graphs}
A regular graph $\Gr{G}$ that is neither complete nor empty is called a
{\em strongly regular} graph, with parameters $\srg{n}{d}{\lambda}{\mu}$
where $\lambda$ and $\mu$ are nonnegative integers, if the following conditions hold:
\begin{enumerate}
\item \label{Item 1 - definition of SRG}
$\Gr{G}$ is a $d$-regular graph on $n$ vertices.
\item \label{Item 2 - definition of SRG}
Every two adjacent vertices in $\Gr{G}$ have exactly $\lambda$ common neighbors.
\item \label{Item 3 - definition of SRG}
Every two distinct and nonadjacent vertices in $\Gr{G}$ have exactly $\mu$ common neighbors.
\end{enumerate}
\end{definition}

\begin{definition}[Primitive and imprimitive strongly regular graphs]
\label{definition:primitive and imprimitive SRGs}
A strongly regular graph $\Gr{G}$ is called {\em primitive} if $\Gr{G}$ and
its complement $\CGr{G}$ are connected graphs. Otherwise, it is called an
{\em imprimitive} strongly regular graph.
\end{definition}

\begin{theorem}[On the imprimitive strongly regular graphs]
\label{thm: disconnected srg}
Let $\Gr{G}$ be a strongly regular graph with parameters $\srg{n}{d}{\lambda}{\mu}$.
Then, the following conditions are equivalent:
\begin{enumerate}
\item \label{Item 1 - imprimitive SRG}
$\Gr{G}$ is a disconnected graph;
\item \label{Item 2 - imprimitive SRG}
$\mu=0$;
\item \label{Item 3 - imprimitive SRG}
$\lambda = d-1$;
\item \label{Item 4 - imprimitive SRG}
$\Gr{G}$ is isomorphic to $m$ disjoint copies of $\CoG{d+1} \,$ for some $m \geq 2$.
\end{enumerate}
\end{theorem}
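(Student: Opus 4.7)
The plan is to establish the equivalence by proving the cycle of implications $(1) \Rightarrow (2) \Rightarrow (3) \Rightarrow (4) \Rightarrow (1)$, exploiting the standard SRG counting identity together with a ``spreading'' argument built around a fixed vertex and its neighborhood.

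For $(1) \Rightarrow (2)$, I would pick two vertices $u, v$ lying in different components of $\Gr{G}$. They are nonadjacent, and any common neighbor would yield a path between $u$ and $v$, contradicting that they are in distinct components. Hence $u$ and $v$ share no common neighbor, so by Item~\ref{Item 3 - definition of SRG} of Definition~\ref{definition: strongly regular graphs}, $\mu = 0$.

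For $(2) \Rightarrow (3)$, I would invoke the standard SRG identity
\begin{align*}
d(d - \lambda - 1) = (n - d - 1) \, \mu,
\end{align*}
obtained by fixing a vertex $v$ and counting in two ways the pairs $(u,w)$ with $u \in \Ngb{v}$, $w \notin \Ngb{v} \cup \{v\}$, and $u \sim w$. Substituting $\mu = 0$ forces $d(d-\lambda-1) = 0$. Since $\Gr{G}$ is neither complete nor empty, we have $1 \leq d \leq n-2$, so $d \neq 0$, which yields $\lambda = d - 1$.

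For $(3) \Rightarrow (4)$, which is the main content of the theorem, I would fix any vertex $v$ and set $\Ngb{v} = \{u_1, \ldots, u_d\}$. For each $u_i$, the pair $(v, u_i)$ is adjacent, so by $\lambda = d-1$ they share exactly $d-1$ common neighbors. Since the common neighbors of $v$ and $u_i$ lie in $\Ngb{v} \setminus \{u_i\}$, which has precisely $d-1$ elements, every element of $\Ngb{v} \setminus \{u_i\}$ must be adjacent to $u_i$. Therefore $\{v\} \cup \Ngb{v}$ induces a clique $\CoG{d+1}$. As each $u_i$ already has $d$ neighbors inside this clique and total degree $d$, it has no neighbor outside, so $\{v\} \cup \Ngb{v}$ is a whole connected component of $\Gr{G}$. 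Iterating the argument on a vertex not yet covered shows that $\Gr{G}$ is the disjoint union of copies of $\CoG{d+1}$, say $m$ of them. Since $\Gr{G}$ is not complete, $m \geq 2$. Finally, $(4) \Rightarrow (1)$ is immediate since $m \geq 2$ disjoint copies form a disconnected graph.

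The main obstacle is the step $(3) \Rightarrow (4)$, where care is needed to argue that the clique $\{v\} \cup \Ngb{v}$ is closed off from the rest of the graph; this follows cleanly from the degree constraint, but it is the only place where the combinatorial structure of SRGs is genuinely used beyond the counting identity.
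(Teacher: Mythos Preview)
Your proof is correct and follows the standard textbook argument for this classical result. Note, however, that the paper does not actually supply its own proof of this theorem: it is stated in the preliminaries (Section~\ref{subsubsection: strongly regular graphs}) as known background on strongly regular graphs, so there is no in-paper proof to compare against. Your cycle of implications, the use of the counting identity \eqref{eq:relation-srg} for $(2)\Rightarrow(3)$, and the neighborhood-clique argument for $(3)\Rightarrow(4)$ are exactly how this result is typically established.
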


\begin{theorem}[On the parameters of strongly regular graphs]
\label{theorem: parameters of srg}
The four parameters of strongly regular graphs satisfy the following properties:
\begin{enumerate}
\item \label{item: complement of SRG}
The complement of a strongly regular graph with parameters $\SRG(n,d,\lambda,\mu)$
is a strongly regular graph with parameters
$\SRG(n_{\mathrm{c}},d_{\mathrm{c}},\lambda_{\mathrm{c}},\mu_{\mathrm{c}})$ where
\begin{align}
\label{eq: n-complement}
& n_{\mathrm{c}} = n, \\
\label{eq: d-complement}
& d_{\mathrm{c}} = n-d-1, \\
\label{eq: lambda-complement}
& \lambda_{\mathrm{c}} = n-2d+\mu-2, \\
\label{eq: mu-complement}
& \mu_{\mathrm{c}} = n-2d+\lambda.
\end{align}
\item \label{item: relation between parameters of SRG}
The four parameters of a strongly regular graph $\SRG(n,d,\lambda,\mu)$
satisfy the equality
\begin{align}
\label{eq:relation-srg}
(n-d-1) \, \mu = d \, (d-\lambda-1).
\end{align}
\end{enumerate}
\end{theorem}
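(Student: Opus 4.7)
The plan is to prove the two items independently via elementary vertex-by-vertex counting; both amount to inclusion-exclusion or double counting, and no spectral input is required. I will not appeal to any machinery beyond Definition~\ref{definition: strongly regular graphs} and Definition~\ref{definition:complement and s.c. graphs}.

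For Item~\ref{item: complement of SRG}, I will first note that since $\Gr{G}$ is strongly regular, it is by definition neither complete nor empty, so $\CGr{G}$ is also neither empty nor complete and is therefore eligible to be strongly regular. The identities $n_{\mathrm{c}} = n$ and $d_{\mathrm{c}} = n-1-d$ are immediate from the definition of the complement. For $\lambda_{\mathrm{c}}$, I fix two vertices $u,v$ that are adjacent in $\CGr{G}$, hence nonadjacent in $\Gr{G}$, and count the vertices $w \in \V{\Gr{G}} \setminus \{u,v\}$ that are nonadjacent to both $u$ and $v$ in $\Gr{G}$. By inclusion-exclusion applied to the $n-2$ candidate vertices, with $d$ neighbors of $u$ in this set, $d$ neighbors of $v$ in this set (since $v \notin \Ngb{u}$ and $u \notin \Ngb{v}$), and exactly $\mu$ common ones, I obtain $\lambda_{\mathrm{c}} = (n-2) - 2d + \mu$, recovering \eqref{eq: lambda-complement}. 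The derivation of $\mu_{\mathrm{c}}$ is analogous: for $u,v$ nonadjacent in $\CGr{G}$, hence adjacent in $\Gr{G}$, each of $u$ and $v$ has $d-1$ neighbors in $\V{\Gr{G}} \setminus \{u,v\}$ (this time $v \in \Ngb{u}$ and $u \in \Ngb{v}$) and $\lambda$ of them are common, so the number of non-neighbors of both in this set is $(n-2) - 2(d-1) + \lambda = n - 2d + \lambda$, which is \eqref{eq: mu-complement}. Since the counts depend only on whether $u,v$ are adjacent in $\CGr{G}$, the complement is indeed strongly regular.

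For Item~\ref{item: relation between parameters of SRG}, I will double count the edges of $\Gr{G}$ joining $\Ngb{v}$ to $\V{\Gr{G}} \setminus (\Ngb{v} \cup \{v\})$ for an arbitrary fixed vertex $v$. From the $\Ngb{v}$ side, each of the $d$ vertices $u \in \Ngb{v}$ is adjacent to $v$ and has exactly $\lambda$ further neighbors inside $\Ngb{v}$, hence $d - 1 - \lambda$ neighbors outside $\Ngb{v} \cup \{v\}$, producing $d(d-\lambda-1)$ such edges in total. From the other side, each of the $n-d-1$ vertices $w \in \V{\Gr{G}} \setminus (\Ngb{v} \cup \{v\})$ is nonadjacent to $v$ and hence shares exactly $\mu$ common neighbors with $v$, all lying in $\Ngb{v}$, producing $(n-d-1)\mu$ such edges. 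Equating the two counts yields \eqref{eq:relation-srg}.

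Neither step presents a substantive obstacle; both are bookkeeping. The only place requiring care is in part~(1), where one must not conflate the cases $u \in \Ngb{v}$ versus $u \notin \Ngb{v}$: the neighbor counts used in the inclusion-exclusion are $d$ or $d-1$ depending on which case applies, and mixing these is the natural source of off-by-one slips.
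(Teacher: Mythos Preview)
Your proof is correct; both inclusion-exclusion counts in Item~\ref{item: complement of SRG} and the double-counting argument in Item~\ref{item: relation between parameters of SRG} are carried out cleanly, and you correctly handle the off-by-one distinction between the $d$ and $d-1$ neighbor counts. The paper presents this theorem as standard background in the preliminaries and does not supply its own proof, so there is nothing to compare against; your argument is the textbook one (as in, e.g., Godsil--Royle or Brouwer--Haemers).
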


\begin{theorem}[The eigenvalues of strongly regular graphs]
\label{theorem: eigenvalues of srg}
The following spectral properties are satisfied by the family of strongly regular graphs:
\begin{enumerate}
\item \label{Item 1: eigenvalues of srg}
A strongly regular graph has three distinct eigenvalues.
\item \label{Item 2: eigenvalues of srg}
Let $\Gr{G}$ be a connected strongly regular graph, and let its parameters be $\SRG(n,d,\lambda,\mu)$.
Then, the largest eigenvalue of its adjacency matrix is $\Eigval{1}{\Gr{G}} = d$ with multiplicity~1,
and the other two distinct eigenvalues of its adjacency matrix are given by
\begin{align}
\label{eigs-SRG}
p_{1,2} = \tfrac12 \, \Biggl( \lambda - \mu \pm \sqrt{ (\lambda-\mu)^2 + 4(d-\mu) } \, \Biggr),
\end{align}
with the respective multiplicities
\begin{align}
\label{eig-multiplicities-SRG}
m_{1,2} = \tfrac12 \, \Biggl( n-1 \mp \frac{2d+(n-1)(\lambda-\mu)}{\sqrt{(\lambda-\mu)^2+4(d-\mu)}} \, \Biggr).
\end{align}
\item \label{Item 3: eigenvalues of srg}
A connected regular graph with exactly three distinct eigenvalues is strongly regular.
\end{enumerate}
\end{theorem}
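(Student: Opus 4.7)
The plan is to work throughout from the fundamental matrix identity
\begin{align}
\label{eq: SRG matrix identity plan}
\A^2 = d \, \I{n} + \lambda \A + \mu \, (\J{n} - \I{n} - \A),
\end{align}
which I would establish first by observing that the $(i,j)$-entry of $\A^2$ equals the number of walks of length~$2$ between $i$ and $j$, which in turn equals the number of common neighbors of $i$ and $j$; the three cases $i=j$, $i \sim j$, and $\{i \neq j,\, i \not\sim j\}$ give the three coefficients in \eqref{eq: SRG matrix identity plan} directly from Definition~\ref{definition: strongly regular graphs}. I would also note at the outset that, by $d$-regularity, $\A \, \mathbf{1} = d \, \mathbf{1}$, and since $\Gr{G}$ is connected, the Perron--Frobenius theorem implies that $d$ is a simple eigenvalue of $\A$ and hence $\Eigval{1}{\Gr{G}} = d$ with multiplicity~$1$.

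For items~\ref{Item 1: eigenvalues of srg} and~\ref{Item 2: eigenvalues of srg}, I would take any eigenvector $\mathbf{v}$ of $\A$ orthogonal to $\mathbf{1}$ (the remaining eigenspaces are orthogonal to the Perron eigenspace since $\A$ is symmetric). Then $\J{n} \mathbf{v} = 0$, so applying \eqref{eq: SRG matrix identity plan} to $\mathbf{v}$ and using $\A \mathbf{v} = p \mathbf{v}$ yields the quadratic $p^{2} - (\lambda - \mu)\, p - (d-\mu) = 0$, whose two roots are exactly $p_{1,2}$ in \eqref{eigs-SRG}. Thus $\A$ has at most three distinct eigenvalues; the quick check that $p_{1,2} < d$ (from Perron) together with the observation that $p_{1} \neq p_{2}$ outside the degenerate situation $\lambda = \mu$, $d = \mu$ (which, combined with \eqref{eq:relation-srg}, would force $\Gr{G}$ to be complete or empty, contradicting Definition~\ref{definition: strongly regular graphs}) confirms that there are exactly three. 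The multiplicities $m_{1}, m_{2}$ in \eqref{eig-multiplicities-SRG} are then obtained by solving the $2 \times 2$ linear system
\begin{align}
1 + m_{1} + m_{2} = n, \qquad d + m_{1} p_{1} + m_{2} p_{2} = \tr(\A) = 0,
\end{align}
where the second equation uses \eqref{eq:14.09.23-c1}.

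For the converse in item~\ref{Item 3: eigenvalues of srg}, I would reverse the argument via the minimal polynomial. Let $\Gr{G}$ be a connected $d$-regular graph whose adjacency matrix $\A$ has exactly three distinct eigenvalues $d, p_{1}, p_{2}$. The minimal polynomial of $\A$ is then $(x-d)(x-p_{1})(x-p_{2})$. Since the eigenspace of $d$ is spanned by $\mathbf{1}$, the operator $(\A - p_{1} \I{n})(\A - p_{2} \I{n})$ annihilates the orthogonal complement of $\mathbf{1}$ and acts as multiplication by $(d-p_{1})(d-p_{2})$ on $\mathbf{1}$; hence it is a scalar multiple of $\J{n}$, say
\begin{align}
(\A - p_{1} \I{n})(\A - p_{2} \I{n}) = c \, \J{n}.
\end{align}
Expanding and reading off the $(i,i)$, $(i,j)$ for $i \sim j$, and $(i,j)$ for $i \neq j,\, i \not\sim j$ entries, I would deduce that the number of common neighbors depends only on whether the two vertices coincide, are adjacent, or are distinct and nonadjacent, giving the three SRG parameters
\begin{align}
\lambda = p_{1} + p_{2} + d + p_{1} p_{2}, \qquad \mu = d + p_{1} p_{2},
\end{align}
so $\Gr{G}$ is strongly regular.

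The main technical obstacle I anticipate is the careful separation of cases in item~\ref{Item 1: eigenvalues of srg}: one must verify that the two non-Perron eigenvalues are genuinely distinct and not equal to $d$, and this requires ruling out the degenerate parameter settings using \eqref{eq:relation-srg} together with the standing assumption in Definition~\ref{definition: strongly regular graphs} that a strongly regular graph is neither complete nor empty. The rest is linear algebra and bookkeeping with the trace and dimension identities.
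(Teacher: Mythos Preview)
The paper does not supply its own proof of this theorem; it is stated in the preliminaries as a standard result (implicitly drawn from the cited textbooks such as \cite{BrouwerH12,GodsilR}). Your plan is precisely the classical argument: establish the identity $\A^2 = d\I{n} + \lambda \A + \mu(\J{n}-\I{n}-\A)$ by counting length-$2$ walks, apply it to eigenvectors orthogonal to $\mathbf{1}$ to obtain the quadratic for $p_{1,2}$, solve the trace/dimension system for $m_{1,2}$, and run the minimal-polynomial argument in reverse for Item~\ref{Item 3: eigenvalues of srg}. This is correct and is exactly what any of the cited references do.

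One small caveat worth flagging: Item~\ref{Item 1: eigenvalues of srg} is stated without a connectedness hypothesis, but a disconnected strongly regular graph is, by Theorem~\ref{thm: disconnected srg}, a disjoint union $m\CoG{d+1}$ with $m\geq 2$, which has only the two distinct eigenvalues $d$ and $-1$. Your argument (via Perron--Frobenius) correctly handles the connected case; the disconnected exception is a slight looseness in the theorem's phrasing rather than a gap in your proof, and it does not affect Items~\ref{Item 2: eigenvalues of srg} and~\ref{Item 3: eigenvalues of srg}, which explicitly assume connectedness.
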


\begin{definition}[Conference graphs]
\label{definition: conference graphs}
A conference graph on $n$ vertices is a strongly regular graph with the parameters
$\srg{n}{\tfrac12(n-1)}{\tfrac14(n-5)}{\tfrac14(n-1)}$ (with $n \geq 5$).
\end{definition}
By Item~\ref{item: complement of SRG} of Theorem~\ref{theorem: parameters of srg}, if $\Gr{G}$ is a
conference graph on $n$ vertices, then so is its complement $\CGr{G}$; it is, however, not necessarily
self-complementary. By Theorem~\ref{theorem: eigenvalues of srg}, the distinct eigenvalues of
the adjacency matrix of $\Gr{G}$ are given by $\tfrac12 (n-1)$, $\tfrac12 (\hspace*{-0.1cm}
\sqrt{n}-1)$, and $-\tfrac12 (\hspace*{-0.1cm} \sqrt{n}+1)$ with multiplicities
$1, \tfrac12 (n-1)$, and $\tfrac12 (n-1)$, respectively. A conference graph is a primitive
strongly regular graph.

\subsection{The Shannon Capacity of Graphs}
\label{subsection: the Shannon capacity of a graph}

The concept of the Shannon capacity of a graph $\Gr{G}$ was introduced in \cite{Shannon56}
to consider the largest information rate that can be achieved with zero-error communication.
A discrete memoryless channel consists of a finite {\em input set} $\set{X}$, a
finite or a countably infinite {\em output set} $\set{Y}$, and a nonempty {\em fan-out set}
$\set{S}_x \subseteq \set{Y}$, for every input $x \in \set{X}$. The set $\set{S}_x$
is the set of all possible output symbols that can be received at the channel output
with positive probability, in each channel use, if the transmitted symbol is $x \in \set{X}$.
The study of the maximum amount (rate) of information that a channel can communicate without
error is of great interest, and it turns to be a problem in graph theory. To that end, the
channel is represented by a {\em confusion graph} $\Gr{G}$ whose set of vertices $\V{\Gr{G}}$
represents the elements of the input set $\set{X}$, and its set of edges $\E{\Gr{G}}$ is
constructed such that any two distinct vertices in $\Gr{G}$ are adjacent if and only if the
two input symbols are not distinguishable by the channel, so
\begin{eqnarray}
\label{eq5:11.10.23}
\V{\Gr{G}} = \set{X}, \quad
\E{\Gr{G}} = \bigl\{ \{x, x'\}: \; x, x' \in \set{X}, \; x \neq x', \; \set{S}_x \cap
\set{S}_{x'} \neq \varnothing \bigr\}.
\end{eqnarray}
(Two distinct input symbols are not distinguishable by the channel if and only if they can result
in the same output symbol with some positive probability).
The largest number of input symbols that a channel can communicate without error in a single use
is $\indnum{\Gr{G}}$ (the independence number of the graph $\Gr{G}$). It is obtained as follows:
The sender and the receiver agree in advance on an independent set $\set{I}$ of a
maximum size $\indnum{\Gr{G}}$, the sender transmits only input symbols in $\set{I}$, and
every received output is in the fan-out set of exactly one input symbol in $\set{I}$,
so the receiver can correctly determine the transmitted input symbol.
Before we proceed, it is required to define the notion of a {\em strong product} of graphs.
\begin{definition}[Strong product of graphs]
\label{def:strong product of graphs}
Let $\Gr{G}$ and $\Gr{H}$ be graphs.
The strong product $\Gr{G} \boxtimes \Gr{H}$ is a graph with
a vertex set $\V{\Gr{G} \boxtimes \Gr{H}} = \V{\Gr{G}} \times \V{\Gr{H}}$ (the Cartesian product),
and distinct vertices $(g, h)$ and $(g', h')$ are adjacent in $\Gr{G} \boxtimes \Gr{H}$ if
and only if one of the following three conditions hold: \newline
(1) $g = g'$ and $\{h, h'\} \in \E{\Gr{H}}$, \quad (2) $\{g, g'\} \in \E{\Gr{G}}$ and $h = h'$,
\quad (3) $\{g, g'\} \in \E{\Gr{G}}$ and $\{h, h'\} \in \E{\Gr{H}}$.
\newline
Strong products are therefore commutative and associative up to graph isomorphisms.
\end{definition}

Consider the transmission of $k$-length strings over a channel that is
used $k \geq 1$ times. The sender transmits a sequence $x_1 \ldots x_k$, and the receiver
gets a sequence $y_1 \ldots y_k$, where $y_i \in \set{S}_{x_i}$ for all $i \in \OneTo{k}$.
The $k$ channel uses are viewed as a single use of an extended channel whose input set
is $\set{X}^k$, its output set is $\set{Y}^k$, and the fan-out set of
$(x_1, \ldots, x_k) \in \set{X}^k$ is the Cartesian product $\set{S}_{x_1} \times \ldots \times \set{S}_{x_k}$.
The {\em $k$-th confusion graph}, which refers to the confusion graph of the extended channel, is the {\em $k$-fold
strong power} of $\Gr{G}$ that is given by $\Gr{G}^{\boxtimes \, k} \eqdef \Gr{G} \boxtimes \ldots \boxtimes \Gr{G}$.
The largest number of $k$-length input strings that are distinguishable by the channel
is then equal to $\indnum{\Gr{G}^{\boxtimes \, k}}$. Using distinguishable strings asserts
error-free communication, so the largest achievable {\em information rate per symbol} is given by
\begin{eqnarray}
\label{eq2:11.10.23}
\frac1k \, \log \indnum{\Gr{G}^{\boxtimes \, k}} = \log \sqrt[k]{\indnum{\Gr{G}^{\boxtimes \, k}}}, \quad k \in \naturals.
\end{eqnarray}
Under the assumption that the length $k$ of the input strings can be made arbitrarily large,
the largest information rate per symbol is given by the supremum of
the right-hand side of \eqref{eq2:11.10.23} over all $k \in \naturals$. The resulting amount is defined to be the
(logarithm of the) {\em Shannon capacity of the graph $\Gr{G}$} or also the {\em zero-error Shannon capacity of
the channel}. By supremizing the exponent of the right-hand side of \eqref{eq2:11.10.23} over all
$k \in \naturals$, the Shannon capacity of a graph $\Gr{G}$ is given by
\begin{align}
\label{eq1:graph capacity}
\Theta(\Gr{G}) &\eqdef \underset{k \in \naturals}{\sup}
\sqrt[k]{\indnum{\Gr{G}^{\boxtimes \, k}}} \\
\label{eq2:graph capacity}
&= \underset{k \to \infty}{\lim} \sqrt[k]{\indnum{\Gr{G}^{\boxtimes \, k}}},
\end{align}
where \eqref{eq2:graph capacity} holds by Fekete's lemma. Indeed, since
$\indnum{\Gr{G} \boxtimes \Gr{H}} \geq \indnum{\Gr{G}} \, \indnum{\Gr{H}}$ holds for every two
graphs $\Gr{G}$ and $\Gr{H}$, it follows that the inequality $\indnum{\Gr{G}^{\boxtimes \, (k_1 + k_2)}}
\geq \indnum{\Gr{G}^{\boxtimes \, k_1}} \; \indnum{\Gr{G}^{\boxtimes \, k_2}}$ holds for every
graph $\Gr{G}$ and for all $k_1, k_2 \in \naturals$.
This validates the application of Fekete's lemma in \eqref{eq2:graph capacity}.
The supremum on the right-hand side of \eqref{eq1:graph capacity} is not necessarily achieved
by a finite $k \in \naturals$ \cite{GuoW90,XuR13}, and the convergence to the Shannon capacity
on the right-hand side of \eqref{eq2:graph capacity} exhibits erratic behavior \cite{AlonL06}.
Moreover, several notions of computability of the Shannon capacity of a graph and the
zero-error capacity of discrete memoryless channels, known for their computational difficulty,
have been addressed in \cite{AlonL06,BocheD20,BocheD21}, yet certain aspects of their
computability remain unresolved. Except for some specific graphs, the Shannon capacity of a graph
remains undetermined, even for graphs with a very simple structure. For instance, the Shannon
capacity of the cycle graph $\CG{n}$ or its complement, with odd $n \geq 7$, remains unresolved,
with only bounds and an asymptotic limit theorem currently available
\cite{BohmanH03,Bohman03a,Bohman05b,BohmanHN09,BohmanHN13,Hales73,Lovasz79_IT,PolakS19,SonnemannK74,Zhu24}
(for other graph analogues of odd-cycle graphs, see \cite{HellR82}).
Specifically, the Shannon capacity of the 7-cycle graph $\CG{7}$ is bounded as follows \cite{Lovasz79_IT,PolakS19}:
\begin{align}
3.2578 < \sqrt[5]{367} \leq \Theta(\CG{7}) \leq \dfrac{7}{1+ \sec \frac{\pi}{7}} < 3.3177.
\end{align}
The Shannon capacity of disjoint unions, strong products of graphs, and the complement of the categorical
(tensor) product of graphs is studied in \cite{Alon98,KeevashL16,Schrijver23,Shannon56,Simonyi21,WigdersonZ23}.

\subsection{Orthonormal Representations of Graphs}
\label{subsection: Orthonormal Representations of Graphs}
In \cite{Lovasz79_IT}, Lov\'{a}sz introduced the concept of orthonormal representations of graphs
to derive an upper bound on the Shannon capacity of graphs in information theory.
Orthonormal representations of graphs are related to several fundamental graph properties.
That concept is defined and exemplified as follows.

\begin{definition}[Orthogonal and orthonormal representations of a graph]
\label{def: orthogonal representation}
Let $\Gr{G}$ be a finite, simple, and undirected graph, and let $d \in \naturals$.
\begin{itemize}
\item
An {\em orthogonal representation} of the graph $\Gr{G}$ in the $d$-dimensional
Euclidean space $\Reals^d$ assigns to each vertex $i \in \V{\Gr{G}}$
a nonzero vector ${\bf{u}}_i \in \Reals^d$ such that
${\bf{u}}_i^{\mathrm{T}} {\bf{u}}_j = 0$ for every $\{i, j\} \notin \E{\Gr{G}}$
with $i \neq j$. In other words, for every two distinct and nonadjacent vertices in the graph,
their assigned nonzero vectors should be orthogonal in $\Reals^d$.
\item
An {\em orthonormal representation} of $\Gr{G}$ is additionally represented by unit vectors,
i.e., $\| {\bf{u}}_i \| = 1$ for all $i \in \V{\Gr{G}}$.
\item
In an orthogonal (orthonormal) representation of $\Gr{G}$, nonadjacent vertices in $\Gr{G}$
are mapped into orthogonal (orthonormal) vectors, but adjacent vertices may not necessarily
be mapped into nonorthogonal vectors. If ${\bf{u}}_i^{\mathrm{T}} {\bf{u}}_j \neq 0$ for all
$\{i, j\} \in \E{\Gr{G}}$, then such a representation of $\Gr{G}$ is called {\em faithful}.
\end{itemize}
\end{definition}

Every graph $\Gr{G}$ on $n$ vertices can be trivially represented by an orthonormal representation in $\Reals^n$,
by assigning each vertex $i \in \OneTo{n}$ to the standard unit-vector ${\bf{e}}_i$ (with a single $\tt{1}$ at the $i$-th
coordinate, and zeros elsewhere). That orthonormal representation is not faithful, unless $\Gr{G}$ is an empty graph.
In order to obtain an $n$-dimensional faithful orthogonal representation, let $\LM = (L_{i,j})$ be the Laplacian matrix of
$\Gr{G}$, which is a positive semidefinite matrix, and let ${\bf{u}}_i$ ($i \in \OneTo{n}$) be the $i$-th column
of the matrix $\LM^{1/2}$. Then, ${\bf{u}}_i^{\mathrm{T}} {\bf{u}}_j = L_{i,j}$ for all $i,j \in \OneTo{n}$, which implies
that $\{{\bf{u}}_i\}$ is a faithful orthogonal representation of~$\Gr{G}$. This idea extends to the signless Laplacian
matrix $\Q$ and the normalized Laplacian matrix ${\bf{\mathcal{L}}}$ since these are positive semidefinite matrices
by Theorems~\ref{theorem: On the Laplacian matrix of a graph}--\ref{theorem: On the normalized Laplacian matrix of a graph}).

\begin{example}[\hspace*{-0.15cm} \cite{Lovasz19}, Example~10.5]
\label{example: orthonormal representation} In an orthogonal representation of a graph $\Gr{G}$,
every vertex in a clique of $\Gr{G}$ can be represented by an identical vector.
Let $k \eqdef \chrnum{\CGr{G}}$ be the chromatic number of the complement graph $\CGr{G}$.
Then, there is a family of $k$ disjoint cliques in $\Gr{G}$, denoted by $\set{B}_1, \ldots, \set{B}_k$, covering the
vertex set $\V{\Gr{G}}$. Mapping all the vertices in the subset $\set{B}_i$, for $i \in \OneTo{k}$, into
${\bf{e}}_i \in \Reals^k$ gives an orthonormal representation of $\Gr{G}$ in the $k$-dimensional Euclidean space.
That orthogonal representation is not guaranteed to be faithful. Indeed, unless $\Gr{G}$ is a disjoint union of complete
graphs, it is possible for two adjacent vertices in $\Gr{G}$ to belong to different cliques; however, their corresponding
vectors are orthogonal.
\end{example}

The construction of low-dimensional orthogonal representations of graphs, particularly those with
unique properties, is a significant area of study in graph theory. Detailed discussions on the minimum
dimension of orthogonal representations of general graphs, especially those lacking specific subgraphs,
can be found in \cite{LovaszSS89, Balla23, BallaLS20, Lovasz79_IT} and Chapter 10 of \cite{Lovasz19}.
Some of the bounds on the minimum dimension of such representations
involve the Lov\'{a}sz $\vartheta$-function, which is presented in Section~\ref{subsection: Lovasz theta-function}
(see Item~\ref{item: min. dimension} of that section).
The minimum dimension of orthonormal representations of a graph $\Gr{G}$ on $n$ vertices is equal to the
minimum rank over all positive semidefinite matrices ${\bf{M}} = (M_{i,j})$ such that $M_{i,i}=1$ for all
$i \in \OneTo{n}$, and $M_{i,j} = 0$ for all $\{i,j\} \not\in \E{\Gr{G}}$ \cite{BallaLS20}.
Here, ${\bf{M}}$ serves as the {\em Gram matrix} of the representing vectors ${\bf{u}}_1, \ldots, {\bf{u}}_n$,
with $M_{i,j} \eqdef {\bf{u}}_i^{\mathrm{T}} {\bf{u}}_j$ for all $i,j \in \OneTo{n}$.
Consequently, the minimum dimension among all orthonormal representations of a graph $\Gr{G}$
is also termed the {\em minimum semidefinite rank} of $\Gr{G}$, and it is denoted by $\mathrm{msr}(\Gr{G})$.
A notable result by Lov\'{a}sz {\em et al.} \cite{LovaszSS89} demonstrates a relationship between graph
connectivity and the minimum dimension $d$ required for an orthogonal representation of a graph
to ensure that any $d$ of the representing vectors are linearly independent in $\Reals^d$.
Theorem~1 of \cite{LovaszSS89} states that a connected, undirected, and simple graph $\Gr{G}$ on
$n$ vertices has such an orthogonal representation in $\Reals^d$ if and only if $\Gr{G}$ is $(n-d)$-connected.
The latter result is also shown in Section 10.3 of \cite{Lovasz19}.

\subsection{Vector and Strict Vector Chromatic Numbers of Graphs}
\label{subsection: Vector chromatic number}
The vector and strict vector colorings of graphs, and their associated chromatic numbers, were
defined by Karger {\em et al.} \cite{KargerMS98}. These notions are introduced as follows, and
Section~\ref{subsection: Lovasz theta-function} addresses in part their relations to the Lov\'{a}sz
$\vartheta$-function and its variants, as well as their relations to other graph invariants.
\begin{definition}[Vector chromatic number]
\label{definition: vector chromatic number}
Let $\Gr{G}$ be a nonempty graph on $n$ vertices, and let $t \geq 2$ be a real number. A {\em vector $t$-coloring}
of $\Gr{G}$ in $\Reals^d$, with $d \in \naturals$, is an assignment of a unit vector ${\bf{u}}_i \in \Reals^d$
to each vertex $i \in \V{\Gr{G}}$ such that, for every two adjacent vertices $\{i, j\} \in \E{\Gr{G}}$,
\begin{align}
\label{eq: vector t-coloring}
{\bf{u}}_i^{\mathrm{T}} \, {\bf{u}}_j \leq -\frac{1}{t-1} \, .
\end{align}
The {\em vector chromatic number} of a nonempty graph $\Gr{G}$, denoted by $\vchrnum{\Gr{G}}$, is the smallest
real number $t \geq 2$ for which a vector $t$-coloring of $\Gr{G}$ exists in $\Reals^n$ (namely, the vector
$t$-coloring of $\Gr{G}$ is assumed here to be of dimension $n$). The vector chromatic number
of an empty graph is defined to be equal to~1.
\end{definition}
Let $\Gr{G}$ be a graph on $n$ vertices, and let ${\bf{A}} = (A_{i,j})$ be its $n \times n$
adjacency matrix. For a real symmetric $n \times n$ matrix ${\bf{M}}$, the standard
notation ${\bf{M}} \succeq 0$ means that the matrix ${\bf{M}}$ is positive semidefinite.
By Definition~\ref{definition: vector chromatic number}, and by setting the $n \times n$
matrix ${\bf{M}} = (M_{i,j})$ with $M_{i,j} \eqdef (t-1) {\bf{u}}_i^{\mathrm{T}} \, {\bf{u}}_j$
for all $i, j \in \OneTo{n}$,
the vector chromatic number of $\Gr{G}$ can be expressed to be equal to the value of the
following semidefinite programming (SDP) problem:
\vspace*{0.1cm}
\begin{eqnarray}
\label{eq: SDP problem - vector chromatic number}
\mbox{\fbox{$
\begin{array}{l}
\text{minimize} \; \; t  \\
\text{subject to} \\
\begin{cases}
{\bf{M}} \succeq 0, \\
M_{i,i} = t-1, \; \forall \, i \in \OneTo{n}, \\
A_{i,j} = 1 \; \Rightarrow \;  M_{i,j} \leq -1, \quad i,j \in \OneTo{n}.
\end{cases}
\end{array}$}}
\end{eqnarray}
Let ${\bf{J}}_n$ be the all-ones $n \times n$ matrix.
The dual of the SDP problem \eqref{eq: SDP problem - vector chromatic number} is given by
\vspace*{0.1cm}
\begin{eqnarray}
\label{eq: dual SDP problem - vector chromatic number}
\mbox{\fbox{$
\begin{array}{l}
\text{maximize} \; \; \mathrm{Tr}({\bf{B}} \, {\bf{J}}_n)  \\
\text{subject to} \\
\begin{cases}
{\bf{B}} \succeq 0, \\
\mathrm{Tr}({\bf{B}}) = 1, \\
B_{i,j} \geq 0, \; \; \forall \; i, j \in \OneTo{n}, \\
A_{i,j} = 0, \; i \neq j  \; \Rightarrow \;  B_{i,j} = 0, \quad i,j \in \OneTo{n}.
\end{cases}
\end{array}$}}
\end{eqnarray}
Strong duality holds here since the SDP problems are feasible and bounded, so the
solutions of the dual SDP problems in \eqref{eq: SDP problem - vector chromatic number}
and \eqref{eq: dual SDP problem - vector chromatic number} coincide. Applying the SDP
problem in \eqref{eq: dual SDP problem - vector chromatic number} to the complement
$\CGr{G}$ gives the {\em Schrijver $\vartheta$-function} of the graph $\Gr{G}$, which is denoted
by $\vartheta'(\Gr{G})$. In light of \eqref{eq: dual SDP problem - vector chromatic number},
with $\Gr{G}$ replaced by the complement $\CGr{G}$, the value of $\vartheta'(\Gr{G})$
is obtained by solving the SDP problem (see equation~(23) in \cite{Schrijver79}):
\begin{eqnarray}
\label{eq: SDP problem - Schrijver's theta-function}
\mbox{\fbox{$
\begin{array}{l}
\text{maximize} \; \; \mathrm{Tr}({\bf{B}} \, {\bf{J}}_n)  \\
\text{subject to} \\
\begin{cases}
{\bf{B}} \succeq 0, \\
\mathrm{Tr}({\bf{B}}) = 1, \\
B_{i,j} \geq 0, \; \; \forall \; i, j \in \OneTo{n}, \\
A_{i,j} = 1 \; \Rightarrow \;  B_{i,j} = 0, \quad i,j \in \OneTo{n}.
\end{cases}
\end{array}$}}
\end{eqnarray}
The vector chromatic number and Schrijver's $\vartheta$-function therefore satisfy the equality
\begin{align}
\label{eq1:17.11.23}
\vartheta'(\Gr{G}) = \vchrnum{\CGr{G}}
\end{align}
for every graph $\Gr{G}$.
By \eqref{eq: SDP problem - Schrijver's theta-function}, it follows that
\begin{align}
\label{eq2:17.11.23}
\vartheta'(\Gr{G}) \geq \indnum{\Gr{G}},
\end{align}
which holds by selecting a feasible solution in \eqref{eq: SDP problem - Schrijver's theta-function}
as follows. Let $\mathcal{I}$ be a largest independent set in $\Gr{G}$, and let
$\mathcal{I} = \{i_1, \ldots, i_\ell\} \subseteq \OneTo{n}$ with $\ell = \indnum{\Gr{G}}$.
Define ${\bf{B}}$ to be the $n \times n$ symmetric matrix whose elements are given by
$B_{i,j} \eqdef \frac{1}{\indnum{\Gr{G}}}$ whenever $i,j \in \mathcal{I}$, and $B_{i,j} \eqdef 0$ otherwise.
Then, ${\bf{B}}$ is indeed a positive semidefinite matrix whose trace is equal to~1, and the objective
function in \eqref{eq: SDP problem - Schrijver's theta-function} is then equal to $\indnum{\Gr{G}}$. Due to
the maximization in \eqref{eq: SDP problem - Schrijver's theta-function}, inequality \eqref{eq2:17.11.23} holds.

Schrijver's $\vartheta$-function of a graph was studied independently by McEliece {\em et al.}
\cite{McElieceRR78} (with a different notation, where $\vartheta'(\Gr{G})$ in \cite{Schrijver79}
is replaced by $\alpha_{\mathrm{L}}(\Gr{G})$ in \cite{McElieceRR78}). In a subsequent study by
Szegedy \cite{Szegedy94}, $\vartheta'(\Gr{G})$ is denoted by $\vartheta_{1/2}(\Gr{G})$
(an additional variant, denoted by $\vartheta_2(\Gr{G})$, was also introduced in \cite{Szegedy94}). To that end,
the SDP problem in \eqref{eq: SDP problem - Schrijver's theta-function} was modified by replacing
the last two lines in \eqref{eq: SDP problem - Schrijver's theta-function} with the requirement that
$B_{i,j} \leq 0$ for all $i,j \in \OneTo{n}$ such that $A_{i,j} = 1$ (see Eq.~(8) in \cite{Szegedy94}).
The Schrijver $\vartheta$-function of a graph $\Gr{G}$, $\vartheta'(\Gr{G})$, constitutes a
variant of the Lov\'{a}sz $\vartheta$-function $\vartheta(\Gr{G})$, which is subsequently introduced in
Section~\ref{subsection: Lovasz theta-function}.

Spectral characterizations of the vector chromatic number of a graph, which is equal to
the Schrijver $\vartheta$-function of the graph complement by \eqref{eq1:17.11.23}, were studied in \cite{Bilu06,Galtman00},
and more recently in \cite{GodsilRRSV2020,WocjanEA23}. We next define the strict vector chromatic
number of a graph, which is also shown in Section~\ref{subsection: Lovasz theta-function} to be
related to the Lov\'{a}sz $\vartheta$-function in a similar way to \eqref{eq1:17.11.23}
(see \eqref{eq: svchrnum vs. Lovasz theta-function}).
\begin{definition}[Strict vector chromatic number]
\label{definition: strict vector chromatic number}
Let $\Gr{G}$ be a nonempty graph on $n$ vertices, and let $t \geq 2$ be a real number. A {\em strict vector $t$-coloring}
of $\Gr{G}$ is an assignment of a unit vector ${\bf{u}}_i \in \Reals^n$ to each vertex $i \in \V{\Gr{G}}$
such that, for every two adjacent vertices $\{i,j\} \in \E{\Gr{G}}$, the condition in \eqref{eq: vector t-coloring}
holds with equality.
The {\em strict vector chromatic number} of a nonempty graph $\Gr{G}$, denoted by $\svchrnum{\Gr{G}}$, is the
smallest real number $t \geq 2$ for which a strict vector $t$-coloring of $\Gr{G}$ exists. The strict vector
chromatic number of an empty graph is defined to be equal to~1.
\end{definition}
Clearly, by Definitions~\ref{definition: vector chromatic number} and~\ref{definition: strict vector chromatic number}, the inequality
\begin{align}
\label{eq1:05.11.23}
\vchrnum{\Gr{G}} \leq \svchrnum{\Gr{G}}
\end{align}
holds for every graph $\Gr{G}$.
By Definition~\ref{definition: strict vector chromatic number}, the strict vector chromatic number is
expressed as a solution of the SDP problem that is almost similar to \eqref{eq: SDP problem - vector chromatic number};
the only difference is that the last inequality constraints in \eqref{eq: SDP problem - vector chromatic number}
turn into equality constraints. Then, similarly to the transition from the SDP problem
in \eqref{eq: SDP problem - vector chromatic number} to the dual SDP problem in \eqref{eq: dual SDP problem - vector chromatic number},
strong duality gives that the strict vector chromatic number $\svchrnum{\Gr{G}}$ can be obtained by solving the following SDP problem:
\begin{eqnarray}
\label{eq: dual SDP problem - strict vector chromatic number}
\mbox{\fbox{$
\begin{array}{l}
\text{maximize} \; \; \mathrm{Tr}({\bf{B}} \, {\bf{J}}_n)  \\
\text{subject to} \\
\begin{cases}
{\bf{B}} \succeq 0, \\
\mathrm{Tr}({\bf{B}}) = 1, \\
A_{i,j} = 0, \; i \neq j  \; \Rightarrow \;  B_{i,j} = 0, \quad i,j \in \OneTo{n}.
\end{cases}
\end{array}$}}
\end{eqnarray}
In comparison to \eqref{eq: dual SDP problem - vector chromatic number}, the
condition that all entries of ${\bf{B}}$ are nonnegative is
absent in \eqref{eq: dual SDP problem - strict vector chromatic number}.

\subsection{The Lov\'{a}sz $\vartheta$-Function of Graphs}
\label{subsection: Lovasz theta-function}
The Lov\'{a}sz $\vartheta$-function of a graph, as introduced in \cite{Lovasz79_IT}, exhibits intriguing
connections to diverse graph parameters such as the independence number, clique number,
chromatic number, and Shannon capacity of the graph. Its efficient computability renders
it a potent tool in information theory, graph theory, and combinatorial optimization.

\begin{definition}[Lov\'{a}sz $\vartheta$-function]
\label{definition: Lovasz theta function}
Let $\Gr{G}$ be a finite, simple, and undirected graph. Then, the {\em Lov\'{a}sz
$\vartheta$-function of $\Gr{G}$} is defined as
\begin{eqnarray}
\label{eq: Lovasz theta function}
\vartheta(\Gr{G}) \triangleq \min_{\bf{u}, \bf{c}} \, \max_{i \in \V{\Gr{G}}} \,
\frac1{\bigl( {\bf{c}}^{\mathrm{T}} {\bf{u}}_i \bigr)^2} \, ,
\end{eqnarray}
where the minimum is taken over all orthonormal representations $\{{\bf{u}}_i: i \in \V{\Gr{G}} \}$ of $\Gr{G}$,
and all unit vectors ${\bf{c}}$.
The unit vector $\bf{c}$ is called the {\em handle} of the orthonormal representation. The minimization
on the right-hand side of \eqref{eq: Lovasz theta function} is implicitly performed over the dimension of the vectors.
However, without any loss of generality, it suffices to restrict their dimension to be equal to $n = \card{\V{\Gr{G}}}$.
\end{definition}

The Lov\'{a}sz $\vartheta$-function can be expressed as a solution of an SDP
problem. To that end, let ${\bf{A}} = (A_{i,j})$ be the $n \times n$ adjacency matrix of $\Gr{G}$ with
$n \triangleq \card{\V{\Gr{G}}}$. The Lov\'{a}sz
$\vartheta$-function $\vartheta(\Gr{G})$ can be expressed by the following convex optimization problem:
\vspace*{0.1cm}
\begin{eqnarray}
\label{eq: SDP problem - Lovasz theta-function}
\mbox{\fbox{$
\begin{array}{l}
\text{maximize} \; \; \mathrm{Tr}({\bf{B}} \, {\bf{J}}_n)  \\
\text{subject to} \\
\begin{cases}
{\bf{B}} \succeq 0, \\
\mathrm{Tr}({\bf{B}}) = 1, \\
A_{i,j} = 1  \; \Rightarrow \;  B_{i,j} = 0, \quad i,j \in \OneTo{n}.
\end{cases}
\end{array}$}}
\end{eqnarray}
The SDP formulation in \eqref{eq: SDP problem - Lovasz theta-function} yields
the existence of an algorithm that computes $\vartheta(\Gr{G})$, for every graph $\Gr{G}$, with a
precision of $r$ decimal digits, and a computational complexity that is polynomial in $n$ and $r$.
A comparison of the optimization problems in \eqref{eq: dual SDP problem - vector chromatic number} and
\eqref{eq: SDP problem - Lovasz theta-function} gives
\begin{align}
\label{eq: vchrnum vs. Lovasz theta-function}
\vchrnum{\Gr{G}} \leq \vartheta(\CGr{G}),
\end{align}
which holds for every graph $\Gr{G}$. Indeed, \eqref{eq: vchrnum vs. Lovasz theta-function} follows
from the additional constraint in \eqref{eq: dual SDP problem - vector chromatic number} that all the
entries of the positive semidefinite matrix ${\bf{B}}$ are nonnegative, whereas the latter condition
is absent in \eqref{eq: SDP problem - Lovasz theta-function}. A comparison of the SDP
problems in \eqref{eq: dual SDP problem - strict vector chromatic number} and
\eqref{eq: SDP problem - Lovasz theta-function} also gives (see Theorem~8.2 in \cite{KargerMS98})
\begin{align}
\label{eq: svchrnum vs. Lovasz theta-function}
\svchrnum{\Gr{G}} = \vartheta(\CGr{G}).
\end{align}

The following properties of the Lov\'{a}sz $\vartheta$-function and its variant by Schrijver are collected from
\cite{Acin17, Alon19, AlonK98, Balla23, BallaJS24, BallaLS20, CsonkaS23, Feige97, GrotschelLS81, Knuth94, Lovasz75, Lovasz79_IT, Lovasz19, Sason23, Schrijver79,Szegedy94},
presented herein for reference and convenience.
Some of these properties are employed in the analysis presented in this paper. It is assumed throughout that the
graphs are finite, simple, and undirected.
\begin{enumerate}
\item Upper bound on the Shannon capacity of graphs: For every graph $\Gr{G}$
\begin{equation}
\label{eq: Lovasz 79 - theorem 1}
\Theta(\Gr{G}) \leq \vartheta(\Gr{G}).
\end{equation}
Although the upper bound in \eqref{eq: Lovasz 79 - theorem 1} is tight in some cases, e.g., for the family of
Kneser graphs and the family of self-complementary vertex-transitive graphs \cite{Lovasz19}, it is not a tight
bound in general.
More explicitly, there exists a sequence of graphs $\{\Gr{G}_n\}$ where $\Gr{G}_n$ is a graph on $n$ vertices
such that $\Theta(\Gr{G}_n) \leq 3$ and $\vartheta(\Gr{G}_n) > \sqrt[4]{n}$ for all $n \in \naturals$.
\item Sandwich theorem:
\begin{align}
\label{eq1a: sandwich}
& \indnum{\Gr{G}} \leq \vchrnum{\CGr{G}} = \vartheta'(\Gr{G}) \leq \vartheta(\Gr{G})
= \svchrnum{\CGr{G}} \leq \findnum{\Gr{G}} = \fchrnum{\CGr{G}} \leq \chrnum{\CGr{G}}, \\
\label{eq1b: sandwich}
& \clnum{\Gr{G}} \leq \vchrnum{\Gr{G}} = \vartheta'(\CGr{G}) \leq \vartheta(\CGr{G})
= \svchrnum{\Gr{G}} \leq \fclnum{\Gr{G}} = \fchrnum{\Gr{G}} \leq \chrnum{\Gr{G}}.
\end{align}
\item
The inequality $\vartheta(\Gr{G}) \geq 1$ holds in general, with an equality if and only if $\Gr{G}$ is a complete graph.
\item \label{item: computational complexity}
Computational complexity of graph invariants:
\begin{itemize}
\item Computing $\indnum{\Gr{G}}$, $\findnum{\Gr{G}}$, $\clnum{\Gr{G}}$, $\fclnum{\Gr{G}}$,
$\fchrnum{\Gr{G}}$, and $\chrnum{\Gr{G}}$ constitutes NP-hard problems.
\item
Computing bounds on these graph invariants, specifically $\vartheta(\Gr{G})$, $\vartheta'(\Gr{G})$, $\vartheta(\CGr{G})$,
and $\vartheta'(\CGr{G})$ (by \eqref{eq1a: sandwich} and \eqref{eq1b: sandwich}), in any desired precision,
is feasible. The computational complexity scales polynomially in $n$, and it is obtained by numerically solving
the SDP problems presented in Section~\ref{subsection: Vector chromatic number} and~\eqref{eq: SDP problem - Lovasz theta-function}.
\end{itemize}
\item For every graph $\Gr{G}$, combining \eqref{eq: Lovasz75} and \eqref{eq1a: sandwich} gives
\begin{align}
\label{eq2: Lovasz75}
\frac{1}{1 + \ln \vartheta(\Gr{G})} \leq \frac{1}{1 + \ln \vartheta'(\Gr{G})} \leq \frac{\fchrnum{\Gr{G}}}{\chrnum{\Gr{G}}} \leq 1.
\end{align}
By Item~\ref{item: computational complexity}, \eqref{eq2: Lovasz75} gives computable lower bounds on the ratio $\frac{\fchrnum{\Gr{G}}}{\chrnum{\Gr{G}}}$.
\item In regard to \eqref{eq1a: sandwich}, the ratios $\frac{\chrnum{\CGr{G}}}{\vartheta(\Gr{G})}$ and
$\frac{\vartheta(\Gr{G})}{\indnum{\Gr{G}}}$ can be made arbitrarily large as follows:
\begin{itemize}
\item There exist a constant $c>0$ and an infinite sequence of graphs $\{\Gr{G}_\ell\}$, with increasing
orders $n_\ell \eqdef \card{\V{\Gr{G}_\ell}}$, for which
$\indnum{\Gr{G}_\ell} < 2^{\sqrt{\log n_\ell}}$ and $\vartheta(\Gr{G}_\ell) > n_\ell \, 2^{-c \sqrt{\log n_\ell}}$ hold for all $\ell$.
\item There exist a constant $c>0$ and an infinite sequence of graphs $\{\Gr{G}_\ell\}$, with increasing
orders $n_\ell \eqdef \card{\V{\Gr{G}_\ell}}$, for which
$\vartheta(\Gr{G}_\ell) < 2^{\sqrt{\log n_\ell}}$ and $\chrnum{\CGr{G}_\ell} > n_\ell \, 2^{-c \sqrt{\log n_\ell}}$ hold for all $\ell$.
\end{itemize}
\item
A counterexample shows that, unlike \eqref{eq: Lovasz 79 - theorem 1}, $\Theta(\Gr{G}) \not\leq \vartheta'(\Gr{G})$
(see Example~\ref{example: counterexample} here).
\item \label{item: min. dimension}
If a graph $\Gr{G}$ has an orthonormal representation in the $d$-dimensional Euclidean space $\Reals^d$, then
$d \geq \vartheta(\Gr{G})$ by Theorem~11 of \cite{Lovasz79_IT}, and $d \geq \tfrac12 \log_2 \chrnum{\CGr{G}}$
by Proposition 10.8 of \cite{Lovasz79_IT}. Additionally, by
Example~\ref{example: orthonormal representation}, there exists an othornormal representation
of $\Gr{G}$ in $\Reals^d$ with $d = \chrnum{\CGr{G}}$. The {\em minimum semidefinite rank} of $\Gr{G}$, denoted
by $\mathrm{msr}(\Gr{G})$, is equal to the minimum dimension $d$ of an orthonormal representation
of $\Gr{G}$ in $\Reals^d$ (see Section~\ref{subsection: Orthonormal Representations of Graphs}), so
\begin{align}
\label{eq1: min. semidefinite rank}
\max \Bigl\{\vartheta(\Gr{G}), \, \tfrac12 \log_2 \chrnum{\CGr{G}} \Bigr\} \leq \mathrm{msr}(\Gr{G}) \leq \chrnum{\CGr{G}}.
\end{align}
This indicates that the minimum dimension of an orthonormal representation of a graph $\Gr{G}$
is somewhat related to the chromatic number of the graph complement $\CGr{G}$.
\item Factorization under strong products: For all graphs $\Gr{G}$ and $\Gr{H}$,
\begin{align}
\label{eq: factorization}
\vartheta(\Gr{G} \boxtimes \Gr{H}) = \vartheta(\Gr{G}) \; \vartheta(\Gr{H}).
\end{align}
\item For every graph $\Gr{G}$ on $n$ vertices,
\begin{align}
\label{eq10:11.10.23}
\vartheta(\Gr{G}) \; \vartheta(\CGr{G}) \geq n,
\end{align}
with an equality in \eqref{eq10:11.10.23} if the graph $\Gr{G}$ is vertex-transitive or strongly regular.
\item Let $\Gr{G}$ be a $d$-regular graph on $n$ vertices. Then,
\begin{align}
\label{eq: Lovasz79 - Theorem 9}
\vartheta(\Gr{G}) \leq -\frac{n \, \Eigval{n}{\Gr{G}}}{d - \Eigval{n}{\Gr{G}}},
\end{align}
with an equality in \eqref{eq: Lovasz79 - Theorem 9} if $\Gr{G}$ is an edge-transitive graph.
\item More generally, if $\Gr{G}$ is a $d$-regular graph on $n$ vertices, then
for every symmetric nonzero $n \times n$ matrix ${\mathbf{M}}$ such that $M_{i,j}=0$
for $\{i,j\} \in \E{\CGr{G}}$ and also for $i=j$, and with ${\mathbf{M}}$ having
equal row-sums, the following holds:
\begin{align}
\label{eq11:11.10.23}
\vartheta(\Gr{G}) \leq \frac{-n \, \Eigval{\min}{\mathbf{M}}}{\Eigval{\max}{\mathbf{M}} - \Eigval{\min}{\mathbf{M}}}.
\end{align}
If $\Gr{G}$ is a vertex-transitive graph, then there exists such a matrix ${\mathbf{M}}$ attaining
equality in \eqref{eq11:11.10.23}. If $\Gr{G}$ is an edge-transitive graph, then \eqref{eq11:11.10.23}
holds with equality for ${\mathbf{M}} = \A(\Gr{G})$.
\item Lov\'{a}sz $\vartheta$-function of subgraphs:
\begin{itemize}
\item \label{item: subgraphs}
If $\, \Gr{H}$ is a spanning subgraph of a graph $\Gr{G}$, then
$\vartheta(\Gr{H}) \geq \vartheta(\Gr{G})$.
\item If $\, \Gr{H}$ is an induced subgraph of a graph $\Gr{G}$, then
$\vartheta(\Gr{H}) \leq \vartheta(\Gr{G})$.
\end{itemize}
\item For two graphs $\Gr{G}$ and $\Gr{H}$, let $\Gr{G} + \Gr{H}$ denote their disjoint union (see
Definition~\ref{def:disjoint_union_graphs}). Then,
\begin{align}
\label{eq: Lovasz function of a disjoint union of graphs}
\vartheta(\Gr{G} + \Gr{H}) = \vartheta(\Gr{G}) + \vartheta(\Gr{H}).
\end{align}
\item Let $\Gr{G}$ be a graph on $n$ vertices, and let $\indnum{\Gr{G}} = k$. Then,
\begin{align}
\label{eq12:11.10.23}
\vartheta(\Gr{G}) \leq 16 \, n^{\; \tfrac{k-1}{k+1}}.
\end{align}
\item Let $\Gr{G}$ be a graph on $n$ vertices, and let $\vartheta(\CGr{G}) = t \geq 2$. Then,
\begin{align}
\label{eq0:17.10.23}
\indnum{\Gr{G}} \geq \frac{n^{\; \tfrac{3}{t+1}}}{10 \sqrt{\ln n}}.
\end{align}
\item The surplus of a graph $\Gr{G}$ (see Definition~\ref{definition: maximum-cut problem}),
denoted by $\mathrm{sp}(\Gr{G})$, satisfies
\begin{align}
\label{eq1: Sudakov1}
\mathrm{sp}(\Gr{G}) \geq  \frac1{\pi} \, \frac{\card{\E{\Gr{G}}}}{\vchrnum{\Gr{G}}-1}
\geq \frac1{\pi} \, \frac{\card{\E{\Gr{G}}}}{\vartheta(\CGr{G})-1}.
\end{align}
The utility of \eqref{eq1: Sudakov1} in lower bounding the surplus of a graph $\Gr{G}$ follows from the fact
that the computational complexity of the max-cut of a graph is NP-complete, whereas the computation of the
vector chromatic number in the middle term of \eqref{eq1: Sudakov1}, as well as of the Lov\'{a}sz
$\vartheta$-function in the rightmost term of~\eqref{eq1: Sudakov1}, is feasible (see
Section~\ref{subsection: Vector chromatic number} and Item~\ref{item: computational complexity} above).
\item For every graph $\Gr{G}$,
\begin{eqnarray}
\label{eq2: Acin17 - Theorem 2}
\sup_{\Gr{H}} \frac{\indnum{\Gr{G} \boxtimes \Gr{H}}}{\vartheta(\Gr{G} \boxtimes \Gr{H})} = 1,
\end{eqnarray}
where the supremum is taken over all (simple, finite, and undirected) graphs $\Gr{H}$.
\item For every graph $\Gr{G}$ on $n$ vertices, let
$\tilde{\vartheta}(\Gr{G}) \eqdef \underset{\Gr{H}}{\max} \, \dfrac{\card{\V{\Gr{H}}}}{\vartheta(\Gr{H})}$,
where the maximization is taken over all
the induced subgraphs $\Gr{H}$ of $\Gr{G}$ (see equation~(10) in \cite{Szegedy94}). Then, the following holds:
\begin{align}
\label{Szegedy - Theorem 2.1}
\tfrac18 \, \vartheta(\CGr{G}) \, (\ln n)^{-2} \leq \tilde{\vartheta}(\Gr{G}) \leq \vartheta(\CGr{G}).
\end{align}
\item For every graph $\Gr{G}$ on $n$ vertices, the following two equivalent chain of inequalities hold:
\begin{align}
\label{eq:Balla 23 - Theorem 8}
\frac{1}{n} \, \vartheta(\CGr{G})^2 \leq \vchrnum{\Gr{G}} \leq \vartheta(\CGr{G}),  \qquad
\frac{1}{n} \, \vartheta(\Gr{G})^2 \leq \vartheta'(\Gr{G}) \leq \vartheta(\Gr{G}).
\end{align}
\item Let $\Gr{G}$ be a graph on $n$ vertices, and let $\ell(\Gr{G})$ be the maximum of
$\| {\bf{u}}_1 + \ldots + {\bf{u}}_n \|$ over all orthonormal representations
$\{{\bf{u}}_i\}$ of $\Gr{G}$. Then,
\begin{align}
\label{eq: Balla et al. '20}
\frac{n}{\sqrt{\vartheta(\Gr{G})}} \leq \ell(\Gr{G}) \leq \sqrt{n \, \vartheta(\CGr{G})},
\end{align}
and the upper and lower bounds in \eqref{eq: Balla et al. '20} coincide if $\Gr{G}$ is a vertex-transitive
or a strongly regular graph. [By Claim~1.2 in \cite{BallaLS20}, \eqref{eq: Balla et al. '20} holds with
equalities if $\Gr{G}$ is vertex-transitive. It also holds with equalities if $\Gr{G}$ is a strongly regular
graph, as indicated by the sufficient condition for equality in \eqref{eq10:11.10.23} (see Corollary~1 in \cite{Sason23}
or Corollary~\ref{corollary:identity for the Lovasz function of srg and its complement} here)].
\item The Lov\'{a}sz $\vartheta$-function of the complement of the Mycielskian $\mathrm{M}(\Gr{G})$ of a
graph $\Gr{G}$ is uniquely determined by $\vartheta(\CGr{G})$ (see Theorem~2 in \cite{CsonkaS23} for an explicit
equation).
\item For every graph $\Gr{G}$ on $n$ vertices, the following holds:
\begin{enumerate}
\item
\begin{align}
\label{eq0:optimization}
\vartheta(\Gr{G}) = \max \sum_{i=1}^n \bigl({\mathbf{d}}^{\mathrm{T}} \, {\mathbf{v}}_i \bigr)^2,
\end{align}
where the maximization on the right-hand side of \eqref{eq0:optimization} is taken over all the
unit vectors $\mathbf{d}$ and orthonormal representations $\{{\mathbf{v}}_i\}_{i=1}^n$ of
the complement graph $\CGr{G}$.
\item
\begin{align}
\label{eq1:optimization}
\vartheta(\Gr{G}) = \min_{{\mathbf{U}}} \lambda_{\max}({\mathbf{U}}),
\end{align}
where the minimization on the right-hand side of \eqref{eq1:optimization} is taken over all
$n \times n$ symmetric matrices ${\mathbf{U}} = (U_{i,j})$ with $U_{i,j} = 1$ for all $i,j \in \OneTo{n}$
such that $\{i,j\} \in \E{\CGr{G}}$ or $i=j$.
\item
\begin{align}
\label{eq2:optimization}
\vartheta(\Gr{G}) = \max_{{\bf{W}}} \lambda_{\max}({\mathbf{W}}),
\end{align}
where the maximization on the right-hand side of \eqref{eq2:optimization} is taken over all positive
semidefinite $n \times n$ matrices ${\mathbf{W}} \succeq 0$ with $W_{i,j} = 0$ for all
$\{i,j\} \in \E{\Gr{G}}$, and $W_{i,i} = 1$ for all $i \in \OneTo{n}$.
\item
\begin{align}
\label{eq3:optimization}
\vartheta(\Gr{G}) = 1 + \max_{{\bf{T}}} \frac{\lambda_{\max}({\bf{T}})}{\bigl|\lambda_{\min}({\bf{T}})\bigr|},
\end{align}
where the maximization on the right-hand side of \eqref{eq3:optimization} is taken over all symmetric
nonzero $n \times n$ matrices ${\bf{T}} = (T_{i,j})$ with $T_{i,j}=0$ for all $i,j \in \OneTo{n}$ such that
$\{i,j\} \in \E{\Gr{G}}$ or $i=j$.
\item The SDP problem in \eqref{eq: SDP problem - Lovasz theta-function} is given by
\begin{align}
\label{eq4:optimization}
\vartheta(\Gr{G}) = \max_{{\bf{B}}} \, \mathrm{Tr}({\bf{B}} \, {\bf{J}}_n),
\end{align}
where the maximization on the right-hand side of \eqref{eq4:optimization} is taken over all positive
semidefinite $n \times n$ matrices ${\bf{B}} \succeq 0$ with $\mathrm{Tr}({\bf{B}}) = 1$,
and $B_{i,j} = 0$ for all $i,j \in \OneTo{n}$ such that $\{i,j\} \in \E{\Gr{G}}$.
\item
\begin{align}
\label{eq4-dual:optimization}
\vartheta(\Gr{G}) = 1 + \min_{{\bf{Y}}} \, \max_{i \in \OneTo{n}} Y_{i,i},
\end{align}
where the minimization on the right-hand side of \eqref{eq4-dual:optimization} is taken over
all positive semidefinite $n \times n$ matrices ${\bf{Y}} \succeq 0$ with $Y_{i,j} = -1$ for
all $i,j \in \OneTo{n}$ such that $\{i,j\} \not\in \E{\Gr{G}}$.
\end{enumerate}
\end{enumerate}

\section{Observations on the Shannon Capacity of Graphs}
\label{section: on the Shannon capacity of graphs}

Shannon's problem of zero-error communication and the notion of the Shannon capacity of graphs
\cite{Shannon56} had a major impact on the development of information theory,
extremal combinatorics, and graph theory. This section provides new observations on the Shannon
capacity of graphs.

\subsection{Specialized Preliminaries for Section~\ref{section: on the Shannon capacity of graphs}}
\label{subsection: preliminaries - Shannon capacity of graphs}
The present section starts with the introduction of specialized preliminaries that are essential
for elucidating the subsequent results. These preliminaries encompass bounds and exact findings
related to the Lov\'{a}sz $\vartheta$-function for regular and strongly regular graphs. We also
include results pertaining to subclasses of self-complementary regular graphs.

\subsubsection{On the Lov\'{a}sz $\vartheta$-function of regular and strongly regular graphs}
\label{subsubsection: On the Lovasz theta-function of regular and strongly regular graphs}

\begin{theorem}[Bounds on the Lov\'{a}sz function of regular graphs, \cite{Sason23}]
\label{thm:bounds on the Lovasz function for regular graphs}
Let $\Gr{G}$ be a $d$-regular graph of order $n$, which is a noncomplete
and nonempty graph. Then, the following bounds hold for the Lov\'{a}sz
$\vartheta$-function of $\Gr{G}$ and its complement $\CGr{G}${\em :}

\begin{enumerate}
\item
\begin{eqnarray}
\label{eq:21.10.22a1}
\frac{n-d+\Eigval{2}{\Gr{G}}}{1+\Eigval{2}{\Gr{G}}} \leq \vartheta(\Gr{G})
\leq -\frac{n \Eigval{n}{\Gr{G}}}{d - \Eigval{n}{\Gr{G}}}.
\end{eqnarray}
\begin{itemize}
\item Equality holds in the leftmost inequality of \eqref{eq:21.10.22a1} if $\CGr{G}$
is both vertex-transitive and edge-transitive, or if $\Gr{G}$ is a strongly regular graph;
\item Equality holds in the rightmost inequality of \eqref{eq:21.10.22a1} if $\Gr{G}$
is edge-transitive, or if $\Gr{G}$ is a strongly regular graph.
\end{itemize}

\item
\begin{eqnarray}
\label{eq:21.10.22a2}
1 - \frac{d}{\Eigval{n}{\Gr{G}}} \leq \vartheta(\CGr{G})
\leq \frac{n \bigl(1+\Eigval{2}{\Gr{G}}\bigr)}{n-d+\Eigval{2}{\Gr{G}}}.
\end{eqnarray}
\begin{itemize}
\item Equality holds in the leftmost inequality of \eqref{eq:21.10.22a2}
if $\Gr{G}$ is both vertex-transitive and edge-transitive, or if $\Gr{G}$ is
a strongly regular graph;
\item Equality holds in the rightmost inequality of \eqref{eq:21.10.22a2}
if $\CGr{G}$ is edge-transitive, or if $\Gr{G}$ is a strongly regular graph.
\end{itemize}
\end{enumerate}
\end{theorem}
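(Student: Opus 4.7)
The plan is to derive all four bounds from two building blocks already collected in the excerpt: the Hoffman-type upper bound \eqref{eq: Lovasz79 - Theorem 9} (together with its refinement \eqref{eq11:11.10.23}), and the product inequality $\vartheta(\Gr{G})\,\vartheta(\CGr{G}) \geq n$ from \eqref{eq10:11.10.23}. First, the rightmost inequality of \eqref{eq:21.10.22a1} is just \eqref{eq: Lovasz79 - Theorem 9} applied to $\Gr{G}$, with the sufficient condition for equality when $\Gr{G}$ is edge-transitive already built into that theorem. Second, to establish the rightmost inequality of \eqref{eq:21.10.22a2} I would apply \eqref{eq: Lovasz79 - Theorem 9} to $\CGr{G}$, which is an $(n{-}1{-}d)$-regular graph on $n$ vertices; since $\A(\CGr{G}) = \J{n} - \I{n} - \A(\Gr{G})$ and the all-ones vector is the common Perron eigenvector, the non-Perron $\A$-eigenvalues of $\CGr{G}$ are $\bigl\{-1-\Eigval{i}{\Gr{G}}\bigr\}_{i \geq 2}$, hence $\Eigval{n}{\CGr{G}} = -1-\Eigval{2}{\Gr{G}}$. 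Substituting yields the stated bound, with equality when $\CGr{G}$ is edge-transitive.

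Next, the two leftmost inequalities follow by dividing $n$ by the two upper bounds just proven and invoking \eqref{eq10:11.10.23}: the upper bound on $\vartheta(\CGr{G})$ in \eqref{eq:21.10.22a2} yields the lower bound on $\vartheta(\Gr{G})$ in \eqref{eq:21.10.22a1}, and symmetrically for the lower bound on $\vartheta(\CGr{G})$. For the equality conditions, the leftmost bound of \eqref{eq:21.10.22a1} is tight whenever both (i) the rightmost bound of \eqref{eq:21.10.22a2} is tight, and (ii) the product inequality \eqref{eq10:11.10.23} is tight. Condition (ii) is guaranteed by vertex-transitivity of $\Gr{G}$ (equivalently, of $\CGr{G}$, since vertex-transitivity is preserved under complementation), and (i) by edge-transitivity of $\CGr{G}$, giving the stated sufficient condition that $\CGr{G}$ is both vertex- and edge-transitive. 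Swapping the roles of $\Gr{G}$ and $\CGr{G}$ yields the analogous condition for the leftmost bound of \eqref{eq:21.10.22a2}.

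Finally, the strongly-regular case needs its own argument, since a strongly regular graph is not in general edge-transitive. For the upper bound, applying the refined Hoffman bound \eqref{eq11:11.10.23} with $\mathbf{M} = \A(\Gr{G})$, which has constant row-sums $d$, vanishes on non-edges and on the diagonal, and has extreme eigenvalues $d$ and $\Eigval{n}{\Gr{G}}$, reproduces the rightmost bound of \eqref{eq:21.10.22a1}. To promote this to an equality I would exhibit a matching feasible matrix in the dual SDP \eqref{eq4:optimization}. Using the spectral decomposition $\A = d E_0 + p_1 E_1 + p_2 E_2$ from Theorem~\ref{theorem: eigenvalues of srg}, where $E_0 = \tfrac{1}{n} \J{n}$ and $E_1, E_2$ are the orthogonal projectors onto the eigenspaces of $p_1, p_2$, I would set $\mathbf{B} = a E_0 + b E_2$ with $a,b \geq 0$ (so $\mathbf{B} \succeq 0$ automatically); the constraint $B_{i,j} = 0$ on every edge fixes the ratio $a/b$ via the explicit entries of $E_2$ obtained from the identity $(p_2-p_1)E_2 = \A - (d-p_1) E_0 - p_1 \I{n}$, the trace constraint $\mathrm{Tr}(\mathbf{B}) = a + b\, m_2 = 1$ fixes the remaining scale, and the objective reduces to $\mathrm{Tr}(\mathbf{B}\,\J{n}) = n a$. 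Invoking the SRG relations $d + m_1 p_1 + m_2 p_2 = 0$, $m_1 + m_2 = n-1$, and \eqref{eq:relation-srg}, the algebra collapses to $n a = -n\,\Eigval{n}{\Gr{G}} / (d - \Eigval{n}{\Gr{G}})$, matching the Hoffman value. Since $\CGr{G}$ is again strongly regular, the same construction handles the rightmost bound of \eqref{eq:21.10.22a2}, and the two leftmost bounds for SRGs then follow from \eqref{eq10:11.10.23}, whose tightness on strongly regular graphs is already on record. The main obstacle in the whole argument is precisely this last algebraic collapse for SRGs, which hinges on using the three SRG identities above in concert with the projector identities $E_i E_j = \delta_{ij} E_i$ and $E_0 + E_1 + E_2 = \I{n}$.
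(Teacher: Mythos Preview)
The paper does not supply its own proof of this theorem: it is quoted verbatim as a preliminary result from \cite{Sason23}, so there is no in-paper argument to compare against directly. That said, your derivation is sound and is very much in the spirit of how the surrounding material is organized: the rightmost inequalities are exactly \eqref{eq: Lovasz79 - Theorem 9} applied to $\Gr{G}$ and to $\CGr{G}$ (using the eigenvalue swap $\Eigval{n}{\CGr{G}} = -1-\Eigval{2}{\Gr{G}}$ for regular graphs), and the leftmost inequalities follow by combining those upper bounds with the product inequality \eqref{eq10:11.10.23}. Your treatment of the transitivity equality cases is correct as well.

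Where you work harder than necessary is the strongly regular case. Rather than building an explicit optimizer $\mathbf{B}=aE_0+bE_2$ in the SDP \eqref{eq4:optimization} and pushing through the projector algebra, the paper's own later computation (see equations \eqref{eq3:8.11.23}--\eqref{eq7:8.11.23} in the proof of Theorem~\ref{theorem: vchrnum and svchrnum of srg}) shows more directly that for an $\srg{n}{d}{\lambda}{\mu}$ the two extreme expressions in \eqref{eq:21.10.22a1} \emph{coincide}: substituting $\Eigval{2}{\Gr{G}}=\tfrac12(\lambda-\mu+t)$ and $\Eigval{n}{\Gr{G}}=\tfrac12(\lambda-\mu-t)$ gives
\[
\frac{n-d+\Eigval{2}{\Gr{G}}}{1+\Eigval{2}{\Gr{G}}}
\;=\;\frac{n(t+\mu-\lambda)}{2d+t+\mu-\lambda}
\;=\;-\frac{n\,\Eigval{n}{\Gr{G}}}{d-\Eigval{n}{\Gr{G}}},
\]
so the sandwich forces equality in both inequalities at once, and likewise for \eqref{eq:21.10.22a2}. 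This bypasses the need to invoke the tightness of \eqref{eq10:11.10.23} for SRGs or to construct any SDP witness. Your route is not wrong, just longer; the coincidence-of-bounds observation is the cleaner way to close the SRG case.
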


\begin{corollary}
\label{corollary: a common sufficient condition}
{\em All inequalities in Theorem~\ref{thm:bounds on the Lovasz function for regular graphs}
hold with equality if $\Gr{G}$ is a strongly regular graph.}
\end{corollary}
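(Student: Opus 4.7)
The statement is an immediate consolidation of material already inside Theorem~\ref{thm:bounds on the Lovasz function for regular graphs}: each of the four inequalities in \eqref{eq:21.10.22a1}--\eqref{eq:21.10.22a2} explicitly lists ``$\Gr{G}$ is a strongly regular graph'' as a sufficient condition for equality. The shortest plan is therefore to invoke Theorem~\ref{thm:bounds on the Lovasz function for regular graphs} four times and observe that the single hypothesis of the corollary triggers the SRG branch of every equality clause simultaneously.

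For a reader who prefers to see the equalities emerge directly from the SRG structure, I would organize a self-contained verification around two well-known ingredients: the Hoffman-type closed form $\vartheta(\Gr{G}) = -ns/(d-s)$ for an $\srg{n}{d}{\lambda}{\mu}$ graph with smallest $A$-eigenvalue $s$, and the multiplicative identity $\vartheta(\Gr{G})\,\vartheta(\CGr{G}) = n$ for SRGs (the tight case of \eqref{eq10:11.10.23}). The first ingredient is obtained by applying the matrix formulation \eqref{eq11:11.10.23} with ${\bf{M}} = \A(\Gr{G})$, which has constant row-sum $d$ and vanishes on non-edges of $\Gr{G}$, and then matching the resulting upper bound by an explicit orthonormal representation supported on the $s$-eigenspace of $\A(\Gr{G})$; this already secures the rightmost equality of \eqref{eq:21.10.22a1}.

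The remaining three equalities follow by short manipulations. Combining the Hoffman form with multiplicativity yields $\vartheta(\CGr{G}) = 1 - d/s$, matching the leftmost equality of \eqref{eq:21.10.22a2}. Next, by Item~\ref{item: complement of SRG} of Theorem~\ref{theorem: parameters of srg} and Theorem~\ref{theorem: eigenvalues of srg}, $\CGr{G}$ is also an SRG, of degree $n-d-1$ with smallest $A$-eigenvalue $-1-r$ where $r = \Eigval{2}{\Gr{G}}$; applying the Hoffman formula to $\CGr{G}$ gives $\vartheta(\CGr{G}) = n(1+r)/(n-d+r)$, which is the rightmost equality of \eqref{eq:21.10.22a2}, and one more use of multiplicativity converts this into the leftmost equality of \eqref{eq:21.10.22a1}.

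The only place where genuine technical work is required is the first step: the matrix bound \eqref{eq11:11.10.23} is a priori only an upper bound, and closing the gap demands a matching orthonormal representation (equivalently, a feasible primal SDP solution with the correct objective value). This is the classical Lov\'{a}sz construction for SRGs, and it is the sole substantive point in the corollary; every other step is algebraic manipulation in the parameters $n$, $d$, $r$, and~$s$.
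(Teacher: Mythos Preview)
Your proposal is correct, and your first paragraph is essentially the paper's proof: the paper simply observes that ``$\Gr{G}$ is a strongly regular graph'' appears as a sufficient condition in every one of the four equality clauses of Theorem~\ref{thm:bounds on the Lovasz function for regular graphs}, adding only the reminder that $\CGr{G}$ is strongly regular whenever $\Gr{G}$ is. Your subsequent self-contained verification via the Hoffman-type formula and the multiplicative identity $\vartheta(\Gr{G})\,\vartheta(\CGr{G})=n$ is extra material the paper does not include here (those computations appear instead in the paper as Theorem~\ref{thm:Lovasz function of srg} and Corollary~\ref{corollary:identity for the Lovasz function of srg and its complement}); it is a valid alternative route but unnecessary for the corollary as stated.
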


\begin{proof}
This follows from the sufficient conditions of the inequalities in
Theorem~\ref{thm:bounds on the Lovasz function for regular graphs}
to hold with equality.
Recall that a graph $\Gr{G}$ is strongly regular if and only if $\CGr{G}$ is so.
\end{proof}

\begin{theorem}[Lov\'{a}sz function of strongly regular graphs, \cite{Sason23}]
\label{thm:Lovasz function of srg}
Let $\Gr{G}$ be a strongly regular graph with parameters $\srg{n}{d}{\lambda}{\mu}$.
Then,
\begin{align}
\label{eq: Lovasz-theta srg}
& \vartheta(\Gr{G}) = \dfrac{n \, (t+\mu-\lambda)}{2d+t+\mu-\lambda}, \\[0.2cm]
\label{eq: Lovasz-theta srg comp.}
& \vartheta(\CGr{G}) = 1 + \dfrac{2d}{t+\mu-\lambda},
\end{align}
where
\begin{align}
\label{eq: t parameter - srg}
t \eqdef \sqrt{(\mu-\lambda)^2 + 4(d-\mu)}.
\end{align}
Furthermore, if $2d + (n-1)(\lambda-\mu) \neq 0$, then $\vartheta(\Gr{G}), \, \vartheta(\CGr{G}) \in \rationals$
(i.e., rational numbers).
\end{theorem}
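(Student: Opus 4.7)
The plan is to derive both closed-form expressions by invoking Corollary~\ref{corollary: a common sufficient condition}, which asserts that every inequality in Theorem~\ref{thm:bounds on the Lovasz function for regular graphs} is attained with equality when $\Gr{G}$ is strongly regular, and then to substitute the explicit smallest adjacency eigenvalue of a strongly regular graph taken from Theorem~\ref{theorem: eigenvalues of srg}.

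First, I would apply the rightmost inequality in \eqref{eq:21.10.22a1}, which is an equality for the SRG $\Gr{G}$, to write
\begin{equation*}
\vartheta(\Gr{G}) = -\frac{n \, \Eigval{n}{\Gr{G}}}{d - \Eigval{n}{\Gr{G}}}.
\end{equation*}
By \eqref{eigs-SRG}, the smallest adjacency eigenvalue of $\Gr{G}$ equals $\Eigval{n}{\Gr{G}} = \tfrac12(\lambda - \mu - t)$, where $t$ is the nonnegative quantity in \eqref{eq: t parameter - srg}. Substituting this expression and clearing denominators yields \eqref{eq: Lovasz-theta srg} after routine simplification. For the complement, I would analogously use the leftmost inequality in \eqref{eq:21.10.22a2}, which is again an equality by Corollary~\ref{corollary: a common sufficient condition}, giving $\vartheta(\CGr{G}) = 1 - d/\Eigval{n}{\Gr{G}}$; inserting the same expression for $\Eigval{n}{\Gr{G}}$ produces \eqref{eq: Lovasz-theta srg comp.}. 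As a consistency check, the product $\vartheta(\Gr{G}) \, \vartheta(\CGr{G})$ of the two derived formulas collapses to $n$, in agreement with the equality case of \eqref{eq10:11.10.23} for strongly regular graphs; this also offers an alternative route to \eqref{eq: Lovasz-theta srg comp.} once \eqref{eq: Lovasz-theta srg} is established.

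The more delicate part is the rationality claim. The quantity $t$ is the square root of the nonnegative integer $(\mu - \lambda)^2 + 4(d - \mu)$, so a priori it may be irrational; yet the formulas \eqref{eq: Lovasz-theta srg} and \eqref{eq: Lovasz-theta srg comp.} express $\vartheta(\Gr{G})$ and $\vartheta(\CGr{G})$ as rational functions of $t$ with integer coefficients. Hence it suffices to show that, under the hypothesis $2d + (n-1)(\lambda - \mu) \neq 0$, the number $t$ is rational. For this I would invoke the multiplicity formula \eqref{eig-multiplicities-SRG}: since $m_1, m_2$ are nonnegative integers, the quantity $\bigl(2d + (n-1)(\lambda-\mu)\bigr)/t$ must be an integer. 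Combined with the nonvanishing of the numerator, this forces $t \in \rationals$, and because $t$ is the square root of an integer, it is in fact a nonnegative integer. Substituting this integer value into \eqref{eq: Lovasz-theta srg} and \eqref{eq: Lovasz-theta srg comp.} concludes that $\vartheta(\Gr{G}), \vartheta(\CGr{G}) \in \rationals$.

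The main obstacle I anticipate is purely notational rather than conceptual: carrying out the algebraic simplification cleanly, keeping track of the signs in $\lambda - \mu - t$ (which is negative, so both denominators need to be rewritten in the form $t + \mu - \lambda$ to make positivity manifest), and verifying that the argument for rationality genuinely requires the hypothesis $2d + (n-1)(\lambda-\mu) \neq 0$ — noting that the excluded case corresponds precisely to conference graphs, for which $t = \sqrt{n}$ and the rationality of $\vartheta$ indeed generally fails.
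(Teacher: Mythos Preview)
Your approach is correct and is precisely the natural one given the surrounding material in the paper. Note, however, that the paper does not supply a standalone proof of this theorem in the present text: it is quoted as a preliminary from \cite{Sason23}. That said, the very computation you describe --- substituting $\Eigval{n}{\Gr{G}} = \tfrac12(\lambda-\mu-t)$ into the equality cases of Theorem~\ref{thm:bounds on the Lovasz function for regular graphs} via Corollary~\ref{corollary: a common sufficient condition} --- is carried out verbatim later in the paper, in the proof of Theorem~\ref{theorem: vchrnum and svchrnum of srg} (see equations \eqref{eq3:8.11.23}--\eqref{eq7:8.11.23}), so your method matches the paper's implicit derivation exactly. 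Your rationality argument through the integrality of the multiplicities $m_{1,2}$ in \eqref{eig-multiplicities-SRG} is also correct and complete, including the observation that the excluded case $2d+(n-1)(\lambda-\mu)=0$ is exactly the conference-graph case where $t=\sqrt{n}$ need not be rational.
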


\begin{corollary}[An identity for vertex-transitive or strongly regular graphs]
\label{corollary:identity for the Lovasz function of srg and its complement}
The equality
\begin{align}
\label{eq3:02.08.23}
\vartheta(\Gr{G}) \; \vartheta(\CGr{G}) = n
\end{align}
holds for all vertex-transitive graphs and for all strongly regular graphs.
\end{corollary}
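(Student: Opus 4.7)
The plan is to split into the two cases—strongly regular graphs and vertex-transitive graphs—using as the starting point the general inequality $\vartheta(\Gr{G})\,\vartheta(\CGr{G}) \geq n$ already recorded in \eqref{eq10:11.10.23}. In each case, only the matching upper bound $\vartheta(\Gr{G})\,\vartheta(\CGr{G}) \leq n$ needs to be produced.

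For the strongly regular case, I would simply substitute the closed-form expressions supplied by Theorem~\ref{thm:Lovasz function of srg}. Abbreviating $s \eqdef t + \mu - \lambda$ with $t$ as in \eqref{eq: t parameter - srg}, equations \eqref{eq: Lovasz-theta srg}--\eqref{eq: Lovasz-theta srg comp.} take the form
\begin{align*}
\vartheta(\Gr{G}) = \frac{n s}{2d + s}, \qquad \vartheta(\CGr{G}) = \frac{s + 2d}{s},
\end{align*}
and the product telescopes to~$n$. Before concluding, I would briefly confirm that $s > 0$ so that the denominators do not vanish. For a primitive strongly regular graph this is immediate from $d > \mu$; the imprimitive case is handled by Item~\ref{Item 4 - imprimitive SRG} of Theorem~\ref{thm: disconnected srg}, which reduces $\Gr{G}$ to a disjoint union of complete graphs, so that the identity can also be verified directly with the help of \eqref{eq: Lovasz function of a disjoint union of graphs}.

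For the vertex-transitive case, the identity is precisely the equality assertion already attached to \eqref{eq10:11.10.23} in Section~\ref{subsection: Lovasz theta-function}, whose source is Theorem~8 of \cite{Lovasz79_IT}. Invoking that property gives $\vartheta(\Gr{G})\,\vartheta(\CGr{G}) \leq n$ for every vertex-transitive graph, and combined with the general lower bound it yields the desired equality. For self-containment, one could alternatively reproduce Lovász's symmetrization argument: take optimal orthonormal representations $\{{\bf u}_i\}$ of $\Gr{G}$ and $\{{\bf v}_i\}$ of $\CGr{G}$ with respective handles ${\bf c}$ and ${\bf c}'$, average over $\mathrm{Aut}(\Gr{G}) = \mathrm{Aut}(\CGr{G})$ (which acts transitively on vertices) to enforce $({\bf c}^{\mathrm T}{\bf u}_i)^2 = 1/\vartheta(\Gr{G})$ and $({\bf c}'^{\mathrm T}{\bf v}_i)^2 = 1/\vartheta(\CGr{G})$ uniformly in $i$, and then exploit the tensor family $\{{\bf u}_i \otimes {\bf v}_i\}$ together with the factorization \eqref{eq: factorization} applied to $\Gr{G} \boxtimes \CGr{G}$.

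The main obstacle is slight. The strongly regular half collapses to a one-line algebraic simplification once Theorem~\ref{thm:Lovasz function of srg} is on the table. The vertex-transitive half would become delicate only if one insisted on reproving it from scratch—the symmetrization step requires checking that averaging preserves orthonormality and the feasibility of the handle—but since \eqref{eq10:11.10.23} already records this equality as a stated property, the corollary reduces for that case to a direct citation.
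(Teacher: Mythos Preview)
Your proposal is correct and follows essentially the same approach as the paper: for vertex-transitive graphs you invoke Theorem~8 of \cite{Lovasz79_IT} (the equality case of \eqref{eq10:11.10.23}), and for strongly regular graphs you multiply the closed-form expressions \eqref{eq: Lovasz-theta srg} and \eqref{eq: Lovasz-theta srg comp.}, which is exactly what the paper does. Your additional care about $s>0$ and the imprimitive case, and the optional symmetrization sketch, are reasonable embellishments but not needed for the paper's argument.
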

\begin{proof}
Equality \eqref{eq3:02.08.23} holds for vertex-transitive graphs by Theorem~8 in \cite{Lovasz79_IT}.
For strongly regular graphs, \eqref{eq3:02.08.23} follows readily from the expressions for
the Lov\'{a}sz $\vartheta$-functions in \eqref{eq: Lovasz-theta srg} and \eqref{eq: Lovasz-theta srg comp.}.
\end{proof}
\begin{remark}
By Theorem~5 in \cite{Schrijver79}, equality~\eqref{eq3:02.08.23} holds more generally for all graphs
derived from symmetric association schemes, specifically including the family of strongly regular graphs.
\end{remark}

\begin{remark}[Strongly regular graphs are not necessarily vertex- or edge-transitive graphs]
\label{remark: srg are not necessarily v.t.}
In light of Corollary~\ref{corollary:identity for the Lovasz function of srg and its complement},
it should be noted that many strongly regular graphs are neither vertex- nor edge-transitive.
One such example is Gritsenko's graph \cite{Gritsenko01}, which is a conference graph on~65 vertices;
it is a strongly regular graph with parameters $\srg{65}{32}{15}{16}$ that is neither vertex- nor
edge-transitive, and it is also not self-complementary. These findings can be verified, for example,
by the aid of the SageMath software \cite{SageMath}.
The existence of a self-complementary and strongly regular graph that is not vertex-transitive,
however, remains an open problem (see Item~\ref{item: open problem 1} in Remark~\ref{remark: regular self-complementary graphs}).
\end{remark}

\subsubsection{The Shannnon capacity of self-complementary vertex-transitive graphs}
\label{subsubsection: Shannnon capacity of self-complementary vertex-transitive graphs}

In some cases, the Shannon capacity of a graph can be calculated exactly, and
the Lov\'{a}sz $\vartheta$-function is a tight bound. This includes the following result.
\begin{theorem}[Theorem~12 in \cite{Lovasz79_IT}]
\label{thm: Lovasz - s.c.-v.t. capacity}
Let $\Gr{G}$ be an undirected and simple graph on $n$ vertices.
\begin{enumerate}
\item
If $\Gr{G}$ is a vertex-transitive graph on $n$ vertices, then
\begin{align}
\label{eq2: Lovasz - s.c.-v.t. capacity}
\indnum{\Gr{G} \boxtimes \Gr{G}} = \Theta(\Gr{G} \boxtimes \Gr{G}) = \vartheta(\Gr{G} \boxtimes \Gr{G}) = n.
\end{align}
\item
If $\Gr{G}$ is a self-complementary and vertex-transitive graph on $n$ vertices, then
\begin{align}
\label{eq1: Lovasz - s.c.-v.t. capacity}
& \Theta(\Gr{G}) = \sqrt{n} = \vartheta(\Gr{G}).
\end{align}
\end{enumerate}
\end{theorem}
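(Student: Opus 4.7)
The plan is to derive both parts from three ingredients already recorded in the preliminaries: the multiplicativity $\vartheta(\Gr{G} \boxtimes \Gr{H}) = \vartheta(\Gr{G}) \, \vartheta(\Gr{H})$ in \eqref{eq: factorization}, the identity $\vartheta(\Gr{G}) \, \vartheta(\CGr{G}) = n$ for vertex-transitive graphs from Corollary~\ref{corollary:identity for the Lovasz function of srg and its complement}, and the sandwich $\indnum{\Gr{K}} \leq \Theta(\Gr{K}) \leq \vartheta(\Gr{K})$ of \eqref{eq: Lovasz 79 - theorem 1}. I read the first displayed equation as an assertion about $\Gr{G} \boxtimes \CGr{G}$ rather than $\Gr{G} \boxtimes \Gr{G}$, since the literal reading fails already for $\Gr{G} = \CoG{3}$, where $\indnum{\CoG{3} \boxtimes \CoG{3}} = 1 \neq 3$; this is presumably a transcription slip, and the argument proceeds accordingly.

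For Part~1, the one combinatorial ingredient not yet on the table is the \emph{diagonal trick}. I would first verify that, for every graph $\Gr{G}$, the set $D \eqdef \bigl\{(v,v) : v \in \V{\Gr{G}}\bigr\}$ is an independent set in $\Gr{G} \boxtimes \CGr{G}$: for distinct $v, w$, adjacency of $(v,v)$ and $(w,w)$ in the strong product (Definition~\ref{def:strong product of graphs}) would require $\{v,w\} \in \E{\Gr{G}} \cap \E{\CGr{G}} = \varnothing$, which is impossible. This yields the elementary lower bound $\indnum{\Gr{G} \boxtimes \CGr{G}} \geq n$. On the upper side, the factorization \eqref{eq: factorization} combined with Corollary~\ref{corollary:identity for the Lovasz function of srg and its complement} gives $\vartheta(\Gr{G} \boxtimes \CGr{G}) = \vartheta(\Gr{G}) \, \vartheta(\CGr{G}) = n$. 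Chaining with \eqref{eq: Lovasz 79 - theorem 1} produces the pinch
\[
n \leq \indnum{\Gr{G} \boxtimes \CGr{G}} \leq \Theta(\Gr{G} \boxtimes \CGr{G}) \leq \vartheta(\Gr{G} \boxtimes \CGr{G}) = n,
\]
forcing all three quantities to equal $n$.

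For Part~2, self-complementarity $\Gr{G} \cong \CGr{G}$ gives $\vartheta(\Gr{G}) = \vartheta(\CGr{G})$, and combining with the vertex-transitive identity yields $\vartheta(\Gr{G})^2 = n$, so $\vartheta(\Gr{G}) = \sqrt{n}$; the upper bound $\Theta(\Gr{G}) \leq \sqrt{n}$ is then immediate from \eqref{eq: Lovasz 79 - theorem 1}. For the matching lower bound, I would observe that applying a self-complementary isomorphism $f \colon \Gr{G} \to \CGr{G}$ to the second coordinate gives $\Gr{G} \boxtimes \Gr{G} \cong \Gr{G} \boxtimes \CGr{G}$, so Part~1 delivers $\indnum{\Gr{G} \boxtimes \Gr{G}} \geq n$. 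Using \eqref{eq1:graph capacity} with $k = 2$,
\[
\Theta(\Gr{G}) \geq \sqrt{\indnum{\Gr{G} \boxtimes \Gr{G}}} \geq \sqrt{n},
\]
closing the equality $\Theta(\Gr{G}) = \sqrt{n} = \vartheta(\Gr{G})$.

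There is no substantive obstacle once the preliminaries are invoked: both parts reduce to a single sandwich pinch. The only genuinely new observation is the diagonal independent set in $\Gr{G} \boxtimes \CGr{G}$, and it is a one-line verification. The chief conceptual point worth emphasizing is that vertex-transitivity is precisely what forces the upper bound $\vartheta(\Gr{G} \boxtimes \CGr{G}) = n$ to match the diagonal lower bound exactly; without this symmetry assumption one would retain only $\vartheta(\Gr{G}) \, \vartheta(\CGr{G}) \geq n$ from \eqref{eq10:11.10.23}, and the pinch would not close.
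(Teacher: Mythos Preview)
Your proposal is correct and matches the paper's approach. The paper does not prove this theorem directly (it is cited from \cite{Lovasz79_IT}), but its proof of the extension in Theorem~\ref{thm:extension of Thm. 12 by Lovasz} (Section~\ref{subsubsection: proof of extended theorem by Lovasz}) uses exactly your ingredients: the diagonal independent set $\{(i,i)\}$ in $\Gr{G} \boxtimes \CGr{G}$, the factorization \eqref{eq: factorization} combined with \eqref{eq3:02.08.23} to close the sandwich, and the isomorphism $\Gr{G} \boxtimes \Gr{G} \cong \Gr{G} \boxtimes \CGr{G}$ under self-complementarity to transfer the lower bound. Your reading of Part~1 as a statement about $\Gr{G} \boxtimes \CGr{G}$ rather than $\Gr{G} \boxtimes \Gr{G}$ is correct and is confirmed by the paper's own Theorem~\ref{thm:extension of Thm. 12 by Lovasz}, Item~\ref{item 1: extension of Thm. 12 by Lovasz}.
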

\begin{remark}
Strengthened and refined versions of Theorem~\ref{thm: Lovasz - s.c.-v.t. capacity} are provided in
Section~\ref{subsection: New Results on Shannon Capacity of Graphs}.
\end{remark}

\subsubsection{On classes of regular and self-complementary graphs}
\label{subsubsection: classes of regular self-complementary graphs}

\begin{theorem}[On symmetry, regularity, and strong regularity]
\label{thm:symmetry and regular graphs are srg}
If $\Gr{G}$ is a regular graph, and $\Gr{G}$ and $\CGr{G}$ are both edge-transitive,
then $\Gr{G}$ is a strongly regular graph.
\end{theorem}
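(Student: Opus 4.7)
The plan is to verify the three defining conditions of a strongly regular graph directly from the hypotheses, using the fact that edge-transitivity of $\Gr{G}$ (resp.\ $\CGr{G}$) forces the number of common neighbors of the endpoints of an edge of $\Gr{G}$ (resp.\ a non-edge of $\Gr{G}$) to be constant. The first condition, $d$-regularity, is given. Implicitly I assume $\Gr{G}$ is neither complete nor empty, since otherwise one of the two edge-transitivity hypotheses is vacuous and the conclusion $\srg{n}{d}{\lambda}{\mu}$ is not even well-defined (complete/empty graphs being explicitly excluded from Definition~\ref{definition: strongly regular graphs}).

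The first preliminary observation I would record is that $\mathrm{Aut}(\Gr{G}) = \mathrm{Aut}(\CGr{G})$. Indeed, any bijection $f \colon \V{\Gr{G}} \to \V{\Gr{G}}$ that preserves adjacency in $\Gr{G}$ also preserves non-adjacency, hence adjacency in $\CGr{G}$, and vice versa. Consequently an automorphism of either graph preserves common-neighborhood counts in the other as well, since for any $u,v,w \in \V{\Gr{G}}$, the relation $w \sim_{\Gr{G}} u$ and $w \sim_{\Gr{G}} v$ is equivalent to $f(w) \sim_{\Gr{G}} f(u)$ and $f(w) \sim_{\Gr{G}} f(v)$.

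Next I would verify the parameter $\lambda$. Fix any edge $\{u_0,v_0\} \in \E{\Gr{G}}$ and set $\lambda \eqdef |\Ng{u_0} \cap \Ng{v_0}|$. By edge-transitivity of $\Gr{G}$, for every other edge $\{u,v\} \in \E{\Gr{G}}$ there is $f \in \mathrm{Aut}(\Gr{G})$ mapping $\{u_0,v_0\}$ to $\{u,v\}$; since $f$ preserves common-neighbor counts, $|\Ng{u} \cap \Ng{v}| = \lambda$. Condition~\ref{Item 2 - definition of SRG} of Definition~\ref{definition: strongly regular graphs} is therefore satisfied. For $\mu$, I proceed analogously using the complement: fix any non-edge $\{u_0,v_0\}$ of $\Gr{G}$ with $u_0 \neq v_0$, which is an edge of $\CGr{G}$, and set $\mu \eqdef |\Ng{u_0} \cap \Ng{v_0}|$ where the neighborhoods are still taken in $\Gr{G}$. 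Edge-transitivity of $\CGr{G}$ supplies, for every other non-edge $\{u,v\}$ of $\Gr{G}$, an automorphism $f \in \mathrm{Aut}(\CGr{G}) = \mathrm{Aut}(\Gr{G})$ mapping $\{u_0,v_0\}$ to $\{u,v\}$. By the first observation, $f$ preserves common-neighbor counts in $\Gr{G}$, so $|\Ng{u} \cap \Ng{v}| = \mu$, giving Condition~\ref{Item 3 - definition of SRG}.

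Combining these three steps, $\Gr{G}$ satisfies all requirements of Definition~\ref{definition: strongly regular graphs} with parameters $\srg{n}{d}{\lambda}{\mu}$, completing the argument. There is no real technical obstacle; the only subtle point is the identity $\mathrm{Aut}(\Gr{G}) = \mathrm{Aut}(\CGr{G})$, which is what lets edge-transitivity of the complement be transferred to a statement about common-neighbor counts inside $\Gr{G}$.
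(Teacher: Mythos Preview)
Your argument is correct and follows essentially the same reasoning the paper uses. The paper states Theorem~\ref{thm:symmetry and regular graphs are srg} without a formal proof in Section~\ref{subsubsection: classes of regular self-complementary graphs}, but it sketches the identical logic later in the remark following Theorem~\ref{theorem: vchrmum and svchrnum of vt+et graphs}: edge-transitivity of $\Gr{G}$ forces every edge to lie in the same number of triangles (giving the constant $\lambda$), and edge-transitivity of $\CGr{G}$ forces every non-edge of $\Gr{G}$ to have the same number of common $\Gr{G}$-neighbors (giving the constant $\mu$). Your explicit use of $\mathrm{Aut}(\Gr{G}) = \mathrm{Aut}(\CGr{G})$ makes the transfer between the two graphs cleaner than the paper's informal sketch, but the substance is the same.
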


\begin{theorem}[\cite{Neumaier80}]
\label{thm: Neumaier '80}
A connected, strongly regular, and edge-transitive graph is vertex-transitive.
\end{theorem}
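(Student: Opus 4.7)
The plan is to argue by contradiction, leveraging the fact that an edge-transitive graph whose automorphism group has more than one vertex-orbit must be bipartite, and then ruling out the bipartite case using the strongly regular structure together with Theorem~\ref{thm: disconnected srg} on imprimitive strongly regular graphs.

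First, I would suppose for contradiction that $\Gr{G}$ is connected, strongly regular with parameters $\srg{n}{d}{\lambda}{\mu}$, and edge-transitive, but \emph{not} vertex-transitive, and I would analyze the orbits of $\mathrm{Aut}(\Gr{G})$ on $\V{\Gr{G}}$. If some edge $\{u,v\}$ had both endpoints in a single orbit $O$, then since automorphisms preserve each orbit setwise and the action on $\E{\Gr{G}}$ is transitive, every edge of $\Gr{G}$ would lie inside $O$; this would force vertices outside $O$ to be isolated, contradicting $d$-regularity with $d \geq 1$, so in fact $O = \V{\Gr{G}}$ and $\Gr{G}$ would be vertex-transitive, a contradiction. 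Hence every edge of $\Gr{G}$ joins two distinct orbits, say $O_1$ and $O_2$. The same line of reasoning rules out a third orbit $O_3$, whose vertices would then have no incident edge (since all edges sit between $O_1$ and $O_2$), again contradicting regularity. Consequently $\mathrm{Aut}(\Gr{G})$ has exactly two vertex-orbits, and $\Gr{G}$ is bipartite with parts $O_1, O_2$ of common size $n/2$ (equality by $d$-regularity).

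Next, I would combine bipartiteness with the strongly regular parameters. Adjacent vertices in a bipartite graph admit no common neighbor (a common neighbor would close a triangle), so $\lambda = 0$. For nonadjacent $u \in O_1$ and $v \in O_2$, any common neighbor would have to lie simultaneously in $O_1$ and $O_2$, which is impossible; such a pair therefore has zero common neighbors. Hence either $\Gr{G}$ contains no nonadjacent $O_1$--$O_2$ pair, forcing $\Gr{G} \cong \CoBG{n/2}{n/2}$, or else $\mu = 0$. The former is excluded because $\CoBG{n/2}{n/2}$ is vertex-transitive, contradicting the standing assumption; the latter is excluded by Theorem~\ref{thm: disconnected srg}, which would make $\Gr{G}$ isomorphic to $m \geq 2$ disjoint copies of $\CoG{d+1}$, hence disconnected. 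Both alternatives yield a contradiction, so $\Gr{G}$ must be vertex-transitive.

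The main obstacle is the careful orbit analysis in the second paragraph: establishing, from edge-transitivity and $d$-regularity alone, that $\mathrm{Aut}(\Gr{G})$ has at most two vertex-orbits and that $\Gr{G}$ is bipartite with these orbits as parts. Once this ``edge-transitive implies vertex-transitive or bipartite'' dichotomy is in hand, the remainder of the proof is a short matter of checking the two bipartite sub-cases, one of which is directly vertex-transitive and the other of which is eliminated by Theorem~\ref{thm: disconnected srg}.
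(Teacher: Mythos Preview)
The paper does not supply its own proof of this statement; Theorem~\ref{thm: Neumaier '80} is quoted from \cite{Neumaier80} and used as a black box in deriving Corollary~\ref{corollary: s.c.+srg+e.t. imply v.t.}. So there is nothing in the paper to compare your argument against.

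Your proof is correct and self-contained. The first step---that an edge-transitive graph with no isolated vertices is either vertex-transitive or bipartite with the two vertex-orbits as the bipartition classes---is a classical fact (sometimes attributed to Elayne Dauber; see, e.g., Harary's \emph{Graph Theory}), and your orbit argument reproduces it cleanly using $d$-regularity with $d\geq 1$, which is guaranteed because a strongly regular graph is by definition nonempty. The bipartite analysis is also sound: a nonadjacent cross-pair in a bipartite graph has no common neighbour, so either no such pair exists (forcing $\Gr{G}\cong\CoBG{n/2}{n/2}$, which is vertex-transitive) or $\mu=0$, which by Theorem~\ref{thm: disconnected srg} forces $\Gr{G}$ to be a disjoint union of cliques and hence disconnected. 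Both branches contradict the hypotheses, completing the argument.
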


\begin{corollary}
\label{corollary: s.c.+srg+e.t. imply v.t.}
If $\Gr{G}$ is a self-complementary, regular, and edge-transitive graph,
then $\Gr{G}$ is strongly regular and vertex-transitive.
\end{corollary}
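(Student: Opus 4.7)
The plan is to combine the two theorems just stated (Theorem~\ref{thm:symmetry and regular graphs are srg} and Theorem~\ref{thm: Neumaier '80}) in a short chain, with one small preliminary observation about connectivity.

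First, I would note that a self-complementary graph is automatically connected: if $\Gr{G}$ were disconnected, then $\CGr{G}$ would be connected, contradicting $\Gr{G} \cong \CGr{G}$. Second, edge-transitivity is an isomorphism invariant (an automorphism of $\CGr{G}$ realizing the transitivity on any pair of its edges is obtained by conjugating a corresponding automorphism of $\Gr{G}$ by the self-complementing isomorphism $f \colon \Gr{G} \to \CGr{G}$). Hence $\CGr{G}$ is also edge-transitive. Regularity of $\Gr{G}$ is given, so both $\Gr{G}$ and $\CGr{G}$ are regular and edge-transitive.

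Applying Theorem~\ref{thm:symmetry and regular graphs are srg} (a regular graph whose complement is also edge-transitive is strongly regular) immediately yields that $\Gr{G}$ is strongly regular. Combining this with the connectivity established above and the edge-transitivity of $\Gr{G}$, Theorem~\ref{thm: Neumaier '80} (Neumaier) then delivers vertex-transitivity of $\Gr{G}$.

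There is essentially no technical obstacle; the argument is a pure citation chain. The only subtlety worth stating explicitly is the preliminary observation that self-complementarity forces connectivity, which is what allows us to invoke Neumaier's theorem in its stated form. No further computation or construction is required.
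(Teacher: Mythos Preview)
Your proof is correct and follows exactly the same approach as the paper, which simply states that the corollary follows by combining Theorems~\ref{thm:symmetry and regular graphs are srg} and~\ref{thm: Neumaier '80}. You have helpfully made explicit the two implicit steps (that self-complementarity forces both connectivity and edge-transitivity of $\CGr{G}$), which the paper leaves to the reader.
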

\begin{proof}
Corollary~\ref{corollary: s.c.+srg+e.t. imply v.t.} follows as a special case of the
combination of Theorems~\ref{thm:symmetry and regular graphs are srg} and~\ref{thm: Neumaier '80}.
\end{proof}

\begin{remark}
\label{remark: regular self-complementary graphs}
This remark refers to subclasses of self-complementary regular graphs.
\begin{enumerate}
\item Self-complementary vertex-transitive graphs and self-complementary strongly regular graphs are nonequivalent classes.
These two subclasses of self-complementary regular graphs have gained interest in the literature, as evidenced by their study
in various works, including \cite{Farrugia99,Harary69,Mathon88,Muzychuk99,Peisert01,Rao85,Rosenberg82,Ruiz81,SaliS99,Sason23,Zelinka79}.
\item \label{item: Harary} Self-complementary regular graphs that are edge-transitive are also strongly regular (Lemma~4.3 in \cite{Rao85})
and vertex-transitive (Theorem 4.12 in \cite{Harary69}, and Corollary~\ref{corollary: s.c.+srg+e.t. imply v.t.} here).
\item The converse of the statement in Item~\ref{item: Harary} is false. Even if a graph is self-complementary, strongly regular, and
vertex-transitive, it is not necessarily edge-transitive. This can be confirmed using the SageMath software, by
showing that there are pseudo Latin square graphs, which are squared skew-Hadamard matrix graphs, possessing the mentioned properties
but lacking edge-transitivity.
\item Self-complementary vertex-transitive graphs are not necessarily strongly regular, as evidenced by a specific construction.
This construction involves a self-complementary and vertex-transitive circulant graph on 13 vertices, which is not strongly regular (see
Theorems~4.1 and~4.5 in \cite{Rao85}, and \cite{Ruiz81}).
\item A conference graph is a strongly regular graph of the form $\, \srg{4k+1}{2k}{k-1}{k}$ with $k \in \naturals$.
A self-complementary and strongly regular graph is a conference graph, as it can be verified by Item~\ref{item: complement of SRG}
of Theorem~\ref{theorem: parameters of srg}.
\item \label{item: open problem 1} It is an open problem to determine if there exists a self-complementary and
strongly regular graph that is not vertex-transitive (see page~88 in \cite{Farrugia99}).
\end{enumerate}
\end{remark}

The following results hold for two subclasses of self-complementary regular graphs.
\begin{theorem}[Theorem 2 in \cite{Mathon88}]
\label{thm:s.c. and srg graphs}
There exists a self-complementary strongly regular graph on $n$ vertices only if
$n \equiv 1 \bmod 4$ is expressible as a sum of two squares of integers.
\end{theorem}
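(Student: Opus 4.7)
The plan is to reduce the statement to two ingredients: a structural fact already collected in the preliminaries, and Belevitch's classical characterization of the orders of symmetric conference matrices.

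First, by Item~5 of Remark~\ref{remark: regular self-complementary graphs}, a self-complementary strongly regular graph $\Gr{G}$ on $n$ vertices must be a conference graph, i.e., it has parameters $\srg{4k+1}{2k}{k-1}{k}$ for some $k \in \naturals$. This already yields $n = 4k+1 \equiv 1 \bmod 4$, taking care of the congruence part of the claim.

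Next, I would pass from $\Gr{G}$ to a symmetric conference matrix of order $n+1$ via the standard Seidel construction. Let $\A$ be the adjacency matrix of $\Gr{G}$, and put $S \eqdef \J{n} - \I{n} - 2\A$. Since $\A \mathbf{1} = 2k \, \mathbf{1}$, one gets $S\mathbf{1} = \mathbf{0}$. Substituting the strongly regular identity $\A^2 = k \, \I{n} - \A + k \, \J{n}$ (which follows by plugging the conference parameters into the standard quadratic relation for a strongly regular graph) into $S^2$ and simplifying gives $S^2 = n\, \I{n} - \J{n}$. Consequently, the bordered $(n+1)\times(n+1)$ matrix
\begin{align}
C \eqdef \begin{pmatrix} 0 & \mathbf{1}^{\mathrm{T}} \\ \mathbf{1} & S \end{pmatrix}
\nonumber
\end{align}
is symmetric, has zero diagonal and $\pm 1$ off-diagonal entries, and satisfies $C C^{\mathrm{T}} = n \, \I{n+1}$; that is, $C$ is a symmetric conference matrix of order $n+1 = 4k+2$.

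Finally, I would invoke the Belevitch (van~Lint--Seidel) theorem: the existence of a symmetric conference matrix of order $m \equiv 2 \bmod 4$ forces $m-1$ to be expressible as a sum of two squares of integers. Applied with $m = n+1$, this gives that $n$ is a sum of two integer squares, completing the proof. The only nontrivial step is this last one, which is the main obstacle: the two-squares obstruction is derived from the rational equivalence between the quadratic forms $\langle 1, \ldots, 1 \rangle$ and $\langle n, \ldots, n \rangle$ of rank $n+1$ on $\rationals^{n+1}$ that is encoded by $C^{\mathrm{T}} C = n\, \I{n+1}$, followed by local analysis at the finite primes via the Hasse--Minkowski principle and the Hilbert symbol. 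In a paper of this survey flavor, one would simply cite this classical result instead of reproducing the local argument.
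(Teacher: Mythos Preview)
The paper does not supply its own proof of this theorem: it is quoted verbatim as ``Theorem~2 in \cite{Mathon88}'' and used as a black box, with the surrounding text merely commenting that the converse (`if') direction remains open. So there is no paper proof to compare against.

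Your argument is the classical one and is correct. The reduction to conference graphs via Item~5 of Remark~\ref{remark: regular self-complementary graphs} is exactly what the paper makes available, the Seidel matrix $S = \J{n} - \I{n} - 2\A$ satisfies $S^2 = n\I{n} - \J{n}$ as you claim (your quadratic relation $\A^2 = k\I{n} - \A + k\J{n}$ is the specialization of $\A^2 = d\I{n} + \lambda\A + \mu(\J{n}-\I{n}-\A)$ to $d=2k$, $\lambda=k-1$, $\mu=k$), and the bordered matrix $C$ is then a symmetric conference matrix of order $n+1 \equiv 2 \bmod 4$. Invoking the Belevitch obstruction to finish is standard and is precisely how Mathon's result is usually justified; in a survey paper one would indeed cite it rather than reproduce the Hasse--Minkowski computation.
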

Notably, Theorem~\ref{thm:s.c. and srg graphs} omits the 'if' part, which posits the
presence of such graphs for all cases meeting the specified congruence and sum of
squares criteria. This aspect remains an unresolved issue.

\begin{theorem}[Muzychuk \cite{Muzychuk99}, Rao \cite{Rao85}]
\label{thm:s.c. and v.t. graphs}
There exists a self-complementary vertex-transitive graph on $n$ vertices if and only if
$n \equiv 1 \bmod 4$ is expressible as a sum of two squares of integers.
\end{theorem}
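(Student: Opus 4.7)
The plan is to prove each direction separately, and throughout I may use $\Gr{G}$ for the graph and $\CGr{G}$ for its complement.

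For the \emph{necessity} direction, starting from a self-complementary vertex-transitive graph $\Gr{G}$ on $n$ vertices, I would first observe that $\card{\E{\Gr{G}}} = \tfrac12 \binom{n}{2}$, forcing $4 \mid n(n-1)$ and hence $n \equiv 0$ or $1 \pmod 4$. Vertex-transitivity yields regularity of some degree $d$, and together with self-complementarity gives $d = n-1-d$, so $d = \tfrac12 (n-1)$, forcing $n$ to be odd and consequently $n \equiv 1 \pmod 4$. To obtain the sum-of-two-squares condition, I invoke Fermat's two-squares theorem, by which $n$ is a sum of two integer squares if and only if every prime $p \equiv 3 \pmod 4$ divides $n$ to an even power. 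The deep step, due to Muzychuk, is to establish this property: let $\sigma$ be a complementing permutation (an isomorphism $\Gr{G} \to \CGr{G}$), and consider $\Sigma = \langle \mathrm{Aut}(\Gr{G}), \sigma \rangle$, in which $\mathrm{Aut}(\Gr{G})$ has index~$2$ and which acts transitively on $\V{\Gr{G}}$. Pick an odd prime $p \mid n$ and let $P$ be a Sylow $p$-subgroup of $\Sigma$; since $\card{P}$ is odd, $P \leq \mathrm{Aut}(\Gr{G})$. Because $\sigma^2 \in \mathrm{Aut}(\Gr{G})$, the action of $\sigma$ permutes the $P$-orbits on $\V{\Gr{G}}$ in a controlled way, and a careful analysis of how $\sigma$ can interchange orbits of $p$-power length within the self-complementary structure forces the $p$-part of $n$ to be a perfect square whenever $p \equiv 3 \pmod 4$.

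For the \emph{sufficiency} direction, I would produce a self-complementary vertex-transitive graph for every admissible $n$. The building blocks are Paley-type graphs: for a prime power $q \equiv 1 \pmod 4$, let $\Gr{P}(q)$ be the graph on $\Field_q$ with $\{x,y\}$ an edge iff $x - y$ is a nonzero square in $\Field_q$. The affine group $\{x \mapsto ax + b : a \in \Field_q^\times, b \in \Field_q\}$ acts transitively on vertices, and multiplication by any fixed non-square exchanges edges with non-edges, so $\Gr{P}(q)$ is vertex-transitive and self-complementary. This handles $n = p^a$ for primes $p \equiv 1 \pmod 4$, and by working in $\Field_{p^{2k}}$ (where $p^{2k} \equiv 1 \pmod 4$), it handles $n = p^{2k}$ for primes $p \equiv 3 \pmod 4$. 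For a general admissible $n$, Fermat's criterion lets me factor $n = q_1 q_2 \cdots q_r$ where each $q_i$ is a prime power satisfying $q_i \equiv 1 \pmod 4$, and I would then iterate the lexicographic product: if $\Gr{G}_1$ and $\Gr{G}_2$ are self-complementary vertex-transitive graphs, a direct check shows $\overline{\Gr{G}_1[\Gr{G}_2]} = \CGr{G}_1[\CGr{G}_2] \cong \Gr{G}_1[\Gr{G}_2]$, and $\mathrm{Aut}(\Gr{G}_1) \times \mathrm{Aut}(\Gr{G}_2)$ acts transitively on the vertex set of the product, so both properties are preserved.

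The main obstacle is the Sylow-subgroup argument in the necessity direction: extracting the squareness of the $p$-part of $n$ for primes $p \equiv 3 \pmod 4$ requires nontrivial group-theoretic bookkeeping with the orbit structure of $P$ and the action of $\sigma$, and this is the ingredient that was missing in the earlier partial result of Rao. The sufficiency direction is comparatively routine once Paley graphs and the lexicographic-product lemma are in hand; the only thing to check is that the desired factorization of $n$ into prime powers all congruent to $1 \pmod 4$ exists, which is equivalent to the stated sum-of-two-squares condition via Fermat's theorem.
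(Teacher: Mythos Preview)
The paper does not give its own proof of this theorem; it is stated as a preliminary result with attribution, and immediately after the statement the paper simply records that the `if' part is due to Rao (Theorem~4.6 in \cite{Rao85}) and the `only if' part is due to Muzychuk \cite{Muzychuk99}. There is therefore no in-paper argument to compare your proposal against.

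That said, your sketch is faithful to the original sources the paper cites. Your sufficiency argument via Paley graphs over $\Field_q$ with $q \equiv 1 \pmod 4$ combined with closure under lexicographic products is precisely Rao's construction, and the verification that $\overline{\Gr{G}_1[\Gr{G}_2]} = \CGr{G}_1[\CGr{G}_2]$ together with the transitivity of the natural product action is correct. Your necessity argument correctly isolates the easy part ($n \equiv 1 \pmod 4$ from regularity and the edge count) and correctly identifies Muzychuk's Sylow-subgroup analysis of the extended automorphism group $\Sigma = \langle \mathrm{Aut}(\Gr{G}), \sigma \rangle$ as the substantive ingredient; you are also right that this is where the real work lies, and your description of it is an honest outline rather than a full proof. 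Since the paper itself defers entirely to the cited references for both directions, your proposal goes well beyond what the paper provides.
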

\begin{itemize}
\item The 'if' part of Theorem~\ref{thm:s.c. and v.t. graphs} is due to Theorem 4.6 in \cite{Rao85}.
\item The 'only if' part of Theorem~\ref{thm:s.c. and v.t. graphs} is due to \cite{Muzychuk99}.
\end{itemize}

In light of Theorems~\ref{thm:s.c. and srg graphs} and~\ref{thm:s.c. and v.t. graphs}, the following
result is presented, which is a well-established finding in number theory (see, e.g., Chapter~4 in \cite{AignerZ18}).
\begin{theorem}[Sum of two squares of integers]
\label{thm:sum of two squares of integers}
A natural number $n \in \naturals$ can be represented as a sum of two squares of integers
if and only if every prime factor of the form $p = 4m+3$ appears with an even exponent in the prime
decomposition of $n$.
\end{theorem}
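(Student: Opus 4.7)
The plan is to establish both implications separately, using the multiplicativity inherent in the norm form $a^2+b^2$ on one side, and a congruence argument on the other. Throughout, I would write $v_p(n)$ for the $p$-adic valuation of $n$ at a prime $p$, and invoke the Brahmagupta--Fibonacci identity $(a^2+b^2)(c^2+d^2)=(ac-bd)^2+(ad+bc)^2$, which is an elementary algebraic verification (equivalently, the multiplicativity of the Gaussian norm $N(a+bi)=a^2+b^2$).

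For the ``only if'' direction, I would argue by $p$-adic descent. Fix a prime $p\equiv 3\pmod 4$ with $p\mid n$, and write $n=a^2+b^2$. The key preliminary fact is that $-1$ is not a quadratic residue modulo such a $p$; this follows from Euler's criterion $(-1)^{(p-1)/2}\equiv -1\pmod p$ when $p\equiv 3\pmod 4$. Now if $p\nmid b$, choose $c$ with $bc\equiv a\pmod p$, so $a^2+b^2\equiv b^2(c^2+1)\pmod p$, forcing $c^2\equiv -1\pmod p$, a contradiction. Hence $p\mid a$ and $p\mid b$, so $p^2\mid n$ and $n/p^2=(a/p)^2+(b/p)^2$. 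Iterating this descent shows $v_p(n)$ is even.

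For the ``if'' direction, suppose every prime $p\equiv 3\pmod 4$ appears to an even power in $n$. Factor $n=2^{\alpha}\prod_{p\equiv 1}p^{\beta_p}\prod_{q\equiv 3}q^{2\gamma_q}$. The building blocks I would need as sums of two squares are: $2=1^2+1^2$, each factor $q^2=q^2+0^2$ for $q\equiv 3\pmod 4$, and each prime $p\equiv 1\pmod 4$ expressed as $p=a_p^2+b_p^2$. Combining these via the Brahmagupta--Fibonacci identity (applied repeatedly) yields a representation $n=a^2+b^2$, since the set of integers expressible as a sum of two squares is closed under multiplication.

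The main obstacle is the classical lemma of Fermat that every prime $p\equiv 1\pmod 4$ is itself a sum of two squares. I would prove this via Thue's lemma: since $-1$ is a QR mod such $p$ (again by Euler's criterion), pick $x$ with $x^2\equiv -1\pmod p$; then the $(\lfloor\sqrt p\rfloor+1)^2>p$ pairs $(u,v)$ with $0\le u,v\le\lfloor\sqrt p\rfloor$ cannot all give distinct residues $u-xv\pmod p$, so some nontrivial $(u_0,v_0)$ with $|u_0|,|v_0|<\sqrt p$ satisfies $u_0\equiv xv_0\pmod p$, whence $u_0^2+v_0^2\equiv (x^2+1)v_0^2\equiv 0\pmod p$ and $0<u_0^2+v_0^2<2p$ forces $u_0^2+v_0^2=p$. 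This is the technical heart of the argument; the remaining pieces are assembly.
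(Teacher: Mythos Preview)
The paper does not supply its own proof of this classical number-theoretic result; it merely states the theorem and refers the reader to Chapter~4 of Aigner--Ziegler's \emph{Proofs from THE BOOK}. Your argument is correct and follows a standard route: $p$-adic descent for the ``only if'' direction, and Fermat's two-squares lemma combined with the Brahmagupta--Fibonacci identity for the ``if'' direction. For comparison, the cited Aigner--Ziegler chapter establishes the key lemma (that every prime $p\equiv 1\pmod 4$ is a sum of two squares) via Zagier's involution argument rather than Thue's pigeonhole, but the overall architecture of the proof is the same as yours.
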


\subsubsection{Latin square graphs}
\label{subsubsection: Latin Square Graphs}
The class of Latin square graphs consists of two types: strongly regular graphs and complete graphs.
To facilitate their presentation, two key notions are briefly introduced.

\begin{definition}[Transversal 2-designs]
\label{def: transversal 2-designs}
Let $m \geq 2$ and $n \geq 1$ be integers. A {\em transversal 2-design}
of order $n$ and block size $m$ is a triple $(\set{X}, \mathscr{G}, \mathscr{B})$,
satisfying the following conditions:
\begin{enumerate}
\item $\set{X}$ is a set of $mn$ points.
\item $\mathscr{G} = \{\set{G}_1, \ldots, \set{G}_m\}$ is a partition of $\set{X}$ into $m$ subsets
(called groups), each containing $n$ points.
\item $\mathscr{B}$ is a class of subsets of $\set{X}$ (called blocks) such that
\begin{enumerate}[(a)]
\item Every block $\set{B} \in \mathscr{B}$ contains exactly one point from each group $\set{G}_j$ with $j \in \OneTo{m}$.
\item Every two points, not contained in the same group, belong together to a single block $\set{B} \in \mathscr{B}$.
\end{enumerate}
\end{enumerate}
Such a transversal 2-design is denoted by $\mathrm{TD}(m,n)$.
\end{definition}

\begin{theorem}
\label{thm: on Transversal 2-designs}
Let $(\set{X}, \mathscr{G}, \mathscr{B})$ be a transversal 2-design $\mathrm{TD}(m,n)$. Then,
\begin{enumerate}
\item Every block $\set{B} \in \mathscr{B}$ contains $m$ points.
\item Every point $x \in \set{X}$ belongs to $n$ blocks.
\item Every two distinct blocks $\set{B}, \set{B}' \in \mathscr{B}$ intersect in one element or in no element.
\item $\card{\mathscr{B}} = n^2$.
\end{enumerate}
\end{theorem}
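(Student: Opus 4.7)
The plan is to prove the four items in order, using only the defining properties of a transversal 2-design $\mathrm{TD}(m,n)$ and simple double-counting arguments.

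For Item~1, I would invoke condition~3(a) directly: a block $\set{B} \in \mathscr{B}$ contains exactly one point from each of the $m$ groups $\set{G}_1, \ldots, \set{G}_m$, and since $\{\set{G}_j\}_{j=1}^m$ partitions $\set{X}$, these contributions are disjoint and exhaust $\set{B}$, giving $\card{\set{B}} = m$.

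For Item~2, I would fix a point $x \in \set{G}_i$ for some $i \in \OneTo{m}$ and let $k$ denote the number of blocks containing $x$. I would count in two ways the pairs $(y, \set{B})$ with $y \in \set{X} \setminus \set{G}_i$, $\set{B} \in \mathscr{B}$, and $\{x, y\} \subseteq \set{B}$. On the one hand, by condition~3(b), for each of the $(m-1)n$ points $y$ outside $\set{G}_i$ there is exactly one such block, giving $(m-1)n$ pairs. On the other hand, each of the $k$ blocks through $x$ contributes exactly $m-1$ such points $y$ (one from each of the remaining groups, by Item~1 applied within that block). Equating gives $k(m-1) = (m-1)n$, so $k = n$.

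For Item~3, I would proceed by contradiction. Suppose distinct blocks $\set{B}, \set{B}'$ share two points $x, y$. By condition~3(a), any two points of a common block lie in different groups, so $x$ and $y$ are in distinct groups. But then $\{x,y\}$ is contained in two distinct blocks, violating the uniqueness in condition~3(b). Hence $\card{\set{B} \cap \set{B}'} \leq 1$.

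For Item~4, I would double-count the incidences $I \eqdef \bigcard{\{(x, \set{B}) : x \in \set{B}, \, \set{B} \in \mathscr{B}\}}$. Summing over blocks and using Item~1 gives $I = m \card{\mathscr{B}}$, while summing over points and using Item~2 gives $I = \card{\set{X}} \cdot n = mn \cdot n = mn^2$. Equating yields $\card{\mathscr{B}} = n^2$. None of the steps are delicate; the only subtlety is recognizing that Item~2 must be established before Item~4 so that the incidence count is available, and that Item~1 and Item~2 together make the final double count immediate.
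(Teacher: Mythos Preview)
Your proof is correct and follows the standard elementary approach via direct appeal to the axioms and double counting. Note, however, that the paper does not actually prove this theorem: it is stated in Section~\ref{subsubsection: Latin Square Graphs} as a preliminary background result without proof, so there is no paper proof to compare against. Your argument is the natural one and would serve well as a self-contained justification.
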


\begin{definition}[Latin squares]
\label{def: Latin squares}
A Latin square of order $n$ is an $n \times n$ array filled with $n$ different symbols, each occurring exactly once
in each row and in each column.
\end{definition}
In the continuation, a Latin square of order $n$ is filled with the elements of the set $\OneTo{n} \eqdef \{1, \ldots, n\}$.
\begin{theorem}
\label{theorem: number of Latin squares of a given order}
The number of Latin squares of order $n$, denoted by $L(n)$, satisfies
\begin{align}
\label{eq1: number of Latin squares}
\frac{n!^{2n}}{n^{n^2}} \leq L(n) \leq \prod_{k=1}^n k!^{\frac{n}{k}},
\end{align}
and, consequently, the following exact asymptotic result holds:
\begin{align}
\label{eq2: number of Latin squares}
\lim_{n \to \infty} \frac{L(n)^{\frac{1}{n^2}}}{n} = \frac{1}{\mathrm{e}^2}.
\end{align}
\end{theorem}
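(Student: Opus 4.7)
\bigskip

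\noindent\textbf{Proof Proposal.}
The plan is to derive both bounds on $L(n)$ by counting Latin squares row-by-row via permanents of $0/1$ matrices, and then to extract the asymptotic from Stirling's formula.

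First, I would set up the row-by-row counting. Build a Latin square with symbol set $\OneTo{n}$ one row at a time. After $k-1$ rows have been chosen, each column has $n-k+1$ available symbols; define $B_k$ to be the $n\times n$ $\{0,1\}$-matrix with $(B_k)_{i,j}=1$ iff symbol $j$ is still available in column $i$. Then $B_k$ has all row sums and all column sums equal to $n-k+1$, and the number of admissible $k$-th rows (permutations of $\OneTo{n}$ avoiding already-used entries column-wise) is exactly the permanent $\mathrm{perm}(B_k)$. Consequently
\begin{align*}
L(n)=\sum \prod_{k=1}^{n}\mathrm{perm}(B_k),
\end{align*}
where the sum is over all legal choices of the earlier rows, and crucially the row/column sums of each $B_k$ depend only on $k$, not on the particular history.

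For the upper bound, I would invoke the Bregman--Minc inequality: for any $\{0,1\}$-matrix with row sums $r_1,\ldots,r_n$, the permanent is at most $\prod_i (r_i!)^{1/r_i}$. Applied to $B_k$ (constant row sum $n-k+1$) this yields $\mathrm{perm}(B_k)\le\bigl((n-k+1)!\bigr)^{n/(n-k+1)}$, and substituting $j=n-k+1$ in the product gives $L(n)\le\prod_{j=1}^{n}(j!)^{n/j}$. For the lower bound, normalize: $\tfrac{1}{n-k+1}B_k$ is doubly stochastic, so the Egorychev--Falikman theorem (van der Waerden's conjecture) gives $\mathrm{perm}(B_k)\ge (n-k+1)^{n}\cdot n!/n^{n}$. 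Since this bound is uniform over the history, multiplying over $k$ and noting $\prod_{k=1}^{n}(n-k+1)^{n}=(n!)^{n}$ and that the factor $n!/n^{n}$ appears $n$ times yields $L(n)\ge (n!)^{2n}/n^{n^{2}}$.

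For the asymptotic, I would take logarithms of both sides and apply Stirling: $\log k!=k\log k-k+O(\log k)$. The lower bound gives
\begin{align*}
\log L(n)\ge 2n\log n!-n^{2}\log n=n^{2}\log n-2n^{2}+O(n\log n),
\end{align*}
so $L(n)^{1/n^{2}}/n\ge e^{-2+o(1)}$. For the upper bound, $\sum_{k=1}^{n}(n/k)\log k!=\sum_{k=1}^{n}\bigl(n\log k-n+O((n/k)\log k)\bigr)=n\log n!-n^{2}+O(n\log^{2}n)=n^{2}\log n-2n^{2}+O(n\log^{2}n)$, yielding the matching upper bound $L(n)^{1/n^{2}}/n\le e^{-2+o(1)}$. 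Sandwiching gives the limit $e^{-2}$.

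The main technical obstacle is the lower bound, since it rests on the Egorychev--Falikman theorem, a nontrivial input whose proof lies well outside the scope of this excerpt; in a self-contained treatment one would simply cite it. The remaining ingredients (Bregman--Minc and Stirling) are standard. A minor subtlety worth flagging is that the lower-bound argument needs the permanent bound to hold uniformly over the choice of previous rows, which is what makes the van der Waerden bound (depending only on the common row sum) the right tool; a direct inductive argument could fail if one instead tried to lower-bound a particular $\mathrm{perm}(B_k)$ depending on history.
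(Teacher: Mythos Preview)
Your proposal is correct and follows exactly the standard argument that the paper invokes: the paper's own ``proof'' is simply a citation to Chapter~37 of Aigner--Ziegler, \emph{Proofs from THE BOOK}, and the argument there is precisely row-by-row counting via permanents, with Bregman--Minc for the upper bound, the van der Waerden / Egorychev--Falikman bound for the lower bound, and Stirling for the asymptotic. Your write-up is a faithful reconstruction of that proof, including the correct observation that both permanent bounds depend only on the common row sum $n-k+1$ and hence apply uniformly over the history.
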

\begin{proof}
See, e.g., Chapter~37 (pages~266-267) of \cite{AignerZ18}.
\end{proof}

\begin{definition}[Orthogonal Latin squares]
\label{def: orthogonal Latin squares}
Two Latin squares $\mathbf{A} = (a_{i,j})$ and $\mathbf{B} = (b_{i,j})$ of order $n$ are said to be
{\em orthogonal Latin squares} if for all $x,y \in \OneTo{n}$, there exists a unique position
$(i,j) \in \OneTo{n} \times \OneTo{n}$ such that $a_{i,j} = x$ and $b_{i,j}=y$. A set of Latin squares
of the same order are said to be {\em mutually orthogonal} if every pair of squares within the set are orthogonal.
\end{definition}

\begin{theorem}[On mutually orthogonal Latin squares]
\label{thm:mutually orthognal Latin squares}
Let $n \in \naturals$, and denote by $N(n)$ the largest number of mutually orthogonal Latin squares
of order $n$.
\begin{enumerate}
\item If $n$ is a prime power, then $N(n) = n-1$. In general, $N(n) \leq n-1$.
\item The existence of $m-2$ mutually orthogonal Latin squares of order $n$ is equivalent to the
existence of a transversal 2-design of order $n$ and block size $m$, namely $\mathrm{TD}(m,n)$.
\end{enumerate}
\end{theorem}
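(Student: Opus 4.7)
My plan is to treat the two items separately, since they rely on quite different ideas. For the upper bound $N(n) \leq n-1$ in Item~1, I would proceed by a normalization argument: given any collection of $k$ mutually orthogonal Latin squares $L_1,\ldots,L_k$ on the symbol set $\OneTo{n}$, I may relabel the symbols within each square so that its first row reads $(1,2,\ldots,n)$; relabeling does not affect orthogonality. I would then examine the entries $L_1(2,1),\ldots,L_k(2,1)$. Because column~$1$ already contains the symbol~$1$ in its first position, each of these values lies in $\{2,\ldots,n\}$. Moreover, if $L_i(2,1) = L_j(2,1) = a$ for distinct $i,j$, then the ordered pair $(a,a)$ appears both at cell $(2,1)$ and at cell $(1,a)$, contradicting the orthogonality of $L_i$ and $L_j$. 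Hence the $k$ values are distinct elements of $\{2,\ldots,n\}$, forcing $k \leq n-1$.

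For the matching construction when $n = p^r$ is a prime power, I would invoke the field $\Field_n$ of $n$ elements, indexing rows and columns by its elements. For each nonzero $\alpha \in \Field_n$ define $L_\alpha(x,y) \eqdef \alpha x + y$. Each $L_\alpha$ is a Latin square because fixing a row makes $y \mapsto \alpha x + y$ a bijection of $\Field_n$, and fixing a column makes $x \mapsto \alpha x + y$ a bijection since $\alpha \neq 0$. For $\alpha \neq \beta$, orthogonality of $L_\alpha$ and $L_\beta$ reduces to showing that the linear system $\alpha x + y = u$, $\beta x + y = v$ has a unique solution for every $(u,v)$, which is immediate as its determinant is $\beta - \alpha \neq 0$. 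This yields $n-1$ mutually orthogonal Latin squares, which together with the upper bound established above gives $N(n) = n-1$.

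For Item~2, I would build a bijective correspondence between the two combinatorial objects. Starting from $m-2$ MOLS $L_1,\ldots,L_{m-2}$ of order $n$, I would take point set $\set{X} = \OneTo{n} \times \OneTo{m}$, groups $\set{G}_k \eqdef \OneTo{n} \times \{k\}$ for $k \in \OneTo{m}$, and for every cell $(i,j) \in \OneTo{n}^2$ introduce the block $\set{B}_{i,j} \eqdef \bigl\{(i,1),\,(j,2),\,(L_1(i,j),3),\,\ldots,\,(L_{m-2}(i,j),m)\bigr\}$. Verifying that this yields a $\mathrm{TD}(m,n)$ amounts to checking that any two points from distinct groups lie in exactly one block: for a pair drawn from $\set{G}_1$ and $\set{G}_2$ the containing cell is immediate; for a pair involving one of these two groups and a symbol group, the Latin-square property of the relevant $L_k$ identifies the unique cell; for two symbol-group points, uniqueness is precisely pairwise orthogonality. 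Conversely, given a $\mathrm{TD}(m,n)$, I would label the points within each group by $\OneTo{n}$ and, for $k \in \{3,\ldots,m\}$, define $L_{k-2}(i,j)$ to be the label of the unique point of $\set{G}_k$ lying in the (unique) block containing $(i,1)$ and $(j,2)$; the Latin-square and mutual orthogonality properties of the resulting squares then read off directly from the design axioms in Definition~\ref{def: transversal 2-designs}.

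In my estimation, the main obstacle is not any single deep step but rather the bookkeeping in Item~2: one must verify multiple cases depending on which two groups the chosen pair of points belong to, and in each case confirm both existence and uniqueness of the common block, while keeping the identification between cells, blocks, and symbol labels consistent in both directions of the correspondence. By contrast, the pigeonhole/normalization argument for the upper bound in Item~1 and the determinant check underlying the prime-power construction are essentially one-line verifications once the setup is in place.
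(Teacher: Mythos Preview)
The paper does not prove this theorem; it is stated as a classical preliminary result (in Section~\ref{subsubsection: Latin Square Graphs}) with no proof given, and is used only to set up the definition of Latin square graphs. Your proposal is correct and is exactly the standard textbook argument: the normalization/pigeonhole bound $N(n)\le n-1$, the finite-field construction $L_\alpha(x,y)=\alpha x+y$ over $\Field_n$ for the prime-power case, and the usual bijective correspondence between $m-2$ MOLS and a $\mathrm{TD}(m,n)$ via the block $\set{B}_{i,j}=\{(i,1),(j,2),(L_1(i,j),3),\ldots,(L_{m-2}(i,j),m)\}$. There is nothing to compare against in the paper itself, and your argument stands on its own.
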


\begin{definition}[Latin square graphs]
\label{def: Latin square graphs}
Let $(\set{X}, \mathscr{G}, \mathscr{B})$ be a $\mathrm{TD}(m,n)$ transversal 2-design with integers
$m \geq 2$ and $n \geq 1$. The associated {\em Latin square graph} $\Gr{G} = (\mathsf{V}, \mathsf{E})$
has the vertex set $\mathsf{V} = \mathscr{B}$, and each two distinct vertices $\set{B}, \set{B}' \in \mathscr{B}$
are adjacent if $\card{\set{B} \cap \set{B}'} = 1$.
This graph is said to be an $\LSG{m}{n}$-graph.
\end{definition}

The following result characterizes Latin square graphs as either strongly regular or complete graphs.
\begin{theorem}
\label{thm:Latin square graphs are srg}
Let $\Gr{G}$ be an $\LSG{m}{n}$-graph with integers $m$ and $n$ such that $2 \leq m \leq n+1$.
\begin{enumerate}
\item If $2 \leq m \leq n$, then $\Gr{G}$ is strongly regular
$\srg{n^2}{\, m(n-1)}{\, m^2-3m+n}{\, m(m-1)}$.
\item If $m=n+1$, then $\Gr{G}$ is isomorphic to the complete graph $\CoG{n^2}$.
\end{enumerate}
\end{theorem}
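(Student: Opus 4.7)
The plan is to verify directly from the transversal 2-design axioms that the graph $\Gr{G} = \LSG{m}{n}$ has the claimed parameters, using the counting properties collected in Theorem~\ref{thm: on Transversal 2-designs}. The number of vertices is $\card{\mathsf{V}} = \card{\mathscr{B}} = n^2$ by Item~4 of that theorem. To compute the degree, I would fix a block $\set{B} \in \mathscr{B}$ and observe that a distinct block $\set{B}'$ is adjacent to $\set{B}$ iff they meet in one point. For each of the $m$ points $x \in \set{B}$ there are $n-1$ further blocks through $x$, and these blocks are pairwise distinct across choices of $x$ because two distinct blocks intersect in at most one point (Item~3). Hence the degree equals $m(n-1)$. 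In the boundary case $m = n+1$, this degree equals $(n+1)(n-1) = n^2-1$, so every vertex is joined to every other and $\Gr{G} \cong \CoG{n^2}$, proving Part~(2). For $2 \leq m \leq n$ with $n \geq 2$ the degree satisfies $0 < m(n-1) < n^2-1$, so $\Gr{G}$ is neither empty nor complete and Definition~\ref{definition: strongly regular graphs} applies.

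For $\lambda$, I would pick two adjacent blocks $\set{B}, \set{B}'$ with $\set{B} \cap \set{B}' = \{x\}$ and partition the common neighbors $\set{B}''$ into two types according to whether $x \in \set{B}''$ or not. Blocks through $x$ distinct from $\set{B}$ and $\set{B}'$: there are exactly $n-2$ of them, and each intersects both $\set{B}$ and $\set{B}'$ only in $x$ by the $\leq 1$ intersection rule. Blocks not through $x$: such a $\set{B}''$ meets $\set{B}$ in a unique point $y \neq x$ lying in some non-$x$-group $\set{G}_i$, and meets $\set{B}'$ in a unique point $z \neq x$ lying in some non-$x$-group $\set{G}_j$; necessarily $i \neq j$, otherwise $y, z$ would be in the same group, contradicting $\card{\set{B}'' \cap \set{G}_i}=1$. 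Conversely, for each such ordered pair $(y,z)$ with $i \neq j$, there is a unique block through $y$ and $z$ (by Definition~\ref{def: transversal 2-designs}), and one checks it is not $\set{B}, \set{B}'$, and does not contain $x$ (else intersection with $\set{B}$ or $\set{B}'$ would exceed 1). This yields $(m-1)(m-2)$ such blocks, and summing gives $\lambda = (n-2) + (m-1)(m-2) = m^2 - 3m + n$.

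For $\mu$, take $\set{B}, \set{B}'$ nonadjacent, i.e.\ $\set{B} \cap \set{B}' = \es$ (by Item~3 of Theorem~\ref{thm: on Transversal 2-designs} the only alternative to sharing one point). A common neighbor $\set{B}''$ meets $\set{B}$ in a unique point $y$ and $\set{B}'$ in a unique point $z$, necessarily lying in distinct groups. For each of the $m$ choices of $y \in \set{B}$, the group of $y$ is excluded from the group of $z$, leaving $m-1$ choices of $z \in \set{B}'$, and the resulting unique block through $\{y,z\}$ is easily seen to differ from $\set{B}$ and $\set{B}'$ (as $\set{B} \cap \set{B}' = \es$). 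Different ordered pairs give different blocks, because $\set{B}'' \cap \set{B}$ and $\set{B}'' \cap \set{B}'$ recover $y$ and $z$. Thus $\mu = m(m-1)$, completing the parameter tuple in Part~(1).

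The only step requiring real care is the $\lambda$ count, where one must correctly enumerate ordered pairs $(y_i, z_j)$ with $i \neq j$ and argue the resulting block is genuinely a common neighbor disjoint from the already-counted blocks through $x$; all other steps are bookkeeping on Items~1--4 of Theorem~\ref{thm: on Transversal 2-designs}. I anticipate no deeper obstacle, since the design axioms supply exactly the incidence data needed. A brief sanity check that $(n^2 - 1 - m(n-1)) \, \mu = m(n-1) \cdot (m(n-1) - 1 - \lambda)$ matches the relation \eqref{eq:relation-srg} can be inserted at the end as verification.
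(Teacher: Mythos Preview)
Your proof is correct and follows the standard direct verification from the transversal 2-design axioms. The paper, however, does not supply a proof of this theorem at all: it is stated as background material in the preliminaries (Section~\ref{subsubsection: Latin Square Graphs}) and simply invoked later when computing the Lov\'{a}sz $\vartheta$-function of Latin square graphs. So there is no ``paper's own proof'' to compare against here; your argument fills in what the paper takes for granted, and does so cleanly.
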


\begin{example}[Lattice graphs]
\label{example:Lattice graphs}
For $n \geq 2$, the lattice graph on $n^2$ vertices is a Latin square graph $\LSG{2}{n}$ with vertex set
$\OneTo{n} \times \OneTo{n}$, where distinct vertices $(a,b)$ and $(c,d)$ are adjacent if $a=c$ or $b=d$.
Two vertices in $\LSG{2}{n}$ are adjacent if they lie in
the same row or column in the Latin square of order $n$. By Theorem~\ref{thm:Latin square graphs are srg} with $m=2$,
it is a strongly regular graph with parameters $\srg{n^2}{2(n-1)}{n-2}{2}$.
\end{example}

\begin{example}[Latin square graphs with block size 3]
\label{example:Latin square graphs with block size 3}
The Latin square graph $\LSG{3}{n}$, with $n \geq 2$, is constructed as follows.
The vertices in the graph represent the $n^2$ cells of the Latin square of order $n$,
and two vertices are adjacent if they lie in the same row or column, or if the cells
of these vertices hold the same element in $\OneTo{n}$. It is a specialization of the
family of Latin square graphs $\LSG{m}{n}$ with $m=3$, referring to the row, column
and symbol in the cell as the conditions for adjacency of any two vertices.
By Theorem~\ref{thm:Latin square graphs are srg} with $m=3$, it is a strongly regular
graph with parameters $\srg{n^2}{3(n-1)}{n}{6}$.
\end{example}

\subsubsection{Symplectic polar graphs}
\label{subsubsection: Symplectic Polar Graphs}
The symplectic polar graphs are a parametric family of strongly regular graphs with parameters
$\srg{v}{k}{\lambda}{\mu}$ given by
\begin{align}
\label{eq1: symplectic}
& v = \frac{q^{2n}-1}{q-1}, \\[0.1cm]
\label{eq2: symplectic}
& k = \frac{q(q^{2n-2}-1)}{q-1}, \\[0.1cm]
\label{eq3: symplectic}
& \lambda = \frac{q^{2n-2}-1}{q-1} - 2, \\[0.1cm]
\label{eq4: symplectic}
& \mu = \frac{q^{2n-2}-1}{q-1},
\end{align}
for all $n \in \naturals$ and $q \geq 2$ that is a prime power, so
$\lambda = \mu-2$ and $\mu = \frac{k}{q}$. That symplectic polar graph
is denoted by $\Sp(2n,q)$ (see Section 2.5 in \cite{BrouwerM22}).
The clique number of $\Sp(2n,q)$ is equal to $\frac{q^n-1}{q-1}$.
By Theorem~\ref{theorem: eigenvalues of srg} and the parameters of the strongly regular graph
in \eqref{eq1: symplectic}--\eqref{eq4: symplectic}, the eigenvalues of the symplectic polar graph
$\Sp(2n,q)$ (with respect to its adjacency matrix) are given by
\begin{itemize}
\item $\lambda_1 = k = \dfrac{q(q^{2n-2}-1)}{q-1}$ (the largest eigenvalue)
with multiplicity 1;
\item $\lambda_2 = r$ (the second-largest eigenvalue) with multiplicity $f$;
\item $\lambda_{\min} = s$ (the smallest eigenvalue) with multiplicity $g$,
\end{itemize}
where
\begin{align}
\label{eq: r - symplectic}
& r = q^{n-1}-1, \qquad f = \frac12 \, \biggl( \frac{q^{2n}-q}{q-1} + q^n \biggr), \\[0.2cm]
\label{eq: s - symplectic}
& s = -q^{n-1}-1, \qquad g = \frac12 \, \biggl( \frac{q^{2n}-q}{q-1} - q^n \biggr).
\end{align}

\subsection{New Results on the Shannon Capacity of Graphs}
\label{subsection: New Results on Shannon Capacity of Graphs}

In this subsection, some new results are presented in regard to the Shannon capacity of graphs.
The next theorem strengthens and extends Theorem~\ref{thm: Lovasz - s.c.-v.t. capacity}
(see Theorem~12 in \cite{Lovasz79_IT}).
\begin{theorem}[On the Shannon capacity of graphs]
\label{thm:extension of Thm. 12 by Lovasz}
Let $\Gr{G}$ be an undirected and simple graph on $n$ vertices.
\begin{enumerate}
\item  \label{item 1: extension of Thm. 12 by Lovasz}
If $\Gr{G}$ is a vertex-transitive or strongly regular graph, then
\begin{align}
\indnum{\Gr{G} \boxtimes \CGr{G}} = \Theta(\Gr{G} \boxtimes \CGr{G}) = \vartheta(\Gr{G} \boxtimes \CGr{G}) = n.
\end{align}
\item  \label{item 2: extension of Thm. 12 by Lovasz}
If $\Gr{G}$ is a conference graph, then $\, \vartheta(\Gr{G}) = \sqrt{n}$.
\item  \label{item 3: extension of Thm. 12 by Lovasz}
If $\Gr{G}$ is a self-complementary graph with $\indnum{\Gr{G}} = k$, then
\begin{align}
\label{eq: capacity bounds for s.c. graphs}
\sqrt{n} \leq \Theta(\Gr{G}) \leq 16 \, n^{\frac{k-1}{k+1}}.
\end{align}
\item   \label{item 4: extension of Thm. 12 by Lovasz}
If $\Gr{G}$ is a self-complementary graph that is vertex-transitive or strongly regular, then
\begin{align}
& \Theta(\Gr{G}) = \sqrt{n} = \vartheta(\Gr{G}), \label{eq1:13.03.24} \\
& \sqrt{\indnum{\Gr{G} \boxtimes \Gr{G}}} = \Theta(\Gr{G}).  \label{eq2:13.03.24}
\end{align}
Hence, the minimum Shannon capacity among all self-complementary graphs of a fixed order $n$
is achieved by those that are vertex-transitive or strongly regular, and this minimum is equal to $\sqrt{n}$.
\end{enumerate}
\end{theorem}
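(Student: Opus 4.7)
The plan is to assemble four ingredients already available in the preliminaries: (i) the multiplicativity $\vartheta(\Gr{G}\boxtimes\Gr{H})=\vartheta(\Gr{G})\vartheta(\Gr{H})$ of \eqref{eq: factorization}; (ii) Corollary~\ref{corollary:identity for the Lovasz function of srg and its complement}, which gives $\vartheta(\Gr{G})\vartheta(\CGr{G})=n$ for vertex-transitive or strongly regular graphs; (iii) the sandwich $\indnum{\Gr{H}}\le\Theta(\Gr{H})\le\vartheta(\Gr{H})$ and the $k=2$ case $\Theta(\Gr{G})\ge\sqrt{\indnum{\Gr{G}\boxtimes\Gr{G}}}$ of \eqref{eq1:graph capacity}; and (iv) a ``diagonal'' independence lemma, to be verified from Definition~\ref{def:strong product of graphs}: for every graph $\Gr{G}$ on $n$ vertices, the set $\Delta\eqdef\{(v,v):v\in\V{\Gr{G}}\}$ is independent in $\Gr{G}\boxtimes\CGr{G}$. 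Indeed, for distinct $u,v\in\V{\Gr{G}}$, the three strong-product adjacency conditions for $(u,u)$ and $(v,v)$ each collapse: the first requires $u=v$, the second requires $u=v$, and the third requires $\{u,v\}\in\E{\Gr{G}}\cap\E{\CGr{G}}=\varnothing$. Hence $\indnum{\Gr{G}\boxtimes\CGr{G}}\ge n$ for \emph{every} graph $\Gr{G}$.

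\textbf{Parts (1) and (2).} Part~(1) then closes at once: by (i) and (ii), $\vartheta(\Gr{G}\boxtimes\CGr{G})=\vartheta(\Gr{G})\vartheta(\CGr{G})=n$ whenever $\Gr{G}$ is vertex-transitive or strongly regular, and combining (iv) with the sandwich gives the chain $n\le\indnum{\Gr{G}\boxtimes\CGr{G}}\le\Theta(\Gr{G}\boxtimes\CGr{G})\le\vartheta(\Gr{G}\boxtimes\CGr{G})=n$, forcing all three quantities to equal $n$. Part~(2) will be a direct substitution into Theorem~\ref{thm:Lovasz function of srg}: with the conference-graph parameters $(d,\lambda,\mu)=\bigl(\tfrac{n-1}{2},\tfrac{n-5}{4},\tfrac{n-1}{4}\bigr)$ one has $\mu-\lambda=1$ and $d-\mu=\tfrac{n-1}{4}$, so \eqref{eq: t parameter - srg} yields $t=\sqrt{n}$ and \eqref{eq: Lovasz-theta srg} telescopes to $\vartheta(\Gr{G})=\tfrac{n(\sqrt{n}+1)}{\sqrt{n}(\sqrt{n}+1)}=\sqrt{n}$.

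\textbf{Parts (3) and (4).} Self-complementarity is the lever that transfers the diagonal bound on $\Gr{G}\boxtimes\CGr{G}$ to $\Gr{G}\boxtimes\Gr{G}$: since $\Gr{G}\cong\CGr{G}$ implies $\Gr{G}\boxtimes\Gr{G}\cong\Gr{G}\boxtimes\CGr{G}$, one obtains $\indnum{\Gr{G}^{\boxtimes 2}}\ge n$, and then (iii) gives the lower bound $\Theta(\Gr{G})\ge\sqrt{n}$ in \eqref{eq: capacity bounds for s.c. graphs}. The upper bound there is $\Theta(\Gr{G})\le\vartheta(\Gr{G})\le 16\,n^{(k-1)/(k+1)}$ via \eqref{eq12:11.10.23}, closing Part~(3). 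For Part~(4), self-complementarity also gives $\vartheta(\Gr{G})=\vartheta(\CGr{G})$, which combined with (ii) forces $\vartheta(\Gr{G})^{2}=n$, so $\vartheta(\Gr{G})=\sqrt{n}$; together with the lower bound just obtained, the sandwich squeezes $\Theta(\Gr{G})=\sqrt{n}=\vartheta(\Gr{G})$, which is \eqref{eq1:13.03.24}. Then \eqref{eq2:13.03.24} follows because the chain $\sqrt{n}\le\sqrt{\indnum{\Gr{G}^{\boxtimes 2}}}\le\Theta(\Gr{G})=\sqrt{n}$ now collapses to equalities, and the final sentence about minimality over all self-complementary graphs of order $n$ is an immediate consequence of the universal lower bound from Part~(3) together with the attainment in Part~(4).

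\textbf{Main obstacle.} The argument is essentially bookkeeping of established identities; the one non-routine step is the diagonal independence lemma in (iv). It is elementary to verify but it carries the entire weight of the theorem, as it is the sole mechanism converting the multiplicative $\vartheta$-identity on a product into a Shannon-capacity equality for the individual graph in the self-complementary case.
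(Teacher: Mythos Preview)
Your proof is correct and follows essentially the same route as the paper's: the diagonal independence lemma $\indnum{\Gr{G}\boxtimes\CGr{G}}\ge n$, the factorization \eqref{eq: factorization}, and the identity $\vartheta(\Gr{G})\vartheta(\CGr{G})=n$ from Corollary~\ref{corollary:identity for the Lovasz function of srg and its complement} are combined exactly as in the paper, and the self-complementarity transfer $\Gr{G}\boxtimes\Gr{G}\cong\Gr{G}\boxtimes\CGr{G}$ is likewise identical. The only minor variation is in Part~(2): the paper argues that a conference graph $\Gr{G}$ and its complement $\CGr{G}$ share the same $\SRG$ parameters, whence $\vartheta(\Gr{G})=\vartheta(\CGr{G})$ by Theorem~\ref{thm:Lovasz function of srg}, and then $\vartheta(\Gr{G})=\sqrt{n}$ follows from the product identity; you instead substitute the conference parameters directly into \eqref{eq: Lovasz-theta srg} and compute $\vartheta(\Gr{G})=\sqrt{n}$ explicitly. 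Both routes are equally short and valid.
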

\begin{proof}
See Section~\ref{subsubsection: proof of extended theorem by Lovasz}.
\end{proof}

\begin{remark}[Discussion on Theorem~\ref{thm:extension of Thm. 12 by Lovasz}]
\label{remark: discussion - self-complementary graphs}
The following are comments in regard to Theorem~\ref{thm:extension of Thm. 12 by Lovasz}.
\begin{enumerate}
\item Item~\ref{item 1: extension of Thm. 12 by Lovasz} of Theorem~\ref{thm:extension of Thm. 12 by Lovasz}
extends Theorem~\ref{thm: Lovasz - s.c.-v.t. capacity} (i.e., Theorem~12 in \cite{Lovasz79_IT})
since it holds for all strongly regular graphs, in addition to vertex-transitive graphs. In general,
strongly regular graphs are not necessarily vertex-transitive (see Remark~\ref{remark: srg are not necessarily v.t.}).
\item In regard to Item~\ref{item 1: extension of Thm. 12 by Lovasz} of Theorem~\ref{thm:extension of Thm. 12 by Lovasz},
it should be noted that the complement of a vertex-transitive graph is vertex-transitive, and the complement of a strongly
regular graph is strongly regular. The strong product of vertex-transitive graphs is vertex-transitive,
whereas the strong product of strongly regular graphs is not necessarily strongly regular,
but only regular; more explicitly, the strong product of $d_1$-regular and $d_2$-regular graphs
is $d$-regular with $d = (1+d_1)(1+d_2)-1$. If $\Gr{G}$ is a strongly regular graph that
is not vertex-transitive, then the regular graph $\Gr{G} \boxtimes \CGr{G}$
may not be vertex-transitive nor strongly regular. As a concrete example, let $\Gr{G}$ be the Gritsenko
graph \cite{Gritsenko01}.
The graph $\Gr{G}$, a conference graph on 65~vertices, is not vertex- or edge-transitive.
Utilizing the SageMath software, it can be verified that the strong product $\Gr{G} \boxtimes \CGr{G}$
is not a strongly regular graph, nor is it vertex- or edge-transitive.
\item Item~\ref{item 2: extension of Thm. 12 by Lovasz} of Theorem~\ref{thm:extension of Thm. 12 by Lovasz} states that
the Lov\'{a}sz $\vartheta$-function of a conference graph on $n$ vertices is equal to $\sqrt{n}$. For a conference graph
$\Gr{G}$ on $n$ vertices, $d = \tfrac12(n-1)$ and $\lambda_{\min}(\Gr{G}) = -\tfrac12 (1+\sqrt{n})$,
so, by Theorem 9 in \cite{Lovasz79_IT} (see \eqref{eq: Lovasz79 - Theorem 9}), $\vartheta(\Gr{G}) \leq \sqrt{n}$
with an equality if the regular graph $\Gr{G}$ is edge-transitive.
Item~\ref{item 2: extension of Thm. 12 by Lovasz} of Theorem~\ref{thm:extension of Thm. 12 by Lovasz}
shows that the rightmost equality in \eqref{eq1:13.03.24} holds for all conference graphs, regardless of the
edge-transitivity property of the graph. There are many conference graphs that are not vertex-transitive, nor edge-transitive
(for a concrete example, see Remark~\ref{remark: srg are not necessarily v.t.}).
\item In regard to Item~\ref{item 4: extension of Thm. 12 by Lovasz} of Theorem~\ref{thm:extension of Thm. 12 by Lovasz},
recall that self-complementary vertex-transitive graphs and self-complementary strongly regular graphs are not equivalent
classes (see Remark~\ref{remark: regular self-complementary graphs}).
\end{enumerate}
\end{remark}

The next two theorems provide the Shannon capacity of two infinite subclasses
of strongly regular graphs (see Sections~\ref{subsubsection: Latin Square Graphs}
and~\ref{subsubsection: Symplectic Polar Graphs} for the presentation of these
families of graphs).
\begin{theorem}[The Shannon capacity of complements of Latin square graphs]
\label{thm:Latin square graphs}
Let $\Gr{G} = \LSG{m}{n}$ be a Latin square graph with $m, n \in \naturals$ and
$2 \leq m < n$. Then,
\begin{align}
\label{eq:18.08.23}
\indnum{\CGr{G}} = \Theta(\CGr{G}) = \vartheta(\CGr{G}) = n.
\end{align}
\end{theorem}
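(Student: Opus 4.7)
The plan is to combine the closed-form formula for the Lov\'asz $\vartheta$-function of strongly regular graphs (Theorem~\ref{thm:Lovasz function of srg}) with an explicit construction of a large clique in $\Gr{G} = \LSG{m}{n}$, and then squeeze the Shannon capacity via the sandwich $\indnum{\CGr{G}} \leq \Theta(\CGr{G}) \leq \vartheta(\CGr{G})$.

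First I would invoke Theorem~\ref{thm:Latin square graphs are srg}: since $2 \leq m < n$, the graph $\Gr{G}$ is strongly regular with parameters $\srg{n^{2}}{\,m(n-1)}{\,m^{2}-3m+n}{\,m(m-1)}$. Feeding these into Theorem~\ref{thm:Lovasz function of srg} gives the numerology $\mu-\lambda = 2m-n$ and $d-\mu = m(n-m)$, so that
\begin{align*}
t^{2} = (\mu-\lambda)^{2}+4(d-\mu) = (2m-n)^{2}+4m(n-m) = n^{2},
\end{align*}
hence $t=n$ (note $t \geq 0$). Then $t+\mu-\lambda = 2m$, and the formula for the complement yields
\begin{align*}
\vartheta(\CGr{G}) \;=\; 1+\frac{2d}{t+\mu-\lambda} \;=\; 1+\frac{2m(n-1)}{2m} \;=\; n.
\end{align*}

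Next I would produce an independent set of size $n$ in $\CGr{G}$ by exhibiting a clique of size $n$ in $\Gr{G}$. Recall that the vertices of $\LSG{m}{n}$ are the blocks of a transversal $2$-design $\mathrm{TD}(m,n)$. By Theorem~\ref{thm: on Transversal 2-designs}, every point $p \in \set{X}$ lies in exactly $n$ blocks, and any two distinct blocks share at most one point. Thus the $n$ blocks through $p$ pairwise intersect in exactly $\{p\}$, so they form a clique of size $n$ in $\Gr{G}$. Consequently $\indnum{\CGr{G}} = \clnum{\Gr{G}} \geq n$.

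Finally, I combine these two inputs with the sandwich theorem (item~2 in Section~\ref{subsection: Lovasz theta-function}) and the Lov\'asz bound $\Theta \leq \vartheta$ (Eq.~\eqref{eq: Lovasz 79 - theorem 1}):
\begin{align*}
n \;\leq\; \indnum{\CGr{G}} \;\leq\; \Theta(\CGr{G}) \;\leq\; \vartheta(\CGr{G}) \;=\; n,
\end{align*}
which forces all quantities to equal $n$ and proves~\eqref{eq:18.08.23}. There is no genuine obstacle here: the only nontrivial step is the algebraic simplification giving $t=n$, which is the identity that makes the $\vartheta$-computation coincide exactly with the size of the natural ``pencil of blocks through a point'' clique, thereby certifying tightness. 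The hypothesis $m<n$ is used only to ensure that $\Gr{G}$ is strongly regular rather than complete, so that Theorem~\ref{thm:Lovasz function of srg} applies.
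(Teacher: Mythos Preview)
Your proof is correct and follows essentially the same approach as the paper: both arguments compute $t=n$ from the strongly regular parameters, obtain $\vartheta(\CGr{G})=n$ (you directly via \eqref{eq: Lovasz-theta srg comp.}, the paper via \eqref{eq: Lovasz-theta srg} combined with the product identity $\vartheta(\Gr{G})\vartheta(\CGr{G})=n^{2}$), exhibit an $n$-clique in $\Gr{G}$ (you explicitly as the blocks through a fixed point, the paper by noting each edge lies in such a clique), and then squeeze via $\indnum{\CGr{G}}\leq\Theta(\CGr{G})\leq\vartheta(\CGr{G})$.
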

\begin{proof}
See Section~\ref{subsubsection: proof of theorem on capacity of complements of Latin square graphs}.
\end{proof}

\begin{theorem}[Graph invariants of symplectic polar graphs and their complements]
\label{thm:on the symplectic graphs}
Let $\Gr{G}$ be the symplectic polar graph $\Sp(2n,q)$, where $n \in \naturals$ and
$q \geq 2$ is a power of a prime. Then, the following holds:
\begin{enumerate}
\item
The independence number, Shannon capacity, and Lov\'{a}sz $\vartheta$-function of the complement graph
$\CGr{G}$ coincide, and they are given by
\begin{align}
\label{eq1:02.08.23}
\indnum{\CGr{G}} = \Theta(\CGr{G}) = \vartheta(\CGr{G}) = \frac{q^n-1}{q-1}.
\end{align}
\item The Lov\'{a}sz $\vartheta$-function of $\Gr{G}$, the chromatic number of $\CGr{G}$, and the
fractional chromatic number of $\CGr{G}$ coincide, and they are given by
\begin{align}
\label{eq2:02.08.23}
\vartheta(\Gr{G}) = \fchrnum{\CGr{G}} = \chrnum{\CGr{G}} = q^n + 1.
\end{align}
\end{enumerate}
\end{theorem}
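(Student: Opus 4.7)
The plan is to compute $\vartheta(\CGr{G})$ and $\vartheta(\Gr{G})$ directly from Theorem~\ref{thm:Lovasz function of srg}, and then obtain each of the two assertions by pinching the combinatorial quantities between these two values via the sandwich theorem \eqref{eq1a: sandwich}--\eqref{eq1b: sandwich}. The only genuinely nontrivial ingredient will be an explicit partition of $\V{\Gr{G}}$ into $q^n+1$ maximum cliques, which is needed to upper-bound $\chrnum{\CGr{G}}$ in the second assertion.

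From the SRG parameters \eqref{eq1: symplectic}--\eqref{eq4: symplectic} one reads off $\mu-\lambda=2$ and $d-\mu=q^{2n-2}-1$, so \eqref{eq: t parameter - srg} gives $t=2q^{n-1}$. Substituting into \eqref{eq: Lovasz-theta srg comp.} and simplifying yields $\vartheta(\CGr{G})=\tfrac{q^n-1}{q-1}$. Since $\Gr{G}$ is strongly regular, the identity $\vartheta(\Gr{G})\,\vartheta(\CGr{G})=v$ in Corollary~\ref{corollary:identity for the Lovasz function of srg and its complement} then delivers $\vartheta(\Gr{G})=\tfrac{q^{2n}-1}{q^n-1}=q^n+1$; alternatively one may substitute directly into \eqref{eq: Lovasz-theta srg}. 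For \eqref{eq1:02.08.23} I would then invoke the fact recorded in Section~\ref{subsubsection: Symplectic Polar Graphs} that $\clnum{\Gr{G}}=\tfrac{q^n-1}{q-1}$, so $\indnum{\CGr{G}}=\clnum{\Gr{G}}$ already matches $\vartheta(\CGr{G})$, and the sandwich $\indnum{\CGr{G}}\le\Theta(\CGr{G})\le\vartheta(\CGr{G})$ from \eqref{eq1a: sandwich} collapses to equality.

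For \eqref{eq2:02.08.23}, the sandwich \eqref{eq1b: sandwich} applied to $\CGr{G}$ already gives $\vartheta(\Gr{G})\le\fchrnum{\CGr{G}}\le\chrnum{\CGr{G}}$, so it suffices to exhibit a partition of $\V{\Gr{G}}$ into $q^n+1$ cliques of $\Gr{G}$, equivalently a proper coloring of $\CGr{G}$ with $q^n+1$ colors. This is the main obstacle, and my plan is to produce one from a Desarguesian symplectic spread. I would identify $\mathbb{F}_q^{2n}$ with $\mathbb{F}_{q^n}^2$ (viewing $\mathbb{F}_{q^n}$ as an $n$-dimensional $\mathbb{F}_q$-vector space) and equip the space with the non-degenerate alternating $\mathbb{F}_q$-bilinear form
\[ B\bigl((x_1,y_1),(x_2,y_2)\bigr)=\mathrm{tr}_{\mathbb{F}_{q^n}/\mathbb{F}_q}(x_1y_2-x_2y_1). \]
Since a non-degenerate symplectic form on $\mathbb{F}_q^{2n}$ is unique up to isomorphism, this loses no generality. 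The subspaces
\[ L_t=\{(x,tx):x\in\mathbb{F}_{q^n}\}\;(t\in\mathbb{F}_{q^n}),\qquad L_\infty=\{(0,y):y\in\mathbb{F}_{q^n}\} \]
are $q^n+1$ distinct $n$-dimensional $\mathbb{F}_q$-subspaces that partition the nonzero vectors of $\mathbb{F}_q^{2n}$, and commutativity of $\mathbb{F}_{q^n}$ makes each of them totally isotropic for $B$. Their projective images therefore form $q^n+1$ maximum cliques of $\Gr{G}$ that partition $\V{\Gr{G}}$, yielding $\chrnum{\CGr{G}}\le q^n+1$ and closing the chain. Once this spread is in place, the remaining work is routine manipulation of the SRG formulas and the sandwich bounds.
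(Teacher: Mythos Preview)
Your proof is correct and follows the paper's approach almost exactly: compute $t=2q^{n-1}$ from the SRG parameters, obtain $\vartheta(\CGr{G})=\tfrac{q^n-1}{q-1}$ from Theorem~\ref{thm:Lovasz function of srg}, recover $\vartheta(\Gr{G})=q^n+1$ via Corollary~\ref{corollary:identity for the Lovasz function of srg and its complement}, and then close both parts by sandwiching with the known clique number and the chromatic number of $\CGr{G}$.

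The one genuine difference is in how you establish $\chrnum{\CGr{G}}\le q^n+1$. The paper simply cites Theorem~2.4 of \cite{TangW06} for the exact value $\chrnum{\CGr{G}}=q^n+1$. You instead build the Desarguesian symplectic spread explicitly on $\mathbb{F}_{q^n}^2$ with the trace form, obtaining a partition of $\V{\Gr{G}}$ into $q^n+1$ maximal totally isotropic subspaces and hence into $q^n+1$ maximum cliques. Your construction is correct (commutativity of $\mathbb{F}_{q^n}$ makes each $L_t$ totally isotropic, the lines partition the nonzero vectors, and the symplectic form is unique up to isometry), and it has the advantage of being self-contained rather than deferring to an external reference. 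The paper's route is shorter on the page; yours actually exhibits the clique cover and would let a reader verify the bound without consulting \cite{TangW06}.
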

\begin{proof}
See Section~\ref{subsubsection: proof of theorem on capacity of complements of symplectic graphs}.
\end{proof}

\begin{remark}[The independence number of symplectic polar graphs]
\label{remark:independence number of symplectic polar graphs}
By Proposition 2.5.4 in \cite{BrouwerM22}, the
independence number of the symplectic polar graph $\Gr{G} = \Sp(2n,q)$
(with $n \in \naturals$ and $q \geq 2$ that is a prime power)
satisfies
\begin{itemize}
\item $\indnum{\Gr{G}} = q+1$ if $n=1$ (an edgeless graph on $q+1$ vertices);
\item $\indnum{\Gr{G}} = 2n+1$ if $q=2$ and $n \in \naturals$;
\item $\indnum{\Gr{G}} = 7$ if $q=3$ and $n=2$;
\item $\indnum{\Gr{G}} \leq 15 \cdot 2^{n-3} - 2$ if $q=3$ and $n \geq 3$;
\item If $q \geq 4$ is a prime power and $n \geq 3$, then
\begin{align}
\label{eq2:10.10.23}
\indnum{\Gr{G}} \leq & \, \frac{q(q-1)^{n-3}-2}{q-2}
+ \tfrac12 q(q-1)^{n-3} \, \bigl( \sqrt{5q^4 + 6q^3 + 7q^2 + 6q + 1} - q^2 - q - 1 \bigr).
\end{align}
\end{itemize}
Hence, unless $n=1$ (resulting in an edgeless graph on $q+1$ vertices),
in general $\indnum{\Gr{G}} < q^n+1 = \vartheta(\Gr{G})$ (the last equality
holds by \eqref{eq2:02.08.23}).
\end{remark}

A new result on the Shannon capacity of two types of joins of graphs
is later provided in Section~\ref{subsection: main results - cospectral and nonisomorphic graphs}.
The presentation of the Shannon capacity of these types of graphs is deferred to the next
section since additional preliminary material is needed for that purpose.

In the following, Section~\ref{subsection: proofs of theorems on the Shannon capacity of graphs} provides
proofs for the results in this section, and Section~\ref{subsection: Shannon capacity of graphs - examples}
concludes with numerical experiments illustrating these results.

\subsection{Proofs}
\label{subsection: proofs of theorems on the Shannon capacity of graphs}
This section proves the results in
Section~\ref{subsection: New Results on Shannon Capacity of Graphs}. \vspace*{-0.1cm}

\subsubsection{Proof of Theorem~\ref{thm:extension of Thm. 12 by Lovasz}}
\label{subsubsection: proof of extended theorem by Lovasz}
Items~\ref{item 1: extension of Thm. 12 by Lovasz}--\ref{item 4: extension of Thm. 12 by Lovasz} of
Theorem~\ref{thm:extension of Thm. 12 by Lovasz} are proved as follows.
\begin{enumerate}
\item \label{item 1: proof of extended theorem by Lovasz}
For every undirected and simple graph $\Gr{G}$ on $n$ vertices,
\begin{eqnarray}
\label{eq4:06.08.23}
\indnum{\Gr{G} \boxtimes \CGr{G}} \geq n,
\end{eqnarray}
which holds since $\{(1,1), \ldots (n,n)\}$ is an independent set in the strong product graph $\Gr{G} \boxtimes \CGr{G}$.
Indeed, if $i, j \in \OneTo{n}$, $i \neq j$, and $\{i,j\} \in \E{\Gr{G}}$, then $\{i,j\} \not\in \E{\CGr{G}}$, which
implies that $\{(i,i), (j,j)\} \not\in \E{\Gr{G} \boxtimes \CGr{G}}$.
If $\Gr{G}$ is vertex-transitive or strongly regular, then by \eqref{eq: factorization}, \eqref{eq3:02.08.23},
and~\eqref{eq4:06.08.23},
\begin{align}
n &\leq \indnum{\Gr{G} \boxtimes \CGr{G}} \\
&\leq \Theta(\Gr{G} \boxtimes \CGr{G}) \\
&\leq \vartheta(\Gr{G} \boxtimes \CGr{G}) \\
&= \vartheta(\Gr{G}) \; \vartheta(\CGr{G}) \\
\label{eq1:16.03.24}
&= n, \\
\label{eq1:19.10.2023}
\Rightarrow \, \; & \indnum{\Gr{G} \boxtimes \CGr{G}} = \Theta(\Gr{G} \boxtimes \CGr{G})
= \vartheta(\Gr{G} \boxtimes \CGr{G}) = n.
\end{align}

\item \label{item 2: proof of extended theorem by Lovasz}
Let $\Gr{G}$ be a conference graph on $n$ vertices. Then, $\Gr{G}$ and $\CGr{G}$ are strongly regular graphs
with the same set of parameters $\SRG(n, \frac{n-1}{2}, \frac{n-1}{5}, \frac{n-1}{4})$. By Theorem~\ref{thm:Lovasz function of srg},
the Lov\'{a}sz $\vartheta$-function of a strongly regular graph is determined by its four parameters, so it follows that
$\vartheta(\Gr{G}) = \vartheta(\CGr{G})$. By \eqref{eq3:02.08.23} and the rightmost equality in \eqref{eq1:19.10.2023}, it
follows that $\vartheta(\Gr{G}) = \sqrt{n}$.

\item \label{item 3: proof of extended theorem by Lovasz}
If $\Gr{G}$ is a self-complementary graph on $n$ vertices, then
\begin{align}
\label{eq13:11.10.23}
\Theta(\Gr{G}) & \geq \sqrt{\indnum{\Gr{G} \boxtimes \Gr{G}}} \\
\label{eq14:11.10.23}
& = \sqrt{\indnum{\Gr{G} \boxtimes \CGr{G}}} \\
\label{eq15:11.10.23}
& \geq \sqrt{n},
\end{align}
where inequality \eqref{eq13:11.10.23} holds by \eqref{eq1:graph capacity}; equality \eqref{eq14:11.10.23} holds by the assumption
that $\Gr{G}$ is self-complementary, so $\Gr{G} \boxtimes \Gr{G} \cong \Gr{G} \boxtimes \CGr{G}$;
finally, \eqref{eq15:11.10.23} holds by \eqref{eq4:06.08.23}. This proves the leftmost inequality in \eqref{eq: capacity bounds for s.c. graphs}.
The rightmost inequality in \eqref{eq: capacity bounds for s.c. graphs} holds by combining \eqref{eq: Lovasz 79 - theorem 1} and
\eqref{eq12:11.10.23} (it should be noted that inequality \eqref{eq12:11.10.23} holds by Theorem~11.18 in \cite{Lovasz19},
irrespectively of the self-complementary property of the graph $\Gr{G}$).

\item  \label{item 4: proof of extended theorem by Lovasz}
If a self-complementary graph $\Gr{G}$ on $n$ vertices is vertex-transitive or strongly regular, then
by \eqref{eq1:16.03.24}
\begin{align}
n = \vartheta(\Gr{G}) \; \vartheta(\CGr{G}) = \vartheta(\Gr{G})^2,
\end{align}
which gives $\vartheta(\Gr{G}) = \sqrt{n}$.
Combined with Item~\ref{item 3: proof of extended theorem by Lovasz} of this proof, it gives
\begin{align}
\label{eq2:19.10.2023}
& \sqrt{n} \leq \Theta(\Gr{G}) \leq \vartheta(\Gr{G}) = \sqrt{n},
\end{align}
so
\begin{align}
\label{eq3:19.10.2023}
\Theta(\Gr{G}) = \vartheta(\Gr{G}) = \sqrt{n}.
\end{align}
Finally, combining \eqref{eq1:19.10.2023} and \eqref{eq3:19.10.2023} gives
\begin{align}
\sqrt{\indnum{\Gr{G} \boxtimes \Gr{G}}} = \sqrt{n} = \Theta(\Gr{G}).
\end{align}
\end{enumerate}

\subsubsection{Proof of Theorem~\ref{thm:Latin square graphs}}
\label{subsubsection: proof of theorem on capacity of complements of Latin square graphs}

The Latin square graph $\LSG{m}{n}$, with $m, n \in \naturals$ and $2 \leq m < n$,
has the property that each of its edges is contained in a clique of size $n$, so
\begin{align}
\label{eq1:14.03.24}
\indnum{\CGr{G}} \geq n.
\end{align}
By Theorem~\ref{thm:Latin square graphs are srg}, the graph $\Gr{G} = \LSG{m}{n}$ is strongly regular with the parameters
\begin{align}
\label{eq15b:11.10.23}
& \card{\V{\Gr{G}}} = n^2, \quad d = m(n-1), \quad \lambda = m^2-3m+n, \quad \mu = m(m-1).
\end{align}
By Theorem~\ref{thm:Lovasz function of srg}, the Lov\'{a}sz $\vartheta$-function of $\Gr{G}$ is then equal to
\begin{align}
\label{eq16:11.10.23}
\vartheta(\Gr{G}) = \frac{\card{\V{\Gr{G}}} \; (t+\mu-\lambda)}{2d+t+\mu-\lambda}
\end{align}
with
\begin{align}
\label{eq17:11.10.23}
t & \eqdef \sqrt{(\mu-\lambda)^2 + 4(d-\mu)} \\
\label{eq18:11.10.23}
& = \sqrt{(2m-n)^2 + 4m(n-m)} \\
\label{eq19:11.10.23}
& = n,
\end{align}
where equality \eqref{eq17:11.10.23} holds by \eqref{eq: t parameter - srg}, equality \eqref{eq18:11.10.23}
is derived from \eqref{eq15b:11.10.23}, and equality \eqref{eq19:11.10.23} is obtained by straightforward algebra
(specifically through the cancellation of $m$). It therefore follows from
\eqref{eq15b:11.10.23}--\eqref{eq19:11.10.23} that
\begin{align}
\label{eq20:11.10.23}
\vartheta(\Gr{G}) &= \frac{\card{\V{\Gr{G}}} \; (t+\mu-\lambda)}{2d+t+\mu-\lambda} \\
\label{eq21:11.10.23}
&= \frac{n^2 \bigl[ n + m(m-1) - \bigl((m-1)(m-2)+n-2 \bigr) \bigr]}{2m(n-1)+n +m(m-1) - \bigl((m-1)(m-2)+n-2\bigr)} \\
\label{eq22:11.10.23}
&= \frac{2mn^2}{2mn} = n.
\end{align}
Since $\Gr{G}$ is a strongly regular graph on $n^2$ vertices, by
Corollary~\ref{corollary:identity for the Lovasz function of srg and its complement},
\begin{align}
\label{eq24:11.10.23}
\vartheta(\CGr{G}) = \frac{\card{\V{\Gr{G}}}}{\vartheta(\Gr{G})} = \frac{n^2}{n} = n.
\end{align}
Finally, since the Shannon capacity of a graph is bounded between its independence number
and its Lov\'{a}sz $\vartheta$-function, \eqref{eq:18.08.23} follows from
\eqref{eq1:14.03.24} and \eqref{eq24:11.10.23}; hence, \eqref{eq1:14.03.24}
holds with equality. Since the Lov\'{a}sz $\vartheta$-function of strongly regular graphs only
depends on their four parameters ($n, d, \lambda, \mu$), it is identical for all (nonisomorphic)
strongly regular graphs with the same set of four parameters. Hence, $\vartheta(\Gr{G})=n$ holds
for all the pseudo Latin square graphs $\PLG{m}{n}$, which form the
class of all the strongly regular graphs with four parameters that are identical to those of the
Latin square graph $\LSG{m}{n}$.

\subsubsection{Proof of Theorem~\ref{thm:on the symplectic graphs}}
\label{subsubsection: proof of theorem on capacity of complements of symplectic graphs}

Let $\Gr{G}$ be the symplectic polar graph $\Sp(2n,q)$, with $n \in \naturals$ and $q \geq 2$ that is a prime power.
By Section 2.5.4 in \cite{BrouwerM22},
\begin{align}
\label{eq25:11.10.23}
\indnum{\CGr{G}} = \clnum{\Gr{G}} = \frac{q^n-1}{q-1}.
\end{align}

The complement graph of a strongly regular graph, $\SRG(\nu,k,\lambda,\mu)$, is a strongly
regular graph with parameters $\SRG(v,v-k-1,v-2k+\mu-2,v-2k+\lambda)$ (see Item~\ref{item: complement of SRG}
of Theorem~\ref{theorem: parameters of srg}).
Substituting $v, k, \lambda, \mu$ in \eqref{eq1: symplectic}--\eqref{eq4: symplectic}
gives that $\CGr{G}$ is a strongly regular graph on $v$ vertices with the parameters
\begin{align}
\label{eq: H complement symplectic graph}
\vspace*{0.1cm}
\SRG \, \Biggl(\frac{q^{2n}-1}{q-1}, q^{2n-1}, q^{2n-2} (q-1), q^{2n-2} (q-1)\Biggr).
\end{align}
The complement graph $\CGr{G}$ has the 3 distinct eigenvalues $q^{2n-1}, \, -1-s = q^{n-1}$, and $-1-r = -q^{n-1}$ with
multiplicities~1, $g$, and $f$, respectively (see \eqref{eq: r - symplectic}--\eqref{eq: s - symplectic}).
Substituting the four parameters of the strongly regular graph $\Gr{G} = \SRG(v,k,\lambda,\mu)$, as given in
\eqref{eq1: symplectic}--\eqref{eq4: symplectic}, into \eqref{eq: t parameter - srg} gives
\begin{align}
\label{eq26:11.10.23}
t &= \sqrt{(\mu-\lambda)^2 + 4(k-\mu)} \\[0.1cm]
\label{eq27:11.10.23}
&= 2 \sqrt{k-\mu+1} \\[0.1cm]
\label{eq28:11.10.23}
&= 2 \sqrt{\frac{q(q^{2n-2}-1)}{q-1} - \frac{q^{2n-2}-1}{q-1} + 1} \\[0.1cm]
\label{eq29:11.10.23}
&= 2 q^{n-1},
\end{align}
where equality \eqref{eq27:11.10.23} holds, by \eqref{eq3: symplectic}--\eqref{eq4: symplectic}, since $\lambda = \mu-2$;
equality \eqref{eq28:11.10.23} further holds by \eqref{eq2: symplectic}.
Consequently, the Lov\'{a}sz $\vartheta$-function of the complement graph $\CGr{G}$ is equal to
\begin{align}
\label{eq30:11.10.23}
\vartheta(\CGr{G}) &= 1 + \dfrac{2k}{t+\mu-\lambda} \\[0.2cm]
\label{eq31:11.10.23}
&= 1 + \frac{2q (q^{2n-2}-1)}{(q-1) \, (2q^{n-1}+2)} \\[0.2cm]
\label{eq32:11.10.23}
&= \frac{q^n-1}{q-1},
\end{align}
where \eqref{eq30:11.10.23} holds by \eqref{eq: Lovasz-theta srg comp.}, and \eqref{eq31:11.10.23} follows from
the combination of \eqref{eq1: symplectic}--\eqref{eq4: symplectic} and \eqref{eq29:11.10.23}.
Since, by \eqref{eq25:11.10.23} and \eqref{eq32:11.10.23}$, \indnum{\CGr{G}} = \vartheta(\CGr{G}) = \frac{q^n-1}{q-1}$,
the Shannon capacity of the complement graph $\CGr{G}$ is also equal to that common value, i.e.,
$\Theta(\CGr{G}) = \frac{q^n-1}{q-1}$. This proves \eqref{eq1:02.08.23}.

By Theorem 2.4 in \cite{TangW06}, the chromatic number of the complement graph $\CGr{G}$ is given by
\begin{align}
\label{eq33:11.10.23}
\chrnum{\CGr{G}} = q^n+1,
\end{align}
and since $\Gr{G}$ and $\CGr{G}$ are strongly regular graphs on $v$ vertices, the combination of equalities
\eqref{eq3:02.08.23}, \eqref{eq1: symplectic}, and \eqref{eq32:11.10.23} give
\begin{align}
\vartheta(\Gr{G}) = \frac{v}{\vartheta(\CGr{G})}
= \dfrac{\dfrac{q^{2n}-1}{q-1}}{\dfrac{q^n-1}{q-1}}
\label{eq34:11.10.23}
= q^n+1.
\end{align}
Consequently, from \eqref{eq33:11.10.23} and \eqref{eq34:11.10.23},
\begin{align}
\label{eq35:11.10.23}
\vartheta(\Gr{G}) = \chrnum{\CGr{G}} = q^n+1.
\end{align}
This finally proves \eqref{eq2:02.08.23} by combining \eqref{eq1a: sandwich}, which gives
$\vartheta(\Gr{G}) \leq \fchrnum{\CGr{G}} \leq \chrnum{\CGr{G}}$, and \eqref{eq35:11.10.23}.

\subsection{Examples for the Results in Section~\ref{subsection: New Results on Shannon Capacity of Graphs}}
\label{subsection: Shannon capacity of graphs - examples}

\begin{example}[Paley graphs]
\label{example:Shannon capacity of Paley graphs}
Let $q$ be a prime power such that $q \equiv 1 \bmod 4$. That is,
$q$ is either an arbitrary power of a prime congruent to~1 modulo~4 or it is an even
power of a prime congruent to~3 modulo~4.
The vertex set of a Paley graph of order $q$ is $\{0, 1, \ldots, q-1\}$, and any two
such distinct vertices are adjacent if their difference is a quadratic residue modulo $q$.
Paley graphs are known to be a class of self-complementary, vertex-transitive, and strongly regular graphs
with parameters $\srg{q}{\tfrac12 (q-1)}{\tfrac14 (q-5)}{\tfrac14 (q-1)}$ (a Paley graph
is therefore a conference graph).
By Item~\ref{item 4: extension of Thm. 12 by Lovasz} of Theorem~\ref{thm:extension of Thm. 12 by Lovasz},
the Shannon capacity of a Paley graph $\Gr{G}_q$ of order $q$ is given by $\Theta(\Gr{G}_q) = \sqrt{q}$, and
\begin{align}
\label{eq: Paley graph}
\indnum{\Gr{G}_q \boxtimes \Gr{G}_q} = q = \Theta(\Gr{G}_q)^2.
\end{align}
If $q$ is an even power of a prime, then $\indnum{\Gr{G}_q} = \sqrt{q}$ (see \cite{BakerEHW96,Blokhuis84}),
so $\indnum{\Gr{G}_q \boxtimes \Gr{G}_q} = q = \indnum{\Gr{G}_q}^2$. If $q$ is
equal to a prime number, then the determination of the independence number of
such a Paley graph is an unsolved number-theoretic problem, and it is conjectured
that $\indnum{\Gr{G}_q} = O\bigl((\log q)^2\bigr)$ (see Example~11.14 in \cite{Lovasz19},
which refers to the case where $q \equiv 1 \bmod 4$ is a prime).
\end{example}

\begin{example}[Shannon capacity of the complements of Latin square graphs with block size~3]
\label{example:Shannon Capacity of the Complements of Latin Square Graphs with Block Size 3}
By Example~\ref{example:Latin square graphs with block size 3}, for any
integer $n \geq 3$, a Latin square graph with block size $m=3$ is a
strongly regular graph whose parameters are given by $\srg{n^2}{3(n-1)}{n}{6}$.
By Item~\ref{item: complement of SRG} of Theorem~\ref{theorem: parameters of srg} (see equalities
\eqref{eq: n-complement}--\eqref{eq: mu-complement}), the complement of
this graph is a strongly regular graph whose parameters are given by
$\srg{n^2}{(n-2)(n-1)}{(n-3)^2+1}{(n-3)(n-2)}$. In light of
Theorem~\ref{thm:Latin square graphs}, the Shannon capacity of these
graphs are given in Table~\ref{Table II: Latin square graphs} for all $3 \leq n \leq 16$.
\begin{table}[h!t!]
\caption{\label{Table II: Latin square graphs} \centering{The Shannon capacity of Latin square
graph complements $\CGr{\LSG{3}{n}}$ (Example~\ref{example:Shannon Capacity of the Complements of Latin Square Graphs with Block Size 3}).}}
\vspace*{0.1cm}
\renewcommand{\arraystretch}{1.5}
\centering
\begin{tabular}{||c|l|c||}
\hline
$n$ & $\hspace*{0.7cm} \overline{\LSG{3}{n}}$ & $\Theta(\overline{\LSG{3}{n}})$ \\[0.1cm] \hline
$3$ & $\srg{9}{2}{1}{0}$     & $3$ \\
$4$ & $\srg{16}{6}{2}{2}$    & $4$ \\
$5$ & $\srg{25}{12}{5}{6}$   & $5$ \\
$6$ & $\srg{36}{20}{10}{12}$ & $6$ \\
$7$ & $\srg{49}{30}{17}{20}$ & $7$ \\
$8$ & $\srg{64}{42}{26}{30}$ & $8$ \\
$9$ & $\srg{81}{56}{37}{42}$ & $9$ \\
$10$ & $\srg{100}{72}{50}{56}$ & $10$ \\
$11$ & $\srg{121}{90}{65}{72}$ & $11$ \\
$12$ & $\srg{144}{110}{82}{90}$ & $12$ \\
$13$ & $\srg{169}{132}{101}{110}$ & $13$ \\
$14$ & $\srg{196}{156}{122}{132}$ & $14$ \\
$15$ & $\srg{225}{182}{145}{156}$ & $15$ \\
$16$ & $\srg{256}{210}{170}{182}$ & $16$ \\[0.1cm] \hline
\end{tabular}
\end{table}
\end{example}

\begin{example}[Shannon capacity of the complements of the symplectic polar graphs]
\label{example: Shannon capacity of the complements of symplectic polar graphs}
\begin{table}[h!t!]
\caption{\label{Table: symplectic graphs II} \centering{The Shannon capacity of complements of
symplectic polar graphs (Example~\ref{example: Shannon capacity of the complements of symplectic polar graphs}).}}
\vspace*{0.1cm}
\renewcommand{\arraystretch}{1.5}
\centering
\begin{tabular}{||c|c|l|c||}
\hline
$n$ & $q$ & $\hspace*{0.7cm} \Gr{H} = \overline{\Sp(2n,q)}$ & $\Theta(\Gr{H})$ \\[0.1cm] \hline
$3$ & $2$ & $\srg{63}{32}{16}{16}$ & $7$ \\
$3$ & $3$ & $\srg{364}{243}{162}{162}$ & $13$ \\
$3$ & $4$ & $\srg{1365}{1024}{768}{768}$ & $21$ \\
$3$ & $5$ & $\srg{3906}{3125}{2500}{2500}$ & $31$ \\
$3$ & $7$ & $\srg{19608}{16807}{14406}{14406}$ & $57$ \\
$4$ & $2$ & $\srg{255}{128}{64}{64}$ & $15$ \\
$4$ & $3$ & $\srg{3280}{2187}{1458}{1458}$ & $40$ \\
$4$ & $4$ & $\srg{21845}{16384}{12288}{12288} $ & $85$
\\[0.1cm] \hline
\end{tabular}
\end{table}
The symplectic polar graphs form an infinite subfamily of the strongly regular graphs;
they are briefly addressed in Section~\ref{subsubsection: Symplectic Polar Graphs}
(see also Section 2.5 in \cite{BrouwerM22} for more details).
In light of Theorem~\ref{thm:on the symplectic graphs}, the independence number,
Shannon capacity, and the Lov\'{a}sz $\vartheta$-function of the complements of symplectic
polar graphs coincide, and their common value is given by \eqref{eq1:02.08.23}.
Table~\ref{Table: symplectic graphs II} provides
the Shannon capacity of the complements of the symplectic polar graphs $\Gr{H} = \overline{\Sp(2n,q)}$
where $q \geq 2$ is a power of a prime and $n \in \naturals$. The set of parameters of
these strongly regular graphs is given in \eqref{eq: H complement symplectic graph}.
\end{example}
\break

\section{On Cospectral and Nonisomorphic Graphs, and on Joins of Graphs}
\label{section: cospectral and nonisomorphic graphs}

The construction of cospectral and nonisomorphic graphs is a nontrivial problem in
spectral graph theory, which has gained interest in the literature (see
\cite{AbiadH12,BlazsikaCH15,ButlerG11,vanDamH03,vanDamH09,Dutta20,DuttaA20,GodsilM82,HaemersE04,Hamud23,HamudB24,LuMZ23}
and references therein).

The present section employs the Lov\'{a}sz $\vartheta$-function of graphs to unveil novel insights
into regular and irregular, cospectral and nonisomorphic graphs. Additionally, it explores
new properties of two types of joins of graphs, recently introduced in \cite{Hamud23,HamudB24},
within the context of irregular, cospectral and nonisomorphic graphs.

To facilitate comprehension, we introduce specific preliminaries essential for the present section in
Section~\ref{subsection: preliminaries - cospectral and nonisomorphic graphs}. Following this,
the new results are presented in Section~\ref{subsection: main results - cospectral and nonisomorphic graphs}.
Subsequently, we delve into the proofs of these results in Section~\ref{subsection: proofs - cospectral and nonisomorphic graphs}.
Finally, in Section~\ref{subsection: examples - cospectral and nonisomorphic graphs}, we engage
in discussions, offer illustrative examples, and conduct numerical experimentation to provide further
insights into the new results.

\subsection{Specialized Preliminaries for Section~\ref{section: cospectral and nonisomorphic graphs}}
\label{subsection: preliminaries - cospectral and nonisomorphic graphs}

The preliminary material in this subsection forms a continuation to Section~\ref{subsubsection: Matrices associated with graphs}.
\begin{definition}[Cospectral graphs]
\label{definition: cospectral graphs}
Let $\Gr{G}$ and $\Gr{H}$ be graphs. Then,
\begin{itemize}
\item
$\Gr{G}$ and $\Gr{H}$ are said to be $X$-cospectral, with $X \in \{A, L, Q, \mathcal{L}\}$,
if these graphs have an identical $X$-spectrum.
\item
$\Gr{G}$ and $\Gr{H}$ are said to be $X$-NICS if these graphs are nonisomorphic and $X$-cospectral.
\item
Let $\set{S} \subseteq \{A, L, Q, \mathcal{L}\}$. The graphs $\Gr{G}$ and $\Gr{H}$ are
said to be $\set{S}$-NICS if they are $X$-NICS for every $X \in \set{S}$.
\end{itemize}
\end{definition}

\begin{remark}[Relations between spectra of regular graphs]
\label{remark: Relations Between Spectra of Regular Graphs}
If $\Gr{G}$ is a $d$-regular graph on $n$ vertices, then by
\eqref{eq: laplacian}--\eqref{eq2: Normalized Laplacian matrix}
and \eqref{eq2:26.09.23}--\eqref{eq5:26.09.23}, for all $i \in \OneTo{n}$
\begin{align}
\label{eq6:26.09.23}
\mu_i(\Gr{G}) = d - \lambda_i(\Gr{G}), \qquad
\nu_i(\Gr{G}) = d + \lambda_i(\Gr{G}), \qquad
\delta_i(\Gr{G}) = 1 - \frac{\lambda_i(\Gr{G})}{d}.
\end{align}
If $\Gr{G}$ and $\Gr{H}$ are regular graphs that are $X$-cospectral for {\em some}
$X \in \{A, L, Q, \mathcal{L}\}$, then it follows from the equalities in \eqref{eq6:26.09.23}
that $\Gr{G}$ and $\Gr{H}$ are $X$-cospectral for {\em every} $X \in \{A, L, Q, \mathcal{L}\}$.
Therefore, for regular graphs, the property of being cospectral remains unchanged regardless
of the matrix chosen among $\A, \LM, \Q$, and ${\bf{\mathcal{L}}}$.
\end{remark}

\begin{definition}[Seidel switching]
\label{definition: Seidel switching}
Let $\Gr{G}$ be a graph with the vertex set $\V{\Gr{G}}$, and let $\set{X} \subseteq \V{\Gr{G}}$.
Constructing a new graph $\hat{\Gr{G}}$ by preserving all the edges in $\Gr{G}$ between
vertices in $\set{X}$ and all the edges in $\Gr{G}$ between vertices in $\V{\Gr{G}} \setminus \set{X}$,
while modifying adjacency and nonadjacency between vertices in $\set{X}$ and $\V{\Gr{G}} \setminus \set{X}$,
is referred to as Seidel switching.
\end{definition}

The next result shows the importance of Seidel switching for the construction of regular, connected,
and cospectral graphs.
\begin{theorem}[Seidel switching and cospectraility]
\label{theorem: Seidel switching}
Let $\Gr{G}$ be a connected and $d$-regular graph, and let $\hat{\Gr{G}}$ be obtained from $\Gr{G}$ by
Seidel switching. If $\hat{\Gr{G}}$ remains connected and $d$-regular, then $\Gr{G}$ and $\hat{\Gr{G}}$
are cospectral graphs.
\end{theorem}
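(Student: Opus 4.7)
The plan is to work with the Seidel matrix of a graph, defined by
$\mathbf{S}(\Gr{G}) \eqdef \J{n} - \I{n} - 2 \A(\Gr{G})$ with $n \eqdef \card{\V{\Gr{G}}}$, and to exhibit Seidel switching as a similarity transformation on $\mathbf{S}(\Gr{G})$ by a signed diagonal involution. Order the vertices so that those in $\set{X}$ come first, with $k \eqdef \card{\set{X}}$, and let $B$ denote the $k \times (n-k)$ bipartite block of $\A(\Gr{G})$ between $\set{X}$ and its complement. By Definition~\ref{definition: Seidel switching}, the diagonal blocks of $\A(\hat{\Gr{G}})$ coincide with those of $\A(\Gr{G})$, while the off-diagonal block is replaced by $\J{k,n-k} - B$. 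Let $D$ be the $n \times n$ diagonal matrix whose first $k$ diagonal entries are $-1$ and whose remaining $n-k$ entries are $+1$. Since $D = D^{-1}$, a direct block computation yields the identity $D \, \mathbf{S}(\Gr{G}) \, D = \mathbf{S}(\hat{\Gr{G}})$, so the Seidel matrices of $\Gr{G}$ and $\hat{\Gr{G}}$ are similar and, in particular, share the same spectrum.

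Next, I would leverage the $d$-regularity of both $\Gr{G}$ and $\hat{\Gr{G}}$ to transfer Seidel cospectrality to the adjacency matrices. Let $\mathbf{1} \in \Reals^n$ denote the all-ones vector. Since both graphs are $d$-regular, $\mathbf{1}$ is an eigenvector of $\A(\Gr{G})$ and of $\A(\hat{\Gr{G}})$ with common eigenvalue $d$, and consequently an eigenvector of $\mathbf{S}(\Gr{G})$ and $\mathbf{S}(\hat{\Gr{G}})$ with common eigenvalue $n - 1 - 2d$. On the orthogonal complement $\mathbf{1}^{\perp}$, the all-ones matrix $\J{n}$ acts as the zero operator, so the defining relation of the Seidel matrix restricts to $\A = -\tfrac12(\I{n} + \mathbf{S})$ on $\mathbf{1}^{\perp}$ for both graphs. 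Since $\mathbf{S}(\Gr{G})$ and $\mathbf{S}(\hat{\Gr{G}})$ are cospectral as whole matrices and agree on the $\mathbf{1}$-eigenspace, their spectra on $\mathbf{1}^{\perp}$ coincide as multisets; applying the affine map $\sigma \mapsto -\tfrac12(1+\sigma)$ entrywise then shows that the non-principal $A$-spectra of $\Gr{G}$ and $\hat{\Gr{G}}$ agree, and together with the shared principal eigenvalue $d$ this yields $A$-cospectrality.

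Finally, by Remark~\ref{remark: Relations Between Spectra of Regular Graphs}, for regular graphs $A$-cospectrality is equivalent to $X$-cospectrality for every $X \in \{A, L, Q, \mathcal{L}\}$, so the conclusion upgrades automatically to cospectrality with respect to each of the four standard matrices. The only nontrivial step in this plan is the block-form verification $D \, \mathbf{S}(\Gr{G}) \, D = \mathbf{S}(\hat{\Gr{G}})$; once this identity is in hand, everything else is linear algebra plus bookkeeping of the principal eigenvalue, which is uniformly pinned down by the standing $d$-regularity hypothesis on both graphs. The connectedness assumption plays no role in the spectral argument itself and appears to be retained with an eye toward the intended application of constructing connected, regular, cospectral, and nonisomorphic graphs.
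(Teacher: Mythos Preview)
The paper does not actually prove this theorem; it is presented in Section~\ref{subsection: preliminaries - cospectral and nonisomorphic graphs} as a known preliminary result without proof, so there is no in-paper argument to compare against. Your proof is correct and is in fact the standard one: the Seidel matrix $\mathbf{S}(\Gr{G}) = \J{n} - \I{n} - 2\A(\Gr{G})$ is conjugated by the diagonal $\pm 1$ matrix indexed by $\set{X}$, and the common $d$-regularity pins down the $\mathbf{1}$-eigenspace so that Seidel cospectrality transfers to $A$-cospectrality via the affine map on $\mathbf{1}^{\perp}$. Your observation that connectedness plays no role in the spectral argument is also accurate.
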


\begin{definition}[Determination of a graph by its spectrum]
\label{definition: Determination of a Graph by its Spectrum}
A graph $G$ is determined by its $X$-spectrum if every graph $\Gr{H}$ that is
$X$-cospectral with $\Gr{G}$ is also isomorphic to $\Gr{G}$.
\end{definition}

\begin{theorem}[\hspace*{-0.1cm} \cite{vanDamH03}]
\label{theorem 1: van Dam and Haemers, 2003}
If $\Gr{G}$ is a regular graph, then the following statements are equivalent:
\begin{itemize}
\item $\Gr{G}$ is determined by its adjacency spectrum ($A$-spectrum);
\item $\Gr{G}$ is determined by its Laplacian spectrum ($L$-spectrum);
\item $\Gr{G}$ is determined by its signless Laplacian spectrum ($Q$-spectrum);
\item $\Gr{G}$ is determined by its normalized Laplacian spectrum ($\mathcal{L}$-spectrum).
\end{itemize}
\end{theorem}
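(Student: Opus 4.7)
The plan is to reduce all four ``determined by $X$-spectrum'' properties to any single one, with Remark~\ref{remark: Relations Between Spectra of Regular Graphs} providing the bridge. The key auxiliary claim to establish is a regularity lemma: if $\Gr{G}$ is a $d$-regular graph on $n$ vertices and $\Gr{H}$ is $X$-cospectral with $\Gr{G}$ for some $X\in\{A,L,Q,\mathcal{L}\}$, then $\Gr{H}$ is also $d$-regular on $n$ vertices. Once this lemma is granted, the equivalence is immediate: if $\Gr{G}$ is determined by its $X$-spectrum and $\Gr{H}$ is $Y$-cospectral with $\Gr{G}$, then the lemma makes $\Gr{H}$ a $d$-regular graph on $n$ vertices; Remark~\ref{remark: Relations Between Spectra of Regular Graphs} upgrades $Y$-cospectrality of the two $d$-regular graphs into $X$-cospectrality; and the hypothesis on $\Gr{G}$ forces $\Gr{H}\cong\Gr{G}$.

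For $X=A$, I would invoke Corollary~\ref{corollary: number of edges and triangles in a graph} to get $2\card{\E{\Gr{H}}}=\sum_i\lambda_i(\Gr{H})^2=nd^2$, so $\Gr{H}$ has average degree $d$, while also $\lambda_1(\Gr{H})=\lambda_1(\Gr{G})=d$; since the spectral radius of the adjacency matrix of any graph dominates its average degree with equality iff the graph is regular (a Perron--Frobenius consequence applied to the Rayleigh quotient with the all-ones vector), $\Gr{H}$ is $d$-regular. For $X\in\{L,Q\}$, the trace identities $\tr(\LM)=\tr(\Q)=\sum_i d_i$ and $\tr(\LM^2)=\tr(\Q^2)=\sum_i d_i^2+2\card{\E{\Gr{H}}}$, together with the fact that the number of edges is determined by both the $L$-spectrum and the $Q$-spectrum (Theorems~\ref{theorem: On the Laplacian matrix of a graph} and~\ref{theorem: On the signless Laplacian matrix of a graph}), yield $\sum_i d_i(\Gr{H})=nd$ and $\sum_i d_i(\Gr{H})^2=nd^2$; the equality case of the Cauchy--Schwarz inequality $\bigl(\sum_i d_i\bigr)^2\leq n\sum_i d_i^2$ then forces all degrees of $\Gr{H}$ to equal $d$.

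The main obstacle is the $X=\mathcal{L}$ case, since by Remark~\ref{remark: size of a graph} the $\mathcal{L}$-spectrum does not determine the number of edges, so the Cauchy--Schwarz shortcut above is unavailable. The plan here is to first apply Theorem~\ref{theorem: On the normalized Laplacian matrix of a graph} to match the number of components of $\Gr{G}$ and $\Gr{H}$ via the multiplicity of $0$, reducing to the connected case; then to use that for a connected $\Gr{H}$ the null space of $\mathcal{L}(\Gr{H})$ is spanned by the degree-square-root vector $\bigl(\sqrt{d_1(\Gr{H})},\ldots,\sqrt{d_n(\Gr{H})}\bigr)^{\mathrm{T}}$, which for the $d$-regular $\Gr{G}$ is proportional to $\mathbf{1}$; and finally to compare, through the identity $\mathcal{L}(\Gr{G})=\I{n}-\tfrac{1}{d}\A(\Gr{G})$, the explicit values of $\tr(\mathcal{L}(\Gr{G})^k)$ (polynomials in $d^{-1}$ with coefficients given by closed-walk counts of $\Gr{G}$) with the degree-weighted expressions for $\tr(\mathcal{L}(\Gr{H})^k)$ such as $\tr(\mathcal{L}(\Gr{H})^2)=n+2\sum_{\{i,j\}\in\E{\Gr{H}}}\tfrac{1}{d_i d_j}$, applying a convexity argument on the degree sequence of $\Gr{H}$ to pin it down as constant.
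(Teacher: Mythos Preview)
The paper does not supply a proof; the theorem is quoted as a preliminary result from \cite{vanDamH03}. Your reduction via a regularity lemma is the natural route, and your arguments for $X\in\{A,L,Q\}$ are correct: each of these three spectra detects regularity of the cospectral mate, and then Remark~\ref{remark: Relations Between Spectra of Regular Graphs} does the rest.

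The $X=\mathcal{L}$ case, however, cannot be closed by the convexity argument you sketch, because the regularity lemma itself is false for the normalized Laplacian. A direct computation shows that every complete bipartite graph $\CoBG{a}{b}$ with $a,b\geq 1$ has $\mathcal{L}$-spectrum $\{0,\,1^{(a+b-2)},\,2\}$, since $\D^{-1/2}\A\D^{-1/2}=(ab)^{-1/2}\A$ and $\A(\CoBG{a}{b})$ has eigenvalues $\pm\sqrt{ab}$ and $0^{(a+b-2)}$. In particular $\CG{4}=\CoBG{2}{2}$ and the star $\CoBG{1}{3}$ are $\mathcal{L}$-cospectral, yet $\CoBG{1}{3}$ is irregular; no comparison of the traces $\tr\bigl(\mathcal{L}(\Gr{H})^k\bigr)$ can separate them. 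Worse, this pair shows that the $\mathcal{L}$-item in the stated equivalence is false as written: $\CG{4}$ is determined by its $A$-spectrum (by Corollary~\ref{corollary: number of edges and triangles in a graph} any $A$-cospectral mate has four vertices, four edges, and no triangles, and $\CG{4}$ is the only such graph), but $\CG{4}$ is \emph{not} determined by its $\mathcal{L}$-spectrum. The equivalence established in \cite{vanDamH03} concerns the $A$-, $L$-, and $Q$-spectra; the normalized-Laplacian line in the present formulation does not hold without further hypotheses.
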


\begin{theorem}[\hspace*{-0.12cm} \cite{vanDamH03}]
\label{theorem 2: van Dam and Haemers, 2003}
All regular graphs on less than 10 vertices are determined by their spectrum. Additionally,
there are four pairs of nonisomorphic and cospectral regular graphs on 10 vertices.
\end{theorem}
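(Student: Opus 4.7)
The plan is to invoke Theorem~\ref{theorem 1: van Dam and Haemers, 2003} to reduce both statements to the $A$-spectrum, and then carry out a finite enumeration. Working with $\A$ is convenient because, for a $d$-regular graph $\Gr{G}$ on $n$ vertices, one has $\A(\CGr{G}) = \J{n} - \I{n} - \A(\Gr{G})$. The all-ones vector is an eigenvector of both matrices with respective eigenvalues $d$ and $n-1-d$, while every eigenvector ${\bf{v}}$ orthogonal to it satisfies $\A(\CGr{G}){\bf{v}} = -(1+\lambda){\bf{v}}$ whenever $\A(\Gr{G}){\bf{v}} = \lambda{\bf{v}}$. Hence two $d$-regular graphs on $n$ vertices are $A$-cospectral if and only if their complements are, so it suffices to examine degrees $d \leq \lfloor (n-1)/2 \rfloor$.

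Next, for each $n \in \OneTo{10}$ and each relevant degree $d$, I would enumerate all $d$-regular graphs on $n$ vertices up to isomorphism. For $n \leq 5$ the lists are essentially trivial (empty graphs, disjoint unions of $\CoG{n}$'s, and short cycles such as $\CG{n}$), handled by direct inspection. For $6 \leq n \leq 10$ I would use the standard backtracking enumeration of regular graphs, as implemented in Meringer's \texttt{genreg}, combined with McKay's \texttt{nauty} for canonical labelling; this produces one representative per isomorphism class. For each graph I would then compute the characteristic polynomial of $\A$ exactly in integer arithmetic (or, equivalently, the multiset of $A$-eigenvalues), and group graphs with identical polynomials.

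For $n \leq 9$, inspection of the grouped polynomials should show every class is a singleton, proving the first assertion; for $n = 10$, the same procedure should produce exactly four classes of size two and no larger class, proving the second. For the latter one can additionally give a conceptual construction of cospectral mates via Theorem~\ref{theorem: Seidel switching}: if a subset $\set{X} \subseteq \V{\Gr{G}}$ can be chosen so that the Seidel switch preserves both $d$-regularity and connectivity, then the resulting graph is cospectral with $\Gr{G}$, and several of the known cospectral mates of regular graphs on few vertices arise in precisely this way. This offers a convenient sanity check on the computational output.

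The principal obstacle is completeness of the enumeration at $n = 10$: establishing that there are \emph{exactly} four pairs (and no fifth) requires exhaustive lists, whose size grows quickly with $d$ — for example, there are $19$ connected cubic graphs on $10$ vertices, and several hundred $4$-regular ones. The reduction to $A$-spectrum and the complementation argument trimming the range of $d$ are conceptually clean, but the substantive content of the theorem is the computational verification itself, which rests on the correctness and completeness of the external enumeration routine.
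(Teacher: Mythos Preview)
The paper does not provide its own proof of this statement: Theorem~\ref{theorem 2: van Dam and Haemers, 2003} is stated as a cited result from van Dam and Haemers \cite{vanDamH03}, with no accompanying argument. There is therefore nothing in the paper to compare your proposal against.

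That said, your sketch is an accurate description of how this kind of result is actually established in the source reference: it is a finite computer-assisted enumeration, and you have correctly identified both the reductions that make it tractable (working with the $A$-spectrum via Theorem~\ref{theorem 1: van Dam and Haemers, 2003}, and halving the degree range via complementation) and the fact that the substantive content is the completeness of the enumeration rather than any conceptual lemma. Your remark that Seidel switching explains some of the cospectral mates is also in line with how the paper uses Theorem~\ref{theorem: Seidel switching} in Example~\ref{example: regular NICS graphs with 10 vertices}. In short, your proposal is sound and matches the methodology of the cited source, but there is no in-paper proof to benchmark it against.
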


The reader is referred to \cite{AbdianBF20,BermanCCLZ18,vanDamH03,vanDamH09,Hamud23,LiuSD14,LiuLu15,LiuZG08,OmidiT07}
for analyses on graphs determined by their spectra with respect to their adjacency,
Laplacian, signless Laplacian, or normalized Laplacian matrices.

\begin{example}[Irregular NICS graphs]
\label{example: NICS graphs}
Figure~\ref{figure: NICS graphs} displays four pairs of irregular, nonisomorphic, and cospectral (NICS) graphs
with respect to a single matrix among the matrices $\A, \LM, \Q$, and $\mathcal{L}$.
Notably, each of these pairs of graphs are no longer cospectral with respect to the other three matrices \cite{ButlerG11}.
It can be verified (e.g., by the Sage Math software \cite{SageMath}) that the common characteristic polynomials
of the pairs of $A$-NICS, $L$-NICS, $Q$-NICS, and $\mathcal{L}$-NICS graphs in Figure~\ref{figure: NICS graphs}
are, respectively, given by
\begin{align}
\label{eq1:char-poly}
& f_{\A}(x) = x^5 - 4x^3, \\
\label{eq2:char-poly}
& f_{\LM}(x) = x^6 - 14x^5 + 73x^4 - 176x^3 + 192x^2-72x, \\
\label{eq3:char-poly}
& f_{\Q}(x) = x^4 - 6x^3 + 9x^2 - 4x, \\
\label{eq4:char-poly}
& f_{\mathcal{L}}(x) = x^4  - 4x^3 + 5x^2 - 2x.
\end{align}
\begin{figure}[h!t!]
\centering
\includegraphics[width=12cm]{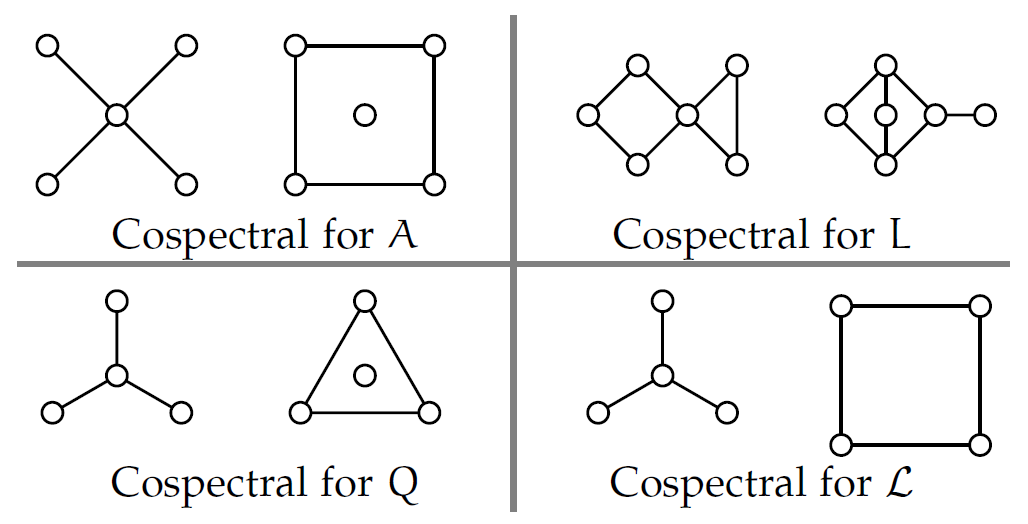}
\caption{\label{figure: NICS graphs} \centering{Pairs of $A$-NICS, $L$-NICS, $Q$-NICS, and $\mathcal{L}$-NICS graphs \cite{ButlerG11}.
Each pair of graphs is cospectral with respect to only one of the four matrices.}}
\end{figure}
\end{example}

By Theorem~\ref{theorem 2: van Dam and Haemers, 2003}, there is no pair of cospectral and
nonisomorphic regular graphs on less than 10 vertices. The next example shows a pair of
regular NICS graphs on 10 vertices.
\begin{example}[Regular NICS graphs \cite{vanDamH03}]
\label{example: regular NICS graphs with 10 vertices}
Let $\Gr{G}$ and $\Gr{H}$ be the regular graphs depicted on the left and right-hand sides of
Figure~\ref{fig:vanDamH03}, respectively. These are 4-regular graphs on 10 vertices, and they
are cospectral while being nonisomorphic \cite{vanDamH03}. Indeed, the identical $A$-characteristic
polynomials of $\Gr{G}$ and $\Gr{H}$ are given by
\begin{align}
\label{eq1:12.10.23}
f_{\A}(x) = x^{10} - 20 x^8 - 16 x^7 + 110 x^6 + 136 x^5 - 180 x^4 - 320 x^3 + 9 x^2 + 200 x + 80,
\end{align}
\begin{figure}[h!t!b!]
\centering
\includegraphics[width=11.5cm]{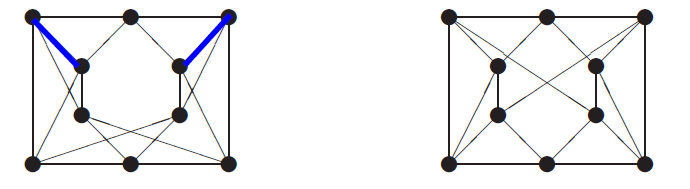}
\caption{\label{fig:vanDamH03} \centering{A pair of cospectral and nonisomorphic 4-regular graphs on 10 vertices \cite{vanDamH03}.
Denote the left and right plots by $\Gr{G}$ and $\Gr{H}$, respectively.}}
\end{figure}
so these regular graphs are cospectral. Alternatively, the cospectrality of the regular graphs $\Gr{G}$ and $\Gr{H}$ can be
inferred from Theorem~\ref{theorem: Seidel switching}, without needing to compute their characteristic polynomials or eigenvalues.
This inference is made by verifying the isomorphism of $\Gr{H}$ and the Seidel switching of $\Gr{G}$ with respect to its four corners,
while also ensuring that the latter graph and $\Gr{G}$ are both connected and 4-regular. However, the cospectral graphs $\Gr{G}$ and
$\Gr{H}$ are nonisomorphic because each of the two blue edges in $\Gr{G}$ belongs to three triangles, whereas no such edge exists in
$\Gr{H}$ (see Figure~\ref{fig:vanDamH03}).
\end{example}

It is of interest to explore constructions of irregular $\{A, L, Q, \mathcal{L}\}$-NICS graphs, as
they exhibit cospectrality concerning four matrices that include the adjacency, Laplacian, signless
Laplacian, and normalized Laplacian matrices. However, despite this cospectrality, the graphs
are nonisomorphic. To delve deeper into this topic, we refer to the recent study by Berman and
Hamud \cite{Hamud23,HamudB24}, which begins with essential definitions.

\begin{definition}[Join of graphs]
\label{definition: join of graphs}
Let $\Gr{G}$ and $\Gr{H}$ be two graphs with disjoint vertex sets.
The join of $\Gr{G}$ and $\Gr{H}$ is defined to be their disjoint union, together with all the edges that connect
the vertices in $\Gr{G}$ with the vertices in $\Gr{H}$. It is denoted by $\Gr{G} \vee \Gr{H}$.
\end{definition}

\begin{remark}[Join and disjoint union of graphs]
\label{remark: join and disjoint union of graphs}
Since the edge set of the disjoint union of graphs $\Gr{G}$ and $\Gr{H}$, denoted by $\Gr{G} + \Gr{H}$,
does not include any edge that connects a vertex in $\Gr{G}$ with a vertex in $\Gr{H}$ (see
Definition~\ref{def:disjoint_union_graphs}), the following equality holds:
\begin{align}
\label{eq1:28.09.23}
\CGr{\Gr{G} \vee \Gr{H}} = \CGr{G} + \CGr{H}.
\end{align}
\end{remark}

\begin{definition}[Neighbors splitting join of graphs \cite{LuMZ23}]
\label{definition: neighbors splitting join of graphs}
Let $\Gr{G}$ and $\Gr{H}$ be graphs with disjoint vertex sets, and let
$\V{\Gr{G}} = \{v_1, \ldots, v_n\}$. The {\em neighbors splitting (NS) join}
of $\Gr{G}$ and $\Gr{H}$ is obtained by adding vertices $v'_1, \ldots, v'_n$
to the vertex set of $\Gr{G} \vee \Gr{H}$ and connecting $v'_i$ to $v_j$ if
and only if $\{v_i, v_j\} \in \E{\Gr{G}}$.
The NS join of $\Gr{G}$ and $\Gr{H}$ is denoted by $\Gr{G} \NS \Gr{H}$.
\end{definition}

\begin{definition}[Nonneighbors splitting join of graphs \cite{Hamud23,HamudB24}]
\label{definition: nonneighbors splitting join of graphs}
Let $\Gr{G}$ and $\Gr{H}$ be graphs with disjoint vertex sets, and let
$\V{\Gr{G}} = \{v_1, \ldots, v_n\}$. The {\em nonneighbors splitting (NNS) join}
of $\Gr{G}$ and $\Gr{H}$ is obtained by adding vertices $v'_1, \ldots, v'_n$
to the vertex set of $\Gr{G} \vee \Gr{H}$ and connecting $v'_i$ to $v_j$, with
$i \neq j$, if and only if $\{v_i, v_j\} \not\in \E{\Gr{G}}$.
The NNS join of $\Gr{G}$ and $\Gr{H}$ is denoted by $\Gr{G} \NNS \Gr{H}$.
\end{definition}

\begin{remark}
In general, $\Gr{G} \NS \Gr{H} \not\cong \Gr{H} \NS \Gr{G}$ and
$\Gr{G} \NNS \Gr{H} \not\cong \Gr{H} \NNS \Gr{G}$ (unless $\Gr{G} \cong \Gr{H}$).
\end{remark}

\begin{example}[NS and NNS join of graphs \cite{HamudB24}]
\label{example: NS and NNS Join of Graphs}
Figure~\ref{fig:BermanH23-Fig. 2.2} shows the NS and NNS
joins of the path graphs $\PathG{4}$ and $\PathG{2}$, denoted by $\PathG{4} \NS \PathG{2}$
and $\PathG{4} \NNS \PathG{2}$, respectively.
\vspace*{-0.1cm}
\begin{figure}[h!t!b!]
\centering
\includegraphics[width=11cm]{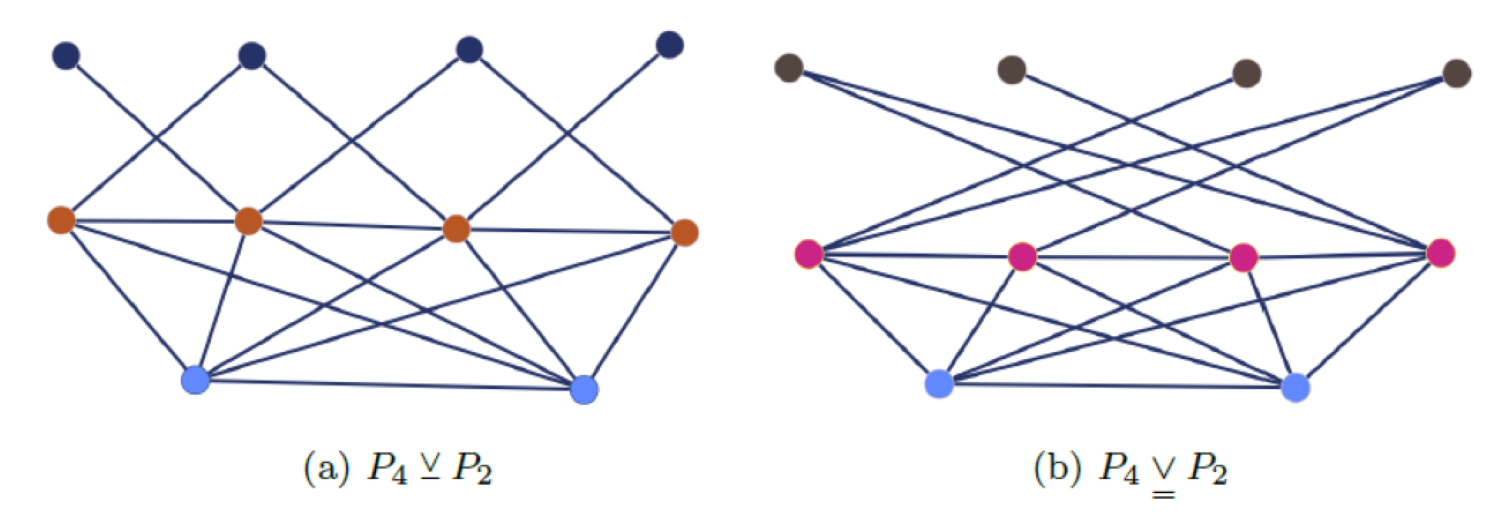}
\caption{\label{fig:BermanH23-Fig. 2.2} \centering{The neighbors splitting (NS)
and nonneighbors splitting (NNS) joins of the path graphs $\PathG{4}$ and $\PathG{2}$
are depicted, respectively, on the left and right-hand sides of this figure.
The NS and NNS joins of graphs are, respectively, denoted by $\PathG{4} \NS \PathG{2}$
and $\PathG{4} \NNS \PathG{2}$ \cite{HamudB24}.}}
\end{figure}
\end{example}

\begin{proposition}[Adjacency matrices of the NS and NNS joins of graphs \cite{Hamud23,HamudB24,LuMZ23}]
\label{proposition: Adjacency Matrices of NS and NNS Join Graphs}
Let $\Gr{G}_1$ and $\Gr{G}_2$ be, respectively, finite and simple graphs on $n_1$ and $n_2$ vertices.
Then, the adjacency matrices of the NS and NNS joins of graphs $\Gr{G}_1 \NS \Gr{G}_2$ and $\Gr{G}_1 \NNS \Gr{G}_2$
are, respectively, given by
\begin{align}
\label{eq: adjacency matrix - NS join}
\A(\Gr{G}_1 \NS \Gr{G}_2) =
\begin{pmatrix}
& \A(\Gr{G}_1) & \A(\Gr{G}_1) & {\mathbf{J}}_{n_1 \times n_2} \\[0.1cm]
& \A(\Gr{G}_1) & {\mathbf{0}}_{n_1 \times n_1} & {\mathbf{0}}_{n_1 \times n_2} \\[0.1cm]
& {\mathbf{J}}_{n_2 \times n_1} & {\mathbf{0}}_{n_2 \times n_1} & \A(\Gr{G}_2)
\end{pmatrix} , \\[0.2cm]
\label{eq: adjacency matrix - NNS join}
\A(\Gr{G}_1 \NNS \Gr{G}_2) =
\begin{pmatrix}
& \A(\Gr{G}_1) & \A(\CGr{G}_1) & {\mathbf{J}}_{n_1 \times n_2} \\[0.1cm]
& \A(\CGr{G}_1) & {\mathbf{0}}_{n_1 \times n_1} & {\mathbf{0}}_{n_1 \times n_2} \\[0.1cm]
& {\mathbf{J}}_{n_2 \times n_1} & {\mathbf{0}}_{n_2 \times n_1} & \A(\Gr{G}_2)
\end{pmatrix} ,
\end{align}
where ${\mathbf{J}}_{n \times m}$ and ${\mathbf{0}}_{n \times m}$ denote, respectively, the
$n \times m$ all-ones and all-zeros matrices. Denoting (for simplicity) $\J{n}$ and
${\mathbf{0}}_n$ to be, respectively, the $n \times n$ all-ones and all-zeros matrices, the equality
\begin{align}
\label{eq: adjaceny matrix of a graph and its complement}
\A(\CGr{G}) = \J{n} - \I{n} - \A(\Gr{G})
\end{align}
relates the adjacency matrices of a graph $\Gr{G}$ and its complement $\CGr{G}$.
\end{proposition}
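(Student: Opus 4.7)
The plan is to label the $2n_1+n_2$ vertices of $\Gr{G}_1 \NS \Gr{G}_2$ (respectively, $\Gr{G}_1 \NNS \Gr{G}_2$) in three consecutive blocks --- first $v_1,\ldots,v_{n_1}$, then the splitting copies $v'_1,\ldots,v'_{n_1}$, and finally the $n_2$ vertices of $\Gr{G}_2$ --- and then read off each of the nine entries of the resulting $3 \times 3$ partitioned adjacency matrix directly from Definitions~\ref{definition: join of graphs},~\ref{definition: neighbors splitting join of graphs}, and~\ref{definition: nonneighbors splitting join of graphs}.

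For the NS join, the $(1,1)$-block records adjacencies among $v_1,\ldots,v_{n_1}$, which are by definition the edges of $\Gr{G}_1$, giving $\A(\Gr{G}_1)$. The $(1,2)$-block has $(i,j)$-entry equal to $1$ iff $v_i \sim v'_j$, which by Definition~\ref{definition: neighbors splitting join of graphs} is equivalent to $\{v_i,v_j\}\in\E{\Gr{G}_1}$, yielding again $\A(\Gr{G}_1)$ (the simplicity of $\Gr{G}_1$ automatically rules out any diagonal contribution). The $(1,3)$-block encodes the join $\vee$, which adds every possible edge between $\V{\Gr{G}_1}$ and $\V{\Gr{G}_2}$, producing $\mathbf{J}_{n_1\times n_2}$. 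The $(2,2)$-block is zero since no edges are introduced among $v'_1,\ldots,v'_{n_1}$; the $(2,3)$-block is zero because no splitting vertex is joined to $\V{\Gr{G}_2}$; and the $(3,3)$-block is $\A(\Gr{G}_2)$. The remaining three blocks are transposes of those already computed, by the symmetry of the adjacency matrix.

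For the NNS join, the only change lies in the $(1,2)$-block (and its transpose). By Definition~\ref{definition: nonneighbors splitting join of graphs}, the $(i,j)$-entry is $1$ iff $i\neq j$ and $\{v_i,v_j\}\notin\E{\Gr{G}_1}$, which is precisely $\A(\CGr{G}_1)_{i,j}$; this relies on the identity $\A(\CGr{G}_1)=\J{n_1}-\I{n_1}-\A(\Gr{G}_1)$ stated at the end of the proposition, which forces a vanishing diagonal and records non-adjacencies off the diagonal. All the other blocks coincide with those computed for the NS join.

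There is essentially no obstacle here; the argument is a direct unpacking of the three definitions. The only minor point worth explicit verification is that the exclusion $i \neq j$ in Definition~\ref{definition: nonneighbors splitting join of graphs} is correctly absorbed into the zero diagonal of $\A(\CGr{G}_1)$, which follows from the simplicity of $\Gr{G}_1$ (equivalently, from $\A(\CGr{G}_1)=\J{n_1}-\I{n_1}-\A(\Gr{G}_1)$).
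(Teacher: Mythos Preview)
Your proof is correct and follows essentially the same approach as the paper's own proof: both order the vertex set in the three blocks $v_1,\ldots,v_{n_1}$, then $v'_1,\ldots,v'_{n_1}$, then $\V{\Gr{G}_2}$, and read off the block structure directly from Definitions~\ref{definition: neighbors splitting join of graphs} and~\ref{definition: nonneighbors splitting join of graphs}. Your version is in fact more detailed, explicitly checking each of the nine blocks and the handling of the diagonal in the NNS case, whereas the paper simply cites the definitions and the vertex ordering.
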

\begin{proof}
Equality~\eqref{eq: adjaceny matrix of a graph and its complement} holds by
Definition~\ref{definition:complement and s.c. graphs} of a graph complement.
The adjacency matrices in \eqref{eq: adjacency matrix - NS join} and \eqref{eq: adjacency matrix - NNS join}
hold by Definitions~\ref{definition: neighbors splitting join of graphs} and~\ref{definition: nonneighbors splitting join of graphs},
respectively. In these matrices:
\begin{enumerate}[(a)]
\item The first $n_1$ rows and columns correspond to the vertices $v_1, \ldots, v_{n_1}$ in $\Gr{G}_1$.
\item The next $n_1$ rows and columns refer to the copied vertices
$v'_1, \ldots, v'_{n_1}$ that are added to the vertex set of $\Gr{G}_1 \vee \Gr{G}_2$ to form both vertex
sets of the graphs $\Gr{G}_1 \NS \Gr{G}_2$ and $\Gr{G}_1 \NNS \Gr{G}_2$.
\item The last $n_2$ rows and columns refer to the vertices in $\Gr{G}_2$.
\end{enumerate}
\end{proof}

Consider the NS and NNS joins of two graphs with disjoint vertex sets. The next result gives sufficient
conditions for these joins of graphs to be nonisomorphic and cospectral with respect to their adjacency,
Laplacian, signless Laplacian, and normalized Laplacian matrices. A partial result was first published in
Theorem~5.1 of \cite{LuMZ23}, followed by the next result presented in \cite{Hamud23,HamudB24}.
\begin{theorem}[Irregular $\{A, L, Q, \mathcal{L}\}$-NICS graphs]
\label{theorem: Berman and Hamud, 2023}
Let $\Gr{G}_1$ and $\Gr{H}_1$ be regular and cospectral graphs, and let $\Gr{G}_2$ and $\Gr{H}_2$
be regular, nonisomorphic, and cospectral (NICS) graphs. Then, the following statements hold:
\begin{enumerate}
\item  \label{Item 1: Berman and Hamud, 2023}
$\Gr{G}_1 \NS \Gr{G}_2$ and $\Gr{H}_1 \NS \Gr{H}_2$
are irregular $\{A, L, Q, \mathcal{L}\}$-NICS graphs.
\item  \label{Item 2: Berman and Hamud, 2023}
$\Gr{G}_1 \NNS \Gr{G}_2$ and $\Gr{H}_1 \NNS \Gr{H}_2$
are irregular $\{A, L, Q, \mathcal{L}\}$-NICS graphs.
\end{enumerate}
\end{theorem}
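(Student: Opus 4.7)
The plan is to verify three properties for each join construction: irregularity, nonisomorphism, and cospectrality with respect to all of $\A, \LM, \Q, \mathcal{L}$. Set $n_i \eqdef \card{\V{\Gr{G}_i}}$ and let $d_i$ denote the common degree of the regular pair $\Gr{G}_i, \Gr{H}_i$ (cospectral regular graphs share both order and degree). A direct degree count shows that $\Gr{G}_1 \NS \Gr{G}_2$ has three vertex classes with degrees $2d_1+n_2$, $d_1$, and $d_2+n_1$, while for $\Gr{G}_1 \NNS \Gr{G}_2$ the corresponding values are $n_1-1+n_2$, $n_1-1-d_1$, and $d_2+n_1$; in either case the three values cannot all coincide for nontrivial parameters, so each join is irregular. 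For nonisomorphism, I would argue that the three classes are intrinsically recognizable inside the joined graph: the copies have all neighborhoods contained in one specific class (the original $\Gr{G}_1$-vertices) and have no edges among themselves or to $\Gr{G}_2$-vertices. Hence any isomorphism $\Gr{G}_1 \NS \Gr{G}_2 \to \Gr{H}_1 \NS \Gr{H}_2$ must respect the partition into classes and restrict to an isomorphism $\Gr{G}_2 \to \Gr{H}_2$, contradicting $\Gr{G}_2 \not\cong \Gr{H}_2$.

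The technical core is cospectrality. Using the block representation of $\A(\Gr{G}_1 \NS \Gr{G}_2)$ from Proposition~\ref{proposition: Adjacency Matrices of NS and NNS Join Graphs}, I would exploit that regularity makes $\mathbf{1}_{n_i}$ an eigenvector of $\A(\Gr{G}_i)$ with eigenvalue $d_i$. Decompose the ambient space $\Reals^{n_1} \oplus \Reals^{n_1} \oplus \Reals^{n_2}$ as $\set{U} \oplus \set{U}^\perp$, where $\set{U}$ is the three-dimensional span of $(\mathbf{1}_{n_1},0,0)$, $(0,\mathbf{1}_{n_1},0)$, and $(0,0,\mathbf{1}_{n_2})$. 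Because each all-ones off-diagonal block $\J{\cdot \times \cdot}$ annihilates vectors orthogonal to $\mathbf{1}$, both $\set{U}$ and $\set{U}^\perp$ are invariant under the block adjacency matrix. The restriction to $\set{U}$ is an explicit $3\times 3$ matrix with entries built only from $n_1,d_1,n_2,d_2$. On $\set{U}^\perp$ the action further separates into a $2\times 2$ block acting on pairs $(x,y,0)$ with $x,y$ ranging over the common nontrivial eigenbasis of $\A(\Gr{G}_1)$, whose entries depend only on the nontrivial eigenvalues of $\A(\Gr{G}_1)$, and a $1\times 1$ block acting on $(0,0,z)$ with $z$ a nontrivial eigenvector of $\A(\Gr{G}_2)$, giving $z$ the corresponding eigenvalue. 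Therefore the $A$-characteristic polynomial of $\A(\Gr{G}_1 \NS \Gr{G}_2)$ factors into quantities that depend only on $n_1,d_1,n_2,d_2$ and on the spectra of $\Gr{G}_1$ and $\Gr{G}_2$; since these data coincide for $\Gr{H}_1$ and $\Gr{H}_2$ (using Remark~\ref{remark: Relations Between Spectra of Regular Graphs} for the equivalence among regular-graph cospectralities), the two NS joins are $A$-cospectral.

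The same decomposition gives the remaining three cospectralities. For $\LM = \D - \A$ and $\Q = \D + \A$, the degree matrix $\D$ of the join is block-diagonal with three scalar blocks whose entries depend only on the parameters $n_1,d_1,n_2,d_2$; hence $\D$ preserves $\set{U}$ and $\set{U}^\perp$, and the same $3\times 3$ plus per-eigenvalue reduction applies. For $\mathcal{L} = \mathbf{I} - \D^{-1/2} \A \D^{-1/2}$, the factor $\D^{-1/2}$ is also block-scalar and preserves the decomposition, so the per-eigenvalue factors acquire the explicit scalings $(2d_1+n_2)^{-1/2}$, $d_1^{-1/2}$, and $(d_2+n_1)^{-1/2}$, but remain functions of the component eigenvalues and the common parameters alone. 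For the NNS case, the off-diagonal $\A(\Gr{G}_1)$ blocks in \eqref{eq: adjacency matrix - NS join} are replaced by $\A(\CGr{G}_1) = \J{n_1} - \I{n_1} - \A(\Gr{G}_1)$, which has $\mathbf{1}_{n_1}$ as an eigenvector with eigenvalue $n_1-1-d_1$ and acts on $\mathbf{1}_{n_1}^\perp$ as $-\I{n_1} - \A(\Gr{G}_1)$; the eigenbasis of $\A(\Gr{G}_1)$ still diagonalizes it, and the $\set{U}$ versus $\set{U}^\perp$ split transfers verbatim.

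The main obstacle is the careful bookkeeping for the normalized Laplacian, where the $\D^{-1/2}$ scalings must be propagated through every block so that the resulting per-eigenvalue factors retain dependence solely on the component eigenvalues and on the parameters $n_i,d_i$; once this is verified, all four cospectralities, together with irregularity and nonisomorphism, follow from the same template for both the NS and NNS variants.
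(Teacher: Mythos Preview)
The paper does not supply its own proof of this theorem: it is quoted in the preliminaries subsection as a result of Berman and Hamud \cite{Hamud23,HamudB24} (with a partial version in \cite{LuMZ23}), and the paper merely cites it before using it downstream. Thus there is no in-paper proof to compare against.

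That said, your approach is exactly the standard one employed in those references. The decomposition of $\Reals^{2n_1+n_2}$ into the three-dimensional ``all-ones'' subspace $\set{U}$ and its orthogonal complement, using that regularity makes $\mathbf{1}$ an eigenvector of each $\A(\Gr{G}_i)$, is precisely the quotient-matrix technique that \cite{LuMZ23,HamudB24} use to obtain closed-form characteristic polynomials for the NS and NNS joins in terms of $f_{\A(\Gr{G}_1)}$, $f_{\A(\Gr{G}_2)}$, and the parameters $n_i,d_i$. Your extension to $\LM$, $\Q$, and $\mathcal{L}$ via the block-scalar structure of $\D$ is likewise the mechanism those papers rely on. One small point worth tightening: your nonisomorphism argument asserts that the three vertex classes are ``intrinsically recognizable,'' but you should verify this does not rest on degrees alone (the degrees of the $\Gr{G}_1$-class and the $\Gr{G}_2$-class can coincide for suitable parameters). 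The structural argument you sketch---that the added copies $v'_i$ form an independent set whose neighborhoods lie entirely in a single $n_1$-subset, while the $\Gr{G}_2$-vertices are each adjacent to all of that same $n_1$-subset---does pin down the partition, so any isomorphism must carry the induced $\Gr{G}_2$ onto the induced $\Gr{H}_2$, yielding the contradiction; just make that explicit.
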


\subsection{New Results on Cospectral and Nonisomorphic Graphs, and on Joins of Graphs}
\label{subsection: main results - cospectral and nonisomorphic graphs}

In light of Theorems~\ref{thm: number of walks of a given length}--\ref{theorem: number of spanning trees},
it is established that every pair of $\{A, L, Q, \mathcal{L}\}$-NICS graphs share identical structural properties,
encompassing identical counts of vertices, edges, triangles, components, bipartite components, spanning trees,
and walks (or closed walks) of specific lengths. Moreover, these graphs exhibit either regularity or
irregularity, with matching girth, indicating shared characteristics despite being cospectral and nonisomorphic.
By Theorem~\ref{theorem: A-eigenvalues of a line graph}, the line graphs of these graphs are also $A$-cospectral,
thereby sharing identical counts of vertices, edges, triangles, and walks (or closed walks) of specified lengths.
In view of these findings, we introduce two questions as follows.
\begin{enumerate}
\item
Is the Lov\'{a}sz $\vartheta$-function identical for any pair of $\{A, L, Q, \mathcal{L}\}$-NICS graphs ?
\item
If not, are there subfamilies of graphs for which the Lov\'{a}sz $\vartheta$-function is unique
for any pair of $\{A, L, Q, \mathcal{L}\}$-NICS graphs within that particular subfamily ?
\end{enumerate}
These questions are addressed here with the following conclusions:
\begin{itemize}
\item
By Theorem~\ref{theorem 2: van Dam and Haemers, 2003}, regular graphs with fewer than
10~vertices can be uniquely determined by their spectrum. Therefore, the response to
the first question is affirmative for these small regular graphs.
\item
In general, the response to the first query is negative, as demonstrated by
Example~\ref{example: Regular NICS graphs cont.}, followed by
Theorems~\ref{theorem: existence of NICS graphs} and~\ref{theorem: on Graph Invariants of NS/NNS Joins of Graphs}.
\item
Some subfamilies of regular graphs exhibit a positive response to the second query, as shown
in Corollary~\ref{corollary: sufficient conditions for equalities},
Theorem~\ref{theorem: Shannon Capacity of NS and NNS Join Graphs}, and
Corollary~\ref{corollary: the Lovasz theta-function of regular NICS graphs}.
\end{itemize}
Our analysis relies in part on \cite{vanDamH03, Hamud23, HamudB24, Sason23},
with the background provided in Section~\ref{subsection: preliminaries - cospectral and nonisomorphic graphs}.

\begin{example}[Regular NICS graphs]
\label{example: Regular NICS graphs cont.}
The present example continues Example~\ref{example: regular NICS graphs with 10 vertices}. Let $\Gr{G}$
and $\Gr{H}$ be the graphs on the left and right plots of Figure~\ref{fig:vanDamH03}, respectively.
The Lov\'{a}sz $\vartheta$-functions of these graphs are computed numerically by solving the SDP problem
in \eqref{eq: SDP problem - Lovasz theta-function} for both $\Gr{G}$ and $\Gr{H}$. The resulting
values are given with a precision of 5~decimal points as follows:
\begin{align}
\label{eq3:30.09.23}
\vartheta(\Gr{G}) = 3.23607, \quad \vartheta(\Gr{H}) = 3.26880.
\end{align}
The complements $\CGr{G}$ and $\CGr{H}$ are 5-regular NICS graphs, whose Lov\'{a}sz
$\vartheta$-functions are equal to
\begin{align}
\label{eq4:30.09.23}
\vartheta(\CGr{G}) = 3.19656, \quad \vartheta(\CGr{H}) = 3.13198.
\end{align}
This results in two pairs of regular NICS graphs on 10 vertices, denoted by $\{\Gr{G}, \Gr{H}\}$ and $\{\CGr{G}, \CGr{H}\}$.
Each pair exhibits distinct values of the Lov\'{a}sz $\vartheta$-functions, as demonstrated by equations
\eqref{eq3:30.09.23} and \eqref{eq4:30.09.23}. It is noteworthy that the four graphs $\Gr{G}$, $\CGr{G}$, $\Gr{H}$,
and $\CGr{H}$ share identical independence numbers~(3), clique numbers~(3), and chromatic numbers~(4).
\end{example}

Following Example~\ref{example: Regular NICS graphs cont.}, the next result demonstrates the existence of
pairs of irregular $\{A, L, Q, \mathcal{L}\}$-NICS graphs on an arbitrarily large order, such that
these cospectral and nonisomorphic graphs have identical pairs of independence numbers, clique numbers, and
chromatic numbers, while exhibiting distinct values of their Lov\'{a}sz $\vartheta$-functions.
\begin{theorem}[On irregular NICS graphs]
\label{theorem: existence of NICS graphs}
For every even integer $n \geq 14$, there exist two connected, irregular $\{A, L, Q, \mathcal{L}\}$-NICS graphs
on $n$ vertices with identical independence, clique, and chromatic numbers, yet their Lov\'{a}sz $\vartheta$-functions
are distinct.
\end{theorem}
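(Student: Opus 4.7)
The plan is to construct $\Gr{F}$ and $\Gr{F}'$ as neighbors splitting (NS) joins and to apply Theorem~\ref{theorem: Berman and Hamud, 2023} to secure the $\{A,L,Q,\mathcal{L}\}$-NICS property. Let $\Gr{G}_2$ and $\Gr{H}_2$ be the pair of 4-regular NICS graphs on 10 vertices depicted in Figure~\ref{fig:vanDamH03}. By Example~\ref{example: Regular NICS graphs cont.}, they share their independence number (equal to 3), clique number (equal to 3), and chromatic number (equal to 4), yet their Lov\'{a}sz $\vartheta$-functions differ. For an even integer $n \geq 14$, set $n_1 \eqdef \tfrac12(n-10) \geq 2$ and let $\Gr{G}_1 = \Gr{H}_1 \eqdef \CoG{n_1}$, so that $\Gr{G}_1$ is regular and trivially cospectral with itself. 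Define $\Gr{F} \eqdef \Gr{G}_1 \NS \Gr{G}_2$ and $\Gr{F}' \eqdef \Gr{H}_1 \NS \Gr{H}_2$; each has $2n_1+10 = n$ vertices.

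Item~\ref{Item 1: Berman and Hamud, 2023} of Theorem~\ref{theorem: Berman and Hamud, 2023} then yields that $\Gr{F}$ and $\Gr{F}'$ are irregular and $\{A,L,Q,\mathcal{L}\}$-NICS. For connectivity, note that in the NS join every vertex of $\Gr{G}_1$ is adjacent to every vertex of $\Gr{G}_2$, and each copied vertex $v_i'$ is adjacent to some $v_j$ in $\Gr{G}_1$ since $\CoG{n_1}$ has minimum degree $n_1 - 1 \geq 1$; the same argument covers $\Gr{F}'$. The restriction $n_1 \geq 2$ (equivalently, $n \geq 14$) is precisely what prevents the copied vertices from being isolated, so the lower bound on $n$ in the statement is explained by this step of the construction.

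Next, to show $\indnum{\Gr{F}} = \indnum{\Gr{F}'}$, $\clnum{\Gr{F}} = \clnum{\Gr{F}'}$, and $\chrnum{\Gr{F}} = \chrnum{\Gr{F}'}$, I would invoke the closed-form expressions for these three invariants of an NS join provided in Theorem~\ref{theorem: on Graph Invariants of NS/NNS Joins of Graphs}, which write each invariant of $\Gr{G}_1 \NS \Gr{G}_2$ as a function of the corresponding invariants of $\Gr{G}_1$ and $\Gr{G}_2$ (together with their orders). Since the first component coincides and the three invariants of $\Gr{G}_2$ match those of $\Gr{H}_2$ by Example~\ref{example: Regular NICS graphs cont.}, the same values arise for $\Gr{F}$ and $\Gr{F}'$.

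The principal and most delicate step is separating the Lov\'{a}sz $\vartheta$-functions. Using the formula for $\vartheta$ of an NS join from Theorem~\ref{theorem: on Graph Invariants of NS/NNS Joins of Graphs}, which expresses $\vartheta(\Gr{G}_1 \NS \Gr{G}_2)$ in terms of $\vartheta(\Gr{G}_1)$, $\vartheta(\Gr{G}_2)$, and the orders, together with $\vartheta(\Gr{G}_2) \neq \vartheta(\Gr{H}_2)$ from Example~\ref{example: Regular NICS graphs cont.}, one infers $\vartheta(\Gr{F}) \neq \vartheta(\Gr{F}')$. The main obstacle is verifying that this join formula depends strictly monotonically on $\vartheta$ of the second component; a natural route is to analyze the SDP~\eqref{eq: SDP problem - Lovasz theta-function} on the block adjacency matrix~\eqref{eq: adjacency matrix - NS join}, exploiting the zero blocks to partially decouple the optimization into pieces governed by $\vartheta(\Gr{G}_1)$ and $\vartheta(\Gr{G}_2)$, thereby isolating the contribution of the latter and transferring the strict inequality on components to the joins.
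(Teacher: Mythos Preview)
Your construction with $\Gr{G}_1=\CoG{n_1}$ (rather than the paper's $\CG{k+1}$) would in fact work, but your justification has genuine gaps. First, there is a circularity issue: in the paper, Theorem~\ref{theorem: on Graph Invariants of NS/NNS Joins of Graphs} is proved \emph{after} Theorem~\ref{theorem: existence of NICS graphs}, and its proof explicitly borrows arguments from the proof of Theorem~\ref{theorem: existence of NICS graphs}; you cannot invoke it here without reorganizing the logic. Second, and more substantively, you mischaracterize what Theorem~\ref{theorem: on Graph Invariants of NS/NNS Joins of Graphs} actually provides. It gives \emph{equalities} only for the clique and chromatic numbers (\eqref{eq1:02.10.23}--\eqref{eq9:02.10.23}); for the independence number and for $\vartheta$, it gives \emph{inequalities} \eqref{eq3:02.10.23} and \eqref{eq1:30.09.23}, with sufficient conditions for equality. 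The sufficient condition for equality in the independence-number bound requires $\indnum{\Gr{G}_2}=\vartheta(\Gr{G}_2)$, which \emph{fails} here ($3\neq 3.236\ldots$). So you cannot conclude $\indnum{\Gr{F}}=\indnum{\Gr{F}'}$ from that theorem; you would need a direct argument (which is easy for $\CoG{n_1}$ since any $v_j$ is adjacent to every $v'_i$ with $i\neq j$, forcing an independent set containing some $v_j$ to have size at most~2, but you have not supplied it).

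For the $\vartheta$-function, Theorem~\ref{theorem: on Graph Invariants of NS/NNS Joins of Graphs} does not give ``a formula in terms of $\vartheta(\Gr{G}_1),\vartheta(\Gr{G}_2)$, and the orders''; it gives $\vartheta(\Gr{G}_1\NS\Gr{G}_2)\geq\card{\V{\Gr{G}_1}}+\vartheta(\Gr{G}_2)$, with equality when there exists a permutation $\pi$ of $\V{\Gr{G}_1}$ such that $\{v_i,v_{\pi(i)}\}\in\E{\Gr{G}_1}$ for all $i$. This condition \emph{does} hold for $\CoG{n_1}$ with $n_1\geq 2$ (any derangement works), which is what makes your construction viable --- but you neither identify nor verify this condition. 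Your final paragraph's sketch (SDP block-decoupling, strict monotonicity) is not the mechanism that works; the actual argument (both in the paper for cycles and, mutatis mutandis, for complete graphs) builds an orthonormal representation of $\Gr{G}_1\NS\Gr{G}_2$ from one of the induced subgraph $\{v'_1,\ldots,v'_{n_1}\}\cup\V{\Gr{G}_2}$ by reusing the vector of $v'_i$ for $v_{\pi(i)}$, thereby showing $\vartheta(\Gr{G}_1\NS\Gr{G}_2)\leq\card{\V{\Gr{G}_1}}+\vartheta(\Gr{G}_2)$ and hence equality.
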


\begin{proof}
See Section~\ref{subsubsection: proof of a theorem on the existence of NICS graphs}.
\end{proof}

\begin{theorem}[Graph invariants of NS/NNS joins of graphs]
\label{theorem: on Graph Invariants of NS/NNS Joins of Graphs}
For every finite, undirected, and simple graphs $\Gr{G}_1$ and $\Gr{G}_2$, the following holds:
\begin{align}
\label{eq3:02.10.23}
\indnum{\Gr{G}_1 \NS \Gr{G}_2} &\geq \card{\V{\Gr{G}_1}} + \indnum{\Gr{G}_2}, \\
\label{eq3b:02.10.23}
\indnum{\Gr{G}_1 \NNS \Gr{G}_2} &\geq \card{\V{\Gr{G}_1}} + \indnum{\Gr{G}_2}, \\
\label{eq1:02.10.23}
\clnum{\Gr{G}_1 \NS \Gr{G}_2} &= \clnum{\Gr{G}_1} + \clnum{\Gr{G}_2} \\
\label{eq2:02.10.23}
&= \clnum{\Gr{G}_1 \NNS \Gr{G}_2}, \\
\label{eq8:02.10.23}
\chrnum{\Gr{G}_1 \NS \Gr{G}_2} &= \chrnum{\Gr{G}_1} + \chrnum{\Gr{G}_2} \\
\label{eq9:02.10.23}
&= \chrnum{\Gr{G}_1 \NNS \Gr{G}_2}, \\[0.1cm]
\label{eq1:30.09.23}
\vartheta(\Gr{G}_1 \NS \Gr{G}_2) &\geq \card{\V{\Gr{G}_1}} + \vartheta(\Gr{G}_2), \\[0.15cm]
\label{eq2:30.09.23}
\vartheta(\Gr{G}_1 \NNS \Gr{G}_2) &\geq \card{\V{\Gr{G}_1}} + \vartheta(\Gr{G}_2).
\end{align}
Moreover, the following sufficient conditions for equalities hold:
\begin{itemize}
\item
Inequality \eqref{eq1:30.09.23} holds with equality if there exists a permutation
$\pi$ of the vertex set $\V{\Gr{G}_1}$ such that $\{v_i, v_{\pi(i)}\} \in \E{\Gr{G}_1}$
for all $i \in \{1, \ldots, \card{\V{\Gr{G}_1}}\}$.
\item
Inequality \eqref{eq2:30.09.23} holds with equality if there exists a permutation
$\pi$ of the vertex set $\V{\Gr{G}_1}$ such that $\pi(i) \neq i$ and
$\{v_i, v_{\pi(i)}\} \not\in \E{\Gr{G}_1}$ for all $i \in \{1, \ldots, \card{\V{\Gr{G}_1}}\}$.
\item
Inequality \eqref{eq3:02.10.23} holds with equality if the sufficient condition
for equality in \eqref{eq1:30.09.23} is satisfied and $\indnum{\Gr{G}_2} = \vartheta(\Gr{G}_2)$.
\item
Inequality \eqref{eq3b:02.10.23} holds with equality if the sufficient condition
for equality in \eqref{eq2:30.09.23} is satisfied and $\indnum{\Gr{G}_2} = \vartheta(\Gr{G}_2)$.
\end{itemize}
\end{theorem}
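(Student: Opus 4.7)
I would approach the theorem in four stages. First, for the four inequalities \eqref{eq3:02.10.23}, \eqref{eq3b:02.10.23}, \eqref{eq1:30.09.23}, and \eqref{eq2:30.09.23}, a single structural observation suffices: reading the block forms in \eqref{eq: adjacency matrix - NS join} and \eqref{eq: adjacency matrix - NNS join}, the subgraph induced on $\V{\Gr{G}'_1} \cup \V{\Gr{G}_2}$ in either $\Gr{G}_1 \NS \Gr{G}_2$ or $\Gr{G}_1 \NNS \Gr{G}_2$ has no edges among the $n_1$ copies and no edges between copies and $\V{\Gr{G}_2}$, so it is isomorphic to the disjoint union $\EmG{n_1} + \Gr{G}_2$, where $\EmG{n_1}$ is the empty graph on $n_1$ vertices. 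Since $\vartheta(\EmG{n_1}) = \indnum{\EmG{n_1}} = n_1$, the additivity of $\vartheta(\cdot)$ under disjoint unions from \eqref{eq: Lovasz function of a disjoint union of graphs}, together with the monotonicity of both $\indnum{\cdot}$ and $\vartheta(\cdot)$ under induced subgraphs, immediately yields all four inequalities.

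For the clique and chromatic equalities \eqref{eq1:02.10.23}--\eqref{eq9:02.10.23}, I would exploit the companion observation that the induced subgraph on $\V{\Gr{G}_1} \cup \V{\Gr{G}_2}$ in either join is precisely the ordinary join $\Gr{G}_1 \vee \Gr{G}_2$, which together with the standard identities $\clnum{\Gr{G}_1 \vee \Gr{G}_2} = \clnum{\Gr{G}_1} + \clnum{\Gr{G}_2}$ and $\chrnum{\Gr{G}_1 \vee \Gr{G}_2} = \chrnum{\Gr{G}_1} + \chrnum{\Gr{G}_2}$ supplies the lower bounds. For the matching clique upper bound, I would run a short case analysis on the intersection of a maximum clique $C$ with the independent set $\V{\Gr{G}'_1}$: at most one copy $v'_i$ can lie in $C$, and if one does then $C \subseteq \V{\Gr{G}_1} \cup \{v'_i\}$ since no copy is adjacent to any vertex of $\Gr{G}_2$, after which $C$ maps to a clique of the same size in $\Gr{G}_1$ by replacing $v'_i$ with $v_i$. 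For the chromatic upper bound, I would color $\Gr{G}_1$ and $\Gr{G}_2$ with disjoint palettes of sizes $\chrnum{\Gr{G}_1}$ and $\chrnum{\Gr{G}_2}$ and assign all $n_1$ copies one shared color drawn from the $\Gr{G}_2$-palette; this is proper in both NS and NNS since every neighbor of a copy $v'_i$ lies inside $\V{\Gr{G}_1}$.

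The main step is the sufficient conditions for equality in the $\vartheta$-bounds, which I would prove by constructing a matching orthonormal representation with handle. Let $\{{\bf{g}}_k\}_{k=1}^{n_2} \subseteq \Reals^{d_2}$ together with a unit handle ${\bf{c}}_2 \in \Reals^{d_2}$ attain $\vartheta(\Gr{G}_2)$ in the sense of \eqref{eq: Lovasz theta function}, and let $\pi$ denote the permutation furnished by the hypothesis. In $\Reals^{n_1+d_2}$ I would set ${\bf{u}}_{v'_i} = ({\bf{e}}_i, \mathbf{0})$, ${\bf{u}}_{v_i} = ({\bf{e}}_{\pi(i)}, \mathbf{0})$, and ${\bf{u}}_{w_k} = (\mathbf{0}, {\bf{g}}_k)$, with the handle ${\bf{c}} = \bigl(\alpha \mathbf{1}_{n_1}/\sqrt{n_1},\, \beta {\bf{c}}_2\bigr)$ and $\alpha^2 + \beta^2 = 1$. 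Once all orthogonalities are checked, the maxima $n_1/\alpha^2$ on the $\Gr{G}_1$-and-copies block and $\vartheta(\Gr{G}_2)/\beta^2$ on the $\Gr{G}_2$-block balance, after optimization in $\alpha$ and $\beta$, at $n_1 + \vartheta(\Gr{G}_2)$, which is the required upper bound. The crux of the verification is that ${\bf{u}}_{v_i}$ must be orthogonal to ${\bf{u}}_{v'_j}$ on every non-edge of the relevant join, and this reduces to the condition $\pi(i) \neq j$: in the NS case $\pi(i) = j$ would force $\{v_i, v_j\} = \{v_i, v_{\pi(i)}\} \in \E{\Gr{G}_1}$, contradicting the non-edge assumption, while $\pi(i) \neq i$ is automatic because a self-loop would otherwise be required; in the NNS case $\pi(i) = j$ would force $\{v_i, v_j\} \notin \E{\Gr{G}_1}$ with $i \neq j$, contradicting either the adjacency assumption or the explicit condition $\pi(i) \neq i$.

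The sufficient conditions for equality in \eqref{eq3:02.10.23} and \eqref{eq3b:02.10.23} then follow by sandwiching: the lower bound from the first stage together with \eqref{eq1a: sandwich} and the $\vartheta$-equality from the preceding stage give $n_1 + \indnum{\Gr{G}_2} \leq \indnum{\Gr{G}_1 \NS \Gr{G}_2} \leq \vartheta(\Gr{G}_1 \NS \Gr{G}_2) = n_1 + \vartheta(\Gr{G}_2) = n_1 + \indnum{\Gr{G}_2}$ under the extra hypothesis $\indnum{\Gr{G}_2} = \vartheta(\Gr{G}_2)$, and likewise for NNS. The principal obstacle is the orthogonality check in the third stage: one must cross-reference the three blocks of the NS and NNS adjacency matrices against the supports of ${\bf{e}}_i$ and ${\bf{e}}_{\pi(i)}$ and confirm that the combinatorial hypothesis on $\pi$ precisely rules out each forbidden coincidence $\pi(i) = j$, whereas every other step reduces to the monotonicity, additivity, and sandwich results already collected in the paper.
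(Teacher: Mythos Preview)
Your approach is essentially the paper's: both obtain the lower bounds from the induced subgraph $\EmG{n_1} + \Gr{G}_2$, and both prove the $\vartheta$ upper bound under the permutation hypothesis by extending an orthonormal representation of that induced subgraph to the full join via $v_i \mapsto {\bf u}_{v'_{\pi(i)}}$ (the paper does this abstractly from an optimal representation of $\Gr{U}$, while you spell out the same thing concretely with standard basis vectors and an explicitly balanced handle). The sandwich for the independence-number equality is identical.

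One small slip to fix: your clique replacement ``$v'_i \mapsto v_i$'' is only valid for the NS join. In the NNS join the neighbours of $v'_i$ inside $\V{\Gr{G}_1}$ are precisely the \emph{non}-neighbours of $v_i$, so the replaced set need not be a clique in $\Gr{G}_1$. The repair is immediate: if $v'_i \in C$ then $C \setminus \{v'_i\} \subseteq \V{\Gr{G}_1}$ is already a clique in $\Gr{G}_1$, whence $|C| \leq \clnum{\Gr{G}_1} + 1 \leq \clnum{\Gr{G}_1} + \clnum{\Gr{G}_2}$ (using $\clnum{\Gr{G}_2} \geq 1$).
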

\begin{proof}
See Section~\ref{subsubsection: proof of a theorem on graph invariants of NS/NNS joins of graphs}.
\end{proof}

\begin{remark}[Perfect matchings in graphs]
\label{remark: perfect matchings}
An equivalent formulation of the sufficient conditions in Theorem~\ref{theorem: on Graph Invariants of NS/NNS Joins of Graphs},
ensuring \eqref{eq1:30.09.23} and \eqref{eq2:30.09.23} to hold with equalities relies on perfect matchings in graphs.
Let $\Gr{G}$ be a graph on $n$ vertices, whose adjacency matrix is $\A \eqdef \A(\Gr{G})$.
Associate with the graph $\Gr{G}$ the bipartite graph $\mathrm{B}(\Gr{G}) = (\set{U} \cup \set{V}, \, \set{E})$
on $2n$ vertices. The vertex set $\set{U} \cup \set{V}$ consists of two disjoint sets $\set{U} = {u_1, \ldots, u_n}$
and $\set{V} = {v_1, \ldots, v_n}$, and an edge $\{u_i, v_j\} \in \set{E}$ if and only if $A_{i,j} = 1$
with $i, j \in \OneTo{n}$.

An equivalent formulation of the sufficient condition for \eqref{eq1:30.09.23} to hold with equality
is the existence of a perfect matching in the bipartite graph $\mathrm{B}(\Gr{G}_1)$.
Indeed, the number of different perfect matchings in
$\mathrm{B}(\Gr{G}_1)$ is equal to the permanent of the adjacency matrix $\A(\Gr{G}_1)$ (see, e.g.,
Chapter~37 in \cite{AignerZ18}), so that sufficient condition is given by $\mathrm{per}(\A(\Gr{G}_1)) > 0$.
Similarly, an equivalent formulation of the sufficient condition for \eqref{eq2:30.09.23} to hold
with equality is that the bipartite graph $\mathrm{B}(\CGr{G}_1)$ has a perfect matching, which can
be also represented by the condition $\mathrm{per}(\A(\CGr{G}_1)) > 0$.
\end{remark}

A well-established result is next presented, which hinges on the widely recognized van der Waerden conjecture.
Initially posed as a question, this conjecture has been elegantly proven and elucidated in Chapter~24 of
\cite{AignerZ18}, \cite{LauerntS10}, and Chapters~11--12 of \cite{LintW01}.
\begin{lemma}[Permanent of the adjacency matrix of a regular graph]
\label{lemma: positive permanent}
Let $\Gr{G}$ be a $d$-regular graph on $n$ vertices, and let $\A$ be its adjacency matrix. Then,
\begin{align}
\label{eq:van der Waerden}
\mathrm{per}(\A) &\geq n! \, \biggl(\frac{d}{n}\biggr)^n.
\end{align}
\end{lemma}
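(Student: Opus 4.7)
The plan is to derive the inequality as an immediate consequence of the Egorychev--Falikman theorem, which resolves the classical van der Waerden conjecture on permanents of doubly stochastic matrices. Recall that the theorem states that for every $n \times n$ doubly stochastic matrix $\mathbf{S}$ (entries nonnegative, all row sums and column sums equal to~$1$), one has the sharp lower bound $\mathrm{per}(\mathbf{S}) \geq n!/n^n$, with equality if and only if $\mathbf{S} = \tfrac{1}{n} \J{n}$. This result is precisely stated in the sources the paper cites in the paragraph introducing the lemma.

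The first step is to rescale. Since $\Gr{G}$ is $d$-regular on $n$ vertices, every row and every column of its adjacency matrix $\A$ sums to $d$. Thus the matrix $\mathbf{S} \eqdef \tfrac{1}{d}\, \A$ has nonnegative entries and all row and column sums equal to $1$, so $\mathbf{S}$ is doubly stochastic (this already requires $d \geq 1$; if $d=0$ then $\A = 0$, $n! (d/n)^n = 0$, and the inequality is trivial, so I would dispose of that case separately at the outset).

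The second step is to use multilinearity of the permanent with respect to rows. Scaling each of the $n$ rows of $\A$ by the factor $1/d$ pulls out a factor $(1/d)^n$, giving the identity $\mathrm{per}(\mathbf{S}) = \mathrm{per}(\A)/d^n$. Substituting this into the van der Waerden bound $\mathrm{per}(\mathbf{S}) \geq n!/n^n$ and rearranging yields
\begin{align*}
\mathrm{per}(\A) \; = \; d^n \, \mathrm{per}(\mathbf{S}) \; \geq \; d^n \cdot \frac{n!}{n^n} \; = \; n!\, \Bigl(\frac{d}{n}\Bigr)^n,
\end{align*}
which is exactly \eqref{eq:van der Waerden}.

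There is essentially no obstacle here beyond invoking the (highly nontrivial) Egorychev--Falikman theorem, which is treated as a known black box in the paper. The only points worth noting in the write-up are (a) the trivial case $d=0$, and (b) that equality in \eqref{eq:van der Waerden} would force $\A = \tfrac{d}{n}\J{n}$, which is impossible for an adjacency matrix of a simple graph unless $n=1$; hence the inequality is strict for every simple $d$-regular graph with $n \geq 2$, a remark that could be useful in the subsequent application of the lemma.
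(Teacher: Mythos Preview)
Your proposal is correct and follows exactly the approach the paper indicates: the paper does not give a detailed proof but states that the lemma ``hinges on the widely recognized van der Waerden conjecture'' and cites \cite{AignerZ18,LauerntS10,LintW01}, which is precisely what you do by rescaling $\A$ to the doubly stochastic matrix $\tfrac{1}{d}\A$ and invoking the Egorychev--Falikman bound. Your write-up supplies the standard details (multilinearity, the $d=0$ case, the equality discussion) that the paper leaves to the references.
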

By Theorem~\ref{theorem: on Graph Invariants of NS/NNS Joins of Graphs} and Remark~\ref{remark: perfect matchings},
the positivity of the permanent of the adjacency matrix of the graph $\Gr{G}_1$ suffices for \eqref{eq1:30.09.23}
to hold with equality. Likewise, the positivity of the permanent of the adjacency matrix of the complement graph
$\CGr{G}_1$ suffices for \eqref{eq2:30.09.23} to hold with equality. Furthermore, by
Theorem~\ref{theorem: on Graph Invariants of NS/NNS Joins of Graphs}, the satisfiability of the supplementary condition
$\indnum{\Gr{G}_2} = \vartheta(\Gr{G}_2)$ implies that \eqref{eq3:02.10.23} and \eqref{eq3b:02.10.23}
hold with equalities, respectively, in addition to the equalities in \eqref{eq1:30.09.23} and \eqref{eq2:30.09.23}.
If $\Gr{G}_1$ is a noncomplete and nonempty regular graph, then Lemma~\ref{lemma: positive permanent}
yields the positivity of the permanent of each of the adjacency matrices $\A(\Gr{G}_1)$ and $\A(\CGr{G}_1)$ (since
if $\Gr{G}_1$ is a $d$-regular graph, then its complement $\CGr{G}_1$ is an $(n-d-1)$-regular graph). The next result follows.

\begin{corollary}
\label{corollary: sufficient conditions for equalities}
Let $\Gr{G}_1$ and $\Gr{G}_2$ be finite, noncomplete, nonempty, undirected, and simple graphs.
\begin{itemize}
\item If $\Gr{G}_1$ is a regular graph, then \eqref{eq1:30.09.23} and \eqref{eq2:30.09.23}
hold with equalities.
\item If $\Gr{G}_1$ is a regular graph and $\indnum{\Gr{G}_2} = \vartheta(\Gr{G}_2)$, then
\eqref{eq3:02.10.23} and \eqref{eq3b:02.10.23} hold with equalities.
\end{itemize}
\end{corollary}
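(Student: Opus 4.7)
The plan is to derive Corollary~\ref{corollary: sufficient conditions for equalities} from Theorem~\ref{theorem: on Graph Invariants of NS/NNS Joins of Graphs} by verifying its sufficient conditions through the permanent-based reformulation in Remark~\ref{remark: perfect matchings} together with the van der Waerden inequality in Lemma~\ref{lemma: positive permanent}. Since $\Gr{G}_1$ is regular, noncomplete, and nonempty, it is $d$-regular on $n \eqdef \card{\V{\Gr{G}_1}}$ vertices for some integer $d$ with $1 \leq d \leq n-2$; consequently its complement $\CGr{G}_1$ is $(n-d-1)$-regular with $1 \leq n-d-1 \leq n-2$ as well.

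For the first bullet, I would apply Lemma~\ref{lemma: positive permanent} to $\A(\Gr{G}_1)$, which yields $\mathrm{per}(\A(\Gr{G}_1)) \geq n! \, (d/n)^n > 0$ since $d \geq 1$. By Remark~\ref{remark: perfect matchings}, this strict positivity is equivalent to the existence of a perfect matching in $\mathrm{B}(\Gr{G}_1)$, and hence to the existence of a permutation $\pi$ of $\V{\Gr{G}_1}$ with $\{v_i, v_{\pi(i)}\} \in \E{\Gr{G}_1}$ for every $i$. This is precisely the sufficient condition listed in Theorem~\ref{theorem: on Graph Invariants of NS/NNS Joins of Graphs} for the equality in \eqref{eq1:30.09.23}. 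Applying the same lemma to $\A(\CGr{G}_1)$ gives $\mathrm{per}(\A(\CGr{G}_1)) \geq n! \, ((n-d-1)/n)^n > 0$ by the noncompleteness hypothesis; the associated permutation $\pi$ automatically satisfies $\pi(i) \neq i$ (as $\CGr{G}_1$ is loopless) together with $\{v_i, v_{\pi(i)}\} \notin \E{\Gr{G}_1}$, which is the sufficient condition for the equality in \eqref{eq2:30.09.23}.

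For the second bullet, the additional hypothesis $\indnum{\Gr{G}_2} = \vartheta(\Gr{G}_2)$, combined with the two permutations exhibited above, matches the conjunctive sufficient conditions recorded in Theorem~\ref{theorem: on Graph Invariants of NS/NNS Joins of Graphs} for the equalities in \eqref{eq3:02.10.23} and \eqref{eq3b:02.10.23}, which therefore follow immediately.

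There is no obstacle of substance in this argument: once the permanent interpretation from Remark~\ref{remark: perfect matchings} is in hand, the van der Waerden bound mechanizes the existence of the required perfect matchings, and the remainder is bookkeeping against the hypotheses of Theorem~\ref{theorem: on Graph Invariants of NS/NNS Joins of Graphs}. The only point requiring care is ensuring that both $d$ and $n-d-1$ are strictly positive so that the van der Waerden bound is strictly positive; this is exactly what the hypotheses that $\Gr{G}_1$ is nonempty and noncomplete guarantee.
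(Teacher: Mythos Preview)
Your proposal is correct and follows essentially the same approach as the paper: the paper likewise reduces the sufficient conditions of Theorem~\ref{theorem: on Graph Invariants of NS/NNS Joins of Graphs} to the positivity of $\mathrm{per}(\A(\Gr{G}_1))$ and $\mathrm{per}(\A(\CGr{G}_1))$ via Remark~\ref{remark: perfect matchings}, and then invokes the van der Waerden bound (Lemma~\ref{lemma: positive permanent}) together with the observation that a noncomplete, nonempty $d$-regular graph and its $(n-d-1)$-regular complement both have strictly positive degree.
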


In light of Theorem~\ref{theorem: on Graph Invariants of NS/NNS Joins of Graphs},
Remark~\ref{remark: perfect matchings}, and Corollary~\ref{corollary: sufficient conditions for equalities},
the next result follows.
\begin{theorem}[The independence number, Shannon capacity, and Lov\'{a}sz $\vartheta$-function of NS and NNS joins of graphs]
\label{theorem: Shannon Capacity of NS and NNS Join Graphs}
Let $\Gr{G}_1$ and $\Gr{G}_2$ be finite, undirected, and simple graphs.
\begin{enumerate}
\item \label{Item 1 - Shannon Capacity of NS and NNS Join Graphs}
If $\mathrm{per}(\A(\Gr{G}_1)) > 0$ and $\indnum{\Gr{G}_2} = \vartheta(\Gr{G}_2)$, then
\begin{align}
\label{eq14:02.10.23}
& \indnum{\Gr{G}_1 \NS \Gr{G}_2} = \Theta(\Gr{G}_1 \NS \Gr{G}_2)
= \vartheta(\Gr{G}_1 \NS \Gr{G}_2) = \card{\V{\Gr{G}_1}} + \vartheta(\Gr{G}_2).
\end{align}
The equalities in \eqref{eq14:02.10.23} hold, in particular, if $\Gr{G}_1$ is
a nonempty regular graph and $\indnum{\Gr{G}_2} = \vartheta(\Gr{G}_2)$.
\item \label{Item 2 - Shannon Capacity of NS and NNS Join Graphs}
If $\mathrm{per}(\A(\CGr{G}_1)) > 0$ and the equality
$\indnum{\Gr{G}_2} = \vartheta(\Gr{G}_2)$ holds, then
\begin{align}
\label{eq15:02.10.23}
& \indnum{\Gr{G}_1 \NNS \Gr{G}_2} = \Theta(\Gr{G}_1 \NNS \Gr{G}_2)
= \vartheta(\Gr{G}_1 \NNS \Gr{G}_2) = \card{\V{\Gr{G}_1}} + \vartheta(\Gr{G}_2).
\end{align}
Equalities in \eqref{eq15:02.10.23} hold, in particular, if $\Gr{G}_1$
is a noncomplete regular graph and $\indnum{\Gr{G}_2} = \vartheta(\Gr{G}_2)$.
\item \label{Item 3 - Shannon Capacity of NS and NNS Join Graphs}
If $\, \mathrm{per}(\A(\Gr{G}_1)) > 0$, $\, \mathrm{per}(\A(\CGr{G}_1)) > 0$,
and $\indnum{\Gr{G}_2} = \vartheta(\Gr{G}_2)$, then
the graphs $\Gr{G}_1 \NS \Gr{G}_2$ and $\Gr{G}_1 \NNS \Gr{G}_2$
share identical independence numbers, clique numbers, chromatic numbers,
Shannon capacities, and Lov\'{a}sz $\vartheta$-functions. This particulary holds
if $\Gr{G}_1$ is a nonempty and noncomplete regular graph and
$\indnum{\Gr{G}_2} = \vartheta(\Gr{G}_2)$.
\end{enumerate}
\end{theorem}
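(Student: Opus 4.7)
The plan is to derive the three items as direct consequences of Theorem~\ref{theorem: on Graph Invariants of NS/NNS Joins of Graphs} together with Remark~\ref{remark: perfect matchings}, Corollary~\ref{corollary: sufficient conditions for equalities}, and the basic sandwich $\indnum{\Gr{G}} \leq \Theta(\Gr{G}) \leq \vartheta(\Gr{G})$ that holds for every graph. The role of the permanent positivity conditions is only to guarantee the equality cases in the lower bounds of Theorem~\ref{theorem: on Graph Invariants of NS/NNS Joins of Graphs}; once those equalities are unlocked, the assumption $\indnum{\Gr{G}_2} = \vartheta(\Gr{G}_2)$ collapses the sandwich.

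For Item~\ref{Item 1 - Shannon Capacity of NS and NNS Join Graphs}, I would first note that the hypothesis $\mathrm{per}(\A(\Gr{G}_1)) > 0$ is, by Remark~\ref{remark: perfect matchings}, equivalent to the existence of a permutation $\pi$ of $\V{\Gr{G}_1}$ with $\{v_i, v_{\pi(i)}\} \in \E{\Gr{G}_1}$ for every $i$. By the sufficient conditions in Theorem~\ref{theorem: on Graph Invariants of NS/NNS Joins of Graphs}, this forces equalities in \eqref{eq1:30.09.23}, and when combined with $\indnum{\Gr{G}_2} = \vartheta(\Gr{G}_2)$ also in \eqref{eq3:02.10.23}. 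Consequently,
\[
\indnum{\Gr{G}_1 \NS \Gr{G}_2} = \card{\V{\Gr{G}_1}} + \indnum{\Gr{G}_2} = \card{\V{\Gr{G}_1}} + \vartheta(\Gr{G}_2) = \vartheta(\Gr{G}_1 \NS \Gr{G}_2).
\]
Sandwiching the Shannon capacity between these two equal quantities gives \eqref{eq14:02.10.23}. The case of a nonempty regular $\Gr{G}_1$ is then handled by invoking Corollary~\ref{corollary: sufficient conditions for equalities} (or, equivalently, Lemma~\ref{lemma: positive permanent}) to conclude $\mathrm{per}(\A(\Gr{G}_1)) > 0$ automatically.

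Item~\ref{Item 2 - Shannon Capacity of NS and NNS Join Graphs} is proved by the exact same template with $\Gr{G}_1 \NS \Gr{G}_2$ replaced by $\Gr{G}_1 \NNS \Gr{G}_2$, using the sufficient condition that applies to \eqref{eq2:30.09.23} and \eqref{eq3b:02.10.23}: the hypothesis $\mathrm{per}(\A(\CGr{G}_1)) > 0$ supplies a permutation $\pi$ of $\V{\Gr{G}_1}$ with $\pi(i) \neq i$ and $\{v_i, v_{\pi(i)}\} \not\in \E{\Gr{G}_1}$ (i.e., a perfect matching in $\mathrm{B}(\CGr{G}_1)$). The noncomplete regular specialization again follows from Corollary~\ref{corollary: sufficient conditions for equalities}, since an $(n-d-1)$-regular complement still has positive permanent when $d < n-1$.

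For Item~\ref{Item 3 - Shannon Capacity of NS and NNS Join Graphs}, the common values of $\clnum{\cdot}$ and $\chrnum{\cdot}$ across the two joins are already furnished, unconditionally, by \eqref{eq1:02.10.23}--\eqref{eq9:02.10.23} in Theorem~\ref{theorem: on Graph Invariants of NS/NNS Joins of Graphs}. Applying Items~\ref{Item 1 - Shannon Capacity of NS and NNS Join Graphs} and~\ref{Item 2 - Shannon Capacity of NS and NNS Join Graphs} simultaneously under both permanent hypotheses yields the same value $\card{\V{\Gr{G}_1}} + \vartheta(\Gr{G}_2)$ for the independence number, Shannon capacity, and Lov\'{a}sz $\vartheta$-function of both $\Gr{G}_1 \NS \Gr{G}_2$ and $\Gr{G}_1 \NNS \Gr{G}_2$. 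The regular specialization in this last item follows by applying Corollary~\ref{corollary: sufficient conditions for equalities} to both $\Gr{G}_1$ and $\CGr{G}_1$, which is valid precisely when $\Gr{G}_1$ is nonempty and noncomplete. There is no real obstacle here beyond bookkeeping; the substantive work was done in Theorem~\ref{theorem: on Graph Invariants of NS/NNS Joins of Graphs}, and the present theorem is essentially a corollary that exploits the collapse $\indnum{\Gr{G}_2} = \vartheta(\Gr{G}_2)$ to pin down $\Theta$ via the sandwich.
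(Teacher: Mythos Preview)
Your proposal is correct and follows essentially the same route as the paper's proof: translate the permanent-positivity hypotheses via Remark~\ref{remark: perfect matchings} into the permutation conditions of Theorem~\ref{theorem: on Graph Invariants of NS/NNS Joins of Graphs}, use those to force equality in \eqref{eq3:02.10.23}, \eqref{eq3b:02.10.23}, \eqref{eq1:30.09.23}, \eqref{eq2:30.09.23}, and then collapse the sandwich $\alpha \leq \Theta \leq \vartheta$. Your treatment of Item~\ref{Item 3 - Shannon Capacity of NS and NNS Join Graphs} is slightly more explicit than the paper's (you point out that the clique and chromatic number equalities come unconditionally from \eqref{eq1:02.10.23}--\eqref{eq9:02.10.23}), but the argument is the same.
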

\begin{proof}
See Section~\ref{subsubsection: proof of a theorem on the Shannon Capacity of NS and NNS Join Graphs}.
\end{proof}

Discussions on the sufficient conditions in Theorem~\ref{theorem: on Graph Invariants of NS/NNS Joins of Graphs}
are further given in Remarks~\ref{remark: the independence numbers of NS/NNS Joins of graphs}
and~\ref{remark: the Lovasz theta-function of NS/NNS joins of graphs} (see
Section~\ref{subsection: examples - cospectral and nonisomorphic graphs}).
By the above results, a pair of regular or irregular $\{A, L, Q, \mathcal{L}\}$-NICS graphs may
have distinct Lov\'{a}sz $\vartheta$-functions. A restatement of Theorem~\ref{thm:bounds on the Lovasz function for regular graphs}
shows that, within some structured infinite subfamilies of regular graphs, the Lov\'{a}sz $\vartheta$-functions
of NICS graphs are identical.
\begin{corollary}[On NICS regular graphs]
\label{corollary: the Lovasz theta-function of regular NICS graphs}
Let $\Gr{G}$ and $\Gr{H}$ be noncomplete, $d$-regular NICS graphs, and let $\{\lambda_i\}_{i=1}^n$
be their common $A$-spectrum in nonincreasing order.
\begin{enumerate}
\item \label{corollary: item 1 - On NICS regular graphs}
If each of these graphs is either strongly regular or edge-transitive, then
\begin{align}
\label{eq1a:26.09.23}
\vartheta(\Gr{G}) = \vartheta(\Gr{H}) = -\frac{n \lambda_n}{d - \lambda_n}.
\end{align}
\item \label{corollary: item 2 - On NICS regular graphs}
If each of these graphs is either strongly regular or its complement is vertex-transitive and
edge-transitive, then
\begin{align}
\label{eq1b:26.09.23}
\vartheta(\Gr{G}) = \vartheta(\Gr{H}) = \frac{n-d+\lambda_2}{1+\lambda_2},
\end{align}
and, for the $(n-d-1)$-regular cospectral graphs $\CGr{G}$ and $\CGr{H}$,
\begin{align}
\label{eq1c:26.09.23}
\vartheta(\CGr{G}) = \vartheta(\CGr{H}) = 1-\frac{d}{\lambda_n}.
\end{align}
\end{enumerate}
\end{corollary}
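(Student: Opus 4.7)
The plan is to apply Theorem~\ref{thm:bounds on the Lovasz function for regular graphs} separately to each of the cospectral regular graphs $\Gr{G}$ and $\Gr{H}$, and then to verify that the hypotheses of the corollary trigger the sufficient conditions for equality in the appropriate bound. Since $\Gr{G}$ and $\Gr{H}$ are both $d$-regular on $n$ vertices and share the common $A$-spectrum $\{\lambda_i\}_{i=1}^n$, the extremal numerical values appearing on both sides of \eqref{eq:21.10.22a1} and \eqref{eq:21.10.22a2} are identical for the two graphs; the only remaining task is to show that the relevant inequality collapses to an equality in each case.

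For Item~\ref{corollary: item 1 - On NICS regular graphs}, I would invoke the rightmost inequality of \eqref{eq:21.10.22a1}, whose sufficient condition for equality is that the graph is edge-transitive or strongly regular (the strong-regularity branch also being covered by Corollary~\ref{corollary: a common sufficient condition}). By the hypothesis of Item~\ref{corollary: item 1 - On NICS regular graphs}, each of $\Gr{G}$ and $\Gr{H}$ fulfills one of these two alternatives, so both $\vartheta(\Gr{G})$ and $\vartheta(\Gr{H})$ saturate the upper bound $-n\lambda_n/(d-\lambda_n)$, yielding \eqref{eq1a:26.09.23}.

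For Item~\ref{corollary: item 2 - On NICS regular graphs}, the analogous argument applies to the leftmost inequalities of \eqref{eq:21.10.22a1} and \eqref{eq:21.10.22a2}. A key observation is that the automorphism group of a simple graph coincides with that of its complement, so vertex-transitivity and edge-transitivity are preserved under complementation; consequently, the hypothesis of Item~\ref{corollary: item 2 - On NICS regular graphs} is equivalent to asserting that each of $\Gr{G}$ and $\Gr{H}$ is strongly regular or both vertex- and edge-transitive, and likewise for $\CGr{G}$ and $\CGr{H}$. Combined with the preservation of strong regularity under complementation (Theorem~\ref{theorem: parameters of srg}), this matches exactly the sufficient conditions stated in Theorem~\ref{thm:bounds on the Lovasz function for regular graphs} for equality in the leftmost inequalities of \eqref{eq:21.10.22a1} and \eqref{eq:21.10.22a2} applied to $\Gr{G}$, $\Gr{H}$, $\CGr{G}$, and $\CGr{H}$. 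The resulting equalities, together with the common spectrum, give \eqref{eq1b:26.09.23} and \eqref{eq1c:26.09.23}.

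There is no substantive obstacle beyond aligning the form of the hypothesis with the form of the sufficient conditions quoted from Theorem~\ref{thm:bounds on the Lovasz function for regular graphs}; the corollary is essentially a direct restatement exploiting the fact that cospectrality renders the spectral bounds numerically identical for $\Gr{G}$ and $\Gr{H}$. The only point that deserves explicit mention is the complement invariance of vertex- and edge-transitivity used in Item~\ref{corollary: item 2 - On NICS regular graphs}, which is what permits the hypothesis phrased in terms of $\CGr{G}$ to supply equality both in the lower bound for $\vartheta(\Gr{G})$ and in the lower bound for $\vartheta(\CGr{G})$.
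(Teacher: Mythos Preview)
Your overall strategy matches the paper's: both approaches invoke the sufficient conditions for equality in Theorem~\ref{thm:bounds on the Lovasz function for regular graphs} and exploit the shared $A$-spectrum to conclude that the spectral expressions coincide for $\Gr{G}$ and $\Gr{H}$. This correctly handles Item~\ref{corollary: item 1 - On NICS regular graphs} and equality~\eqref{eq1b:26.09.23} in Item~\ref{corollary: item 2 - On NICS regular graphs}.

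There is, however, a genuine error in your treatment of \eqref{eq1c:26.09.23}. You assert that because $\Gr{G}$ and $\CGr{G}$ share the same automorphism group, ``vertex-transitivity and edge-transitivity are preserved under complementation.'' Vertex-transitivity is indeed preserved, but edge-transitivity is \emph{not}: transitivity of the common automorphism group on $\E{\CGr{G}}$ says nothing about its action on the complementary set $\E{\Gr{G}}$. The paper itself stresses this distinction (see the remark following Theorem~\ref{theorem: vchrmum and svchrnum of vt+et graphs}: ``the closeness property holds for vertex-transitive graphs under the operation of graph complementation, this property does not extend to edge-transitive graphs''). Consequently, the hypothesis ``$\CGr{G}$ is vertex- and edge-transitive'' in Item~\ref{corollary: item 2 - On NICS regular graphs} does \emph{not} reduce to ``$\Gr{G}$ is vertex- and edge-transitive,'' and you cannot reach the sufficient condition for equality in the leftmost inequality of \eqref{eq:21.10.22a2} by this route. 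The paper's proof does not rely on any such complement-invariance of edge-transitivity; it simply cites the sufficient conditions of Theorem~\ref{thm:bounds on the Lovasz function for regular graphs} directly, and in addition records (via \eqref{eq: 21.11.2022a1}--\eqref{eq: 21.11.2022a2}) that $\CGr{G}$ and $\CGr{H}$ are themselves cospectral $(n-d-1)$-regular graphs.
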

\begin{proof}
See Section~\ref{subsubsection: proof of corollary: the Lovasz theta-function of regular NICS graphs}.
\end{proof}

\subsection{Proofs}
\label{subsection: proofs - cospectral and nonisomorphic graphs}
This section proves all the new results that are presented in
Section~\ref{subsection: main results - cospectral and nonisomorphic graphs}.

\subsubsection{Proof of Theorem~\ref{theorem: existence of NICS graphs}}
\label{subsubsection: proof of a theorem on the existence of NICS graphs}

In light of Theorem~\ref{theorem: Berman and Hamud, 2023}, for every $k \in \naturals$,
this proof suggests a construction of two connected, irregular $\{A, L, Q, \mathcal{L}\}$-NICS
graphs on $n_k \eqdef 2k + 12$ vertices. To that end, let
\begin{align}
\label{eq0:15.10.23}
\Gr{G}_1^{(k)} = \Gr{H}_1^{(k)} \eqdef
\begin{dcases}
\begin{array}{cl}
\CG{k+1}, \quad & \mbox{if $k \geq 2$,} \\[0.1cm]
\CoG{2},  \quad & \mbox{if $k=1$},
\end{array}
\end{dcases}
\end{align}
and let $\Gr{G}_2$ and $\Gr{H}_2$ be the 4-regular NICS graphs on 10~vertices, respectively
depicted on the left and right-hand sides of Figure~\ref{fig:vanDamH03}.
By Item~\ref{Item 1: Berman and Hamud, 2023} of Theorem~\ref{theorem: Berman and Hamud, 2023},
for all $k \in \naturals$, the graphs $\Gr{G}_1^{(k)} \NS \Gr{G}_2$ and
$\Gr{H}_1^{(k)} \NS \Gr{H}_2$ are irregular $\{A, L, Q, \mathcal{L}\}$-NICS
graphs. The orders of $\Gr{G}_1^{(k)}$, $\Gr{G}_2$, $\Gr{H}_1^{(k)}$, and
$\Gr{H}_2$ are given by
\begin{align}
\label{eq1:15.10.23}
\card{\V{\Gr{G}_1^{(k)}}} = \card{\V{\Gr{H}_1^{(k)}}} = k+1, \quad
\card{\V{\Gr{G}_2}} = \card{\V{\Gr{H}_2}} = 10.
\end{align}
By Definition~\ref{definition: neighbors splitting join of graphs} and \eqref{eq1:15.10.23},
it follows that
\begin{align}
\label{eq3:15.10.23}
\card{\V{\Gr{G}_1^{(k)} \NS \Gr{G}_2}} &=
2 \, \card{\V{\Gr{G}_1^{(k)}}} + \card{\V{\Gr{G}_2}} \\
\label{eq4:15.10.23}
&= 2k+12 \\
\label{eq5:15.10.23}
&= n_k,
\end{align}
and similarly, by Definition~\ref{definition: nonneighbors splitting join of graphs} and \eqref{eq1:15.10.23},
\begin{align}
\label{eq6:15.10.23}
\card{\V{\Gr{H}_1^{(k)} \NS \Gr{H}_2}} &=
2 \, \card{\V{\Gr{H}_1^{(k)}}} + \card{\V{\Gr{H}_2}} \\
\label{eq7:15.10.23}
&= n_k.
\end{align}
The graphs $\Gr{G}_2$ and $\Gr{H}_2$ can be verified to have identical independence numbers,
clique numbers, and chromatic numbers, which are equal to
\begin{align}
\label{eq1:29.09.23}
\indnum{\Gr{G}_2} = \indnum{\Gr{H}_2} = 3, \quad \clnum{\Gr{G}_2} = \clnum{\Gr{H}_2} = 3,
\quad \chrnum{\Gr{G}_2} = \chrnum{\Gr{H}_2} = 4.
\end{align}

A largest independent set in the graph $\Gr{G}_1^{(k)} \NS \Gr{G}_2$
is obtained by combining a largest
independent set in $\Gr{G}_2$ along with the vertices
$v'_1, \ldots, v'_{k+1}$ that are added to the vertex set
of $\Gr{G}_1^{(k)} \vee \Gr{G}_2$ to form both vertex sets
of $\Gr{G}_1^{(k)} \NS \Gr{G}_2$ and $\Gr{G}_1^{(k)} \NNS \Gr{G}_2$.
Indeed, this can be deduced as follows:
\begin{itemize}
\item
The vertices $v'_1, \ldots, v'_{k+1}$ are nonadjacent to each other, and
they are also nonadjacent to vertices in $\Gr{G}_2$ (by
Definition~\ref{definition: neighbors splitting join of graphs}),
so the disjoint union of a largest independent set in $\Gr{G}_2$ and $\{v'_1, \ldots, v'_{k+1}\}$
forms an independent set in $\Gr{G}_1^{(k)} \NS \Gr{G}_2$ whose size
is equal to $\card{\V{\Gr{G}_1}} + \indnum{\Gr{G}_2} = (k+1) + 3 = k+4$.
\item
Each of the vertices $v_1, \ldots, v_{k+1}$ in $\Gr{G}_1^{(k)}$ is adjacent to all
the vertices in $\Gr{G}_2$, so none of them can be added to get a larger independent
set in $\Gr{G}_1^{(k)} \NS \Gr{G}_2$.
\item
When forming an independent set in $\Gr{G}_1^{(k)} \NS \Gr{G}_2$, by taking a disjoint union
of a nonempty subset of vertices in $\Gr{G}_1^{(k)}$ with a subset of the vertices
$\{v'_1, \ldots, v'_{k+1}\}$, it is important to note that no vertex from $\Gr{G}_2$ can be
included in that independent set (since all vertices in $\Gr{G}_2$ are adjacent to each vertex
in $\Gr{G}_1^{(k)}$), and also the total cardinality of the two former subsets of vertices
cannot exceed $\card{\V{\Gr{G}_1^{(k)}}} = k+1$ (by Definition~\ref{definition: neighbors splitting join of graphs}
and the definition of the graph $\Gr{G}_1^{(k)}$ in \eqref{eq0:15.10.23}). This therefore imposes
a constraint on the size of that independent set, which is strictly less than $k+4$.
\end{itemize}
The same reasoning holds for the independence number of the NS join of graphs
$\Gr{H}_1^{(k)} \NS \Gr{H}_2$, so
\begin{align}
\label{eq2:29.09.23}
\indnum{\Gr{G}_1^{(k)} \NS \Gr{G}_2} = \indnum{\Gr{H}_1^{(k)} \NS \Gr{H}_2} = k+4,
\quad \forall \, k \in \naturals.
\end{align}

A largest clique in $\Gr{G}_1^{(k)} \NS \Gr{G}_2$ is obtained by taking the disjoint union of
largest cliques in $\Gr{G}_1^{(k)}$ and $\Gr{G}_2$. Indeed, this holds since each
vertex in $\Gr{G}_1^{(k)}$ is adjacent to each vertex in $\Gr{G}_2$, and the vertices
$v'_1, \ldots, v'_{k+1}$ (as defined above) are nonadjacent to each other.
Likewise, the same reasoning also holds for the clique number of $\Gr{H}_1^{(k)} \NS \Gr{H}_2$.
By \eqref{eq1:29.09.23} where $\clnum{\Gr{G}_2} = 3 = \clnum{\Gr{H}_2}$, and since
$\clnum{\Gr{G}_1^{(k)}} = 2 = \clnum{\Gr{H}_1^{(k)}}$, it follows that
\begin{align}
\label{eq3:29.09.23}
\clnum{\Gr{G}_1^{(k)} \NS \Gr{G}_2} = \clnum{\Gr{H}_1^{(k)} \NS \Gr{H}_2} = 5,
\quad \forall \, k \in \naturals.
\end{align}

The chromatic number of the NS join of graphs $\Gr{G}_1^{(k)} \NS \Gr{G}_2$ is next considered.
By \eqref{eq1:29.09.23}, $\chrnum{\Gr{G}_2} = 4$ colors are at least needed for properly coloring the vertices
in $\Gr{G}_2$ so that no two adjacent vertices in $\Gr{G}_2$ are assigned an identical color. All of the
vertices in the set $\V{\Gr{G}_1^{(k)}} = \{v_1, \ldots, v_k\}$ are adjacent to each vertex in $\Gr{G}_2$, so
$\chrnum{\Gr{G}_1^{(k)}}$ new colors are used for the vertices in $\Gr{G}_1^{(k)}$.
The chromatic number of $\Gr{G}_1^{(k)} = \CG{k+1}$ is equal to~2 or~3 if $k \geq 2$ is odd or even, respectively,
and the chromatic number of $\Gr{G}_1^{(1)} = \CoG{2}$ is equal to~2. Consequently, by \eqref{eq0:15.10.23}, for all $k \in \naturals$
\begin{align}
\label{eq0:16.10.23}
\chrnum{\Gr{G}_1^{(k)}} = \chrnum{\Gr{H}_1^{(k)}} =
\begin{dcases}
\begin{array}{cl}
2, \quad & \mbox{if $k$ is odd,} \\[0.1cm]
3, \quad & \mbox{if $k$ is even}.
\end{array}
\end{dcases}
\end{align}
Finally, since vertices in $\{v'_1, \ldots, v'_{k+1}\}$ are nonadjacent to one another and also they are nonadjacent to
any vertex in $\Gr{G}_2$,
one of the colors assigned to the vertices in $\Gr{G}_2$ can be also used for all the vertices $v'_1, \ldots, v'_{k+1}$.
Likewise, the same argument holds for $\Gr{H}_1^{(k)} \NS \Gr{H}_2$, so it follows from \eqref{eq1:29.09.23}
and \eqref{eq0:16.10.23} that
\begin{align}
\label{eq4:29.09.23}
\chrnum{\Gr{G}_1^{(k)} \NS \Gr{G}_2} &= \chrnum{\Gr{H}_1^{(k)} \NS \Gr{H}_2} \\
\label{eq1:16.10.23}
&= \chrnum{\Gr{G}_1^{(k)}} + \chrnum{\Gr{G}_2} \\
\label{eq2:16.10.23}
&=
\begin{dcases}
\begin{array}{cl}
6, \quad & \mbox{if $k$ is odd,} \\[0.1cm]
7, \quad & \mbox{if $k$ is even.}
\end{array}
\end{dcases}
\end{align}

Next, we demonstrate that the Lov\'{a}sz $\vartheta$-functions of the graphs $\Gr{G}_1^{(k)} \NS \Gr{G}_2$
and $\Gr{H}_1^{(k)} \NS \Gr{H}_2$ are distinct.
We compute the Lov\'{a}sz $\vartheta$-functions of $\Gr{G}_2$ and $\Gr{H}_2$
by solving the SDP problem in \eqref{eq: SDP problem - Lovasz theta-function}, which yields
\begin{align}
\label{eq5a:29.09.23}
& \vartheta(\Gr{G}_2) = 3.23607 \ldots, \\
\label{eq5b:29.09.23}
& \vartheta(\Gr{H}_2) = 3.26880 \ldots \; .
\end{align}
Consider a disjoint union of $k+1$ isolated vertices (i.e., the $(k+1)$-partite graph
$\mathrm{K}_{1, 1, \ldots, 1}$) with either $\Gr{G}_2$ or $\Gr{H}_2$, and denote these
disjoint unions by the graphs $\Gr{U}^{(k)}$ and $\Gr{W}^{(k)}$, respectively. In general, the
Lov\'{a}sz $\vartheta$-function of a disjoint union of graphs is equal to the sum of the
Lov\'{a}sz $\vartheta$-functions of the component graphs (see \eqref{eq: Lovasz function of a disjoint union of graphs}
with a proof in Section~18 of \cite{Knuth94}), so by \eqref{eq1:15.10.23}
\begin{align}
\label{eq6:29.09.23}
\vartheta(\Gr{U}^{(k)}) &= k+1 + \vartheta(\Gr{G}_2) = \card{\V{\Gr{G}_1^{(k)}}} + \vartheta(\Gr{G}_2), \\
\label{eq7:29.09.23}
\vartheta(\Gr{W}^{(k)}) &= k+1 + \vartheta(\Gr{H}_2) = \card{\V{\Gr{H}_1^{(k)}}} + \vartheta(\Gr{H}_2).
\end{align}
The graphs $\Gr{U}^{(k)}$ and $\Gr{W}^{(k)}$ are, respectively, induced subgraphs of $\Gr{G}_1^{(k)} \NS \Gr{G}_2$
and $\Gr{H}_1^{(k)} \NS \Gr{H}_2$. Indeed, $\Gr{U}^{(k)}$ and $\Gr{W}^{(k)}$ are obtained by deleting the vertices
in $\Gr{G}_1^{(k)}$ and $\Gr{H}_1^{(k)}$ from the graphs $\Gr{G}_1^{(k)} \NS \Gr{G}_2$ and $\Gr{H}_1^{(k)} \NS \Gr{H}_2$,
respectively (recall that $\Gr{G}_1^{(k)} = \Gr{H}_1^{(k)}$ by \eqref{eq0:15.10.23}).
The Lov\'{a}sz $\vartheta$-function of an induced subgraph is less than or equal to the Lov\'{a}sz $\vartheta$-function of
the original graph (see Item~\ref{item: subgraphs} in Section~\ref{subsection: Lovasz theta-function}),  so it follows
from \eqref{eq6:29.09.23} and \eqref{eq7:29.09.23} that
\begin{align}
\label{eq8:29.09.23}
& \vartheta(\Gr{G}_1^{(k)} \NS \Gr{G}_2) \geq \vartheta(\Gr{U}^{(k)}) = \vartheta(\Gr{G}_2) + k+1, \\
\label{eq9:29.09.23}
& \vartheta(\Gr{H}_1^{(k)} \NS \Gr{H}_2) \geq \vartheta(\Gr{W}^{(k)}) = \vartheta(\Gr{H}_2) + k+1.
\end{align}
We finally show that the leftmost inequalities in \eqref{eq8:29.09.23} and \eqref{eq9:29.09.23} hold with equality.
This is shown by Definition~\ref{definition: Lovasz theta function} of the Lov\'{a}sz $\vartheta$-function of a graph.
For $k \in \naturals$, let ${\bf{u}}^{(k)}$ be an orthonormal representation of the graph $\Gr{U}^{(k)}$,
$\{{\bf{u}}_i^{(k)}: i \in \V{\Gr{U}^{(k)}} \}$, and let the unit vector ${\bf{c}}^{(k)}$ be an optimal handle
of that orthonormal representation in the sense of attaining the minimum on the right-hand side of the
equality for the Lov\'{a}sz $\vartheta$-function of the graph $\Gr{U}^{(k)}$ (see \eqref{eq: Lovasz theta function}), i.e.,
\begin{align}
\label{eq10:29.09.23}
\vartheta(\Gr{U}^{(k)}) = \max_{i \in \V{\Gr{U}^{(k)}}} \,
\frac1{\left( \bigl({\bf{c}}^{(k)}\bigr)^{\mathrm{T}} {\bf{u}}_i^{(k)} \right)^2}.
\end{align}
Let $v'_1, \ldots, v'_{k+1}$ denote the vertices that are added to the vertex set of
the join of graphs $\Gr{G}_1^{(k)} \vee \Gr{G}_2$ in order to form the vertex set of
the NS join of graphs $\Gr{G}_1^{(k)} \NS \Gr{G}_2$.
These added vertices are associated with $k+1$ vectors, all of which are orthogonal.
This orthogonality arises because they correspond to vertices that are nonadjacent
in the graph $\Gr{G}_1^{(k)} \NS \Gr{G}_2$. Furthermore, these vectors are also orthogonal
to all the vectors assigned to the vertices in $\Gr{G}_2$, as none of the vertices
$v'_1, \ldots, v'_{k+1}$ are adjacent to any vertex in $\Gr{G}_2$.
By \eqref{eq0:15.10.23}, the vertices $v_i$ and $v_{i+1}$ are adjacent in $\Gr{G}_1^{(k)}$
for all $i \in \OneTo{k}$, and vertex $v_{k+1}$ is adjacent to $v_1$ in $\Gr{G}_1^{(k)}$.
The $k+1$ orthonormal vectors that are assigned to the vertices $v'_1, \ldots, v'_{k+1}$ can
be also used to represent the $k+1$ vertices $v_1, \ldots, v_{k+1}$ of $\Gr{G}_1^{(k)}$ in
an orthonormal representation of the graph $\Gr{G}_1^{(k)} \NS \Gr{G}_2$.
Indeed, that can be done by letting, for $i \in \OneTo{k}$, the vector assigned to $v'_i$ be
equal to the vector assigned to the vertex $v_{i+1}$, and the vector assigned to the vertex
$v'_{k+1}$ be equal to the vector assigned to vertex $v_1$. This implies that any two unit vectors that
are assigned to nonadjacent vertices from $\{v'_1, \ldots, v'_{k+1}\} \cup \{v_1, \ldots, v_{k+1}\}$
in the graph $\Gr{G}_1^{(k)} \NS \Gr{G}_2$ are orthonormal. Hence, by \eqref{eq: Lovasz theta function}
and \eqref{eq10:29.09.23},
\begin{align}
\label{eq10b:29.09.23}
\vartheta(\Gr{G}_1^{(k)} \NS \Gr{G}_2) \leq \vartheta(\Gr{U}^{(k)}),
\end{align}
and the combination of \eqref{eq8:29.09.23} and \eqref{eq10b:29.09.23} gives
\begin{align}
\label{eq11:29.09.23}
\vartheta(\Gr{G}_1^{(k)} \NS \Gr{G}_2) = \vartheta(\Gr{U}^{(k)}).
\end{align}
Similarly, $\vartheta(\Gr{H}_1^{(k)} \NS \Gr{H}_2) = \vartheta(\Gr{W}^{(k)})$,
so \eqref{eq8:29.09.23} and \eqref{eq9:29.09.23} hold with equality, and by
\eqref{eq5a:29.09.23}--\eqref{eq7:29.09.23}
\begin{align}
\vartheta(\Gr{G}_1^{(k)} \NS \Gr{G}_2) &= k+1 + \vartheta(\Gr{G}_2)
= k + 4.23607 \ldots \; , \label{eq13:29.09.23} \\
\vartheta(\Gr{H}_1^{(k)} \NS \Gr{H}_2) &= k+1 + \vartheta(\Gr{H}_2)
= k + 4.26880 \ldots \; . \label{eq14:29.09.23}
\end{align}
This demonstrates that every pair of the constructed irregular $\{A, L, Q, \mathcal{L}\}$-NICS graphs
exhibit identical independence numbers, clique numbers, and chromatic numbers, while also possessing
distinct Lov\'{a}sz $\vartheta$-functions.

\subsubsection{Proof of Theorem~\ref{theorem: on Graph Invariants of NS/NNS Joins of Graphs}}
\label{subsubsection: proof of a theorem on graph invariants of NS/NNS joins of graphs}

For any finite, undirected, and simple graphs $\Gr{G}_1$ and $\Gr{G}_2$, the arguments
presented in the proof of Theorem~\ref{theorem: existence of NICS graphs} apply directly to
the clique numbers $\clnum{\Gr{G}_1 \NS \Gr{G}_2}$ and $\clnum{\Gr{G}_1 \NNS \Gr{G}_2}$,
and the chromatic numbers $\chrnum{\Gr{G}_1 \NS \Gr{G}_2}$ and $\chrnum{\Gr{G}_1 \NNS \Gr{G}_2}$.
These considerations therefore validate equalities \eqref{eq1:02.10.23}--\eqref{eq9:02.10.23}.

The remainder of the proof of Theorem~\ref{theorem: existence of NICS graphs} does not extend,
however, to equalities in \eqref{eq3:02.10.23}, \eqref{eq3b:02.10.23}, \eqref{eq1:30.09.23}, and
\eqref{eq2:30.09.23}, regarding the independence numbers and Lov\'{a}sz $\vartheta$-functions
of the graphs $\Gr{G}_1 \NS \Gr{G}_2$ and $\Gr{G}_1 \NNS \Gr{G}_2$. Subsequently, we establish
inequalities \eqref{eq3:02.10.23}, \eqref{eq3b:02.10.23}, \eqref{eq1:30.09.23}, and \eqref{eq2:30.09.23},
and provide sufficient conditions to assert their validity with equality.
[Remarks~\ref{remark: the independence numbers of NS/NNS Joins of graphs}
and~\ref{remark: the Lovasz theta-function of NS/NNS joins of graphs} consider some cases
where \eqref{eq3:02.10.23} and \eqref{eq1:30.09.23} hold with strict inequalities].

Let $n_1 \eqdef \card{\V{\Gr{G}_1}}$, and let $v'_1, \ldots, v'_{n_1}$ be the
vertices added to the vertex set of $\Gr{G}_1 \vee \Gr{G}_2$ to form both vertex
sets of the graphs $\Gr{G}_1 \NS \Gr{G}_2$ and $\Gr{G}_1 \NNS \Gr{G}_2$. According
to Definitions~\ref{definition: neighbors splitting join of graphs}
and~\ref{definition: nonneighbors splitting join of graphs}, combining
an independent set in $\Gr{G}_2$ with the vertices $v'_1, \ldots, v'_{n_1}$ yields
an independent set in both $\Gr{G}_1 \NS \Gr{G}_2$ and $\Gr{G}_1 \NNS \Gr{G}_2$. By
selecting a largest independent set in $\Gr{G}_2$, the resulting independent set in
both $\Gr{G}_1 \NS \Gr{G}_2$ and $\Gr{G}_1 \NNS \Gr{G}_2$ has a cardinality equal to
$\card{\V{\Gr{G}_1}} + \indnum{\Gr{G}_2}$. This justifies inequalities \eqref{eq3:02.10.23}
and \eqref{eq3b:02.10.23}.

Inequalities \eqref{eq1:30.09.23} and \eqref{eq2:30.09.23} are next proved.
Consider the graph $\Gr{U}$, defined as the disjoint union of an empty graph
on $n_1$ vertices, denoted by $\EmG{n_1}$, and the graph $\Gr{G}_2$. By
\eqref{eq: Lovasz function of a disjoint union of graphs}, it follows that
\begin{align}
\label{eq4a:19.10.23}
\vartheta(\Gr{U}) &= \vartheta(\EmG{n_1}) + \vartheta(\Gr{G}_2) \\
\label{eq4b:19.10.23}
&= n_1 + \vartheta(\Gr{G}_2) \\
\label{eq4c:19.10.23}
&= \card{\V{\Gr{G}_1}} + \vartheta(\Gr{G}_2).
\end{align}
The graph $\Gr{U}$ is an induced subgraph of both graphs $\Gr{G}_1 \NS \Gr{G}_2$ and
$\Gr{G}_1 \NNS \Gr{G}_2$ since the former graph can be obtained from each of the latter
two graphs by removing the vertices in $\Gr{G}_1$ along with their incident edges. By
Item~\ref{item: subgraphs} of Section~\ref{subsection: Lovasz theta-function}, it follows that
\begin{align}
\label{eq4d:19.10.23}
& \vartheta(\Gr{G}_1 \NS \Gr{G}_2) \geq \vartheta(\Gr{U}),  \\
\label{eq4e:19.10.23}
& \vartheta(\Gr{G}_1 \NNS \Gr{G}_2) \geq \vartheta(\Gr{U}),
\end{align}
and then the combination of \eqref{eq4a:19.10.23}--\eqref{eq4e:19.10.23} results in
\eqref{eq1:30.09.23} and \eqref{eq2:30.09.23}.

Sufficient conditions for inequalities \eqref{eq1:30.09.23} and \eqref{eq2:30.09.23}
to hold as equalities are subsequently proved.
Let $\{{\bf{u}}_i: i \in \V{\Gr{U}} \}$ and ${\bf{c}}$ be, respectively, an orthonormal
representation of $\Gr{U}$ and a unit vector such that
\begin{align}
\label{eq5:19.10.23}
\vartheta(\Gr{U}) = \max_{i \in \V{\Gr{U}}} \,
\frac1{\bigl( {\bf{c}}^{\mathrm{T}} {\bf{u}}_i \bigr)^2}.
\end{align}
The vertex set of the graph $\Gr{U}$ consists of $n_1$ isolated vertices, denoted by $v'_1, \ldots, v'_{n_1}$,
and the vertex set of $\Gr{G}_2$. The vertex set of each of the graphs $\Gr{G}_1 \NS \Gr{G}_2$ and
$\Gr{G}_1 \NNS \Gr{G}_2$ can be regarded as the disjoint union of the vertex sets of $\Gr{U}$ and $\Gr{G}_1$,
where $\V{\Gr{U}} = \{v'_1, \ldots, v'_{n_1} \} \cup \V{\Gr{G}_2}$ and $\V{\Gr{G}_1} = \{v_1, \ldots, v_{n_1}\}$.
We next consider separately the Lov\'{a}sz $\vartheta$-functions of the two graphs $\Gr{G}_1 \NS \Gr{G}_2$
and $\Gr{G}_1 \NNS \Gr{G}_2$.
\begin{enumerate}
\item \label{item: proof - NS}
Consider the graph $\Gr{G}_1 \NS \Gr{G}_2$, where $n_1 \eqdef \card{\V{\Gr{G}_1}}$.
Also consider an orthonormal representation of the graph $\Gr{U}$, along with a unit vector
$\bf{c}$, attaining the maximum on the right-hand side of \eqref{eq5:19.10.23}.
Suppose that there exists a permutation $\pi \colon \OneTo{n_1} \to \OneTo{n_1}$
such that $\{v_i, v_{\pi(i)}\} \in \E{\Gr{G}_1}$ for all $i \in \OneTo{n_1}$.
Select the unit vector representing the vertex $v_{\pi(i)} \in \V{\Gr{G}_1}$ to be
equal to the vector representing the vertex $v'_i \in \V{\Gr{U}}$. This facilitates an orthonormal
representation of $\Gr{G}_1 \NS \Gr{G}_2$ using the same set of vectors as in the orthonormal
representation of the induced subgraph $\Gr{U}$. Indeed, the vertices $v'_1, \ldots, v'_{n_1}$
are nonadjacent to each other in $\Gr{U}$ or also in $\Gr{G}_1 \NS \Gr{G}_2$.
Consequently, due to the orthonormality of the set of vectors representing the $n_1$ vertices
$v'_1, \ldots, v'_{n_1}$, the vector representing any vertex $v'_i$ (with $i \in \OneTo{n_1}$)
is orthogonal to all vectors representing vertices in $\Gr{G}_1$, except for
the vertex $v_{\pi(i)}$. This exception exists because, by construction, the unit
vectors that represent $v'_i$ and $v_{\pi(i)}$ are identical. Specifically, for all
$i \in \OneTo{n_1}$, this setup ensures the required orthogonality between the
representing vector of $v'_i$ and the representing vectors of all vertices in $\Gr{G}_1$
that are not adjacent to $v'_i$.

It is worth noting that the considered orthonormal representation of $\Gr{G}_1 \NS \Gr{G}_2$
has two additional features:
\begin{itemize}
\item
Given that each vertex in $\Gr{G}_1$ is adjacent to every vertex in $\Gr{G}_2$, it is
unnecessary to check the orthogonality of pairs of vectors representing any vertex from
$\Gr{G}_1$ and any vertex from $\Gr{G}_2$. Despite this, these vector pairs are indeed orthogonal.
This orthogonality results from the specific construction of the graph $\Gr{U}$ and its
orthonormal representation, ensuring that the vectors representing vertices $v'_1, \ldots, v'_{n_1}$
are orthogonal to those representing the vertices in $\Gr{G}_2$. Moreover, there exists an
injective mapping (by assumption) between the vectors representing the vertices in $\Gr{G}_1$
and those representing the vertices $v'_1, \ldots, v'_{n_1}$, from which the orthogonality
between the representing vectors of any vertex pair from $\Gr{G}_1$ and $\Gr{G}_2$ follows.
\item
By assumption, there is an injective mapping from the orthonormal vectors representing the vertices
$v'_1, \ldots, v'_{n_1}$ to the vectors representing the vertices in $\Gr{G}_1$. Consequently, the
vectors representing the vertices in $\Gr{G}_1$ are also orthonormal. Therefore, any two of these
unit vectors that represent nonadjacent and distinct vertices in $\Gr{G}_1$ are, in particular, orthogonal.
\end{itemize}

Consequently, under the assumption on the existence of a permutation $\pi$ of $\OneTo{n_1}$ as described,
consider $\{{\bf{u}}_i: i \in \V{\Gr{G}_1 \NS \Gr{G}_2} \}$ as the proposed orthonormal representation of
the graph $\Gr{G}_1 \NS \Gr{G}_2$. It then follows that
\begin{align}
\label{eq6a:19.10.23}
\vartheta(\Gr{G}_1 \NS \Gr{G}_2) &\leq \max_{i \in \V{\Gr{G}_1 \NS \Gr{G}_2}} \,
\frac1{\bigl( {\bf{c}}^{\mathrm{T}} {\bf{u}}_i \bigr)^2} \\
\label{eq6b:19.10.23}
&= \max_{i \in \V{\Gr{U}}} \, \frac1{\bigl( {\bf{c}}^{\mathrm{T}} {\bf{u}}_i \bigr)^2} \\
\label{eq6c:19.10.23}
&= \vartheta(\Gr{U}),
\end{align}
where inequality \eqref{eq6a:19.10.23} arises from applying \eqref{eq: Lovasz theta function} to
the graph $\Gr{G}_1 \NS \Gr{G}_2$, based on the constructed orthonormal representation of that graph
and considering that $\mathbf{c}$ is a unit vector. Equality \eqref{eq6b:19.10.23} holds because
the considered orthonormal representation of $\Gr{G}_1 \NS \Gr{G}_2$ utilizes the same set of unit
vectors as the orthonormal representation of the induced subgraph $\Gr{U}$, also incorporating the
same unit vector $\mathbf{c}$. Furthermore, equality \eqref{eq6c:19.10.23} is due to
\eqref{eq5:19.10.23}. Combining \eqref{eq4d:19.10.23} with \eqref{eq6a:19.10.23}--\eqref{eq6c:19.10.23},
under the aforementioned assumption, it follows that
\begin{align}
\label{eq6d:19.10.23}
\vartheta(\Gr{G}_1 \NS \Gr{G}_2) = \vartheta(\Gr{U}).
\end{align}
Combining \eqref{eq4a:19.10.23}--\eqref{eq4c:19.10.23} and \eqref{eq6d:19.10.23}
gives that inequality \eqref{eq1:30.09.23} holds as an equality.
\item \label{item: proof - NNS}
Consider the graph $\Gr{G}_1 \NNS \Gr{G}_2$, where $n_1 \eqdef \card{\V{\Gr{G}_1}}$.
Suppose that there exists a permutation $\pi$ of $\OneTo{n_1}$ such that, for all
$i \in \OneTo{n_1}$, the conditions $i \neq \pi(i)$ and
$\{v_i, v_{\pi(i)}\} \notin \E{\Gr{G}_1}$ hold.
The proof that these conditions are sufficient for equality in inequality~\eqref{eq2:30.09.23}
is analogous to the proof for Item~\ref{item: proof - NS} regarding the satisfiability of
inequality~\eqref{eq1:30.09.23} with equality. The primary distinction involves a
minor modification in the new proof, reflecting differences between the definitions
of the NS and NNS joins of graphs, as delineated in
Definitions~\ref{definition: neighbors splitting join of graphs}
and~\ref{definition: nonneighbors splitting join of graphs}. This adjustment accounts for
the altered assumption compared to that used in Item~\ref{item: proof - NS}.
\end{enumerate}

The sufficient conditions for the equalities in inequalities \eqref{eq3:02.10.23} and
\eqref{eq3b:02.10.23} are finally demonstrated. Assume the above sufficient condition
for equality in \eqref{eq1:30.09.23} is met and $\indnum{\Gr{G}_2} = \vartheta(\Gr{G}_2)$. Then,
\begin{align}
\label{eq10:02.10.23}
\vartheta(\Gr{G}_1 \NS \Gr{G}_2) &= \card{\V{\Gr{G}_1}} + \vartheta(\Gr{G}_2) \\
\label{eq11:02.10.23}
&= \card{\V{\Gr{G}_1}} + \indnum{\Gr{G}_2} \\[0.1cm]
\label{eq12:02.10.23}
&\leq \indnum{\Gr{G}_1 \NS \Gr{G}_2} \\
\label{eq13:02.10.23}
&\leq \vartheta(\Gr{G}_1 \NS \Gr{G}_2),
\end{align}
where \eqref{eq10:02.10.23} is justified by the sufficient condition for equality
in \eqref{eq1:30.09.23}; \eqref{eq11:02.10.23} holds by assumption;
\eqref{eq12:02.10.23} holds by \eqref{eq3:02.10.23}, and \eqref{eq13:02.10.23} holds
by \eqref{eq1a: sandwich} (the Lov\'{a}sz $\vartheta$-function of a graph is an upper
bound on its independence number).
Hence, the two inequalities in \eqref{eq12:02.10.23} and \eqref{eq13:02.10.23} hold with equalities,
so \eqref{eq3:02.10.23} holds with equality.
Similarly, if the above sufficient condition for equality in \eqref{eq2:30.09.23} is met and
$\indnum{\Gr{G}_2} = \vartheta(\Gr{G}_2)$, then also \eqref{eq3b:02.10.23} holds with equality.
This completes the proof of Theorem~\ref{theorem: on Graph Invariants of NS/NNS Joins of Graphs}.

\subsubsection{Proof of Theorem~\ref{theorem: Shannon Capacity of NS and NNS Join Graphs}}
\label{subsubsection: proof of a theorem on the Shannon Capacity of NS and NNS Join Graphs}

Let $\Gr{G}_1$ and $\Gr{G}_2$ be finite, undirected, and simple graphs.
Consider the NS join of graphs $\Gr{G}_1 \NS \Gr{G}_2$. In light of Remark~\ref{remark: perfect matchings},
the assumptions in Item~\ref{Item 1 - Shannon Capacity of NS and NNS Join Graphs} of
Theorem~\ref{theorem: Shannon Capacity of NS and NNS Join Graphs} are equivalent to the satisfiability
of the sufficient condition for inequality \eqref{eq3:02.10.23} to hold with equality.
By Theorem~\ref{theorem: on Graph Invariants of NS/NNS Joins of Graphs}, the satisfiability
of the latter condition also suffices for inequality \eqref{eq1:30.09.23} to hold with equality.
Consequently, by the assumptions in Item~\ref{Item 1 - Shannon Capacity of NS and NNS Join Graphs}
of Theorem~\ref{theorem: Shannon Capacity of NS and NNS Join Graphs},
\begin{align}
\label{eq14b:02.10.23}
& \indnum{\Gr{G}_1 \NS \Gr{G}_2} = \vartheta(\Gr{G}_1 \NS \Gr{G}_2) = \card{\V{\Gr{G}_1}} + \vartheta(\Gr{G}_2).
\end{align}
Since the inequalities $\indnum{\Gr{G}} \leq \Theta(\Gr{G}) \leq \vartheta(\Gr{G})$ hold for every graph $\Gr{G}$,
it also follows from \eqref{eq14b:02.10.23} that
\begin{align}
\label{eq14c:02.10.23}
& \Theta(\Gr{G}_1 \NS \Gr{G}_2) = \card{\V{\Gr{G}_1}} + \vartheta(\Gr{G}_2).
\end{align}
Combining \eqref{eq14b:02.10.23} and \eqref{eq14c:02.10.23} gives \eqref{eq14:02.10.23}, which validates
Item~\ref{Item 1 - Shannon Capacity of NS and NNS Join Graphs} of Theorem~\ref{theorem: Shannon Capacity of NS and NNS Join Graphs}.

Consider the NNS join of graphs $\Gr{G}_1 \NNS \Gr{G}_2$.
Item~\ref{Item 2 - Shannon Capacity of NS and NNS Join Graphs} of
Theorem~\ref{theorem: Shannon Capacity of NS and NNS Join Graphs} is justified likewise
by the replacement of the condition $\mathrm{per}(\A(\Gr{G}_1)) > 0$ with the modified condition $\mathrm{per}(\A(\CGr{G}_1)) > 0$,
while maintaining the condition $\indnum{\Gr{G}_2} = \vartheta(\Gr{G}_2)$.
Under these assumptions, Item~\ref{Item 2 - Shannon Capacity of NS and NNS Join Graphs}
of Theorem~\ref{theorem: Shannon Capacity of NS and NNS Join Graphs}
is justified by combining Theorem~\ref{theorem: on Graph Invariants of NS/NNS Joins of Graphs}
and Remark~\ref{remark: perfect matchings}.

Item~\ref{Item 3 - Shannon Capacity of NS and NNS Join Graphs} of
Theorem~\ref{theorem: Shannon Capacity of NS and NNS Join Graphs} finally follows
by combining Items~\ref{Item 1 - Shannon Capacity of NS and NNS Join Graphs}
and~\ref{Item 2 - Shannon Capacity of NS and NNS Join Graphs} of that theorem.

\subsubsection{Proof of Corollary~\ref{corollary: the Lovasz theta-function of regular NICS graphs}}
\label{subsubsection: proof of corollary: the Lovasz theta-function of regular NICS graphs}
By the assumptions outlined in Items~\ref{corollary: item 1 - On NICS regular graphs}
and~\ref{corollary: item 2 - On NICS regular graphs} of
Corollary~\ref{corollary: the Lovasz theta-function of regular NICS graphs},
equalities \eqref{eq1a:26.09.23}--\eqref{eq1c:26.09.23} hold in light of the satisfiability
of the sufficient conditions for the leftmost and rightmost inequalities in \eqref{eq:21.10.22a1}
and \eqref{eq:21.10.22a2} to hold as equalities.
Additionally, if $\Gr{G}$ and $\Gr{H}$ are cospectral $d$-regular graphs on $n$ vertices, then their
complements $\CGr{G}$ and $\CGr{H}$ are cospectral $(n-d-1)$-regular graphs. This assertion is valid
since both complement graphs share the largest eigenvalue of their adjacency matrices, which is $n-d-1$,
and all other eigenvalues, accounting for multiplicities, are uniformly shared and arranged in nonincreasing
order as $-1 - \lambda_{n+1-i}$ for each $i$ from 1 to $n-1$.

\subsection{Discussions and Examples for the Results in Section~\ref{subsection: main results - cospectral and nonisomorphic graphs}}
\label{subsection: examples - cospectral and nonisomorphic graphs}

\begin{remark}[The independence numbers of NS/NNS joins of graphs]
\label{remark: the independence numbers of NS/NNS Joins of graphs}
In the construction used in the proof of Theorem~\ref{theorem: existence of NICS graphs},
we encounter a situation where \eqref{eq3:02.10.23} holds with equality.
Inequalities \eqref{eq3:02.10.23} and \eqref{eq3b:02.10.23} in
Theorem~\ref{theorem: on Graph Invariants of NS/NNS Joins of Graphs} may hold, however, with strict
inequalities. We next show two possible selections of $\Gr{G}_1$ in the NS join of graphs
$\Gr{G}_1 \NS \Gr{G}_2$, for which the difference between the left
and right sides of \eqref{eq3:02.10.23} can be made arbitrarily large.

\begin{itemize}
\item Let $\Gr{G}_1$ be a graph whose vertex set includes a subset of $k_1$ isolated vertices, and let $\Gr{G}_2$ be an arbitrary
finite, undirected, and simple graph. Consider the NS join of graphs $\Gr{G}_1 \NS \Gr{G}_2$.
In this construction, the $k_1$ isolated vertices in $\Gr{G}_1$, combined with the $\card{\V{\Gr{G}_1}}$
vertices added to the vertex set of $\Gr{G}_1 \vee \Gr{G}_2$ to form the complete vertex set
of $\Gr{G}_1 \NS \Gr{G}_2$, collectively form an independent set in $\Gr{G}_1 \NS \Gr{G}_2$.
Consequently, $\indnum{\Gr{G}_1 \NS \Gr{G}_2} \geq \card{\V{\Gr{G}_1}} + k_1$, so
\begin{align}
\label{eq4:02.10.23}
\indnum{\Gr{G}_1 \NS \Gr{G}_2} - \bigl( \card{\V{\Gr{G}_1}} + \indnum{\Gr{G}_2} \bigr)
\geq k_1 - \indnum{\Gr{G}_2}.
\end{align}
The right-hand side of \eqref{eq4:02.10.23} can be made arbitrarily large as we let $k_1$ tend to infinity.

\item Let $\Gr{G}_1$ be a star graph with $k_1$ leaves (i.e., vertices of degree~1). Let its
vertex set be given by $\V{\Gr{G}_1} = \{v_1, \ldots, v_{k_1+1}\}$,
where $v_1$ is the central vertex adjacent to all the leaves $v_2, \ldots, v_{k_1+1}$ in $\Gr{G}_1$.
Consider the NS join of graphs $\Gr{G}_1 \NS \Gr{G}_2$, with an arbitrary
finite, undirected, and simple graph $\Gr{G}_2$. Denote by $v'_1, \ldots, v'_{k_1+1}$ the $k_1+1$
vertices that are added to the join of graphs $\Gr{G}_1 \vee \Gr{G}_2$ to form the vertex set of
the NS join of graphs $\Gr{G}_1 \NS \Gr{G}_2$. In that graph, the vertex $v'_1$ is adjacent to the $k_1$ vertices
$v_2, \ldots, v_{k_1+1}$ in $\Gr{G}_1$, and each of the vertices $v'_2, \ldots, v'_{k_1+1}$
is adjacent only to $v_1$. Hence, the disjoint union
$\set{I} \eqdef \{v'_2, \ldots, v'_{k_1+1}\} \cup \{v_2, \ldots, v_{k_1+1}\}$
is an independent set in the graph $\Gr{G}_1 \NS \Gr{G}_2$, so
\begin{align}
\label{eq6:02.10.23}
& \indnum{\Gr{G}_1 \NS \Gr{G}_2} \geq \card{\set{I}} = 2k_1, \\
\label{eq7:02.10.23}
\Rightarrow \, & \indnum{\Gr{G}_1 \NS \Gr{G}_2} - \bigl( \card{\V{\Gr{G}_1}} + \indnum{\Gr{G}_2} \bigr)
\geq k_1 - \indnum{\Gr{G}_2} - 1.
\end{align}
Once again, the right-hand side of \eqref{eq7:02.10.23} can be made arbitrarily large by letting $k_1$ tend to infinity.
\end{itemize}
\end{remark}

\begin{remark}[The Lov\'{a}sz $\vartheta$-function of NS/NNS joins of graphs]
\label{remark: the Lovasz theta-function of NS/NNS joins of graphs}
Let $\Gr{G}_1 = \CoBG{1}{9}$ be the star graph on 10 vertices (with 9 leaves), and let $\Gr{G}_2$ be the graph
on the left side of Figure~\ref{fig:vanDamH03}. The star graph $\Gr{G}_1$ clearly does not satisfy the sufficient
condition in Theorem~\ref{theorem: on Graph Invariants of NS/NNS Joins of Graphs} for equality in \eqref{eq1:30.09.23},
but it satisfies the sufficient condition in Theorem~\ref{theorem: on Graph Invariants of NS/NNS Joins of Graphs}
for equality in \eqref{eq2:30.09.23}. The Lov\'{a}sz $\vartheta$-functions of the NS and NNS
joints of graphs $\Gr{G}_1 \NS \Gr{G}_2$ and $\Gr{G}_1 \NNS \Gr{G}_2$, respectively, are computed
by solving the SDP problem presented in \eqref{eq: SDP problem - Lovasz theta-function}, using the adjacency matrices
specified in \eqref{eq: adjacency matrix - NS join} and \eqref{eq: adjacency matrix - NNS join}. This gives
\begin{align}
\label{eq1a:18.10.2023}
\vartheta(\Gr{G}_1 \NS \Gr{G}_2) &= 18 \\
\label{eq1b:18.10.2023}
& > 13.23607 \ldots \\
\label{eq1c:18.10.2023}
& = \card{\V{\Gr{G}_1}} + \vartheta(\Gr{G}_2),
\end{align}
whereas,
\begin{align}
\label{eq2a:18.10.2023}
\vartheta(\Gr{G}_1 \NNS \Gr{G}_2) &= 13.23607 \ldots \\
\label{eq2b:18.10.2023}
& = \card{\V{\Gr{G}_1}} + \vartheta(\Gr{G}_2),
\end{align}
and the equalities \eqref{eq1c:18.10.2023} and \eqref{eq2b:18.10.2023} are justified by \eqref{eq5a:29.09.23}
and since $\card{\V{\Gr{G}_1}} = 10$. This shows that if the sufficient condition in
Theorem~\ref{theorem: on Graph Invariants of NS/NNS Joins of Graphs} for equality in \eqref{eq1:30.09.23}
is violated, then \eqref{eq1:30.09.23} may hold with strict inequality (see \eqref{eq1b:18.10.2023}).
Conversely, \eqref{eq2:30.09.23} holds here with equality (see \eqref{eq2a:18.10.2023}--\eqref{eq2b:18.10.2023}),
as it is expected in light of the explanation provided above.
\end{remark}

\begin{example}[NS and NNS joins of Kneser graphs]
\label{example: NS and NNS Joins of Kneser Graphs}
Let $\Gr{G}_1 = \KG{n_1}{r_1}$ and $\Gr{G}_2 = \KG{n_2}{r_2}$ be Kneser graphs with $n_1 \geq 2r_1$
and $n_2 \geq 2r_2$ (so, these are nonempty graphs). Then, for $i \in \{1,2\}$, $\Gr{G}_i$ is a
$d_i$-regular graph with $d_i = \binom{n_i-r_i}{r_i}$, and
\begin{align}
\label{eq: order KS graph}
& \card{\V{\Gr{G}_i}} = \binom{n_i}{r_i},  \\
\label{eq: chrnum KS graph}
& \chrnum{\Gr{G}_i} = n_i - 2r_i + 2, \\
\label{eq: independence number KS graph}
& \indnum{\Gr{G}_i} = \binom{n_i-1}{r_i-1} = \Theta(\Gr{G}_i) = \vartheta(\Gr{G}_i),
\end{align}
where \eqref{eq: order KS graph}, \eqref{eq: chrnum KS graph}, and the first equality in \eqref{eq: independence number KS graph}
are justified, e.g., in Chapter~43 of \cite{AignerZ18}; the second and third equalities in \eqref{eq: independence number KS graph}
are due to Theorem~13 in \cite{Lovasz79_IT}.
In light of Theorems~\ref{theorem: on Graph Invariants of NS/NNS Joins of Graphs} and~\ref{theorem: Shannon Capacity of NS and NNS Join Graphs},
it follows from \eqref{eq: order KS graph}--\eqref{eq: independence number KS graph} that
\begin{align}
\label{eq1:22.10.23}
& \indnum{\Gr{G}_1 \NS \Gr{G}_2} = \Theta(\Gr{G}_1 \NS \Gr{G}_2) = \vartheta(\Gr{G}_1 \NS \Gr{G}_2) = \binom{n_1}{r_1} + \binom{n_2-1}{r_2-1}, \\[0.2cm]
\label{eq2:22.10.23}
& \indnum{\Gr{G}_1 \NNS \Gr{G}_2} = \Theta(\Gr{G}_1 \NNS \Gr{G}_2) = \vartheta(\Gr{G}_1 \NNS \Gr{G}_2) = \binom{n_1}{r_1} + \binom{n_2-1}{r_2-1}, \\[0.2cm]
\label{eq3:22.10.23}
& \chrnum{\Gr{G}_1 \NS \Gr{G}_2} = n_1 + n_2 - 2(r_1+r_2) + 4 = \chrnum{\Gr{G}_1 \NNS \Gr{G}_1}, \\[0.2cm]
\label{eq4:22.10.23}
& \clnum{\CGr{G}_1 \NS \CGr{G}_2} = \binom{n_1-1}{r_1-1} + \binom{n_2-1}{r_2-1} = \clnum{\CGr{G}_1 \NNS \CGr{G}_2}.
\end{align}
Specifically, let $\Gr{G}_1 = \Gr{G}_2 = \Gr{P}$, where $\Gr{P}$ denotes the Petersen graph; $\Gr{P}$ is
a graph on~10 vertices, and $\Gr{P} \cong \KG{5}{2}$, so \eqref{eq1:22.10.23} and \eqref{eq2:22.10.23}
with $n_1=n_2=5$ and $r_1=r_2=2$ give
\begin{align}
\label{eq: NS and NNS Joins of Petersen Graphs}
\vartheta(\Gr{P} \NS \Gr{P}) = 14 = \vartheta(\Gr{P} \NNS \Gr{P}).
\end{align}
Equality~\eqref{eq: NS and NNS Joins of Petersen Graphs} was verified numerically using a combination of the CVX and SageMath software tools
\cite{GrantB20,SageMath}. This was done by first constructing the NS and NNS join of graphs $\Gr{P} \NS \Gr{P}$
and $\Gr{P} \NNS \Gr{P}$, respectively, based on their adjacency matrices in \eqref{eq: adjacency matrix - NS join} and
\eqref{eq: adjacency matrix - NNS join}. Subsequently, the SDP problem presented in \eqref{eq: SDP problem - Lovasz theta-function} was solved
for computing the Lov\'{a}sz $\vartheta$-functions of the two constructed graphs.
\end{example}

\begin{example}[NS/NNS joins of a regular graph and the Tietze graph]
\label{example: The Shannon capacity of NS/NNS joins of a regular graph and Tietze graph}
Let $\Gr{G}_1$ be a nonempty and noncomplete regular graph, and let $\Gr{G}_2$ be the Tietze graph
(see Figure~\ref{fig.: Tietze graph}). The graph $\Gr{G}_2$ is a 3-regular
graph on 12 vertices, not strongly regular, nor vertex- or edge-transitive
(these properties can be verified by the SageMath software \cite{SageMath}).
\begin{figure}[h!t!]
\centering
\includegraphics[width=9cm]{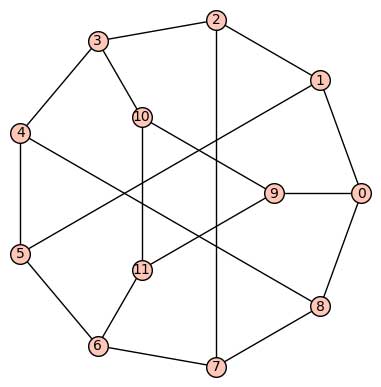}
\caption{\label{fig.: Tietze graph} \centering{The Tietze graph $\Gr{G}_2$
(see Example~\ref{example: The Shannon capacity of NS/NNS joins of a regular graph and Tietze graph}).}}
\end{figure}

It is next shown that
\begin{align}
\label{eq1:Tietze}
\indnum{\Gr{G}_2} = 5 = \vartheta(\Gr{G}_2).
\end{align}
Indeed, a maximum independent set in $\Gr{G}_2$ is given by $\{0, 3, 5, 7, 11\}$, so $\indnum{\Gr{G}_2} = 5$.
Additionally, the equality $\vartheta(\Gr{G}_2) = 5$ is verified by solving the SDP problem in \eqref{eq: SDP problem - Lovasz theta-function}:
\begin{equation}
\label{eq2:Tietze}
\mbox{\fbox{$
\begin{array}{l}
\text{maximize} \; \; \mathrm{Tr}({\bf{B}} \, {\bf{J}}_{12}) \\
\text{subject to} \\
\begin{cases}
{\bf{B}} \succeq 0, \; \; \mathrm{Tr}({\bf{B}}) = 1, \\
A_{i,j} = 1  \; \Rightarrow \;  B_{i,j} = 0, \quad i,j \in \{1, \ldots, 12\}.
\end{cases}
\end{array}$}}
\end{equation}
The maximizing matrix $\bf{B}$ in \eqref{eq2:Tietze}, obtained numerically by the CVX
software \cite{GrantB20}, is given by
\[
\label{eq3:Tietze}
{\bf{B}} = \frac{1}{15} \,
\begin{pmatrix}
2 & 0 & 1 & 1 & 1 & 1 & 1 & 1 & 0 & 0 & 1 & 1 \\[0.1cm]
0 & 1 & 0 & 1 & 0 & 0 & 1 & 0 & 1 & 1 & 0 & 0 \\[0.1cm]
1 & 0 & 1 & 0 & 1 & 0 & 1 & 0 & 0 & 0 & 1 & 0 \\[0.1cm]
1 & 1 & 0 & 2 & 0 & 1 & 1 & 1 & 1 & 1 & 0 & 1 \\[0.1cm]
1 & 0 & 1 & 0 & 1 & 0 & 1 & 0 & 0 & 0 & 1 & 0 \\[0.1cm]
1 & 0 & 0 & 1 & 0 & 1 & 0 & 1 & 0 & 0 & 0 & 1 \\[0.1cm]
1 & 1 & 1 & 1 & 1 & 0 & 2 & 0 & 1 & 1 & 1 & 0 \\[0.1cm]
1 & 0 & 0 & 1 & 0 & 1 & 0 & 1 & 0 & 0 & 0 & 1 \\[0.1cm]
0 & 1 & 0 & 1 & 0 & 0 & 1 & 0 & 1 & 1 & 0 & 0 \\[0.1cm]
0 & 1 & 0 & 1 & 0 & 0 & 1 & 0 & 1 & 1 & 0 & 0 \\[0.1cm]
1 & 0 & 1 & 0 & 1 & 0 & 1 & 0 & 0 & 0 & 1 & 0 \\[0.1cm]
1 & 0 & 0 & 1 & 0 & 1 & 0 & 1 & 0 & 0 & 0 & 1
\end{pmatrix},
\]
which gives
\begin{align}
\label{eq4:Tietze}
\vartheta(\Gr{G}_2) = \mathrm{Tr}({\bf{B}} \, {\bf{J}}_{12}) = 5.
\end{align}
For comparison with \eqref{eq4:Tietze}, since the graph $\Gr{G}_2$ is
a 3-regular graph on 12 vertices, inequality \eqref{eq: Lovasz79 - Theorem 9}
(Theorem~9 in \cite{Lovasz79_IT}) with $d=3$ and $\lambda_{\min}(\Gr{G}_2) = -2.30278 \ldots$ gives
\begin{align}
\label{eq6:Tietze}
\vartheta(\Gr{G}_2) \leq - \frac{n \lambda_{\min}(\Gr{G})}{d - \lambda_{\min}(\Gr{G})} = 5.21110 \ldots,
\end{align}
so the upper bound \eqref{eq6:Tietze} is not tight. It should be noted that $\Gr{G}_2$ is neither edge-transitive
nor strongly regular. If it were, then Theorem~\ref{thm:bounds on the Lovasz function for regular graphs} would have
transformed inequality \eqref{eq6:Tietze} into an equality. This transformation would have occurred provided the
sufficient conditions for equality, specified in regard to the rightmost inequality of \eqref{eq:21.10.22a1}, were met.

By \eqref{eq1:Tietze}, it follows that the Shannon capacity of the Tietze graph is equal to~5
\begin{align}
\label{eq5:Tietze}
\Theta(\Gr{G}_2) = 5.
\end{align}
The vertices in $\Gr{G}_2$ can be partitioned into at least three independent
sets, e.g., $\{0, 2, 5, 11\}$, $\{1, 4, 7, 10\}$, and $\{3, 6, 8, 9\}$ (see Figure~\ref{fig.: Tietze graph}), so
the chromatic number of $\Gr{G}_2$ is equal to~3
\begin{align}
\label{eq7:Tietze}
\chrnum{\Gr{G}_2} = 3.
\end{align}
The maximum clique in $\Gr{G}_2$ (see Figure~\ref{fig.: Tietze graph}) is the triangle $\{9, 10, 11\}$, so
\begin{align}
\label{eq8:Tietze}
\clnum{\Gr{G}_2} = 3.
\end{align}
In light of Theorem~\ref{theorem: on Graph Invariants of NS/NNS Joins of Graphs} and Corollary~\ref{corollary: sufficient conditions for equalities},
it follows from \eqref{eq1:Tietze}, \eqref{eq7:Tietze}, and \eqref{eq8:Tietze} that
\begin{align}
\label{eq9:Tietze}
& \indnum{\Gr{G}_1 \NS \Gr{G}_2} = \Theta(\Gr{G}_1 \NS \Gr{G}_2) = \vartheta(\Gr{G}_1 \NS \Gr{G}_2) = \card{\V{\Gr{G}_1}} + 5, \\[0.1cm]
\label{eq10:Tietze}
& \indnum{\Gr{G}_1 \NNS \Gr{G}_2} = \Theta(\Gr{G}_1 \NNS \Gr{G}_2) = \vartheta(\Gr{G}_1 \NNS \Gr{G}_2) = \card{\V{\Gr{G}_1}} + 5, \\[0.1cm]
\label{eq11:Tietze}
& \chrnum{\Gr{G}_1 \NS \Gr{G}_2} = \chrnum{\Gr{G}_1} + 3 = \chrnum{\Gr{G}_1 \NNS \Gr{G}_2}, \\[0.1cm]
\label{eq12:Tietze}
& \clnum{\Gr{G}_1 \NS \Gr{G}_2} = \clnum{\Gr{G}_1} + 3 = \clnum{\Gr{G}_1 \NNS \Gr{G}_2}.
\end{align}
\end{example}

\begin{example}[Chang graphs]
\label{example: Chang Graphs}
Chang graphs are three cospectral and strongly regular graphs with the identical
parameters $\srg{28}{12}{6}{4}$. The line graph of the complete graph $\CoG{8}$
is also a strongly regular graph with the same set of parameters.
By Theorem~\ref{theorem: eigenvalues of srg}, these graphs have an
identical spectrum with $\lambda_1 = 12$, $\lambda_2 = \ldots = \lambda_8 = 4$
(multiplicity~7), and $\lambda_9 = \ldots = \lambda_{28} = -2$ (multiplicity~20).
It can be verified, using the SageMath software \cite{SageMath}, that all pairs
among these cospectral graphs are nonisomorphic and that the three Chang graphs
and their complements are neither vertex-transitive nor edge-transitive.
In light of Corollary~\ref{corollary: the Lovasz theta-function of regular NICS graphs},
those four strongly regular graphs have an identical Lov\'{a}sz $\vartheta$-function
valued at~4, as demonstrated by either \eqref{eq1a:26.09.23} or \eqref{eq1b:26.09.23}.
Furthermore, the graph complements, which are strongly regular, cospectral, and nonisomorphic,
share a common value of~7 for their Lov\'{a}sz $\vartheta$-functions, as demonstrated by
\eqref{eq1c:26.09.23}.
\end{example}

\section{Bounds on Graph Invariants via the Lov\'{a}sz $\vartheta$-Function}
\label{section: bounds on graph invariants}
Building on preliminaries in Section~\ref{section: preliminaries} and introducing additional
background in Section~\ref{subsection: preliminaries - bounds on graph invariants}, this section utilizes
the Lov\'{a}sz $\vartheta$-function and its unique properties to derive bounds on various graph invariants.
These bounds are presented in Section~\ref{subsection: main results - bounds on graph invariants},
and their proofs are provided in Section~\ref{subsection: proofs - bounds on graph invariants}.
The tightness of these bounds is examined in Section~\ref{subsection: examples - bounds on graph invariants},
including comparisons with established bounds in the literature.

\subsection{Specialized Preliminaries for Section~\ref{section: bounds on graph invariants}}
\label{subsection: preliminaries - bounds on graph invariants}

\subsubsection{Perfect graphs}
\label{subsubsection: perfect graphs}
For all graphs, the chromatic number is greater than or equal to the clique number, but they can be far apart.
It is specifically possible to construct finite, connected, and simple graphs whose clique numbers are equal
to~2, while having an arbitrarily large chromatic number (see, e.g., page~314 in \cite{AignerZ18}).
Perfect graphs are characterized by the property that not only their chromatic and clique numbers
coincide, but also the same coincidence applies to every induced subgraph. These graphs received a lot of attention (see,
e.g., the books \cite{AR01,BeinekeGW21,Golumbic04}), dating back to the paper by Claude Shannon in 1956
\cite{Shannon56} and the work by Claude Berge in the early sixties \cite{Berge63}. Perfect graphs include many
important families of graphs such as bipartite graphs, line graphs of bipartite graphs, chordal graphs,
comparability graphs, and the complements of all these graphs \cite{Lovasz83}.
Unlike general graphs, many graph invariants of all perfect graphs are computationally feasible \cite{GrotschelLS84,Lovasz83}.

\begin{definition}[Perfect graph]
\label{definition: perfect graphs}
A graph $\Gr{G}$ is {\em perfect} if $\chrnum{\Gr{H}} = \clnum{\Gr{H}}$ for every induced subgraph~$\Gr{H}$~of~$\Gr{G}$.
\end{definition}

\begin{theorem}[Weak perfect graph theorem \cite{Lovasz72}]
\label{theorem: Weak Perfect Graph Theorem}
The complement of a perfect graph retains this perfection. Equivalently, a graph is perfect if and only if its complement is perfect.
\end{theorem}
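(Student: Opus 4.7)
The plan is to deduce the self-complementarity of perfectness from a symmetric numerical characterization of perfect graphs, rather than to juggle chromatic and clique numbers directly. Since taking complements interchanges the roles of $\indnum{\cdot}$ and $\clnum{\cdot}$ (see Definition~\ref{definition: cliques}) but does not obviously interchange $\chrnum{\cdot}$ with $\clnum{\cdot}$, a direct attack on Definition~\ref{definition: perfect graphs} seems awkward. Instead, I will aim for the following equivalent characterization due to Lov\'asz: a graph $\Gr{G}$ is perfect if and only if every induced subgraph $\Gr{H} \subseteq \Gr{G}$ satisfies
\begin{align}
\label{eq: Lovasz characterization of perfect graphs}
\indnum{\Gr{H}} \, \clnum{\Gr{H}} \geq \card{\V{\Gr{H}}}.
\end{align}
Once \eqref{eq: Lovasz characterization of perfect graphs} is established, the theorem will follow in one line: the induced subgraphs of $\CGr{G}$ are exactly the complements of induced subgraphs of $\Gr{G}$, and by Definition~\ref{definition: cliques} we have $\indnum{\CGr{H}} = \clnum{\Gr{H}}$ and $\clnum{\CGr{H}} = \indnum{\Gr{H}}$, so the condition \eqref{eq: Lovasz characterization of perfect graphs} is invariant under complementation.

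The forward implication in the characterization is easy: if $\Gr{G}$ is perfect and $\Gr{H} \subseteq \Gr{G}$ is an induced subgraph, then $\chrnum{\Gr{H}} = \clnum{\Gr{H}}$, and combining this with the trivial bound $\indnum{\Gr{H}} \, \chrnum{\Gr{H}} \geq \card{\V{\Gr{H}}}$ from \eqref{eq:18.04.2024} yields \eqref{eq: Lovasz characterization of perfect graphs}. The hard direction is the converse, and this is where the main technical work lies.

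The key lemma I will prove is the \emph{replication lemma}: if $\Gr{G}$ is a perfect graph and $\Gr{G}'$ is obtained from $\Gr{G}$ by duplicating a vertex $v \in \V{\Gr{G}}$, that is, by adding a new vertex $v'$ adjacent to $v$ and to exactly the neighbors of $v$ in $\Gr{G}$, then $\Gr{G}'$ is also perfect. Iterating this lemma, and noting that replacing a vertex by a clique of size $k$ can be realized by $k-1$ successive duplications, one obtains the following: for any perfect graph $\Gr{G}$ and any positive integer weights $w_v$ on $v \in \V{\Gr{G}}$, the graph $\Gr{G}_{\bf w}$ obtained by replacing each $v$ by a clique of size $w_v$ (joining two such cliques completely iff $v \sim v'$ in $\Gr{G}$) is again perfect. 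The converse of the characterization will then be proved by contradiction: supposing $\Gr{G}$ is a minimal counterexample to perfectness that nevertheless satisfies \eqref{eq: Lovasz characterization of perfect graphs} for all induced subgraphs, one chooses carefully weights counting how often each vertex is used in a maximum clique-cover versus a maximum independent set, applies the replication construction, and derives a numerical contradiction with $\indnum{\Gr{G}_{\bf w}} \, \clnum{\Gr{G}_{\bf w}} < \card{\V{\Gr{G}_{\bf w}}}$.

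The main obstacle, as expected, is the replication lemma and the combinatorial counting argument that extracts a contradiction from a minimal counterexample. The replication lemma itself will be proved by taking an optimal coloring of the induced subgraph $\Gr{G}' - v$ (perfect by minimality/induction) and extending it: the color class containing $v$ in a coloring of $\Gr{G}$ can be split to accommodate $v'$, using the fact that either $\{v, v'\}$ lies in a maximum clique of $\Gr{G}'$ or it does not, and treating the two cases separately. Once the replication lemma is in hand, everything else is clean bookkeeping, and the final step (symmetry under complementation in \eqref{eq: Lovasz characterization of perfect graphs}) is immediate.
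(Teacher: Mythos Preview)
The paper does not supply its own proof of this theorem; it is stated as a classical result with a citation to Lov\'{a}sz \cite{Lovasz72} in the preliminaries (Section~\ref{subsubsection: perfect graphs}), and is used later without further justification. So there is no in-paper proof to compare against.

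That said, your proposal is the standard Lov\'{a}sz argument and is correct in outline: establish the symmetric characterization \eqref{eq: Lovasz characterization of perfect graphs}, derive the forward direction from \eqref{eq:18.04.2024}, and obtain the converse via the replication lemma plus a minimal-counterexample weighting. One point worth tightening: your description of the replication lemma's proof (``the color class containing $v$ \ldots\ can be split to accommodate $v'$'') is a bit loose. The cleaner argument is that $\clnum{\Gr{G}'}$ equals either $\clnum{\Gr{G}}$ or $\clnum{\Gr{G}}+1$; in the latter case $v$ lies in every maximum clique of $\Gr{G}$, so one colors $\Gr{G}$ optimally and gives $v'$ a fresh color, while in the former case one removes from $\Gr{G}$ the color class of $v$ in an optimal coloring, observes that the remaining graph has clique number $\clnum{\Gr{G}}-1$ (since every maximum clique of $\Gr{G}'$ that avoids $\{v,v'\}$ meets that class), recolors it with $\clnum{\Gr{G}}-1$ colors by induction, and uses one more color for the removed class together with $v'$. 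The final step---that \eqref{eq: Lovasz characterization of perfect graphs} is invariant under complementation because induced subgraphs of $\CGr{G}$ are complements of induced subgraphs of $\Gr{G}$---is exactly right.
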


A cycle graph of length $2k+1$, with $k \geq 2$, satisfies $3 = \chrnum{\CG{2k+1}} > \clnum{\CG{2k+1}} = 2$, and its complement
satisfies $k+1 = \chrnum{\CGr{\CG{2k+1}}} > \clnum{\CGr{\CG{2k+1}}} = k$. This shows that cycle graphs of odd length greater than~3,
as well as their complements, are imperfect. By definition, the family of perfect graphs is closed under the operation of taking induced subgraphs.
Therefore, these graphs can be characterized by the exclusion of a specific family of induced subgraphs, denoted $\mathscr{G}$.
The strong perfect graph theorem, presented as follows, states that these two examples are the only elements within $\mathscr{G}$.

\begin{theorem}[Strong perfect graph theorem \cite{ChudnovskyRST06}]
\label{theorem: Strong Perfect Graph Theorem}
A graph is perfect if and only if neither the graph nor its complement contains an
induced cycle of odd length greater than 3.
\end{theorem}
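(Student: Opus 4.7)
The plan is to prove the two directions of the theorem separately, because they are enormously unequal in difficulty. For the \emph{only if} direction, I would argue briefly as follows: by Definition~\ref{definition: perfect graphs} a perfect graph has the property that every induced subgraph $\Gr{H}$ satisfies $\chrnum{\Gr{H}} = \clnum{\Gr{H}}$, so it suffices to verify that each odd cycle $\CG{2k+1}$ with $k \geq 2$ and each of its complements fails this equation. A direct count gives $\chrnum{\CG{2k+1}} = 3$ and $\clnum{\CG{2k+1}} = 2$, and then Theorem~\ref{theorem: Weak Perfect Graph Theorem} (or an equally short direct computation showing $\chrnum{\CGr{\CG{2k+1}}} = k+1 > k = \clnum{\CGr{\CG{2k+1}}}$) disposes of the antiholes. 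Hence a perfect graph can contain no induced odd hole or odd antihole of length at least $5$.

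The \emph{if} direction — every Berge graph (i.e., a graph with no induced odd hole and no induced odd antihole of length $\geq 5$) is perfect — is the full strength of Chudnovsky, Robertson, Seymour, and Thomas, and I would follow their program. The approach is by contradiction: assume a minimum-order Berge graph $\Gr{G}$ that is not perfect; by minimality every proper induced subgraph of $\Gr{G}$ is itself a perfect Berge graph, so $\Gr{G}$ is a minimum imperfect Berge graph. The central ingredient is then a \emph{structure theorem} asserting that every Berge graph $\Gr{G}$ either lies in one of five \emph{basic} classes — bipartite graphs, line graphs of bipartite graphs, the complements of either of these, and the so-called double split graphs — or else admits one of a short list of \emph{decompositions}: a proper $2$-join, the complement of a proper $2$-join, or a balanced skew partition.

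Granted the structure theorem, the proof splits into two clearly separable subtasks. First, I would verify directly that each basic class is perfect: bipartite graphs are trivially perfect; line graphs of bipartite graphs are perfect by König's edge-coloring theorem; the complements of these two classes are perfect by the Weak Perfect Graph Theorem (Theorem~\ref{theorem: Weak Perfect Graph Theorem}); and double split graphs can be handled by an explicit chromatic and clique computation. Second, I would show that a minimum imperfect Berge graph cannot admit any of the three decompositions. For the proper $2$-join and its complement this is a classical reduction that produces a strictly smaller Berge counterexample by replacing one side of the $2$-join with a carefully chosen gadget derived from its $\alpha$-saturated maximum cliques (and dually for the complement). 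The balanced skew partition case is the most delicate: assuming $\Gr{G}$ admits such a partition, one extracts an imperfect induced subgraph on strictly fewer vertices using a minimum cut argument together with the perfection of the pieces, again contradicting minimality.

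The main obstacle is, unsurprisingly, the structure theorem itself: proving that a Berge graph which does \emph{not} admit any of the three decompositions must be basic. This is the substance of the roughly $180$-page Chudnovsky–Robertson–Seymour–Thomas argument and proceeds by a long case analysis organized around whether $\Gr{G}$ contains an induced "stretched" configuration (a long prism or proper wheel), how certain connected attachments meet an odd wheel-like substructure, and a delicate analysis of even and odd prisms. I would not attempt to reproduce that argument here; instead, my plan is to cite it as a black box, and to devote the writeup to the short \emph{only if} direction and to the reduction from the structure theorem plus perfection of the basic classes to the contradiction that completes the \emph{if} direction.
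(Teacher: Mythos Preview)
Your outline accurately sketches the Chudnovsky--Robertson--Seymour--Thomas strategy, but you should be aware that the paper itself does \emph{not} prove this theorem. It is stated as a cited result: immediately after the statement, the paper simply remarks that the theorem was conjectured by Berge and proved four decades later in \cite{ChudnovskyRST06}, and refers the reader to the survey \cite{Trotignon15}. There is no proof or proof sketch in the paper to compare against.

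So your proposal is not wrong, but it is doing far more work than the paper does. If your goal is to match the paper, the appropriate ``proof'' is no proof at all---just the citation. If instead you intend to go beyond the paper and include a genuine sketch, then what you have written is a reasonable high-level summary of the structure-theorem approach; just be explicit that the heavy lifting (the structure theorem for Berge graphs) is being imported wholesale from \cite{ChudnovskyRST06}, as you already acknowledge.
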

The reader is referred to \cite{Trotignon15} as a survey paper on perfect graphs, which is mostly focused on the strong perfect graph theorem.
This theorem was conjectured by Berge \cite{Berge63}, and it was proved four decades later (in 2002) by Chudnovsky, Robertson, Seymour, and Thomas \cite{ChudnovskyRST06}.

\subsubsection{Triangle-free graphs}
\label{subsubsection: triangle-free graphs}

A triangle-free graph is a graph that does not contain any triangles, which are cycles of length three, as subgraphs.
Triangle-free graphs have garnered attention, as evidenced by their coverage in theoretical graph theory,
game theory, and advanced coding techniques for data storage
\cite{Alon94,Balogh23,Csikvari22,BermanDSMW16,BermanSM22,BernshteynBCK23,PirotS21,GlockJS23,CameronDR16,BargZ22,BargSY23,DengHWX23,HuangX23,Shearer83,Shearer91,Griggs83}.
This interest underscores the importance of investigating graph invariants of triangle-free graphs.

Lower bounds on the independence number of triangle-free graphs were derived in \cite{Shearer83,Shearer91}, as
presented in the next result.
\begin{theorem}[Lower bound on the independence number of triangle-free graphs \cite{Shearer91}]
\label{thm: Shearer, 1991}
Let $\Gr{G}$ be a triangle-free graph on $n$ vertices with a degree sequence
$\{d_1, \ldots, d_n\}$. Let $f \colon \{0, 1, 2, \ldots\} \to \Reals$ be defined
by the recursion
\begin{align}
\label{eq:recursion}
f(k) = \frac{1 + (k^2-k) \, f(k-1)}{k^2+1}, \quad k \in \naturals,
\end{align}
with $f(0) = 1$. Then,
\begin{align}
\label{eq:LB-Sheaer91}
\indnum{\Gr{G}} \geq \sum_{\ell=1}^n f(d_\ell).
\end{align}
\end{theorem}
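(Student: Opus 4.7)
The plan is to proceed by strong induction on the order $n = \card{\V{\Gr{G}}}$, establishing $\indnum{\Gr{G}} \geq I(\Gr{G})$, where I define $I(\Gr{G}) \eqdef \sum_{v \in \V{\Gr{G}}} f(\dgr{v}(\Gr{G}))$. The base case $n = 0$ is vacuous, and $n = 1$ follows from $f(0) = 1$; more generally, the bound is immediate whenever $\Gr{G}$ is edgeless, since then $\indnum{\Gr{G}} = n = n f(0)$.

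For the inductive step, I would fix a vertex $v \in \V{\Gr{G}}$ of degree $D = \dgr{v}(\Gr{G}) \geq 1$. The triangle-free hypothesis enters through the observation that the neighborhood $\Ngb{v}$ is an independent set of size $D$. This suggests two natural ways to extend a partial independent set built around $v$: (a) take $\{v\}$ together with a maximum independent set of $\Gr{G}_1 \eqdef \Gr{G} - \{v\} - \Ngb{v}$, which yields $\indnum{\Gr{G}} \geq 1 + \indnum{\Gr{G}_1} \geq 1 + I(\Gr{G}_1)$ after applying the inductive hypothesis; and (b) take the entire set $\Ngb{v}$ together with a maximum independent set of $\Gr{G}_2 \eqdef \Gr{G} - \{v\} - \Ngb{v} - \Ngb{\Ngb{v}}$, yielding $\indnum{\Gr{G}} \geq D + I(\Gr{G}_2)$. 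An appropriate convex combination of the two bounds, with weights dictated by the recursion \eqref{eq:recursion} in its rearranged form $(D^2+1) f(D) = 1 + D(D-1) f(D-1)$, should reproduce $I(\Gr{G})$: informally, the coefficient $\tfrac{1}{D^2+1}$ accounts for the ``$+1$'' contribution of $v$ to $f(D)$, while the coefficient $\tfrac{D(D-1)}{D^2+1}$ balances the $f(D-1)$ contributions of the neighbors of $v$ whose degree drops by one once $v$ is deleted.

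The principal obstacle lies in accounting for the degree drops of vertices outside $\Ngb{v}$. When $\Ngb{\Ngb{v}}$ is removed in option (b), a vertex $w$ at distance exactly $2$ from $v$ may share several common neighbors with $v$, so its $f$-contribution to $I(\Gr{G}_2)$ reflects a decrease of $\card{\Ngb{w} \cap \Ngb{v}}$ in its degree, and this multiplicity is not constrained by the triangle-free hypothesis alone. To control the resulting cross terms, I plan to exploit the monotonicity and the precise convexity profile of $f$ encoded in \eqref{eq:recursion}, possibly by averaging the local argument over a uniformly random choice of $v$, or, in the spirit of Shearer's original randomized construction, by selecting an independent set $\set{I}$ greedily along a uniformly random linear ordering of $\V{\Gr{G}}$ and computing $\expectation[\card{\set{I}}]$ through a vertex-by-vertex local expectation that invokes the recursion for $f$ and absorbs the detailed second-neighborhood structure. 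This randomized variant, rather than the deterministic two-option reduction above, is the route I expect to succeed in closing the induction cleanly.
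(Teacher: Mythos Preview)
The paper does not supply a proof of this theorem. It appears in Section~\ref{subsubsection: triangle-free graphs} as a cited preliminary result from Shearer's 1991 paper, and is used only to derive the short Corollary~\ref{corollary: triangle-free and vertex-transitive graphs} that follows it. There is therefore no ``paper's proof'' to compare your proposal against.

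As for the proposal itself: you have correctly located the difficulty. Your option~(b), which deletes $\{v\} \cup \Ngb{v} \cup \Ngb{\Ngb{v}}$, removes too much, and the degree drops in the second neighbourhood are not governed by the single recursion \eqref{eq:recursion}; this is exactly the gap you flag. Shearer's 1991 argument stays with the milder pair of moves---delete $v$ alone, or delete $\{v\} \cup \Ngb{v}$---and closes the induction by showing (via an averaging argument over vertices, using the monotonicity of $k \mapsto k f(k)$ and the decrease of $f$) that some vertex $v$ satisfies the required inequality. The random-linear-ordering greedy analysis you mention at the end is closer in spirit to Shearer's earlier 1983 paper than to the 1991 argument that yields the present recursion; if you pursue that route you will need to reconcile the local expectation with the specific form of~\eqref{eq:recursion}, which is tuned to the deterministic deletion argument rather than to the random-permutation model.
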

\begin{corollary}[Upper bound on the fractional chromatic number]
\label{corollary: triangle-free and vertex-transitive graphs}
Let $\Gr{G}$ be a triangle-free and vertex-transitive graph on $n$ vertices with degree $d$.
Then,
\begin{align}
\label{eq1:13.08.23}
\fchrnum{\Gr{G}} \leq \frac{n}{\lceil n f(d) \rceil} \leq \frac{1}{f(d)}.
\end{align}
\end{corollary}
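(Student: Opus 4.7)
The plan is to combine three ingredients: (i) regularity of vertex-transitive graphs, (ii) Shearer's lower bound on the independence number of triangle-free graphs (Theorem~\ref{thm: Shearer, 1991}), and (iii) the classical identity $\fchrnum{\Gr{G}} = \card{\V{\Gr{G}}}/\indnum{\Gr{G}}$ valid for every vertex-transitive graph.

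First, since $\Gr{G}$ is vertex-transitive, it is regular (see Example~\ref{example: vertex- and edge-transitivity}), so every vertex has the same degree $d$. Hence the degree sequence in Theorem~\ref{thm: Shearer, 1991} is $(d, d, \ldots, d)$, and the theorem yields the bound
\begin{align*}
\indnum{\Gr{G}} \, \geq \, \sum_{\ell=1}^{n} f(d) \, = \, n \, f(d).
\end{align*}
Because $\indnum{\Gr{G}}$ is an integer, this sharpens to $\indnum{\Gr{G}} \geq \lceil n f(d) \rceil$.

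Next, I would invoke the symmetrization identity for vertex-transitive graphs. The argument is that, in the dual LP~\eqref{eq: fractional clique number}, one may average any optimal vertex-weighting over the (transitive) automorphism group of $\Gr{G}$ to produce a feasible solution with a constant weight $x_v \equiv c$ on every $v \in \V{\Gr{G}}$. The constraint $\sum_{v \in \set{I}} x_v \leq 1$ for every maximal independent set then forces $c \leq 1/\indnum{\Gr{G}}$, while the objective equals $nc$, so that $\fclnum{\Gr{G}} = n/\indnum{\Gr{G}}$. By the LP-duality identity \eqref{eq1:14.10.23}, we conclude
\begin{align*}
\fchrnum{\Gr{G}} \, = \, \fclnum{\Gr{G}} \, = \, \frac{n}{\indnum{\Gr{G}}}.
\end{align*}

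Finally, chaining the Shearer bound into this identity gives
\begin{align*}
\fchrnum{\Gr{G}} \, = \, \frac{n}{\indnum{\Gr{G}}} \, \leq \, \frac{n}{\lceil n f(d) \rceil} \, \leq \, \frac{n}{n f(d)} \, = \, \frac{1}{f(d)},
\end{align*}
which is exactly~\eqref{eq1:13.08.23}. Essentially no step is genuinely hard: the Shearer estimate is quoted, and the only subtlety is the symmetrization identity, whose only delicate point is justifying that the group-averaged weighting remains feasible (which follows since independent sets are mapped to independent sets by automorphisms and the constraints are convex). The bound is attained, moreover, when $n f(d)$ happens to be integral and equal to $\indnum{\Gr{G}}$, e.g.\ if Shearer's inequality is tight.
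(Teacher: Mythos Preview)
Your proof is correct and follows essentially the same approach as the paper: apply Shearer's bound to the $d$-regular triangle-free graph to get $\indnum{\Gr{G}} \geq \lceil n f(d) \rceil$, then use the identity $\fchrnum{\Gr{G}} = n/\indnum{\Gr{G}}$ for vertex-transitive graphs. The paper simply quotes this identity without justification, whereas you supply the symmetrization argument via the dual LP~\eqref{eq: fractional clique number}; this is a welcome elaboration but not a different route.
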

\begin{proof}
The triangle-free graph $\Gr{G}$ is $d$-regular, which implies by
\eqref{eq:LB-Sheaer91} that $\indnum{\Gr{G}} \geq n \, f(d)$.
Also, since $\Gr{G}$ is vertex-transitive on $n$ vertices,
$\fchrnum{\Gr{G}} = \dfrac{n}{\indnum{\Gr{G}}} \leq \dfrac{n}{\lceil n f(d) \rceil} \leq \dfrac{1}{f(d)}$.
\end{proof}

A recent work \cite{PirotS21} provides upper bounds on
the fractional chromatic number of a triangle-free graph, expressed as a function of the maximal degree
of its vertices.
\begin{theorem}[Upper bound on the fractional chromatic number of a triangle-free graph \cite{PirotS21}]
\label{thm: Pirot and Sereni, 2021}
For every triangle-free graph $\Gr{G}$ of maximum degree $\Delta = \Delta(\Gr{G})$,
\begin{align}
\label{eq:UB1-PS01}
\fchrnum{\Gr{G}} \leq 1 + \min_{k \in \naturals} \, \inf_{\lambda > 0} \;
\frac{(1+\lambda)^k + \lambda (1 + \lambda) \Delta}{\lambda (1+k \lambda)} \, .
\end{align}
\end{theorem}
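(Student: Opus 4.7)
The plan is to adapt the \emph{local occupancy} framework of Davies, Jenssen, Perkins, and Roberts, working with the hard-core Gibbs measure at fugacity $\lambda > 0$. For any graph $\Gr{F}$, this is the probability distribution on independent sets $I \subseteq \V{\Gr{F}}$ defined by $\mu_\lambda(I) \propto \lambda^{\card{I}}$. First, I would recall the general principle: if, for every vertex $v$ of every induced subgraph of $\Gr{G}$ and under every feasible boundary condition, the marginals of $\mu_\lambda$ satisfy a local occupancy inequality
\begin{align*}
\beta \, \prob[v \in I] + \expectation\bigl[ \card{I \cap \Ngb{v}} \bigr] \; \geq \; \gamma \, \prob[v \in I]
\end{align*}
with constants $\beta, \gamma > 0$ depending only on $\lambda$, $k$, and $\Delta$, then an averaging argument over the vertex set combined with LP duality for the fractional coloring (as in Theorem~\ref{theorem: fractional chromatic number}) yields $\fchrnum{\Gr{G}} \leq \beta/\gamma$. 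This reduces the global chromatic question to a single local statement.

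Next, I would exploit the triangle-free hypothesis, which is used exactly once but crucially. Fix $v \in \V{\Gr{G}}$ and condition the hard-core measure on the restriction of $I$ to $\V{\Gr{G}} \setminus (\{v\} \cup \Ngb{v})$. Triangle-freeness forces $\Ngb{v}$ to be an independent set in $\Gr{G}$, so the conditional law on $\{v\} \cup \Ngb{v}$ is that of a hard-core model on a star: writing $U \subseteq \Ngb{v}$ for the collection of neighbors that remain unblocked by the exterior, the conditional partition function factorizes, with the center $v$ contributing $\lambda$ if occupied and the elements of $U$ contributing $1 + \lambda$ each otherwise. To bring the parameter $k$ into play, I would augment this local picture by $k$ auxiliary pairwise non-adjacent vertices, attached to nothing; this produces a factor $(1+\lambda)^k$ in the "center empty" contribution and a denominator of the form $\lambda(1 + k\lambda)$ in the occupancy ratio, matching the shape of the claimed bound.

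Then I would carry out the key algebraic step. For every admissible $U \subseteq \Ngb{v}$ of size at most $\Delta$, one computes the ratio of the augmented local partition function to the weighted occupancy term, and shows by monotonicity in $\card{U}$ that the extremal case is $\card{U} = \Delta$, giving
\begin{align*}
(1+\lambda)^k + \lambda (1+\lambda) \Delta \; \geq \; \bigl[\fchrnum{\Gr{G}} - 1\bigr] \, \lambda \, (1 + k \lambda).
\end{align*}
Taking the tower of conditional expectations recovers the global local occupancy inequality requested in Step~1, and minimizing the resulting bound over $\lambda > 0$ and $k \in \naturals$ yields the displayed estimate.

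The main obstacle will be verifying the local occupancy inequality uniformly across all exterior boundary conditions: one must prove that the extremal configuration is $\card{U} = \Delta$ with all of $U$ unblocked by the outside, and then handle the optimization relating the center's occupation probability to the expected number of occupied neighbors. The triangle-free hypothesis is indispensable at this step, because it is precisely what collapses the conditional distribution into a star-shaped hard-core model and produces the clean factor $(1+\lambda)^k$ in closed form; without triangle-freeness, the local partition function would be a sum over independent sets in a non-trivial induced subgraph of $\Ngb{v}$, and the tidy closed-form estimate would be lost.
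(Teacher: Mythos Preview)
The paper does not prove this statement: Theorem~\ref{thm: Pirot and Sereni, 2021} appears in Section~\ref{subsubsection: triangle-free graphs} as a cited preliminary result from Pirot and Sereni \cite{PirotS21}, with no proof given here. There is therefore nothing in the present paper to compare your proposal against.

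That said, your outline is broadly faithful to the method actually used in \cite{PirotS21}: the hard-core model at fugacity $\lambda$, the local occupancy framework of Davies--Jenssen--Perkins--Roberts, and the key observation that triangle-freeness makes each neighbourhood $\Ngb{v}$ an independent set so that the conditional law on $\{v\}\cup\Ngb{v}$ is a star hard-core model. One point where your sketch is imprecise is the role of the integer $k$: in \cite{PirotS21} the parameter $k$ does not arise from ``$k$ auxiliary pairwise non-adjacent vertices attached to nothing'' but rather from a greedy fractional-colouring procedure that iteratively extracts independent sets and tracks how the residual maximum degree drops; the $(1+\lambda)^k$ and $1+k\lambda$ terms come from analysing this iteration, not from padding the local model. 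Your heuristic for where those factors come from would need to be replaced by that iterative argument to make the proof go through.
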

The following result gives an upper bound on the fractional chromatic number of
graphs with a large girth (so they are, in particular, triangle-free graphs).
\begin{theorem}[Upper bound on the chromatic number of graphs with a large girth, \cite{PirotS21}]
\label{theorem: large girth}
\label{eq:UB2-PS01}
If $\Gr{G}$ is a graph of girth of at least 7, then
\begin{align}
\fchrnum{\Gr{G}} \leq 1 + \min_{k \in \naturals} \; \frac{2 \Delta(\Gr{G}) + 2^{k-3}}{k}.
\end{align}
\end{theorem}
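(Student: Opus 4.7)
The plan is to exhibit, for each $k \in \naturals$, a probability distribution over independent sets of $\Gr{G}$ under which every vertex $v \in \V{\Gr{G}}$ is selected with probability at least $k/\bigl(k + 2\Delta(\Gr{G}) + 2^{k-3}\bigr)$. The desired bound then follows from the LP-duality characterization underlying Theorem~\ref{theorem: fractional chromatic number}: if each vertex lies in a random independent set $\mathcal{I}$ with probability at least $1/t$, then $\fchrnum{\Gr{G}} \leq t$, since the induced probability measure on the support of $\mathcal{I}$ yields a feasible solution of the LP~\eqref{eq: fractional chromatic number} after rescaling. Taking $t = (k + 2\Delta + 2^{k-3})/k = 1 + (2\Delta + 2^{k-3})/k$ and minimizing over $k \in \naturals$ then produces the statement.

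First I would record the structural consequences of the girth assumption. Because the girth is at least $7$, the ball $B(v,3)$ of radius~$3$ around any vertex $v$ is a tree, and in particular, for every edge $\{u,w\} \in \E{\Gr{G}}$, the sets $\Ngb{u} \setminus \{w\}$ and $\Ngb{w} \setminus \{u\}$ are disjoint, each is itself an independent set, and no two neighbors of $v$ are joined by a path of length at most $4$ outside $v$. This local tree-likeness is what distinguishes the present bound (asymptotically of order $\Delta/\log \Delta$ for the optimal choice of $k$) from the generic $O(\Delta)$ bound valid for all triangle-free graphs recorded in Theorem~\ref{thm: Pirot and Sereni, 2021}.

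Next I would specify the random procedure that builds $\mathcal{I}$. For fixed $k$, assign to each vertex $u$ an independent uniform priority $Y_u \in \{1,\ldots,k\}$ together with an independent Bernoulli mark $\xi_u$ whose mean is tuned in terms of $k$ and $\Delta$; then place $v$ in $\mathcal{I}$ precisely when $\xi_v = 1$ and no neighbor of $v$ receives both a strictly higher priority and an active mark, followed by a local tie-breaking clean-up confined to $B(v,3)$ that produces an independent set (this is unambiguous because $B(v,3)$ is a tree by the girth hypothesis). The probability that $v$ survives can then be analyzed on the depth-$k$ subtree rooted at $v$: the factor $k$ in the denominator originates from the uniform priority draw, the $2\Delta$ contribution bounds the first-order conflicts along the edges incident with $v$ and its selected neighbors (using $\bigcard{\Ngb{u}\cup\Ngb{w}}\leq 2\Delta$ for each edge $\{u,w\}$, which requires girth $\geq 4$), and the $2^{k-3}$ contribution represents the worst-case branching when the occupancy recursion is iterated through $k$ layers of the local tree.

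The hard part will be the inductive occupancy computation producing the exact constant $2^{k-3}$. A shallow analysis (small $k$) only recovers a $\Delta$-type bound on $\fchrnum{\Gr{G}}$, whereas iterating to depth~$k$ trades the linear $2\Delta$ term for an exponential penalty coming from branching in the local tree; controlling this trade-off requires a careful bookkeeping of conditional marginals along the tree, using for a second time that neighborhoods of vertices at mutual distance at least $3$ are pairwise disjoint (again by girth $\geq 7$). Once the uniform lower bound on $\Pr[v \in \mathcal{I}]$ is established, I would read off the corresponding upper bound on $\fchrnum{\Gr{G}}$ and minimize it over $k \in \naturals$ to conclude.
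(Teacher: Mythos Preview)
The paper does not contain a proof of this statement. Theorem~\ref{theorem: large girth} appears in the specialized preliminaries of Section~\ref{section: bounds on graph invariants} as a result quoted from Pirot and Sereni~\cite{PirotS21}; the paper merely states it and points the reader to Table~1.4 of~\cite{PirotS21} for numerical illustrations. There is therefore no ``paper's own proof'' to compare against.

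As for your sketch itself: the high-level architecture---produce a random independent set in which every vertex appears with probability at least $k/(k+2\Delta+2^{k-3})$, then invoke LP duality---is indeed the mechanism behind results of this type in~\cite{PirotS21}. However, your proposal is not a proof but an outline with the decisive step left undone. You explicitly acknowledge that ``the hard part will be the inductive occupancy computation producing the exact constant $2^{k-3}$,'' and you do not carry it out; the description of the random procedure (priorities in $\{1,\ldots,k\}$, Bernoulli marks with an unspecified mean, a vaguely described ``tie-breaking clean-up'') is too imprecise to verify that the claimed marginal lower bound actually holds, and the heuristic attribution of the three terms $k$, $2\Delta$, $2^{k-3}$ to specific events is not substantiated. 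If you want to turn this into a genuine proof you will need to specify the procedure precisely and execute the occupancy recursion; otherwise the correct course here is simply to cite~\cite{PirotS21}, as the paper does.
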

Numerical results that rely on Theorem~\ref{theorem: large girth} are presented in Table~1.4 of \cite{PirotS21}.

\subsubsection{Lower bounds on the fractional chromatic number of a graph}
\label{subsubsection: lower bounds on the fractional chromatic number of a graph - background}
The fractional chromatic number of a graph $\Gr{G}$ is lower-bounded by the Lov\'{a}sz
$\vartheta$-function of the complement $\CGr{G}$
\begin{align}
\label{eq1:04.10.23}
\fchrnum{\Gr{G}} \geq \vartheta(\CGr{G})
\end{align}
(see Exercise~11.20 in \cite{Lovasz19}, and \eqref{eq1a: sandwich} here).
For a self-contained presentation in the continuation of this section, we next provide a proof of \eqref{eq1:04.10.23}.

\begin{proof}
By the formulation of the fractional chromatic number of a graph as an LP problem (see
\eqref{eq: fractional chromatic number}), let $\fchrnum{\CGr{G}} \eqdef t$ be the smallest
value for which there exists a family $\{\set{B}_j\}_{j=1}^p$ of cliques in $\Gr{G}$,
along with nonnegative weights $\{\tau_j\}_{j=1}^p$ such that
\begin{align}
\label{eq1:LP}
& \hspace*{0.2cm} \sum_{j=1}^p \tau_j = t, \\
\label{eq2:LP}
& \sum_{j: \, v \in \set{B}_j} \tau_j \geq 1, \quad \forall \, v \in \V{\Gr{G}}.
\end{align}
For every $v \in \V{\Gr{G}}$, let the vertex $v$ be assigned the vector
\begin{align}
\label{eq:u_vec}
{\bf{u}}(v) = \sum_{j: \, v \in \set{B}_j} \alpha_j(v) \, {\bf{e}}_j,
\end{align}
where, for $j \in \OneTo{p}$, ${\bf{e}}_j$ is the $p$-dimensional unit column vector characterized by having $\tt{1}$ in its
$j$-th component and zeros in all other components, and
\begin{align}
\label{eq:alpha_j}
\alpha_j(v) \eqdef \sqrt{\frac{\tau_j}{\underset{\ell: \, v \in \set{B}_\ell}{\sum} \tau_\ell}},
\quad \forall \, j: \, v \in \set{B}_j.
\end{align}
This mapping gives an orthonormal representation of $\Gr{G}$ since these are unit vectors
in $\Reals^p$, and if $\{v_1, v_2\} \notin \E{\Gr{G}}$, then there exists no $j \in \OneTo{p}$
such that $v_1, v_2 \in \set{B}_j$ (since the set $\set{B}_j$ forms a clique in
$\Gr{G}$), so ${\bf{u}}(v_1)^{\mathrm{T}} \, {\bf{u}}(v_2) = 0$.
Let the $p$-dimensional handle ${\bf{c}}$ of that orthonormal representation be given by
\begin{align}
\label{eq0:24.10.23}
{\bf{c}} \eqdef \frac1{\sqrt{t}} \; (\sqrt{\tau}_1, \ldots, \sqrt{\tau_p})^{\mathrm{T}},
\end{align}
so it follows from \eqref{eq1:LP} that $\|{\bf{c}}\|=1$. For all $v \in \V{\Gr{G}}$,
\begin{align}
\label{eq1:24.10.23}
{\bf{u}}(v)^{\mathrm{T}} \, {\bf{c}}
&= \frac1{\sqrt{t}} \sum_{j: \, v \in \set{B}_j} \alpha_j(v) \, \sqrt{\tau_j} \\
\label{eq2:24.10.23}
&= \frac1{\sqrt{t}} \, \sqrt{\sum_{j: \, v \in \set{B}_j} \tau_j} \\
\label{eq3:24.10.23}
&\geq \frac1{\sqrt{t}},
\end{align}
where equality \eqref{eq1:24.10.23} holds by \eqref{eq:u_vec} and \eqref{eq0:24.10.23};
\eqref{eq2:24.10.23} holds by \eqref{eq:alpha_j}, and inequality \eqref{eq3:24.10.23}
holds by \eqref{eq2:LP}. Consequently, it follows from \eqref{eq: Lovasz theta function}
and \eqref{eq3:24.10.23} that
\begin{align}
\label{eq4:24.10.23}
\vartheta(\Gr{G}) \leq \max_{v \in \V{\Gr{G}}} \frac1{\bigl({\bf{u}}(v)^{\mathrm{T}}
\, {\bf{c}}\bigr)^2} \leq t = \fchrnum{\CGr{G}}.
\end{align}
\end{proof}

Two additional spectral lower bounds on the fractional chromatic number of a graph are presented as follows.
These are compared with the lower bound on the right-hand side of equation \eqref{eq1:04.10.23}.
\begin{enumerate}
\item For a graph $\Gr{G}$ on $n$ vertices, let $\lambda_1(\Gr{G})$ and $\lambda_n(\Gr{G})$ be, respectively, the
largest and least eigenvalues of the adjacency matrix of $\Gr{G}$ (see \eqref{eq2:26.09.23}). Then, independently by \cite{Bilu06,Galtman00},
\begin{align}
\label{eq2:04.10.23}
\vchrnum{\Gr{G}} \geq 1 - \frac{\lambda_1(\Gr{G})}{\lambda_n(\Gr{G})}.
\end{align}
Combining \eqref{eq: vchrnum vs. Lovasz theta-function}, \eqref{eq1:04.10.23}, and \eqref{eq2:04.10.23} gives
\begin{align}
\label{eq11:8.11.23}
\fchrnum{\Gr{G}} \geq \vartheta(\CGr{G}) \geq \vchrnum{\Gr{G}} \geq 1 - \frac{\lambda_1(\Gr{G})}{\lambda_n(\Gr{G})}.
\end{align}
This demonstrates that the bound expressed on the right-hand side of \eqref{eq2:04.10.23} gives
a looser lower bound on the fractional chromatic number in comparison to the bound on the right-hand
side of \eqref{eq1:04.10.23}. Alternatively, this conclusion can also be reached by applying
\eqref{eq3:optimization} to the complement graph $\CGr{G}$, which gives
\begin{align}
\label{eq5:optimization}
\vartheta(\CGr{G}) = 1 + \max_{{\bf{T}}} \frac{\lambda_{\max}({\bf{T}})}{\bigl|\lambda_{\min}({\bf{T}})\bigr|},
\end{align}
where ${\bf{T}} = (T_{i,j})$ ranges over all the symmetric nonzero $n \times n$ matrices with $T_{i,j}=0$
for all $\{i,j\} \not\in \E{\Gr{G}}$, and also for $i=j$. Selecting a potentially suboptimal choice for
${\bf{T}}$ on the right-hand side of \eqref{eq5:optimization}, namely, the adjacency matrix of $\Gr{G}$, yields
\begin{align}
\label{eq10a:24.10.23}
\vartheta(\CGr{G}) \geq 1 - \frac{\lambda_1(\Gr{G})}{\lambda_n(\Gr{G})}.
\end{align}
\item A recent lower bound by Guo and Sapiro (Theorem 1.1 in \cite{GuoS22}) states that
\begin{align}
\label{eq3:04.10.23}
\fchrnum{\Gr{G}} \geq 1 + \max \Biggl\{ \frac{s^{+}(\Gr{G})}{s^{-}(\Gr{G})},
\, \frac{s^{-}(\Gr{G})}{s^{+}(\Gr{G})} \Biggr\},
\end{align}
where, on the right-hand side of \eqref{eq3:04.10.23}, $s^{+}(\Gr{G})$ and $s^{-}(\Gr{G})$ respectively
denote the sum of squares of the positive and negative eigenvalues of the adjacency matrix $\A$ of $\Gr{G}$;
these sums are referred to as the positive and negative square energies of the graph $\Gr{G}$.
The interested reader is directed to recent studies in \cite{AbiadLDHM23,ElphickL23}, which explore
the symmetry or asymmetry between the positive and negative square energies of structured
and random graphs. These studies also discuss open questions related to this topic.

The lower bound on the fractional chromatic number presented on the right-hand side of
\eqref{eq3:04.10.23} strengthens Theorem~2.3 from \cite{AndoL15}, which provides an identical
lower bound on the chromatic number of the graph $\Gr{G}$. It is worth noting that
inequality \eqref{eq3:04.10.23} was first conjectured in \cite{WocjanE13} with a proof of
a loosened lower bound that was smaller by~1 than the right-hand side of \eqref{eq3:04.10.23}.
In continuation to that line of research, Coutino and Spier \cite{CoutinhoS23} recently proved
that the right-hand side of \eqref{eq3:04.10.23} is even a lower bound on the vector
 chromatic number of the graph $\Gr{G}$, i.e.,
\begin{align}
\label{eq12:8.11.23}
\fchrnum{\Gr{G}} \geq \vartheta(\CGr{G}) \geq \vchrnum{\Gr{G}} \geq
1 + \max \Biggl\{ \frac{s^{+}(\Gr{G})}{s^{-}(\Gr{G})}, \, \frac{s^{-}(\Gr{G})}{s^{+}(\Gr{G})} \Biggr\}.
\end{align}
The chain of inequalities in \eqref{eq12:8.11.23} show that the right-hand side of \eqref{eq3:04.10.23} is
a looser lower bound on the fractional chromatic number in comparison to the right-hand side of \eqref{eq1:04.10.23}.
\end{enumerate}
The spectral lower bounds on the fractional chromatic number of a graph $\Gr{G}$, as presented
on the right-hand sides of \eqref{eq2:04.10.23} and \eqref{eq3:04.10.23}, are looser than the bound on the
right-hand side of \eqref{eq1:04.10.23}. The latter is equal to the Lov\'{a}sz $\vartheta$-function of the
graph complement $\CGr{G}$.

We next refer to the so called {\em inertial} lower bounds on the chromatic and fractional chromatic numbers of a graph.
Let $\Gr{G}$ be a graph on $n$ vertices, and let $\A$ be its adjacency matrix. The {\em inertia} of $\A$ is defined as
the ordered triple $(n^+, n^0, n^-)$ where $n^+$, $n^0$, and $n^-$, respectively, denote the numbers (counting
multiplicities) of the positive, zero, and negative eigenvalues of $\A$ (so, $n = n^+ + n^0 + n^-$). Note that
$\mathrm{rank}(\A) = n^{+} + n^{-}$ and $\mathrm{nullity}(\A) = n^0$. Theorem~1 of \cite{ElphickW17} states that
\begin{align}
\label{eq: ElphickW17 - Theorem1}
\chrnum{\Gr{G}} \geq 1 + \max \Biggl\{ \frac{n^{+}}{n^{-}}, \, \frac{n^{-}}{n^{+}} \Biggr\}.
\end{align}
A graph $\Gr{G}$ is called {\em nonsingular} if its adjacency matrix $\A$ is nonsingular, which holds if
and only if $n^0 = 0$. It is further proved in Section~3 of \cite{ElphickW17} that if $\Gr{G}$ is a nonsingular
graph, then the right-hand side of \eqref{eq: ElphickW17 - Theorem1} is even a lower bound on its fractional
chromatic number, i.e.,
\begin{align}
\label{eq: ElphickW17 - Section3}
\fchrnum{\Gr{G}} \geq 1 + \max \Biggl\{ \frac{n^{+}}{n^{-}}, \, \frac{n^{-}}{n^{+}} \Biggr\}.
\end{align}
Additionally, \eqref{eq: ElphickW17 - Section3} holds with equality for all Kneser graphs $\KG{r}{k}$
with $r \geq 2k$ (i.e., for all nonempty Kneser graphs) and for all cycle graphs.

In light of Theorem~\ref{theorem: eigenvalues of srg}, the lower bound in \eqref{eq: ElphickW17 - Section3}
holds in particular for all strongly regular graphs (since their adjacency matrices have no zero eigenvalues).
Then, for strongly regular graphs, \eqref{eq: ElphickW17 - Section3} gets the form
\begin{align}
\label{eq2: ElphickW17 - Section3}
\fchrnum{\Gr{G}} \geq 1 + \max \Biggl\{ \frac{m_1+1}{m_2}, \, \frac{m_2}{m_1+1} \Biggr\},
\end{align}
where $m_1$ and $m_2$ on the right-hand side of \eqref{eq2: ElphickW17 - Section3}, respectively, denote
the multiplicities of the second-largest and least eigenvalues of the adjacency matrix of a strongly
regular graph $\Gr{G}$. The expressions for $m_{1,2}$ are given in \eqref{eig-multiplicities-SRG}.
It is further commented in \cite{ElphickW17} that the lower bound \eqref{eq: ElphickW17 - Section3} on the
fractional chromatic number of a nonsingular graph $\Gr{G}$ cannot serve as a lower bound on the vector chromatic
number of $\Gr{G}$.

\subsubsection{Lower bounds on the independence and clique numbers of a graph}
\label{subsubsection: lower bounds on the independence and clique numbers of a graph - background}
Lower bounds on the independence and clique numbers of a graph were established using the vertex degrees by Wei \cite{Wei81}.
Let $\Gr{G}$ be a graph on $n$ vertices, and let $\{d_i\}_{i=1}^n$ represent the degree sequence of its vertices.
Then, the following (nonspectral) bounds on the independence and clique numbers of $\Gr{G}$, respectively, hold:
\begin{align}
\label{eq: Wei1}
& \indnum{\Gr{G}} \geq \sum_{i=1}^n \frac1{1+d_i}, \\
\label{eq: Wei2}
& \clnum{\Gr{G}} \geq \sum_{i=1}^n \frac1{n-d_i}.
\end{align}
The equivalent bounds \eqref{eq: Wei1} and \eqref{eq: Wei2} are proved by using a probabilistic approach
(see, e.g., page~287 in \cite{AignerZ18}).
The reader is referred to \cite{Griggs83} for a refinement of Wei's bounds in \eqref{eq: Wei1} and
\eqref{eq: Wei2} for connected triangle-free graphs.

Spectral lower bounds on the independence and clique numbers of a graph, originally conjectured in
\cite{EdwardsE83} and proved in \cite{Nikiforov02}, are given by
\begin{align}
\label{eq: Nikiforov1a-2002}
& \indnum{\Gr{G}} \geq \frac{n(n-1)-2m}{n(n-1)-2m-\lambda_1(\CGr{G})^2} \, , \\
\label{eq: Nikiforov1b-2002}
& \clnum{\Gr{G}} \geq \frac{2m}{2m-\lambda_1(\Gr{G})^2} \, ,
\end{align}
where $m \eqdef \card{\E{\Gr{G}}}$ denotes the size of the graph.
The equivalent spectral bounds in \eqref{eq: Nikiforov1a-2002} and \eqref{eq: Nikiforov1b-2002} were derived
in \cite{Nikiforov02} through a combination of the Motzkin-Straus result (Theorem~1 in \cite{MotzkinS65}), an equality
for the largest eigenvalue of the adjacency matrix of the graph $\Gr{G}$, and the
Cauchy-Schwarz inequality. The Motzkin-Starus result states that if $\Gr{G}$ is a graph on $n$ vertices, then
\begin{align}
\label{eq1: Motzkin-Straus}
\max \Biggl\{ \; \sum_{\{i,j\} \in \E{\Gr{G}}} x_i x_j : \sum_{i=1}^n x_i = 1, \; x_i \geq 0, \;
\forall i \in \OneTo{n} \; \Biggr\} = \frac12 \, \Biggl(1 - \frac{1}{\clnum{\Gr{G}}} \Biggr),
\end{align}
where every edge $\{i,j\}$ and $\{j,i\}$ is counted only once on the left-hand side of \eqref{eq1: Motzkin-Straus},
the maximization is attained by selecting a maximum clique $\set{C}$ in $\Gr{G}$, and setting
$x_i \eqdef \frac1{\clnum{\Gr{G}}}$ for all $i \in \OneTo{n}$ such that $v_i \in \set{C}$, and $x_i \eqdef 0$ otherwise.
It is noteworthy that \cite{Wilf86} appears to be the first paper to identify a relationship between the Motzkin-Straus
result and spectral bounds for graphs.

Bounds \eqref{eq: Wei1}--\eqref{eq: Wei2} and
\eqref{eq: Nikiforov1a-2002}--\eqref{eq: Nikiforov1b-2002}, respectively, coincide if $\Gr{G}$
is a regular graph. Indeed, if $\Gr{G}$ is a $d$-regular graph on $n$ vertices, then
$m = \tfrac12 \, nd$, $\lambda_1(\Gr{G})=d$, and $\lambda_1(\CGr{G}) = n-d-1$,
so the right-hand sides of \eqref{eq: Wei1} and \eqref{eq: Nikiforov1a-2002} are equal to
$\frac{n}{1+d}$, and the right-hand sides of \eqref{eq: Wei2} and \eqref{eq: Nikiforov1b-2002}
are equal to $\frac{n}{n-d}$. In light of \eqref{eq:18.04.2024} and the Brooks' theorem, which provides an upper
bound on the chromatic number of a connected graph (see, e.g., Theorem~6.14.2 in \cite{GodsilR}),
a lower bound on the independence number of a connected $d$-regular graph on $n$ vertices can be improved to $\frac{n}{d}$,
unless $\Gr{G}$ forms a complete graph or an odd-cycle graph. Further lower bounds on the clique and independence numbers
of a general simple graph $\Gr{G}$, expressed in terms of the average degree of the vertices in $\Gr{G}$ and the
smallest eigenvalue of the adjacency matrix of $\Gr{G}$ or its complement $\CGr{G}$, were derived in \cite{Nikiforov09}.
The interested reader is also referred to Section~2 in \cite{LuLT07} for the derivation of lower bounds on the
independence and clique numbers of a graph, based on the spectrum of its Laplacian matrix.

\subsection{New Bounds and Exact Results on Graph Invariants via the Lov\'{a}sz $\vartheta$-Function}
\label{subsection: main results - bounds on graph invariants}

The present subsection provides new bounds and exact results on various graph invariants, derived
from the properties of the Lov\'{a}sz $\vartheta$-function. This subsection is organized into four parts:
\begin{itemize}
\item
Section~\ref{subsubsection: bounds on graph invariants based on the sandwich theorem}
discusses strongly regular graphs, offering explicit bounds on several invariants of these graphs;
\item
Section~\ref{subsubsection: exact results for the vchrnum and svchrnum} derives spectral upper and
lower bounds on the vector and strict vector chromatic numbers of regular graphs, including
sufficient conditions that facilitate achieving these bounds.
\item
Section~\ref{subsubsection: lower bounds on the fractional chromatic number of a graph}
uses the Lov\'{a}sz $\vartheta$-function of a graph's complement to establish lower bounds on the
fractional chromatic number of a graph, with a focus on perfect and triangle-free graphs.
\item
Section~\ref{subsubsection: lower bounds on the independence and clique numbers of a graph} provides
lower bounds on the independence and clique numbers of a graph, based on the Lov\'{a}sz $\vartheta$-function
of the graph or its complement.
\end{itemize}

\subsubsection{Bounds on graph invariants based on the sandwich theorem for the Lov\'{a}sz $\vartheta$-function}
\label{subsubsection: bounds on graph invariants based on the sandwich theorem}

The sandwich theorem for the Lov\'{a}sz $\vartheta$-function (see \eqref{eq1a: sandwich}
and \eqref{eq1b: sandwich}) enables the calculation of feasible bounds on the fractional
and integral independence, clique, and chromatic numbers of graphs.
Although the exact calculation of these graph invariants is NP-hard, determining their bounds,
based on the numerical solution of the SDP problem outlined in \eqref{eq: SDP problem - Lovasz theta-function},
is feasible with a polynomial complexity in the number of vertices (see Theorem~11.11 in \cite{Lovasz19}).

The following result illustrates the approach by deriving bounds on the considered graph invariants
of strongly regular graphs. The upper bounds on the independence and clique numbers are equivalent
to those derived in Theorem~2.1.5 of \cite{Haemers79_thesis}. However, the derivation method here
is different, relying on the Lov\'{a}sz $\vartheta$-function of strongly regular graphs.
Through this alternative approach, new lower bounds on the fractional graph invariants are also derived.

\begin{corollary}
\label{corollary:bounds on parameters of SRGs}
Let $\Gr{G}$ be a strongly regular graph with parameters $\SRG(n, d, \lambda, \mu)$.
Then,
\begin{align}
\label{eq:01.09.23-srg1}
& \indnum{\Gr{G}} \leq \bigg\lfloor \dfrac{n \, (t+\mu-\lambda)}{2d+t+\mu-\lambda} \bigg\rfloor,
\quad \findnum{\Gr{G}} \geq \dfrac{n \, (t+\mu-\lambda)}{2d+t+\mu-\lambda}, \\[0.1cm]
\label{eq:01.09.23-srg2}
& \clnum{\Gr{G}} \leq 1 + \bigg\lfloor \dfrac{2d}{t+\mu-\lambda} \bigg\rfloor,  \quad \hspace*{0.2cm}
\fclnum{\Gr{G}} \geq 1 + \dfrac{2d}{t+\mu-\lambda}, \\[0.1cm]
\label{eq:01.09.23-srg3}
& \chrnum{\Gr{G}} \geq 1 + \bigg\lceil \dfrac{2d}{t+\mu-\lambda} \bigg\rceil, \quad \hspace*{0.2cm}
\fchrnum{\Gr{G}} \geq 1 + \dfrac{2d}{t+\mu-\lambda},\\[0.1cm]
\label{eq:01.09.23-srg4}
& \chrnum{\CGr{G}} \geq \bigg\lceil \dfrac{n \, (t+\mu-\lambda)}{2d+t+\mu-\lambda} \bigg\rceil, \quad
\fchrnum{\CGr{G}} \geq \dfrac{n \, (t+\mu-\lambda)}{2d+t+\mu-\lambda},
\end{align}
with
\begin{align}
\label{eq:01.09.23-srg5}
t \eqdef \sqrt{(\mu-\lambda)^2 + 4(d-\mu)}.
\end{align}
\end{corollary}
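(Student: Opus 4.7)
The plan is to derive all eight bounds as direct consequences of two inputs: the sandwich chain of inequalities \eqref{eq1a: sandwich}--\eqref{eq1b: sandwich}, and the closed-form expressions \eqref{eq: Lovasz-theta srg}--\eqref{eq: Lovasz-theta srg comp.} for $\vartheta(\Gr{G})$ and $\vartheta(\CGr{G})$ given in Theorem~\ref{thm:Lovasz function of srg}, with $t$ defined as in \eqref{eq: t parameter - srg}. Since the hypothesis is that $\Gr{G}$ is strongly regular with parameters $\SRG(n,d,\lambda,\mu)$, Theorem~\ref{thm:Lovasz function of srg} applies and supplies the explicit values that appear throughout \eqref{eq:01.09.23-srg1}--\eqref{eq:01.09.23-srg4}.

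I would organize the derivation into two parallel tracks, corresponding to the two halves of the sandwich. First, from \eqref{eq1a: sandwich} one reads $\indnum{\Gr{G}} \leq \vartheta(\Gr{G}) \leq \findnum{\Gr{G}} = \fchrnum{\CGr{G}} \leq \chrnum{\CGr{G}}$; substituting $\vartheta(\Gr{G}) = \frac{n(t+\mu-\lambda)}{2d+t+\mu-\lambda}$ yields the fractional lower bounds on $\findnum{\Gr{G}}$ and $\fchrnum{\CGr{G}}$ in \eqref{eq:01.09.23-srg1} and \eqref{eq:01.09.23-srg4}, while the integrality of $\indnum{\Gr{G}}$ and $\chrnum{\CGr{G}}$ lets one insert a floor in the upper bound on $\indnum{\Gr{G}}$ and a ceiling in the lower bound on $\chrnum{\CGr{G}}$. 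Second, applying \eqref{eq1b: sandwich} in the form $\clnum{\Gr{G}} \leq \vartheta(\CGr{G}) \leq \fclnum{\Gr{G}} = \fchrnum{\Gr{G}} \leq \chrnum{\Gr{G}}$ and substituting $\vartheta(\CGr{G}) = 1 + \frac{2d}{t+\mu-\lambda}$ yields the remaining bounds in \eqref{eq:01.09.23-srg2} and \eqref{eq:01.09.23-srg3}, with a floor appearing in the clique-number bound and a ceiling in the chromatic-number bound by the same integrality argument.

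There is essentially no obstacle beyond bookkeeping, since every step is a direct substitution: the only thing to verify is that the quantity $t+\mu-\lambda$ is strictly positive so that the bounds are well defined, which follows because $t = \sqrt{(\mu-\lambda)^2 + 4(d-\mu)} \geq |\mu-\lambda|$ and $d > \mu$ for a noncomplete strongly regular graph (so in fact $t > |\mu-\lambda| \geq \lambda-\mu$). The mild subtlety worth flagging is the placement of the floor and ceiling functions: the floor is used only on the integer quantity appearing on the side opposite to the strict inequality direction (e.g., $\indnum{\Gr{G}}$ being an integer bounded above by $\vartheta(\Gr{G})$), while the fractional counterparts $\findnum{\Gr{G}}, \, \fclnum{\Gr{G}}, \, \fchrnum{\Gr{G}}, \, \fchrnum{\CGr{G}}$ admit no such rounding, since they are rational but not necessarily integer. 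With these observations in place, the corollary follows by eight one-line substitutions into the sandwich inequalities.
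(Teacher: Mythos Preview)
Your proposal is correct and matches the paper's approach exactly: the paper's proof consists of the single sentence that inequalities \eqref{eq:01.09.23-srg1}--\eqref{eq:01.09.23-srg4} follow from \eqref{eq1a: sandwich}, \eqref{eq1b: sandwich}, and Theorem~\ref{thm:Lovasz function of srg}. Your write-up simply expands this citation into the explicit substitutions and adds the (helpful but inessential) check that $t+\mu-\lambda>0$ and the integrality justification for the floors and ceilings.
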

\begin{proof}
Inequalities \eqref{eq:01.09.23-srg1}--\eqref{eq:01.09.23-srg4} follow from \eqref{eq1a: sandwich},
\eqref{eq1b: sandwich}, and Theorem~\ref{thm:Lovasz function of srg}.
\end{proof}

Numerical examinations of the tightness of bounds on graph invariants, as given in Corollary~\ref{corollary:bounds on parameters of SRGs},
are provided in Example~\ref{example: bounds on graph invariants of srgs} for several strongly regular graphs.
The upper bound on the independence number, presented in \eqref{eq:01.09.23-srg1}, is a formulation of the Delsarte
and Hoffman bound, which was originally developed for regular graphs (see Section~3.3 in \cite{Delsarte73} and the
survey in \cite{Haemers21}). This formulation has been specialized in \eqref{eq:01.09.23-srg1} to apply to
strongly regular graphs.
It is noteworthy that this bound, also known as Hoffman's ratio bound, was generalized to simple graphs that are not
necessarily regular (Theorem~2.1.3 of \cite{Haemers79_thesis}), and to graphs that may have self-loops (Corollaries~3.4
and~3.6 of \cite{GodsilN08}). Additionally, other upper bounds on the independence number of a connected simple graph
$\Gr{G}$ are provided in \cite{LiZ14}, expressed in terms of the largest and smallest eigenvalues of the adjacency
matrix of $\Gr{G}$, along with its maximum and minimum degrees.

\subsubsection{Exact results and bounds for the vector and strict vector chromatic numbers of graphs}
\label{subsubsection: exact results for the vchrnum and svchrnum}
Upper and lower bounds on the vector and strict vector chromatic numbers are next derived for regular graphs,
along with sufficient conditions for the attainability of each of these bounds. For strongly regular graphs
or for graphs that are both vertex- and edge-transitive, it leads to exact closed-form expressions of their
vector and strict vector chromatic numbers. These results demonstrate that these two types of chromatic numbers
coincide for such regular graphs.
The subsequent theorem forms a strengthening of Theorem~\ref{thm:bounds on the Lovasz function for regular graphs}
(see Remark~\ref{remark: comparison between two theorems}).
\begin{theorem}
\label{theorem: bounds on vchrnum and svchrnum for regular graphs}
Let $\Gr{G}$ be a graph on $n$ vertices.
\begin{enumerate}
\item \label{item 1: bounds on vchrnum and svchrnum for regular graphs}
The following inequality holds:
\begin{align}
\label{eq9:7.11.23}
\svchrnum{\Gr{G}} \, \svchrnum{\CGr{G}} \geq n,
\end{align}
with equality if $\Gr{G}$ is a vertex-transitive or a strongly regular graph.
\item \label{item 2: bounds on vchrnum and svchrnum for regular graphs}
If $\Gr{G}$ is a $d$-regular graph, then
\begin{align}
\label{eq10:7.11.23}
& 1 - \dfrac{d}{\Eigval{n}{\Gr{G}}} \leq \vchrnum{\Gr{G}} \leq \svchrnum{\Gr{G}}
\leq \dfrac{n \bigl(1+\Eigval{2}{\Gr{G}}\bigr)}{n-d+\Eigval{2}{\Gr{G}}}, \\[0.2cm]
\label{eq11:7.11.23}
& \dfrac{n-d+\Eigval{2}{\Gr{G}}}{1+\Eigval{2}{\Gr{G}}} \leq \vchrnum{\CGr{G}} \leq \svchrnum{\CGr{G}}
\leq -\dfrac{n \Eigval{n}{\Gr{G}}}{d - \Eigval{n}{\Gr{G}}}.
\end{align}
Furthermore,
\begin{itemize}
\item Equality holds in the leftmost inequality of \eqref{eq10:7.11.23}
if $\Gr{G}$ is both vertex-transitive and edge-transitive, or if $\Gr{G}$ is
a strongly regular graph;
\item Equality holds in the rightmost inequality of \eqref{eq10:7.11.23}
if $\CGr{G}$ is edge-transitive, or if $\Gr{G}$ is a strongly regular graph;
\item Equality holds in the leftmost inequality of \eqref{eq11:7.11.23} if $\CGr{G}$
is both vertex-transitive and edge-transitive, or if $\Gr{G}$ is a strongly regular graph;
\item Equality holds in the rightmost inequality of \eqref{eq11:7.11.23} if $\Gr{G}$
is edge-transitive, or if $\Gr{G}$ is a strongly regular graph.
\end{itemize}
\end{enumerate}
\end{theorem}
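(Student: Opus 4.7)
The plan is to reduce everything to results already established in Section~\ref{subsection: Vector chromatic number} and Theorem~\ref{thm:bounds on the Lovasz function for regular graphs} by exploiting the two key identities $\svchrnum{\Gr{G}} = \vartheta(\CGr{G})$ from \eqref{eq: svchrnum vs. Lovasz theta-function} and $\vartheta'(\Gr{G}) = \vchrnum{\CGr{G}}$ from \eqref{eq1:17.11.23}, together with the sandwich $\vchrnum{\Gr{G}} \leq \svchrnum{\Gr{G}}$ from \eqref{eq1:05.11.23} and the Bilu--Galtman spectral bound $\vchrnum{\Gr{G}} \geq 1 - \Eigval{1}{\Gr{G}}/\Eigval{n}{\Gr{G}}$ recalled in \eqref{eq2:04.10.23}. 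For Item~\ref{item 1: bounds on vchrnum and svchrnum for regular graphs}, applying \eqref{eq: svchrnum vs. Lovasz theta-function} to both $\Gr{G}$ and $\CGr{G}$ turns the product $\svchrnum{\Gr{G}} \, \svchrnum{\CGr{G}}$ into $\vartheta(\Gr{G}) \, \vartheta(\CGr{G})$, so \eqref{eq9:7.11.23} and its equality cases are immediate from \eqref{eq10:11.10.23}.

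For the chain \eqref{eq10:7.11.23} in Item~\ref{item 2: bounds on vchrnum and svchrnum for regular graphs}, the rightmost inequality is the rightmost inequality of \eqref{eq:21.10.22a2} rewritten via $\svchrnum{\Gr{G}} = \vartheta(\CGr{G})$; its equality cases therefore inherit the sufficient conditions already stated for \eqref{eq:21.10.22a2}. The middle inequality is just \eqref{eq1:05.11.23}. The leftmost inequality is the Bilu--Galtman bound \eqref{eq2:04.10.23} applied to $\Gr{G}$, in which the substitution $\Eigval{1}{\Gr{G}} = d$ (valid because $\Gr{G}$ is $d$-regular) produces exactly $1 - d/\Eigval{n}{\Gr{G}}$. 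For its equality case, I would observe that under the stated hypotheses (either $\Gr{G}$ both vertex- and edge-transitive, or $\Gr{G}$ strongly regular) the leftmost inequality of \eqref{eq:21.10.22a2} is already an equality, so $\svchrnum{\Gr{G}} = \vartheta(\CGr{G}) = 1 - d/\Eigval{n}{\Gr{G}}$; combined with $\vchrnum{\Gr{G}} \leq \svchrnum{\Gr{G}}$ and the Bilu--Galtman lower bound on $\vchrnum{\Gr{G}}$, the whole chain collapses.

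For the chain \eqref{eq11:7.11.23}, the rightmost inequality and its equality cases follow from the rightmost inequality of \eqref{eq:21.10.22a1} via $\svchrnum{\CGr{G}} = \vartheta(\Gr{G})$, and the middle inequality is again \eqref{eq1:05.11.23}. The leftmost inequality needs one short spectral computation: I would apply the Bilu--Galtman bound to $\CGr{G}$ and use that, for a $d$-regular graph on $n$ vertices, $\A(\CGr{G}) = \J{n} - \I{n} - \A(\Gr{G})$ shares the all-ones eigenvector and has $\Eigval{1}{\CGr{G}} = n-d-1$, while on the orthogonal complement of $\mathbf{1}$ the eigenvalues transform as $\lambda \mapsto -(1+\lambda)$, giving $\Eigval{n}{\CGr{G}} = -(1+\Eigval{2}{\Gr{G}})$. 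Substituting yields
\begin{align*}
\vchrnum{\CGr{G}} \geq 1 - \frac{\Eigval{1}{\CGr{G}}}{\Eigval{n}{\CGr{G}}}
= 1 + \frac{n-d-1}{1+\Eigval{2}{\Gr{G}}}
= \frac{n-d+\Eigval{2}{\Gr{G}}}{1+\Eigval{2}{\Gr{G}}},
\end{align*}
which is the desired bound. The equality case then mirrors the argument above: under the stated hypotheses the leftmost inequality of \eqref{eq:21.10.22a1} is an equality, so $\svchrnum{\CGr{G}} = \vartheta(\Gr{G})$ already attains the bound, and the sandwich forces $\vchrnum{\CGr{G}}$ to attain it as well.

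The only step that is not purely a rewriting of prior results is the spectral transfer to the complement in \eqref{eq11:7.11.23}, and that is a standard one-line computation. The main subtlety to watch is ensuring that the identification $\Eigval{n}{\CGr{G}} = -(1 + \Eigval{2}{\Gr{G}})$ is correct for the relevant regime, that is, for $\Gr{G}$ a noncomplete and nonempty $d$-regular graph so that $0 < n-d-1$ and $\Eigval{2}{\Gr{G}} \geq -1$; otherwise the argument is essentially bookkeeping on the equality cases already catalogued in Theorem~\ref{thm:bounds on the Lovasz function for regular graphs}.
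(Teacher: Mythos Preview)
Your proposal is correct and follows essentially the same approach as the paper's own proof: both reduce Item~\ref{item 1: bounds on vchrnum and svchrnum for regular graphs} to \eqref{eq10:11.10.23} via the identity $\svchrnum{\Gr{G}} = \vartheta(\CGr{G})$, and for Item~\ref{item 2: bounds on vchrnum and svchrnum for regular graphs} both use \eqref{eq1:05.11.23} for the middle inequalities, the Bilu--Galtman bound \eqref{eq2:04.10.23} for the leftmost inequalities (with the same spectral transfer $\Eigval{n}{\CGr{G}} = -(1+\Eigval{2}{\Gr{G}})$ for the complement), and Theorem~\ref{thm:bounds on the Lovasz function for regular graphs} for the rightmost inequalities and all equality cases. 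The collapse-the-sandwich argument you give for the leftmost equality cases is exactly what the paper does as well.
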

\begin{proof}
See Section~\ref{subsubsection: proof of bounds on vchrnum and svchrnum for regular graphs}.
\end{proof}
\begin{remark}[Comparison of Item~\ref{item 2: bounds on vchrnum and svchrnum for regular graphs} in
Theorem~\ref{theorem: bounds on vchrnum and svchrnum for regular graphs}
with Theorem~\ref{thm:bounds on the Lovasz function for regular graphs}]
\label{remark: comparison between two theorems}
By equality \eqref{eq: svchrnum vs. Lovasz theta-function}, where $\svchrnum{\Gr{G}} = \vartheta(\CGr{G})$
holds for every graph $\Gr{G}$, the rightmost inequalities in \eqref{eq10:7.11.23} and \eqref{eq11:7.11.23} are, respectively,
equivalent to those in \eqref{eq:21.10.22a2} and \eqref{eq:21.10.22a1}. Furthermore, in light of inequality
\eqref{eq: vchrnum vs. Lovasz theta-function}, which states that $\vchrnum{\Gr{G}} \leq \vartheta(\CGr{G})$ for all $\Gr{G}$,
the leftmost inequalities in \eqref{eq10:7.11.23} and \eqref{eq11:7.11.23} are, respectively, stronger than those
in \eqref{eq:21.10.22a2} and \eqref{eq:21.10.22a1} of Theorem~\ref{thm:bounds on the Lovasz function for regular graphs}.
These observations affirm that Item~\ref{item 2: bounds on vchrnum and svchrnum for regular graphs}
in Theorem~\ref{theorem: bounds on vchrnum and svchrnum for regular graphs} not only corroborates,
but also strengthens the assertions made in Theorem~\ref{thm:bounds on the Lovasz function for regular graphs}.
\end{remark}

\begin{remark}[Relation to spectral lower bounds in \cite{WocjanEA23}]
\label{remark: connection to the paper by Wocjan et al. 2023}
The spectral lower bound on the vector chromatic number of a regular graph $\Gr{G}$, as presented in the leftmost
term of \eqref{eq10:7.11.23}, is due to Galtman \cite{Galtman00}; it is also referred to as Hoffman's bound for the vector chromatic number
\cite{Bilu06,WocjanEA23}. Spectral lower bounds on the vector chromatic number of any connected graph $\Gr{G}$ are introduced
in Theorems~1 and~6 of \cite{WocjanEA23}. The lower bound in Theorem~1 of \cite{WocjanEA23} is given by
\begin{align}
\label{eq1:WocjanEA23}
\vchrnum{\Gr{G}} \geq 1 + \frac{2m}{2m - n \nu_n(\Gr{G})},
\end{align}
where $m \eqdef \card{\E{\Gr{G}}}$ is the number of edges in $\Gr{G}$, and $\nu_n(\Gr{G})$ denotes the least eigenvalue of
the signless Laplacian matrix ${\bf{Q}}$ of the graph $\Gr{G}$ (see \eqref{eq4:26.09.23}). Additionally, the lower bound
in Theorem~6 of \cite{WocjanEA23} is given by
\begin{align}
\label{eq2:WocjanEA23}
\vchrnum{\Gr{G}} \geq 1 + \frac{\lambda_1(\Gr{G})}{\lambda_1(\Gr{G}) - \nu_1(\Gr{G}) + \mu_n(\Gr{G})},
\end{align}
where $\lambda_1(\Gr{G})$, $\mu_n(\Gr{G})$, and $\nu_1(\Gr{G})$ denote, respectively, the largest eigenvalues of the
adjacency matrix $\A$, the Laplacian matrix ${\bf{L}}$, and the signless Laplacian matrix ${\bf{Q}}$ of the graph $\Gr{G}$
(see \eqref{eq2:26.09.23}--\eqref{eq4:26.09.23}).
For regular graphs, it is mentioned in Section~6 of \cite{WocjanEA23} that these lower bounds coincide. Indeed, this
can be verified as follows: for a $d$-regular graph $\Gr{G}$ on $n$ vertices, the equalities
\begin{align}
\label{eq3:WocjanEA23}
2m = nd, \quad \lambda_1(\Gr{G}) = d, \quad \mu_n(\Gr{G}) = d - \lambda_n(\Gr{G}), \quad
\nu_1(\Gr{G}) = 2d, \quad \nu_n(\Gr{G}) = d + \lambda_n(\Gr{G})
\end{align}
hold, so the equalities in \eqref{eq3:WocjanEA23} confirm that the lower bounds on the
left-hand sides of \eqref{eq1:WocjanEA23} and \eqref{eq2:WocjanEA23} coincide.
Furthermore, these lower bounds are identical to the lower bound on $\vchrnum{\Gr{G}}$
as presented in the leftmost term of \eqref{eq10:7.11.23}.
\end{remark}

Based on Theorem~\ref{theorem: bounds on vchrnum and svchrnum for regular graphs}, exact
closed-form expressions for the vector and strict vector chromatic numbers are next provided
for two important subclasses of regular graphs.

\begin{theorem}[Vector and strict vector chromatic numbers of strongly regular graphs]
\label{theorem: vchrnum and svchrnum of srg}
Let $\Gr{G}$ be a strongly regular graph with parameters $\SRG(n, d, \lambda, \mu)$.
Then, the vector and strict vector chromatic numbers of $\Gr{G}$ and its complement
$\CGr{G}$ satisfy
\begin{align}
\label{eq1: vchrnum and svchrnum of srg}
& \vchrnum{\Gr{G}} = 1 + \dfrac{2d}{t+\mu-\lambda} = \svchrnum{\Gr{G}},  \\[0.15cm]
\label{eq2: vchrnum and svchrnum of srg}
& \vchrnum{\CGr{G}} = \dfrac{n \, (t+\mu-\lambda)}{2d+t+\mu-\lambda} = \svchrnum{\CGr{G}},
\end{align}
where $t$ is given in \eqref{eq:01.09.23-srg5}. The vector and strict vector
chromatic numbers of strongly regular graphs are therefore identical. Additionally, the product of
the vector (or strict vector) chromatic numbers of a strongly regular graph and its complement
equals the order of the graph, i.e.,
\begin{align}
\label{eq3: vchrnum and svchrnum of srg}
& \vchrnum{\CGr{G}} = \dfrac{n}{\vchrnum{\Gr{G}}}, \\[0.15cm]
\label{eq4: vchrnum and svchrnum of srg}
&\svchrnum{\CGr{G}} = \dfrac{n}{\svchrnum{\Gr{G}}}.
\end{align}
\end{theorem}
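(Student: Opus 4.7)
The plan is to derive Theorem~\ref{theorem: vchrnum and svchrnum of srg} as a direct consequence of several previously established results, requiring only minor algebraic verification rather than a new argument. First, I would observe that equation~\eqref{eq: svchrnum vs. Lovasz theta-function}, which asserts $\svchrnum{\Gr{H}} = \vartheta(\CGr{H})$ for every graph $\Gr{H}$, combined with the closed-form expressions for $\vartheta(\Gr{G})$ and $\vartheta(\CGr{G})$ from Theorem~\ref{thm:Lovasz function of srg} (applied both to $\Gr{G}$ and to its complement $\CGr{G}$, which is also strongly regular by Item~\ref{item: complement of SRG} of Theorem~\ref{theorem: parameters of srg}), yields
\begin{align*}
\svchrnum{\Gr{G}} = \vartheta(\CGr{G}) = 1 + \frac{2d}{t+\mu-\lambda}, \qquad
\svchrnum{\CGr{G}} = \vartheta(\Gr{G}) = \frac{n(t+\mu-\lambda)}{2d+t+\mu-\lambda}.
\end{align*}
This already establishes the rightmost equalities in both \eqref{eq1: vchrnum and svchrnum of srg} and \eqref{eq2: vchrnum and svchrnum of srg}.

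Next I would show that the vector and strict vector chromatic numbers coincide on both $\Gr{G}$ and $\CGr{G}$. Here Item~\ref{item 2: bounds on vchrnum and svchrnum for regular graphs} of Theorem~\ref{theorem: bounds on vchrnum and svchrnum for regular graphs} is the key tool: for any $d$-regular graph, the chains \eqref{eq10:7.11.23} and \eqref{eq11:7.11.23} sandwich $\vchrnum{\Gr{G}}$ and $\svchrnum{\Gr{G}}$ (respectively $\vchrnum{\CGr{G}}$ and $\svchrnum{\CGr{G}}$) between matching spectral quantities, and the sufficient conditions listed there ensure that every inequality in both chains becomes an equality when $\Gr{G}$ is strongly regular, with strong regularity being preserved under complementation. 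The extreme terms in each chain therefore collapse, which forces $\vchrnum{\Gr{G}} = \svchrnum{\Gr{G}}$ and $\vchrnum{\CGr{G}} = \svchrnum{\CGr{G}}$, delivering the missing halves of \eqref{eq1: vchrnum and svchrnum of srg} and \eqref{eq2: vchrnum and svchrnum of srg}.

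To confirm consistency between the spectral expressions produced by the sandwich and the parametric expressions produced via $\vartheta$, I would substitute $\lambda_n(\Gr{G}) = \tfrac12(\lambda - \mu - t)$ from Theorem~\ref{theorem: eigenvalues of srg} into the lower bound $1 - d/\lambda_n(\Gr{G})$ of \eqref{eq10:7.11.23} and simplify to $1 + 2d/(t+\mu-\lambda)$; the analogous substitution using $\lambda_2(\Gr{G}) = \tfrac12(\lambda - \mu + t)$ into the lower bound of \eqref{eq11:7.11.23} recovers the closed-form expression for $\vchrnum{\CGr{G}}$. Finally, the product identities \eqref{eq3: vchrnum and svchrnum of srg} and \eqref{eq4: vchrnum and svchrnum of srg} follow from Corollary~\ref{corollary:identity for the Lovasz function of srg and its complement}, which gives $\vartheta(\Gr{G}) \, \vartheta(\CGr{G}) = n$ for strongly regular graphs, together with the identifications above. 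No part of the argument appears to pose a genuine obstacle: the theorem is essentially a repackaging of spectral identities and sandwich bounds already proved, and the only subtle point worth articulating carefully is that the sufficient conditions of Theorem~\ref{theorem: bounds on vchrnum and svchrnum for regular graphs} apply simultaneously to $\Gr{G}$ and $\CGr{G}$, which is immediate.
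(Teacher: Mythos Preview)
Your proposal is correct and follows essentially the same route as the paper's proof: both invoke the sufficient conditions in Theorem~\ref{theorem: bounds on vchrnum and svchrnum for regular graphs} to make the outer inequalities of \eqref{eq10:7.11.23} and \eqref{eq11:7.11.23} tight for strongly regular $\Gr{G}$, then substitute the eigenvalue formulas $\lambda_2(\Gr{G}) = \tfrac12(\lambda-\mu+t)$ and $\lambda_n(\Gr{G}) = \tfrac12(\lambda-\mu-t)$ from Theorem~\ref{theorem: eigenvalues of srg} to verify that the two extreme spectral expressions coincide, forcing the middle inequality $\vchrnum{\cdot} \leq \svchrnum{\cdot}$ to collapse as well. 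The only cosmetic difference is that you open by computing $\svchrnum{\Gr{G}}$ and $\svchrnum{\CGr{G}}$ directly from $\svchrnum{\Gr{H}} = \vartheta(\CGr{H})$ and Theorem~\ref{thm:Lovasz function of srg}, whereas the paper stays entirely inside the sandwich chains; and you cite Corollary~\ref{corollary:identity for the Lovasz function of srg and its complement} for the product identities, whereas the paper reads them off the explicit closed forms.
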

\begin{proof}
See Section~\ref{subsubsection: proof of chrnum and schrnum of srg}.
\end{proof}

\begin{corollary}[Vector and strict vector chromatic numbers of conference graphs]
\label{corollary: vchrnum and svchrnum of conference graphs}
If $\Gr{G}$ is a conference graph on $n$ vertices, then
\begin{align}
\label{eq: vchrnum and svchrnum of conference graphs}
\vchrnum{\CGr{G}} = \vchrnum{\Gr{G}} = \sqrt{n} = \svchrnum{\Gr{G}} = \svchrnum{\CGr{G}}.
\end{align}
\end{corollary}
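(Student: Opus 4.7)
The plan is to deduce the corollary directly from Theorem~\ref{theorem: vchrnum and svchrnum of srg}, exploiting the fact that by Definition~\ref{definition: conference graphs} a conference graph $\Gr{G}$ on $n$ vertices is strongly regular with parameters
\[
\SRG\Bigl(n,\tfrac{1}{2}(n-1),\tfrac{1}{4}(n-5),\tfrac{1}{4}(n-1)\Bigr),
\]
so that $d=\tfrac12(n-1)$, $\mu-\lambda=1$, and $d-\mu=\tfrac14(n-1)$. First I would substitute these values into the definition \eqref{eq:01.09.23-srg5} of the auxiliary parameter $t$, obtaining
\[
t=\sqrt{(\mu-\lambda)^2+4(d-\mu)}=\sqrt{1+(n-1)}=\sqrt{n}.
\]

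Next I would feed this value of $t$ into the closed-form expressions of Theorem~\ref{theorem: vchrnum and svchrnum of srg}. Formula \eqref{eq1: vchrnum and svchrnum of srg} yields
\[
\vchrnum{\Gr{G}}=\svchrnum{\Gr{G}}=1+\frac{2d}{t+\mu-\lambda}=1+\frac{n-1}{\sqrt{n}+1}=1+(\sqrt{n}-1)=\sqrt{n},
\]
after rationalising the denominator. For the complement, I would either apply formula \eqref{eq2: vchrnum and svchrnum of srg} and perform the analogous simplification, or, more elegantly, invoke \eqref{eq3: vchrnum and svchrnum of srg}--\eqref{eq4: vchrnum and svchrnum of srg} to conclude that
\[
\vchrnum{\CGr{G}}=\svchrnum{\CGr{G}}=\frac{n}{\sqrt{n}}=\sqrt{n}.
\]
Assembling these four equalities gives \eqref{eq: vchrnum and svchrnum of conference graphs}.

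There is no real obstacle here: the only subtlety is arithmetic, namely recognising the identity $\tfrac{n-1}{\sqrt{n}+1}=\sqrt{n}-1$ that makes the closed-form of Theorem~\ref{theorem: vchrnum and svchrnum of srg} collapse to $\sqrt{n}$. As a conceptual consistency check, one may note that by Item~\ref{item: complement of SRG} of Theorem~\ref{theorem: parameters of srg} the complement of a conference graph is again a conference graph with the same parameters, so $\Gr{G}$ and $\CGr{G}$ must share the same (strict) vector chromatic number; this recovers the symmetry in \eqref{eq: vchrnum and svchrnum of conference graphs} independently of the computation, and is consistent with Item~\ref{item 2: extension of Thm. 12 by Lovasz} of Theorem~\ref{thm:extension of Thm. 12 by Lovasz} together with the identity $\svchrnum{\CGr{G}}=\vartheta(\Gr{G})$ from \eqref{eq: svchrnum vs. Lovasz theta-function}.
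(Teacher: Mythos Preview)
Your proposal is correct and follows essentially the same approach as the paper: both apply Theorem~\ref{theorem: vchrnum and svchrnum of srg} with the conference-graph parameters $d=\tfrac12(n-1)$, $\lambda=\tfrac14(n-5)$, $\mu=\tfrac14(n-1)$, and both invoke the fact that $\CGr{G}$ is again a conference graph to handle the complement equalities. Your version is simply more explicit about the arithmetic (computing $t=\sqrt{n}$ and simplifying $\tfrac{n-1}{\sqrt{n}+1}$), whereas the paper leaves the substitution implicit.
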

\begin{proof}
The equalities in \eqref{eq: vchrnum and svchrnum of conference graphs} stem from applying
Theorem~\ref{theorem: vchrnum and svchrnum of srg} with the parameters of a conference
graph. Specifically, a conference graph,
when considered as a strongly regular graph, is characterized by the parameters $d = \frac{1}{2}(n-1)$,
$\lambda = \frac{1}{4}(n-5)$, and $\mu = \frac{1}{4}(n-1)$ (see
Definition \ref{definition: conference graphs}). The extreme equalities in
\eqref{eq: vchrnum and svchrnum of conference graphs} also hold
since $\Gr{G}$ and $\CGr{G}$ are both conference graphs on $n$ vertices.
\end{proof}

\begin{theorem}[Vector/ strict vector chromatic numbers of vertex- and edge-transitive graphs]
\label{theorem: vchrmum and svchrnum of vt+et graphs}
Let $\Gr{G}$ be a graph that is both vertex-transitive and edge-transitive. Then, its vector
and strict vector chromatic numbers coincide, and
\begin{align}
\label{eq1: 07.11.23}
\vchrnum{\Gr{G}} = 1 - \frac{\lambda_1(\Gr{G})}{\lambda_n(\Gr{G})} = \svchrnum{\Gr{G}}.
\end{align}
\end{theorem}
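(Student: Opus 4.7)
The plan is to reduce the claim to two facts already established in the excerpt: the identity $\svchrnum{\Gr{G}} = \vartheta(\CGr{G})$ in \eqref{eq: svchrnum vs. Lovasz theta-function}, and the behavior of the Lov\'{a}sz $\vartheta$-function under the two symmetry hypotheses. Since $\Gr{G}$ is vertex-transitive, it is $d$-regular for some $d$, and therefore $\Eigval{1}{\Gr{G}} = d$; this will let me rewrite the right-hand side of \eqref{eq1: 07.11.23} as $1 - d/\Eigval{n}{\Gr{G}}$ throughout.

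First I would invoke Item~\ref{item 2: bounds on vchrnum and svchrnum for regular graphs} of Theorem~\ref{theorem: bounds on vchrnum and svchrnum for regular graphs}: the stated sufficient condition for equality in the leftmost inequality of \eqref{eq10:7.11.23} is precisely that $\Gr{G}$ be both vertex-transitive and edge-transitive. Hence
\begin{align*}
\vchrnum{\Gr{G}} = 1 - \frac{d}{\Eigval{n}{\Gr{G}}} = 1 - \frac{\Eigval{1}{\Gr{G}}}{\Eigval{n}{\Gr{G}}}.
\end{align*}
This gives one of the two equalities in \eqref{eq1: 07.11.23} immediately.

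Next I would compute $\svchrnum{\Gr{G}}$ through its $\vartheta$-representation. By \eqref{eq: svchrnum vs. Lovasz theta-function}, $\svchrnum{\Gr{G}} = \vartheta(\CGr{G})$. Vertex-transitivity of $\Gr{G}$ yields the product identity $\vartheta(\Gr{G}) \, \vartheta(\CGr{G}) = n$ from Corollary~\ref{corollary:identity for the Lovasz function of srg and its complement}, while the $d$-regular edge-transitive hypothesis secures equality in the Lov\'{a}sz spectral bound \eqref{eq: Lovasz79 - Theorem 9}, giving $\vartheta(\Gr{G}) = -n\Eigval{n}{\Gr{G}}/(d - \Eigval{n}{\Gr{G}})$. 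Dividing yields
\begin{align*}
\svchrnum{\Gr{G}} = \vartheta(\CGr{G}) = \frac{n}{\vartheta(\Gr{G})} = \frac{d - \Eigval{n}{\Gr{G}}}{-\Eigval{n}{\Gr{G}}} = 1 - \frac{\Eigval{1}{\Gr{G}}}{\Eigval{n}{\Gr{G}}},
\end{align*}
which matches the value of $\vchrnum{\Gr{G}}$ obtained above, establishing \eqref{eq1: 07.11.23}.

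There is no serious obstacle once the two ingredients are properly aligned; the entire argument is a one-line composition of the symmetry-driven equality cases of \eqref{eq3:02.08.23} and \eqref{eq: Lovasz79 - Theorem 9}, together with the SDP identity \eqref{eq: svchrnum vs. Lovasz theta-function}. The only point worth stressing is that both equality conditions (the product identity and the spectral upper bound for $\vartheta$) are triggered simultaneously by the joint vertex- and edge-transitivity assumption, which is what forces the generally distinct quantities $\vchrnum{\Gr{G}}$ and $\svchrnum{\Gr{G}}$ to collapse onto the same spectral expression.
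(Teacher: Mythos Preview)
Your proof is correct and follows essentially the same approach as the paper. The paper routes both halves through Theorem~\ref{theorem: bounds on vchrnum and svchrnum for regular graphs} (using the rightmost equality of \eqref{eq11:7.11.23} to get $\svchrnum{\CGr{G}}$, then Item~\ref{item 1: bounds on vchrnum and svchrnum for regular graphs} for the product identity), whereas you go directly to the underlying ingredients $\svchrnum{\Gr{G}}=\vartheta(\CGr{G})$, the product identity \eqref{eq3:02.08.23}, and the equality case of \eqref{eq: Lovasz79 - Theorem 9}; these are exactly the facts the paper's intermediate theorem packages, so the arguments are the same in substance.
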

\begin{proof}
See Section~\ref{subsubsection: proof of chrnum and schrnum of vt+et graphs}.
\end{proof}

\begin{remark}[On the subclasses of regular graphs in Theorems~\ref{theorem: vchrnum and svchrnum of srg}
and~\ref{theorem: vchrmum and svchrnum of vt+et graphs}]
Theorems~\ref{theorem: vchrnum and svchrnum of srg} and~\ref{theorem: vchrmum and svchrnum of vt+et graphs}
consider, respectively, the class of strongly regular graphs and the class of graphs that are vertex-transitive
and edge-transitive. These are nonequivalent subclasses of the regular graphs, where also none of them
is contained in the other. Specifically,
the complement of the Cameron graph (see Section~10.54 in \cite{BrouwerM22}) is a strongly regular
graph, $\SRG(231, 200, 172, 180)$, which is vertex-transitive but not edge-transitive. On the other hand,
the Foster graph is 3-regular on 90 vertices (see page 305 of \cite{BrouwerM22}), which is vertex-transitive
and edge-transitive, but it is not strongly regular. Another such example is Holt's graph that is 4-regular on
27~vertices, vertex- and edge-transitive, but not strongly regular \cite{Holt81}. The family of Kneser graphs
$\KG{n}{k}$, where $n \geq 2k$, forms a family of vertex- and edge-transitive graphs that are not necessarily
strongly regular graphs; if, $k=2$ and $n \geq 4$, then it is a strongly regular graph with parameters
$\srg{a_n}{b_n}{c_n}{d_n}$ with $a_n = \binom{n}{2}$, $b_n = \binom{n-2}{2}$, $c_n = \binom{n-4}{2}$,
and $d_n = \binom{n-3}{2}$. It shows that none of these two subclasses of regular graphs in
Theorems~\ref{theorem: vchrnum and svchrnum of srg} and~\ref{theorem: vchrmum and svchrnum of vt+et graphs}
contains the other. To clarify why a graph that is both vertex- and edge-transitive is not necessarily
strongly regular, suppose that $\Gr{G}$ is a vertex- and edge-transitive graph. Then,
\begin{itemize}
\item
($\Gr{G}$ is vertex-transitive) $\Rightarrow$ $\Gr{G}$ is regular;
\item
($\Gr{G}$ is edge-transitive) $\Rightarrow$
(every edge in $\Gr{G}$ is contained in the same number of
triangles) $\Leftrightarrow$ (every pair of adjacent vertices
in $\Gr{G}$ has the same number of common neighbors).
\end{itemize}
Hence, the condition that every pair of nonadjacent vertices has
the same number of common neighbors, which is a requirement for
strongly regular graphs, is not necessarily fulfilled here. To
that end, suppose that also $\CGr{G}$ is edge-transitive. Then,
\begin{itemize}
\item
($\CGr{G}$ is edge transitive) $\Rightarrow$
(for every edge $\{u,v\} \in \E{\CGr{G}}$, the same number of vertices
are not adjacent in $\CGr{G}$ to either $u$ or $v$) $\Leftrightarrow$
(every pair of nonadjacent vertices in $\Gr{G}$ has the
same number of common neighbors).
\end{itemize}
It is important to note that while the closeness property holds for vertex-transitive
graphs under the operation of graph complementation, this property does not extend
to edge-transitive graphs.
\end{remark}

\begin{remark}[The vector and strict vector chromatic numbers of vertex-transitive graphs may be distinct]
\label{remark: vchrmum and svchrnum of vt graphs}
As opposed to graphs that are both vertex- and edge-transitive, whose
vector and strict vector chromatic numbers coincide (by
Theorem~\ref{theorem: vchrmum and svchrnum of vt+et graphs}),
the strict vector chromatic number of a graph that is only vertex-transitive
(but not edge-transitive) may be strictly greater than its vector chromatic
number (see Example~\ref{example: vchrnum and svchrnum, regular graphs - part 2}).
\end{remark}

\subsubsection{Lower bounds on the fractional chromatic number of a graph}
\label{subsubsection: lower bounds on the fractional chromatic number of a graph}
The fractional chromatic number of a graph $\Gr{G}$ is lower-bounded by the Lov\'{a}sz $\vartheta$-function
of its complement $\CGr{G}$, as demonstrated by inequality \eqref{eq1:04.10.23}. Following the derivation
of a closed-form lower bound on the fractional chromatic number of any strongly regular graph (see \eqref{eq:01.09.23-srg3}),
two additional classes of graphs are considered, while linking these results with relevant literature.

{\em On perfect graphs}:
The strong perfect graph theorem provides an exact characterization of perfect graphs (see
Theorem~\ref{theorem: Strong Perfect Graph Theorem}). This characterization, alongside the
sandwich theorem for the Lov\'{a}sz $\vartheta$-function (see \eqref{eq1a: sandwich} and \eqref{eq1b: sandwich}) and
the weak perfect graph theorem (see Theorem~\ref{theorem: Weak Perfect Graph Theorem}),
establishes that for perfect graphs, the computations of both integral and fractional
independence numbers, clique numbers, and chromatic numbers, as well as the Shannon capacity,
vector chromatic number, and strict vector chromatic number coincide with the Lov\'{a}sz $\vartheta$-function
of either the graph itself or its complement. Remarkably, the computational complexity of these graph
invariants, typically NP-hard for general graphs, can be efficiently computed in polynomial time
for perfect graphs by solving the SDP problem outlined in \eqref{eq: SDP problem - Lovasz theta-function} \cite{GrotschelLS84}.

\begin{theorem}[Graph invariants of perfect graphs]
\label{theorem: graph invariants of perfect graphs}
If $\Gr{G}$ is a perfect graph, then
\begin{align}
\label{eq1: perfect graph}
& \clnum{\Gr{G}} = \fclnum{\Gr{G}} = \vchrnum{\Gr{G}} = \svchrnum{\Gr{G}} = \chrnum{\Gr{G}}
= \fchrnum{\Gr{G}} = \Theta(\CGr{G}) = \vartheta(\CGr{G}) = \vartheta'(\CGr{G}), \\
\label{eq2: perfect graph}
& \indnum{\Gr{G}} = \findnum{\Gr{G}} = \vchrnum{\CGr{G}} = \svchrnum{\CGr{G}} = \chrnum{\CGr{G}}
= \fchrnum{\CGr{G}} = \Theta(\Gr{G}) = \vartheta(\Gr{G}) = \vartheta'(\Gr{G}).
\end{align}
\end{theorem}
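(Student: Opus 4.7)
The plan is to reduce all the equalities in \eqref{eq1: perfect graph} and \eqref{eq2: perfect graph} to a single collapse of the sandwich inequalities in \eqref{eq1a: sandwich} and \eqref{eq1b: sandwich}, exploiting the fact that, by the weak perfect graph theorem (Theorem~\ref{theorem: Weak Perfect Graph Theorem}), the perfectness of $\Gr{G}$ implies the perfectness of $\CGr{G}$. The core observation is that for a perfect graph $\Gr{H}$, Definition~\ref{definition: perfect graphs} (applied to $\Gr{H}$ itself as an induced subgraph of $\Gr{H}$) gives $\chrnum{\Gr{H}}=\clnum{\Gr{H}}$, so it suffices to combine $\chrnum{\Gr{G}}=\clnum{\Gr{G}}$ and $\chrnum{\CGr{G}}=\clnum{\CGr{G}}$ with Definition~\ref{definition: cliques}, which gives $\clnum{\CGr{G}}=\indnum{\Gr{G}}$ and $\indnum{\CGr{G}}=\clnum{\Gr{G}}$.

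First I would establish \eqref{eq1: perfect graph}. By the sandwich theorem \eqref{eq1b: sandwich},
\begin{equation*}
\clnum{\Gr{G}} \leq \vchrnum{\Gr{G}} = \vartheta'(\CGr{G}) \leq \vartheta(\CGr{G}) = \svchrnum{\Gr{G}}
\leq \fclnum{\Gr{G}} = \fchrnum{\Gr{G}} \leq \chrnum{\Gr{G}}.
\end{equation*}
Since $\Gr{G}$ is perfect, the two extreme terms agree, so every quantity in this chain equals $\clnum{\Gr{G}}$. This already covers all the numerical graph invariants in \eqref{eq1: perfect graph} except the Shannon capacity of $\CGr{G}$. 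For the latter, I would use \eqref{eq: Lovasz 79 - theorem 1} and the trivial lower bound $\Theta(\CGr{G}) \geq \indnum{\CGr{G}} = \clnum{\Gr{G}}$, together with the already established $\vartheta(\CGr{G}) = \clnum{\Gr{G}}$, to squeeze $\Theta(\CGr{G}) = \clnum{\Gr{G}}$.

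The proof of \eqref{eq2: perfect graph} proceeds in a symmetric way. Applying the sandwich theorem in the form \eqref{eq1a: sandwich},
\begin{equation*}
\indnum{\Gr{G}} \leq \vchrnum{\CGr{G}} = \vartheta'(\Gr{G}) \leq \vartheta(\Gr{G}) = \svchrnum{\CGr{G}}
\leq \findnum{\Gr{G}} = \fchrnum{\CGr{G}} \leq \chrnum{\CGr{G}},
\end{equation*}
and invoking the weak perfect graph theorem to assert that $\CGr{G}$ is perfect, one gets $\chrnum{\CGr{G}} = \clnum{\CGr{G}} = \indnum{\Gr{G}}$, which again collapses the entire chain to the common value $\indnum{\Gr{G}}$. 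The equality $\Theta(\Gr{G}) = \indnum{\Gr{G}} = \vartheta(\Gr{G})$ then follows from $\indnum{\Gr{G}} \leq \Theta(\Gr{G}) \leq \vartheta(\Gr{G})$ combined with the established collapse.

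There is no genuinely hard step here: the entire proof is essentially a bookkeeping exercise of stringing together three external results (the weak perfect graph theorem, the definition of perfectness, and the two sandwich chains \eqref{eq1a: sandwich}--\eqref{eq1b: sandwich}) with the trivial identity $\clnum{\CGr{G}}=\indnum{\Gr{G}}$. The only minor subtlety to be careful about is that the Shannon capacities $\Theta(\Gr{G})$ and $\Theta(\CGr{G})$ do not appear explicitly in the sandwich theorem, and must be handled separately via their definition \eqref{eq1:graph capacity} and the Lov\'{a}sz bound \eqref{eq: Lovasz 79 - theorem 1}; this is a purely routine squeeze argument.
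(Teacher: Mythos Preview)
Your proposal is correct and follows essentially the same approach as the paper: invoke the weak perfect graph theorem to get $\clnum{\Gr{G}}=\chrnum{\Gr{G}}$ and $\indnum{\Gr{G}}=\chrnum{\CGr{G}}$, then collapse the sandwich chains \eqref{eq1a: sandwich}--\eqref{eq1b: sandwich}. Your treatment is in fact slightly more careful than the paper's, which simply says ``the other equalities hold by the sandwich theorem'' without explicitly noting that $\Theta(\Gr{G})$ and $\Theta(\CGr{G})$ are absent from those chains and must be squeezed separately via $\indnum{\cdot}\le\Theta(\cdot)\le\vartheta(\cdot)$.
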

\begin{proof}
If $\Gr{G}$ is a perfect graph, so is its complement $\CGr{G}$.
Hence, $\clnum{\Gr{G}} = \chrnum{\Gr{G}}$ and $\indnum{\Gr{G}} = \chrnum{\CGr{G}}$. The other
equalities in \eqref{eq1: perfect graph} and \eqref{eq2: perfect graph} hold by the sandwich
theorem in \eqref{eq1a: sandwich} and \eqref{eq1b: sandwich}.
\end{proof}
If $\Gr{G}$ is a perfect graph, then so is every induced subgraph of $\Gr{G}$ or $\CGr{G}$.
It therefore follows that, if $\Gr{G}$ is a perfect graph, then the chains of equalities in \eqref{eq1: perfect graph}
and \eqref{eq2: perfect graph} hold for every induced subgraph of $\Gr{G}$ or $\CGr{G}$.

Theorem~\ref{theorem: graph invariants of perfect graphs} is illustrated in Examples~\ref{example: Hanoi Tower Graphs}
and~\ref{example: windmill graphs}, with respect to three-tower Hanoi graphs and windmill graphs
(these are all perfect graphs), and with comparisons to lower bounds on the fractional chromatic number.
By Theorem~\ref{theorem: graph invariants of perfect graphs}, the lower bound on the right-hand side of
\eqref{eq1:04.10.23} is tight for perfect graphs, whereas the other lower bounds in
\eqref{eq2:04.10.23} and \eqref{eq3:04.10.23} are not necessarily tight for these graphs.

\vspace*{0.2cm}
{\em On triangle-free graphs}: The next result gives a lower bound on the fractional chromatic number
of triangle-free graphs (i.e., graphs that have no cliques of three vertices).
\begin{theorem}
\label{theorem: LB for triangle-free graphs}
{\em Let $\Gr{G}$ be a nonempty triangle-free graph on $n$ vertices.
Then,
\begin{align}
\label{eq: LB for triangle-free graphs}
\fchrnum{\CGr{G}} \geq \vartheta(\Gr{G}) \geq \tfrac{1}{16} \, n^{\frac{2}{3}}.
\end{align}}
\end{theorem}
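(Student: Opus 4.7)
My plan is to obtain the left inequality from the sandwich theorem essentially for free, and to obtain the right inequality by combining two results already listed in the preliminaries: the product inequality \eqref{eq10:11.10.23} and the upper bound \eqref{eq12:11.10.23} on $\vartheta$ in terms of the independence number.

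For the left inequality, I would simply invoke the sandwich theorem \eqref{eq1a: sandwich}, which gives $\vartheta(\Gr{G}) = \svchrnum{\CGr{G}} \leq \findnum{\Gr{G}} = \fchrnum{\CGr{G}}$. This step requires no new computation.

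For the right inequality, the key observation is that a nonempty triangle-free graph $\Gr{G}$ has clique number exactly $2$, since it contains at least one edge but no triangle. Equivalently, $\indnum{\CGr{G}} = 2$. Applying the bound \eqref{eq12:11.10.23} to the complement graph $\CGr{G}$ (which has the same order $n$) with $k = 2$ yields
\begin{align*}
\vartheta(\CGr{G}) \leq 16 \, n^{\frac{2-1}{2+1}} = 16 \, n^{\frac{1}{3}}.
\end{align*}
Now I would combine this with the general inequality \eqref{eq10:11.10.23}, namely $\vartheta(\Gr{G}) \, \vartheta(\CGr{G}) \geq n$, to conclude
\begin{align*}
\vartheta(\Gr{G}) \geq \frac{n}{\vartheta(\CGr{G})} \geq \frac{n}{16 \, n^{1/3}} = \tfrac{1}{16} \, n^{\frac{2}{3}},
\end{align*}
which is exactly the desired bound.

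Since both ingredients are already established in Section~\ref{subsection: Lovasz theta-function}, I anticipate no genuine obstacle; the proof is essentially a two-line deduction. The only subtlety worth emphasizing is that the hypothesis of nonemptiness is used precisely to guarantee $\clnum{\Gr{G}} = 2$ rather than $\clnum{\Gr{G}} = 1$, which is what lets us apply \eqref{eq12:11.10.23} with exponent $\tfrac{1}{3}$ rather than $0$.
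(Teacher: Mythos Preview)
Your proposal is correct and follows essentially the same approach as the paper's proof: both invoke the sandwich theorem \eqref{eq1a: sandwich} for the left inequality, then use $\indnum{\CGr{G}}=\clnum{\Gr{G}}=2$ together with \eqref{eq12:11.10.23} applied to $\CGr{G}$ to get $\vartheta(\CGr{G})\le 16\,n^{1/3}$, and finally combine this with \eqref{eq10:11.10.23} to obtain $\vartheta(\Gr{G})\ge \tfrac{1}{16}\,n^{2/3}$.
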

\begin{proof}
See Section~\ref{subsubsection: proof of LB for triangle-free graphs}.
\end{proof}

\begin{remark}[Relation to \cite{Alon94}]
\label{remark: Relation to N. Alon, 1994}
In Theorem 2.1 of \cite{Alon94}, Alon proved that for every integer represented as $n = 8^k$, where $k \in \naturals$
is not divisible by~3, there exists a triangle-free and $d_n$-regular graph $\Gr{G}_n$ on $n$ vertices, with
$d_n = 2^{k-1} (2^{k-1}-1)$, and
\begin{align}
\label{eq1:1.11.23}
\vartheta(\Gr{G}_n) \leq (36 + o(1)) n^{\frac{2}{3}},
\end{align}
as $o(1) \underset{n \to \infty}{\longrightarrow} 0$. The result in Theorem~\ref{theorem: LB for triangle-free graphs}
shows that, up to a constant factor, $\vartheta(\Gr{G}_n)$ in \eqref{eq1:1.11.23} has the smallest scaling in $n$. A result
of a similar focus, examining the minimum chromatic number of a triangle-free graph as a function of its number
of vertices or edges, is analyzed in \cite{Nilli00}.
\end{remark}

\subsubsection{Lower bounds on the independence and clique numbers of a graph}
\label{subsubsection: lower bounds on the independence and clique numbers of a graph}
Section~\ref{subsubsection: lower bounds on the independence and clique numbers of a graph - background}
introduces known lower bounds on the independence and clique numbers of a graph.
The following bounds are expressed in terms of the Lov\'{a}sz $\vartheta$-function of the graph or its complement.
\begin{theorem}[Lower bounds on the independence and clique numbers of a graph]
\label{thm: lower bound on the independence number}
Let $\Gr{G}$ be a graph on $n$ vertices, excluding the complete graph or its complement. Then,
\begin{align}
\label{eq1:17.10.23}
\indnum{\Gr{G}} \geq \max \left\{ \frac{1}{10 \sqrt{\ln n}} \cdot n^{\, \dfrac{3}{\vartheta(\CGr{G})+1}},
\; \; \frac{2 \ln n}{\ln \Biggl( \dfrac{16n}{\vartheta(\Gr{G})} \Biggr)} - 1 \right\}, \\[0.1cm]
\label{eq1b:17.10.23}
\clnum{\Gr{G}} \geq \max \left\{ \frac{1}{10 \sqrt{\ln n}} \cdot n^{\, \dfrac{3}{\vartheta(\Gr{G})+1}},
\; \; \frac{2 \ln n}{\ln \Biggl( \dfrac{16n}{\vartheta(\CGr{G})} \Biggr)} - 1 \right\}.
\end{align}
\end{theorem}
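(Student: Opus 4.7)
The plan is to assemble the four stated inequalities by combining two tools already established in the excerpt: the probabilistic lower bound \eqref{eq0:17.10.23}, which states that $\indnum{\Gr{G}} \geq n^{3/(t+1)}/(10\sqrt{\ln n})$ whenever $t \eqdef \vartheta(\CGr{G}) \geq 2$, and the upper bound \eqref{eq12:11.10.23}, which states that $\vartheta(\Gr{G}) \leq 16 \, n^{(k-1)/(k+1)}$ with $k \eqdef \indnum{\Gr{G}}$. The first of these directly gives the first term inside the max on the right-hand side of \eqref{eq1:17.10.23}; the exclusion of the complete graph and of the empty graph ensures that both $\vartheta(\Gr{G}) \geq 2$ and $\vartheta(\CGr{G}) \geq 2$, so that the hypothesis $t \geq 2$ is met.

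For the second term, I would invert \eqref{eq12:11.10.23} to isolate $k$. Writing $v \eqdef \vartheta(\Gr{G})$ and taking logarithms yields $\ln(v/16) \leq \tfrac{k-1}{k+1} \, \ln n$; because $v \leq n < 16n$, the coefficient $\ln n - \ln(v/16) = \ln(16n/v)$ is strictly positive, so one may rearrange to obtain
\begin{align*}
k \geq \frac{\ln(vn/16)}{\ln(16n/v)}.
\end{align*}
A short computation using $\ln(vn/16) = \ln(n^2) - \ln(16n/v) = 2\ln n - \ln(16n/v)$ then rewrites this lower bound as $2\ln n / \ln(16n/\vartheta(\Gr{G})) - 1$, matching the second term of \eqref{eq1:17.10.23}.

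Finally, the clique-number bounds in \eqref{eq1b:17.10.23} follow by applying the two independence-number bounds just established to the complement graph $\CGr{G}$ in place of $\Gr{G}$, and then using the identities $\indnum{\CGr{G}} = \clnum{\Gr{G}}$ and $\overline{\CGr{G}} = \Gr{G}$. Under the exclusion of complete and empty graphs, the roles of $\vartheta(\Gr{G})$ and $\vartheta(\CGr{G})$ are exchanged consistently in the two terms, yielding exactly \eqref{eq1b:17.10.23}.

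There is no substantive obstacle here beyond bookkeeping: both of the underlying bounds are already stated in Section~\ref{subsection: Lovasz theta-function}, and the derivation reduces to an algebraic inversion of the Lov\'{a}sz upper bound plus an appeal to graph complementation. The only points requiring a little care are verifying the positivity of $\ln(16n/\vartheta(\Gr{G}))$ (and of $\ln(16n/\vartheta(\CGr{G}))$) so that the rearrangement preserves the inequality sign, and checking that the exclusion hypothesis is exactly what is needed to guarantee the applicability of \eqref{eq0:17.10.23} to both $\Gr{G}$ and $\CGr{G}$.
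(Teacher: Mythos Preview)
Your proposal is correct and follows essentially the same approach as the paper's proof: invoke \eqref{eq0:17.10.23} with $t=\vartheta(\CGr{G})$ for the first term, algebraically invert \eqref{eq12:11.10.23} to isolate $k=\indnum{\Gr{G}}$ for the second term, and then pass to the complement for \eqref{eq1b:17.10.23}. In fact you supply more detail than the paper does, which simply asserts that the second term is ``an equivalent form'' of \eqref{eq12:11.10.23} without writing out the rearrangement.
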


\begin{proof}
See Section~\ref{subsubsection: proof of lower bounds on the independence number}.
\end{proof}

\subsection{Proofs}
\label{subsection: proofs - bounds on graph invariants}
This section offers proofs of results outlined in
Section~\ref{subsection: main results - bounds on graph invariants}.

\subsubsection{Proof of Theorem~\ref{theorem: bounds on vchrnum and svchrnum for regular graphs}}
\label{subsubsection: proof of bounds on vchrnum and svchrnum for regular graphs}
Let $\Gr{G}$ be a graph on $n$ vertices. Inequality \eqref{eq9:7.11.23} is derived by combining
\eqref{eq: svchrnum vs. Lovasz theta-function} (Theorem~8.2 in \cite{KargerMS98})
and \eqref{eq10:11.10.23} (Corollary~2 in \cite{Lovasz79_IT}). Additionally, if $\Gr{G}$ is
a strongly regular or vertex-transitive graph, then \eqref{eq9:7.11.23} is
assured to hold with equality by combining \eqref{eq: svchrnum vs. Lovasz theta-function}
and Corollary~\ref{corollary:identity for the Lovasz function of srg and its complement}.
This completes the proof of Item~\ref{item 1: bounds on vchrnum and svchrnum for regular graphs}
in Theorem~\ref{theorem: bounds on vchrnum and svchrnum for regular graphs}.

We now turn to prove Item~\ref{item 2: bounds on vchrnum and svchrnum for regular graphs}
in Theorem~\ref{theorem: bounds on vchrnum and svchrnum for regular graphs}.
Let $\Gr{G}$ be a $d$-regular graph on $n$ vertices.
Referring to Remark~\ref{remark: comparison between two theorems} and \eqref{eq: vchrnum vs. Lovasz theta-function},
it suffices to prove the leftmost inequalities in \eqref{eq10:7.11.23} and \eqref{eq11:7.11.23}. This is sufficient
because the middle inequalities in \eqref{eq10:7.11.23} and \eqref{eq11:7.11.23} are \eqref{eq1:05.11.23}, and the rightmost
inequalities in \eqref{eq10:7.11.23} and \eqref{eq11:7.11.23} are, respectively, equivalent to those
in \eqref{eq:21.10.22a2} and \eqref{eq:21.10.22a1} (due to equality \eqref{eq: svchrnum vs. Lovasz theta-function}).
By Galtman's result \cite{Galtman00},
\begin{align}
\label{eq: Galtman}
\vchrnum{\Gr{G}} \geq 1 - \frac{\lambda_1(\Gr{G})}{\lambda_n(\Gr{G})},
\end{align}
which is the leftmost inequality in \eqref{eq10:7.11.23} with $\lambda_1(\Gr{G}) = d$. The leftmost inequality
in \eqref{eq11:7.11.23} is obtained as follows:
\begin{align}
\label{eq2: Galtman}
\vchrnum{\CGr{G}} & \geq 1 - \dfrac{\lambda_1(\CGr{G})}{\lambda_n(\CGr{G})} \\
\label{eq13:7.11.23}
& = 1 + \dfrac{n-d-1}{1+\lambda_2(\Gr{G})} \\[0.1cm]
\label{eq14:7.11.23}
& = \dfrac{n-d+\lambda_2(\Gr{G})}{1+\lambda_2(\Gr{G})},
\end{align}
where \eqref{eq2: Galtman} holds by replacing $\Gr{G}$ with $\CGr{G}$ on both sides of \eqref{eq: Galtman};
\eqref{eq13:7.11.23} is obtained from \eqref{eq: adjaceny matrix of a graph and its complement},
where the latter reveals the following relation for a $d$-regular graph $\Gr{G}$ on $n$ vertices (see Section~1.3.2 in \cite{BrouwerH12}):
\begin{eqnarray}
\label{eq: 21.11.2022a1}
&& \Eigval{1}{\CGr{G}} = n-d-1 = n-1-\Eigval{1}{\Gr{G}}, \\
\label{eq: 21.11.2022a2}
&& \Eigval{\ell}{\CGr{G}} = -1 - \Eigval{n+2-\ell}{\Gr{G}}, \quad \ell = 2, \ldots, n.
\end{eqnarray}
Specifically, by setting $\ell=n$ in \eqref{eq: 21.11.2022a2},
\begin{eqnarray}
\label{eq: 21.11.2022a3}
\Eigval{n}{\CGr{G}} = -1 - \Eigval{2}{\Gr{G}}.
\end{eqnarray}
The leftmost inequalities in \eqref{eq10:7.11.23} and \eqref{eq11:7.11.23} hold by
\eqref{eq: Galtman}--\eqref{eq14:7.11.23}. Subsequently, by \eqref{eq: svchrnum vs. Lovasz theta-function},
the sufficient conditions for equality in the rightmost inequalities of \eqref{eq:21.10.22a2} and
\eqref{eq:21.10.22a1}, respectively, coincide with those required for equality in the rightmost inequalities
of \eqref{eq10:7.11.23} and \eqref{eq11:7.11.23}. Furthermore, by \eqref{eq: svchrnum vs. Lovasz theta-function}
alongside the leftmost and middle inequalities in \eqref{eq10:7.11.23} and \eqref{eq11:7.11.23},
the sufficient conditions for equality in the leftmost inequalities of \eqref{eq:21.10.22a2} and
\eqref{eq:21.10.22a1}, respectively, also ensure equality in the leftmost inequalities
of \eqref{eq10:7.11.23} and \eqref{eq11:7.11.23}.

\subsubsection{Proof of Theorem~\ref{theorem: vchrnum and svchrnum of srg}}
\label{subsubsection: proof of chrnum and schrnum of srg}
Let $\Gr{G}$ be a strongly regular graph with parameters $\srg{n}{d}{\lambda}{\mu}$.
Then, by the satisfiability of the sufficient conditions in
Theorem~\ref{theorem: bounds on vchrnum and svchrnum for regular graphs}
for equality in the leftmost and rightmost inequalities
of \eqref{eq10:7.11.23} and \eqref{eq11:7.11.23}, it follows that
\begin{align}
\label{eq1:8.11.23}
& 1 - \dfrac{d}{\Eigval{n}{\Gr{G}}} = \vchrnum{\Gr{G}} \leq \svchrnum{\Gr{G}}
= \dfrac{n \bigl(1+\Eigval{2}{\Gr{G}}\bigr)}{n-d+\Eigval{2}{\Gr{G}}}, \\[0.2cm]
\label{eq2:8.11.23}
& \dfrac{n-d+\Eigval{2}{\Gr{G}}}{1+\Eigval{2}{\Gr{G}}} = \vchrnum{\CGr{G}} \leq \svchrnum{\CGr{G}}
= -\dfrac{n \Eigval{n}{\Gr{G}}}{d - \Eigval{n}{\Gr{G}}}.
\end{align}
We proceed to prove that equality is also attained in the middle inequalities of \eqref{eq1:8.11.23} and \eqref{eq2:8.11.23}.
By Theorem~\ref{theorem: eigenvalues of srg} and the parameter $t$ as defined in \eqref{eq: t parameter - srg},
\begin{align}
\label{eq3:8.11.23}
\Eigval{2}{\Gr{G}} = \tfrac12 (\lambda-\mu+t), \quad \Eigval{n}{\Gr{G}} = \tfrac12 (\lambda-\mu-t),
\end{align}
which implies that the leftmost and rightmost terms in \eqref{eq1:8.11.23} coincide, and
\begin{align}
\label{eq4:8.11.23}
1 - \dfrac{d}{\Eigval{n}{\Gr{G}}} & = 1 + \dfrac{2d}{\mu+t-\lambda} \\[0.1cm]
\label{eq5:8.11.23}
&= \dfrac{n \bigl(1+\Eigval{2}{\Gr{G}}\bigr)}{n-d+\Eigval{2}{\Gr{G}}}.
\end{align}
Likewise, by taking reciprocals in \eqref{eq4:8.11.23}--\eqref{eq5:8.11.23} and multiplying by $n$, the leftmost
and rightmost terms in \eqref{eq2:8.11.23} also coincide, and
\begin{align}
\label{eq6:8.11.23}
\dfrac{n-d+\Eigval{2}{\Gr{G}}}{1+\Eigval{2}{\Gr{G}}} & = \dfrac{n(t+\mu-\lambda)}{2d+t+\mu-\lambda} \\[0.1cm]
\label{eq7:8.11.23}
&= -\dfrac{n \Eigval{n}{\Gr{G}}}{d - \Eigval{n}{\Gr{G}}}.
\end{align}
In light of \eqref{eq4:8.11.23}--\eqref{eq7:8.11.23}, the middle inequalities
in \eqref{eq10:7.11.23} and \eqref{eq11:7.11.23} hold with equalities,
establishing all the equalities in \eqref{eq1: vchrnum and svchrnum of srg}--\eqref{eq4: vchrnum and svchrnum of srg}.

\subsubsection{Proof of Theorem~\ref{theorem: vchrmum and svchrnum of vt+et graphs}}
\label{subsubsection: proof of chrnum and schrnum of vt+et graphs}
Let $\Gr{G}$ be a graph on $n$ vertices that is both vertex-transitive and edge-transitive.
Due to its vertex-transitivity, $\Gr{G}$ is regular, with $d$ denoting the degree of its vertices.
By Item~\ref{item 2: bounds on vchrnum and svchrnum for regular graphs} in
Theorem~\ref{theorem: bounds on vchrnum and svchrnum for regular graphs},
equality holds in the leftmost inequality of \eqref{eq10:7.11.23}
and the rightmost inequality of \eqref{eq11:7.11.23}, which gives
\begin{align}
\label{eq8:8.11.23}
& \vchrnum{\Gr{G}} = 1 - \dfrac{d}{\Eigval{n}{\Gr{G}}},  \\[0.1cm]
\label{eq9:8.11.23}
& \svchrnum{\CGr{G}} = -\dfrac{n \Eigval{n}{\Gr{G}}}{d - \Eigval{n}{\Gr{G}}}.
\end{align}
By Item~\ref{item 1: bounds on vchrnum and svchrnum for regular graphs} of
Theorem~\ref{theorem: bounds on vchrnum and svchrnum for regular graphs},
the vertex-transitivity of $\Gr{G}$ yields equality in \eqref{eq9:7.11.23}, so
\begin{align}
\label{eq10:8.11.23}
\svchrnum{\Gr{G}} = \frac{n}{\svchrnum{\CGr{G}}} = 1 - \dfrac{d}{\Eigval{n}{\Gr{G}}}.
\end{align}
Combining \eqref{eq8:8.11.23}, \eqref{eq10:8.11.23}, and the equality
$\Eigval{1}{\Gr{G}}=d$ (for a $d$-regular graph $\Gr{G}$) gives \eqref{eq1: 07.11.23}.

\subsubsection{Proof of Theorem~\ref{theorem: LB for triangle-free graphs}}
\label{subsubsection: proof of LB for triangle-free graphs}
Let $\Gr{G}$ be a nonempty triangle-free graph on $n$ vertices, which implies that $\indnum{\CGr{G}} = \clnum{\Gr{G}} = 2$.
By \eqref{eq12:11.10.23} (Theorem~11.18 in \cite{Lovasz19}), with $\Gr{G}$ replaced by $\CGr{G}$ and setting $k=2$,
it follows that
\begin{align}
\label{eq10:01.10.23}
\vartheta(\CGr{G}) \leq 16 n^{\frac{1}{3}}.
\end{align}
Since $\vartheta(\Gr{G}) \, \vartheta(\CGr{G}) \geq n$ holds for every graph $\Gr{G}$ on $n$ vertices
(see \eqref{eq10:11.10.23}), it follows from \eqref{eq10:01.10.23} that
\begin{align}
\label{eq11:01.10.23}
\vartheta(\Gr{G}) \geq \tfrac{1}{16} \, n^{\frac{2}{3}}.
\end{align}
The result in \eqref{eq: LB for triangle-free graphs} is deduced by combining \eqref{eq11:01.10.23}
with the inequality $\fchrnum{\CGr{G}} \geq \vartheta(\Gr{G})$ (see \eqref{eq1a: sandwich}).

\subsubsection{Proof of Theorem~\ref{thm: lower bound on the independence number}}
\label{subsubsection: proof of lower bounds on the independence number}
Inequalities \eqref{eq1:17.10.23} and \eqref{eq1b:17.10.23} are equivalent since
$\indnum{\Gr{G}} = \clnum{\CGr{G}}$, so the right-hand side of \eqref{eq1b:17.10.23}
is obtained from the right-hand side of \eqref{eq1:17.10.23} by replacing $\Gr{G}$ with $\CGr{G}$.
For a graph $\Gr{G}$ on $n$ vertices, inequality \eqref{eq1:17.10.23} is composed of two
lower bounds on its independence number. The first term on the right-hand side of \eqref{eq1:17.10.23}
is the lower bound on the right-hand of \eqref{eq0:17.10.23} with $t \eqdef \vartheta(\CGr{G})$
(by assumption, since neither $\Gr{G}$ nor its complement $\CGr{G}$ are complete graphs, it follows that
$\vartheta(\Gr{G}) \geq \indnum{\Gr{G}} \geq 2$ and $\vartheta(\CGr{G}) \geq \clnum{\Gr{G}} \geq 2$).
The second term on the right-hand side of \eqref{eq1:17.10.23} is an equivalent form of
inequality \eqref{eq12:11.10.23}, where the independence number $\indnum{\Gr{G}}$ is lower-bounded
by a function of $\vartheta(\Gr{G})$.

\subsection{Examples for the Results in Section~\ref{subsection: main results - bounds on graph invariants}}
\label{subsection: examples - bounds on graph invariants}

The present subsection provides the utility of the results in Section~\ref{subsection: main results - bounds on graph invariants},
and it also obtains new results by examples and counterexamples.

\begin{example}[Bounds on graph invariants for strongly regular graphs]
\label{example: bounds on graph invariants of srgs}
The present example aims to illustrate the tightness of the bounds in Corollary~\ref{corollary:bounds on parameters of SRGs}
for four selected strongly regular graphs:
\begin{itemize}
\item Petersen graph that is a strongly regular graph with parameters $\srg{10}{3}{0}{1}$;
\item Schl\"{a}fli graph that is a strongly regular graph with parameters $\srg{27}{16}{10}{8}$;
\item Shrikhande graph that is a strongly regular graph with parameters $\srg{16}{6}{2}{2}$;
\item Hall-Janko graph that is a strongly regular graph with parameters $\srg{100}{36}{14}{12}$.
\end{itemize}
\begin{enumerate}
\item Let $\Gr{G}_1$ be the Petersen graph. Then, $\vartheta(\Gr{G}_1) = 4$, and $\vartheta(\CGr{G}_1) = \tfrac{5}{2}$.
The bounds on the independence, clique, and chromatic numbers are tight:
\begin{align}
\label{eq2:25.10.23}
& \indnum{\Gr{G}_1} = 4 = \vartheta(\Gr{G}_1), \\
\label{eq2a:25.10.23}
&\clnum{\Gr{G}_1} = 2 = \lfloor \vartheta(\CGr{G}_1) \rfloor, \\
\label{eq2b:25.10.23}
& \fchrnum{\Gr{G}_1} = \tfrac{5}{2} = \vartheta(\CGr{G}_1), \\
\label{eq2c:25.10.23}
& \chrnum{\Gr{G}_1} = 3 = \lceil \vartheta(\CGr{G}_1) \rceil.
\end{align}
\item Let $\Gr{G}_2, \Gr{G}_3, \Gr{G}_4$ be the Schl\"{a}fli, Shrikhande, and Hall-Janko graphs, respectively.
Then,
\begin{align}
\label{eq3:25.10.23}
& \vartheta(\Gr{G}_2) = 3, \; \; \vartheta(\CGr{G}_2) = 9, \\
\label{eq3a:25.10.23}
& \vartheta(\Gr{G}_3) = \vartheta(\CGr{G}_3) = 4, \\
\label{eq3b:25.10.23}
& \vartheta(\Gr{G}_4) = \vartheta(\CGr{G}_4) = 10.
\end{align}
\item
The lower bounds on the chromatic and fractional chromatic numbers of these graphs are tight:
\begin{align}
\label{eq4:25.10.23}
& \fchrnum{\Gr{G}_2} = \chrnum{\Gr{G}_2} = 9, \\
\label{eq4a:25.10.23}
& \fchrnum{\Gr{G}_3} = \chrnum{\Gr{G}_3} = 4, \\
\label{eq4b:25.10.23}
& \fchrnum{\Gr{G}_4} = \chrnum{\Gr{G}_4} = 10.
\end{align}
\item
The upper bounds on their independence numbers are also tight:
\begin{align}
\label{eq5:25.10.23}
& \indnum{\Gr{G}_2} = 3 = \vartheta(\Gr{G}_2), \\
\label{eq5a:25.10.23}
& \indnum{\Gr{G}_3} = 4 = \vartheta(\Gr{G}_3), \\
\label{eq5b:25.10.23}
& \indnum{\Gr{G}_4} = 10 = \vartheta(\Gr{G}_4).
\end{align}
\item
The upper bounds on their clique numbers are, however, not tight since
\begin{align}
\label{eq6:25.10.23}
& \clnum{\Gr{G}_2} = 6 < \vartheta(\CGr{G}_2), \\
\label{eq6a:25.10.23}
& \clnum{\Gr{G}_3} = 3 < \vartheta(\CGr{G}_3), \\
\label{eq6b:25.10.23}
& \clnum{\Gr{G}_4} = 4 < \vartheta(\CGr{G}_4).
\end{align}
\item
For comparison, the inertial lower bound \eqref{eq2: ElphickW17 - Section3} gives
\begin{align}
\label{eq1:16.11.23}
\fchrnum{\Gr{G}_1} \geq 2 \tfrac{1}{2}, \quad \fchrnum{\Gr{G}_2} \geq 3 \tfrac{6}{7},
\quad \fchrnum{\Gr{G}_3} \geq 2 \tfrac{2}{7}, \quad \fchrnum{\Gr{G}_4} \geq 2 \tfrac{26}{37}.
\end{align}
With the exception of the Petersen graph ($\Gr{G}_1$), it is observed that the lower bounds in \eqref{eq1:16.11.23} are not tight.
In contrast, the bound provided in Corollary~\ref{corollary:bounds on parameters of SRGs} proves to be tight for the fractional
chromatic numbers of all four graphs.
\end{enumerate}
\end{example}

\begin{table}[h!b!]
\caption{\label{table: vchrmum and svchrnum of regular graphs} \centering{Regular graphs with identical vector and strict vector chromatic numbers
by Theorems~\ref{theorem: vchrnum and svchrnum of srg} and~\ref{theorem: vchrmum and svchrnum of vt+et graphs} (see
Example~\ref{example: vchrnum and svchrnum, regular graphs}).}}
\renewcommand{\arraystretch}{1.5}
\centering
\vspace*{0.1cm}
\begin{tabular}{||c|c|c||}
\hline
\text{Named Graph} $\Gr{G}$ & \text{Type of Graph} & $\Chromatic_{\mathrm{v,sv}}(\Gr{G})$ \\[0.05cm] \hline
Pentagon $(\CG{5})$ & $\srg{5}{2}{0}{1}$ & $\sqrt{5}$ \\
Petersen & $\srg{10}{3}{0}{1}$ & $2 \tfrac{1}{2}$ \\
Clebsch & $\srg{16}{5}{0}{2}$ & $2 \tfrac{1}{3}$ \\
Shrikhande & $\srg{16}{6}{2}{2}$  & $4$ \\
Schl\"{a}fli  & $\srg{27}{16}{10}{8}$ & $9$ \\
Holt (a.k.a. Doyle) & 27 vertices, vertex- and edge-transitive, not srg & $2.53747 \ldots$ \\
Hoffman-Singleton & $\srg{50}{7}{0}{1}$ & $3 \tfrac{1}{3}$ \\
Sims-Gewirtz & $\srg{56}{10}{0}{2}$ & $3 \tfrac{1}{2}$ \\
Gritsenko & $\srg{65}{32}{15}{16}$  & $\sqrt{65}$ \\
Mesner ($M_{22}$) & $\srg{77}{16}{0}{4}$ & $3 \tfrac{2}{3}$ \\
Brouwer-Haemers & $ \srg{81}{20}{1}{6}$ & $3 \tfrac{6}{7}$ \\
Foster & 90 vertices, vertex- and edge-transitive, not srg & $2$  \\
Higman-Sims & $\srg{100}{22}{0}{6}$ & $3 \tfrac{3}{4}$ \\
Hall-Janko & $\srg{100}{36}{14}{12}$ & $10$ \\
Dejter & 112 vertices, vertex- and edge-transitive, bipartite & 2 \\
Cameron & $\srg{231}{30}{9}{3}$  & $11$  \\
Mathon-Rosa & $\srg{280}{117}{44}{52}$ & $10$ \\
Janko-Kharaghani-Tonchev & $\srg{324}{153}{72}{72}$ & $18$ \\
Even-cycle graph $\CG{n}, n \geq 4$ & vertex- and edge-transitive, not srg if $n \neq 4$ & 2 \\
Odd-cycle graph $\CG{n}, n \geq 3$ & vertex- and edge-transitive, not srg if $n \neq 5$ & $1 + \sec\bigl(\frac{\pi}{n}\bigr)$ \\
Kneser graph $\KG{n}{k}, n \geq 2k$ & vertex- and edge-transitive, not srg in general & $\dfrac{n}{k}$ \\
Paley graph of order $n$ & conference graph, vertex- and edge-transitive & $\sqrt{n}$ \\[0.05cm]
\hline
\end{tabular}
\end{table}

\begin{example}[Identical vector and strict vector chromatic numbers in
Theorems~\ref{theorem: vchrnum and svchrnum of srg} and~\ref{theorem: vchrmum and svchrnum of vt+et graphs}]
\label{example: vchrnum and svchrnum, regular graphs}
By Theorems~\ref{theorem: vchrnum and svchrnum of srg} and~\ref{theorem: vchrmum and svchrnum of vt+et graphs},
the vector and strict vector chromatic numbers coincide for every graph that is either strongly regular
or both vertex- and edge-transitive. Theorems~\ref{theorem: vchrnum and svchrnum of srg}
and~\ref{theorem: vchrmum and svchrnum of vt+et graphs} are confirmed numerically by solving the SDP problems in
\eqref{eq: dual SDP problem - vector chromatic number} and \eqref{eq: dual SDP problem - strict vector chromatic number}
for such regular graphs, while getting a match with \eqref{eq1: vchrnum and svchrnum of srg}, \eqref{eq2: vchrnum and svchrnum of srg},
and \eqref{eq1: 07.11.23} (see Table~\ref{table: vchrmum and svchrnum of regular graphs}).
Numerical results in Table~\ref{table: vchrmum and svchrnum of regular graphs} also exemplify
that the vector and strict vector chromatic numbers may coincide for graphs that do not exhibit
either strong regularity or both vertex- and edge-transitivity. That happens to be the case, e.g., for the Frucht graph,
whose vector and strict vector chromatic numbers are identical and equal to~3. The Frucht graph is
a 3-regular graph on 12~vertices that is neither vertex- nor edge-transitive, and is also
not a strongly regular graph.
\end{example}

\begin{example}[The vector and strict vector chromatic numbers of vertex-transitive graphs may be distinct]
\label{example: vchrnum and svchrnum, regular graphs - part 2}
In \cite{Schrijver79} (see page~429), there is an example of a regular graph whose strict vector
chromatic number is strictly greater than its vector chromatic number. Since only final results are
given in \cite{Schrijver79}, we first elaborate on that example from \cite{Schrijver79}, and we then
generalize it to examine the vector and strict vector chromatic numbers of these generalized graphs.

\begin{table}[h!t!]
\caption{\label{table: generalized Schrijver's example} \centering{The vector and strict vector chromatic numbers of the vertex-transitive
graph $\Gr{H} \eqdef \Gr{H}_{\ell, d_{\mathrm{L}}, d_{\mathrm{H}}}$ in Example~\ref{example: vchrnum and svchrnum, regular graphs - part 2},
for different selections of integers $\ell, d_{\mathrm{L}}, d_{\mathrm{H}} \in \naturals$ such that $1 \leq d_{\mathrm{L}} \leq d_{\mathrm{H}} \leq \ell$.
The values of these two types of chromatic numbers are bolded if $\vchrnum{\Gr{H}} < \svchrnum{\Gr{H}}$. In light of
Theorem~\ref{theorem: vchrmum and svchrnum of vt+et graphs}, it is specified if $\Gr{H}$ is also edge-transitive.}}
\renewcommand{\arraystretch}{1.5}
\vspace*{0.1cm}
\centering
\begin{tabular}{||c|c|c||c|c||c||}
\hline
$\ell$ & $d_{\mathrm{L}}$ & $d_{\mathrm{H}}$ & $\vchrnum{\Gr{H}}$ & $\svchrnum{\Gr{H}}$ & \text{Is} $\Gr{H}$ \text{edge-} \\
& & & & & \text{transitive ?} \\[0.05cm] \hline
$4$ & $2$ & $2$ & $4$ & $4$ & yes\\
$4$ & $2$ & $3$ & $6$ & $6$ & yes\\
$4$ & $2$ & $4$ & $8$ & $8$ & no \\
$4$ & $3$ & $4$ & $2 \tfrac{2}{3}$ & $2 \tfrac{2}{3}$ & yes\\
$5$ & $2$ & $5$ & $16$ & $16$ & no\\
$5$ & $3$ & $4$ & $4$ & $4$ & no \\
$5$ & $3$ & $5$ & ${\bf{4}}$ & ${\bf{5 \tfrac{1}{3}}}$ & no \\
$5$ & $4$ & $5$ & $2 \tfrac{2}{3}$ & $2 \tfrac{2}{3}$ & no \\
$6$ & $2$ & $5$ & $26 \tfrac{2}{3}$ & $26 \tfrac{2}{3}$ & no \\
$6$ & $2$ & $6$ & $32$ & $32$ & no \\
$6$ & $3$ & $6$ & $8$ & $8$ & no \\
$6$ & $4$ & $6$ & ${\bf{4}}$ & ${\bf{5 \tfrac{1}{3}}}$ & no \\
$6$ & $5$ & $6$ & $2 \tfrac{2}{5}$ & $2 \tfrac{2}{5}$  & yes \\
$7$ & $3$ & $7$ & $16$ & $16$ & no \\
$7$ & $4$ & $7$ & $8$ & $8$ & no \\
$7$ & $5$ & $7$ & ${\bf{3}}$ & ${\bf{3 \tfrac{5}{9}}}$ & no \\
$7$ & $6$ & $7$ & $2 \tfrac{2}{5}$ & $2 \tfrac{2}{5}$ & no \\[0.05cm]
\hline
\end{tabular}
\end{table}

Let $\Gr{G}$ be a graph on~64 vertices, and label its vertices by the 6-length binary strings $\{0,1\}^6$.
Let any two vertices be adjacent if and only if their corresponding strings are of Hamming distance that
is at most~3. Let $\Gr{H} \eqdef \CGr{G}$ denote the complement graph, so any two of its vertices are adjacent if their corresponding
strings are of Hamming distance of at least~4. Replacing the binary strings with $\pm 1$ vectors in the standard way
($0 \mapsto +1$ and $1 \mapsto -1$) and then normalizing these vectors to be unit vectors in the Euclidean space, the adjacency of vertices in $\Gr{H}$
corresponds to two vectors having an inner product that is less than or equal to $-\tfrac13$.
By Definition~\ref{definition: vector chromatic number}, it follows that $\vchrnum{\Gr{H}} \leq 4$. It can be
verified that there exists a clique in $\Gr{H}$ of 4~vertices, so $\clnum{\Gr{H}} \geq 4$; indeed, the vertices
that refer to the vectors $(+1,+1,+1,+1,+1,+1)$, $(-1,-1,-1,-1,+1,+1)$, $(-1,-1,+1,+1,-1,-1)$, and $(+1,+1,-1,-1,-1,-1)$
form a clique in $\Gr{H}$ since any pair of vectors differ in four coordinates. Hence,
$\clnum{\Gr{H}} = 4 = \vchrnum{\Gr{H}}$. Numerical calculation of the strict vector chromatic number of
the graph $\Gr{H}$ as a solution of the SDP problem in \eqref{eq: dual SDP problem - strict vector chromatic number}, which
relies on first constructing the adjacency matrix of the graph $\Gr{H}$ by its above definition, gives that
$\svchrnum{\Gr{H}} = 5 \tfrac{1}{3}$; it is consistent with Schrijver's paper \cite{Schrijver79}
where it was claimed that $\vartheta(\Gr{G}) = 5 \tfrac{1}{3}$ (by definition, $\Gr{H} = \CGr{G}$, so
$\svchrnum{\Gr{H}} = \vartheta(\Gr{G})$). To conclude, it is obtained that
\begin{align}
\label{eq3:17.11.23}
\svchrnum{\Gr{H}} = 5 \tfrac{1}{3} > 4 = \vchrnum{\Gr{H}}.
\end{align}
The regular graph $\Gr{H}$ exhibits vertex-transitivity due to a property in which adding a constant vector to all vertices,
using arithmetic over $\mathrm{GF}(2)$, maintains the Hamming distance between any pair of vertices. This implies that the
adjacency relations among the vertices in $\Gr{H}$ remain consistent under such transformations. Consequently, any vector
$u \in \{0,1\}^6$ can be mapped to any other vector $v \in \{0,1\}^6$ by adding the fixed vector $v - u \in \{0,1\}^6$ to all
vertices in the graph. However, $\Gr{H}$ does not possess edge-transitivity. If it did, according to
Theorem~\ref{theorem: vchrmum and svchrnum of vt+et graphs}, the vector and strict vector chromatic numbers of $\Gr{H}$
would be identical. The SageMath software tool \cite{SageMath} has confirmed that $\Gr{H}$ is not an edge-transitive graph.

As an extension of the above construction of $\Gr{H}$, let $\ell, d_{\mathrm{L}}, d_{\mathrm{H}}$
be integers such that $1 \leq d_{\mathrm{L}} \leq d_{\mathrm{H}} \leq \ell$. Let $\Gr{H}$ be a graph on $n = 2^\ell$ vertices
whose vertices are labeled by all the $\ell$-length binary strings $\{0,1\}^\ell$, and let any pair of vertices in $\Gr{H}$
be adjacent if and only if their corresponding strings are of Hamming distance $d$ that satisfies
$d_{\mathrm{L}} \leq d \leq d_{\mathrm{H}}$. As it is explained above, the regular graph $\Gr{H}$ is vertex-transitive.
The above graph $\Gr{H}$ from \cite{Schrijver79} corresponds to the special case where $d_{\mathrm{L}} = 4, d_{\mathrm{H}} = 6$,
and $\ell=6$. We next compare the vector and strict vector
chromatic numbers of the graph $\Gr{H} \eqdef \Gr{H}_{\ell, d_{\mathrm{L}}, d_{\mathrm{H}}}$ by numerically solving the SDP problems
in \eqref{eq: SDP problem - vector chromatic number} and \eqref{eq: dual SDP problem - strict vector chromatic number} for different
selections of the parameters $(\ell, d_{\mathrm{L}}, d_{\mathrm{H}})$, showing the existence of other possible combinations of these
graph parameters, for which the strict vector chromatic number of the vertex-transitive graph $\Gr{H}$ is strictly greater than its vector chromatic number.
The numerical results of the vector and strict vector chromatic numbers of the graph $\Gr{H} \eqdef \Gr{H}_{\ell, d_{\mathrm{L}}, d_{\mathrm{H}}}$
are presented in Table~\ref{table: generalized Schrijver's example} for different values of parameters $(\ell, d_{\mathrm{L}}, d_{\mathrm{H}})$.
It is shown that, for such vertex-transitive graphs, these two types of chromatic numbers may be distinct, in contrast to graphs that are
both vertex- and edge-transitive whose vector and strict vector chromatic numbers coincide by Theorem~\ref{theorem: vchrmum and svchrnum of vt+et graphs}.
\end{example}

\begin{example}[The variant of the $\vartheta$-function by Schrijver is not an upper bound on the Shannon capacity]
\label{example: counterexample}
By the sandwich theorem in \eqref{eq1a: sandwich}, the inequality $\indnum{\Gr{G}} \leq \vartheta'(\Gr{G}) \leq \vartheta(\Gr{G})$
holds for every finite, undirected, and simple graph $\Gr{G}$.
It is next shown by a counterexample that, unlike the Lov\'{a}sz $\vartheta$-function of a graph $\Gr{G}$, which forms an upper bound
on the Shannon capacity of $\Gr{G}$ (see \eqref{eq: Lovasz 79 - theorem 1}), the variant of the $\vartheta$-function by Schrijver
does not possess that property, i.e., $\Theta(\Gr{G}) \not\leq \vartheta'(\Gr{G})$. It settles a query since the introduction of the latter
$\vartheta$-function in the late seventies \cite{McElieceRR78,Schrijver79}, which was also posed as an open question in
the last paragraph of \cite{BiT19}.

As a counterexample, let $\Gr{G} = \CGr{H}$ with the graph $\Gr{H} \eqdef \Gr{H}_{5, 3, 5}$ that is defined in
Example~\ref{example: vchrnum and svchrnum, regular graphs - part 2}.
In other words, $\Gr{G}$ is a regular graph on~32 vertices that are labeled by the 5-length binary vectors $\{0,1\}^5$,
and any two vertices in $\Gr{G}$ are adjacent if and only if the Hamming distance between their corresponding vectors
is either equal to 1 or~2. The degree of each vertex in $\Gr{G}$ is therefore equal to $\binom{5}{1}+\binom{5}{2} = 15$.
By equalities \eqref{eq1:17.11.23}, \eqref{eq: svchrnum vs. Lovasz theta-function}, and
Table~\ref{table: generalized Schrijver's example}, it follows that
\begin{align}
\label{eq2:20.11.2023}
\vartheta'(\Gr{G}) = 4 < 5 \tfrac{1}{3} = \vartheta(\Gr{G}).
\end{align}
In regard to the different values of $\vartheta'(\Gr{G})$ and $\vartheta(\Gr{G})$ in \eqref{eq2:20.11.2023}, it should be noted that
the regular graph $\Gr{G}$ is vertex-transitive, and the complement graph $\Gr{H} = \CGr{G}$ is not edge-transitive.
Indeed, it is proved in Example~\ref{example: vchrnum and svchrnum, regular graphs - part 2} that all graphs
$\Gr{H}_{\ell, d_{\mathrm{L}}, d_{\mathrm{H}}}$, with $1 \leq d_{\mathrm{L}} \leq d_{\mathrm{H}} \leq \ell$ and $\ell \in \naturals$,
are vertex-transitive; their complements are therefore vertex-transitive as well. On the other hand, the graph $\Gr{H}$ cannot be
edge-transitive. Otherwise, due to its vertex-transitivity, the strict inequality in \eqref{eq2:20.11.2023} could not hold by
Theorem~\ref{theorem: vchrmum and svchrnum of vt+et graphs} and equalities \eqref{eq1:17.11.23} and \eqref{eq: svchrnum vs. Lovasz theta-function}.
As a side note, the graph $\Gr{G}$ is edge-transitive, unlike its complement $\Gr{H}$; those properties can be verified by the SageMath
software tool \cite{SageMath}.

The independence number of $\Gr{G}$ satisfies $\indnum{\Gr{G}} = 4$, so inequality \eqref{eq2:17.11.23} holds with equality
for the considered graph $\Gr{G}$. This gives an example where the Schrijver $\vartheta$-function of a graph provides a tight upper bound
on its independence number, in contrast to the Lov\'{a}sz $\vartheta$-function of that graph.
The computation of the Shannon capacity of the graph $\Gr{G}$ presents a challenge in this context. However, obtaining a lower bound on
$\Theta(\Gr{G})$ would suffice to demonstrate the claim that $\Theta(\Gr{G}) \not\leq \vartheta'(\Gr{G})$. To that end, programming with
the SageMath software tool gives
\begin{align}
\label{eq3:20.11.2023}
& \indnum{\Gr{G} \boxtimes \Gr{G}} = 20,  \\
\label{eq4:20.11.2023}
\Rightarrow \, & \Theta(\Gr{G}) \geq \sqrt{\indnum{\Gr{G} \boxtimes \Gr{G}}} = \sqrt{20} > 4 = \vartheta'(\Gr{G}),
\end{align}
where the leftmost inequality in \eqref{eq4:20.11.2023} holds by \eqref{eq1:graph capacity}, and the rest of \eqref{eq4:20.11.2023}
holds by \eqref{eq2:20.11.2023} and \eqref{eq3:20.11.2023}. It should be noted that the computation of the independence
number of the strong product graph $\Gr{G} \boxtimes \Gr{G}$ appears to be significantly more efficient
by computing instead the clique number of the complement graph $\CGr{\Gr{G} \boxtimes \Gr{G}}$ with the algorithm
{\tt Cliquer}; this has been done with the SageMath software, which wraps a set of C routines named {\tt Cliquer} \cite{NO2003}.
An example of such a maximum independent set in the graph $\Gr{G} \boxtimes \Gr{G}$, which is a maximum clique of size~20
in its complement graph, is given by
\begin{align}
\set{I} = \Bigl\{ &(3, 31), \, (5, 24), \, (6, 6), \, (7, 1), \, (9, 5), \, (10, 16), \, (11, 10), \, (12, 11),
\, (13, 22), \, (14, 29), \nonumber \\
\label{eq5:20.11.2023}
& (17, 2), \, (18, 9), \, (19, 20), \, (20, 21), \, (21, 15), \, (22, 26), \, (24, 30), \, (25, 25), \, (26, 7), \, (28, 0) \Bigr\}.
\end{align}
It appears that there are 368,640 such maximal independent sets of size~20 in the strong product graph $\Gr{G} \boxtimes \Gr{G}$.
The interested reader is referred to the comprehensive review paper \cite{WuH15} on algorithms and applications
of the maximum clique problem.
\end{example}

\begin{example}[Three-tower Hanoi graphs]
\label{example: Hanoi Tower Graphs}
Three-tower Hanoi graphs, denoted $\Gr{H}_3^n$ for $n \in \naturals$, form a family of graphs that can be
constructed recursively as follows (see Figure~\ref{figure: Hanoi_Tower_graphs}).
\begin{itemize}
\item For $n=1$, the graph is the complete graph $\CoG{3}$ (a triangle).
\item For $n > 1$, the graph $\Gr{H}_3^n$ is constructed by converting each vertex at each corner of a triangle
in the graph $\Gr{H}_3^{n-1}$ into a new triangle.
\end{itemize}
\begin{figure}[h!t!]
\centering
\includegraphics[width=4.5cm]{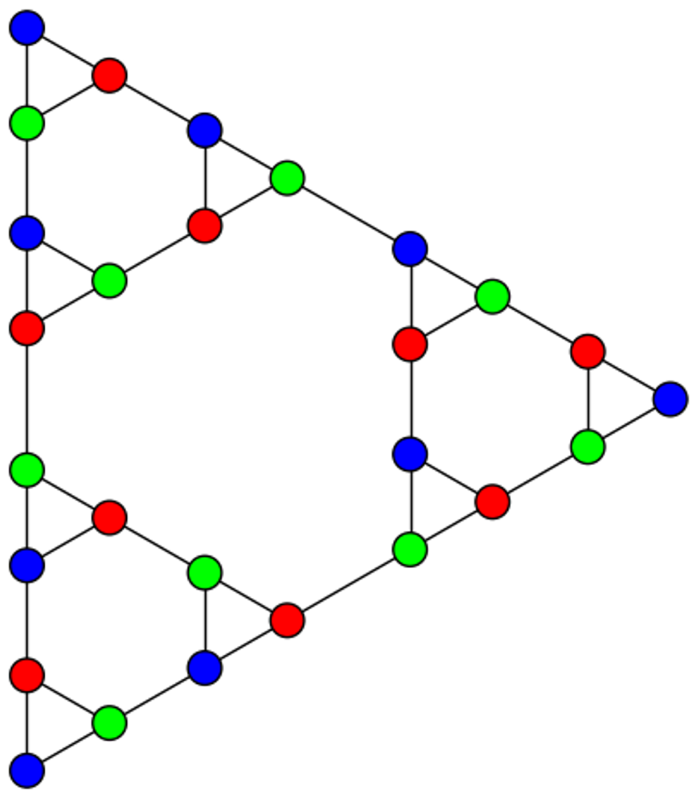}
\hspace{0.6cm}
\includegraphics[width=4.5cm]{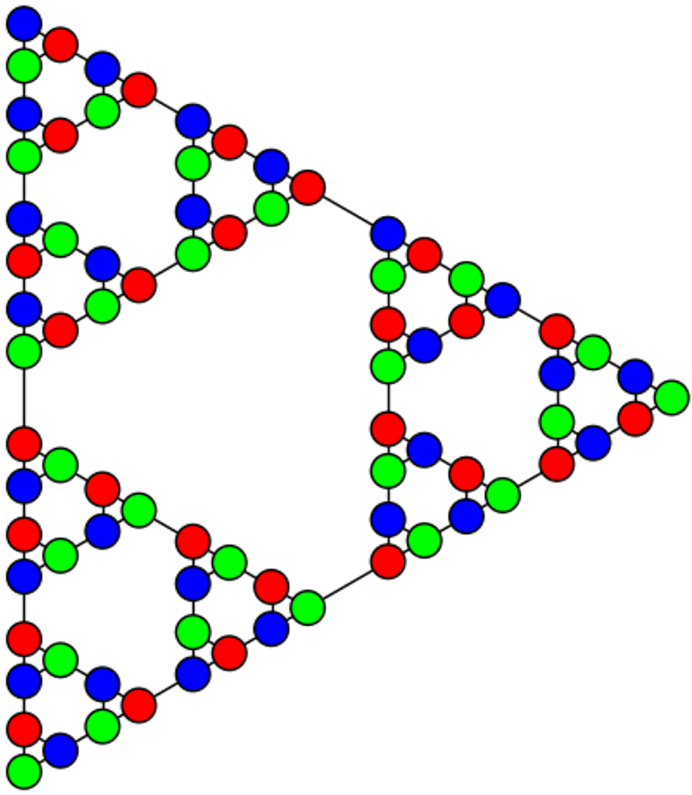}
\vspace*{0.2cm}
\caption{\label{figure: Hanoi_Tower_graphs} \centering{The three-tower Hanoi graphs $\Gr{H}_3^3$ (left)
and $\Gr{H}_3^4$ (right), whose clique and chromatic numbers are equal to~3.}}
\end{figure}

By construction,
\begin{itemize}
\item
The graph $\Gr{H}_3^n$ comprises $3^n$ vertices. It exhibits irregularity as each vertex, except for the three at the corners of
the outer triangle, has a degree of 3, while those corner vertices have a degree of 2 (see Figure~\ref{figure: Hanoi_Tower_graphs}).
These graphs are neither edge-transitive nor vertex-transitive.
\item A three-tower graph is composed of triangles, and it satisfies $\chrnum{\Gr{H}_3^n} = \clnum{\Gr{H}_3^n} = 3$ for all $n \in \naturals$.
\item By the sandwich theorem \eqref{eq1b: sandwich}, it follows that $\fchrnum{\Gr{H}_3^n} = \vartheta(\CGr{\Gr{H}_3^n}) = 3$ for all $n \in \naturals$.
\item Table~\ref{Table II: 3 graphs} compares lower bounds on the fractional chromatic numbers of these graphs.
The lower bound on the right-hand side of \eqref{eq1:04.10.23}, which is the Lov\'{a}sz $\vartheta$-function of the graph complement,
is tight for the three-tower Hanoi graphs. This is in contrast to the other lower bounds on the right-hand sides of
\eqref{eq2:04.10.23} and \eqref{eq3:04.10.23} that are not tight for the examined graphs.
\end{itemize}
\begin{table}[h!b!]
\caption{\label{Table II: 3 graphs}
\centering{The fractional chromatic numbers, and their lower bounds in \eqref{eq1:04.10.23}--\eqref{eq3:04.10.23},
for the two Hanoi graphs in Figure~\ref{figure: Hanoi_Tower_graphs}, and the third graph $\Gr{H}_3^5$.}}
\renewcommand{\arraystretch}{1.5}
\vspace*{0.1cm}
\centering
\begin{tabular}{||c|c|c|c|c||}
\hline
\text{Graph} $\Gr{G}$ &$\fchrnum{\Gr{G}}$ &\text{RHS of} \eqref{eq1:04.10.23} & \text{RHS of}
\eqref{eq2:04.10.23} & \text{RHS of} \eqref{eq3:04.10.23} \\[0.05cm] \hline
$\Gr{H}_3^3$ & $ 3 $ & $ 3 $ & $ 2.4677 $ & $ 2.4334 $ \\
$\Gr{H}_3^4$ & $ 3 $ & $ 3 $ & $ 2.4927 $ & $ 2.4048 $ \\
$\Gr{H}_3^5$ & $ 3 $ & $ 3 $ & $ 2.4984 $ & $ 2.3957 $ \\[0.05cm]
\hline
\end{tabular}
\end{table}
\end{example}

\begin{example}[Windmill graphs]
\label{example: windmill graphs}
Let $k \geq 2$ and $n \geq 2$ be integers.
The windmill graph $\Gr{W}_{k,n}$ is an undirected, connected, and simple graph, constructed
by joining $n$ copies of the complete graph $\CoG{k}$ at a shared
universal vertex (see Figure~\ref{fig:windmill graphs}). Specifically, if $k=2$, then $\Gr{W}_{2,n}$
is a star graph with $n$ leaves (see the leftmost graph in Figure~\ref{fig:windmill graphs}).
The windmill graphs are known to be perfect graphs \cite{Golumbic04}. The chromatic and fractional chromatic
numbers of the graph $\Gr{G} = \Gr{W}_{k,n}$ coincide, being equal to $k$.

\begin{figure}[h!t!]
\centering
\includegraphics[width=3.90cm]{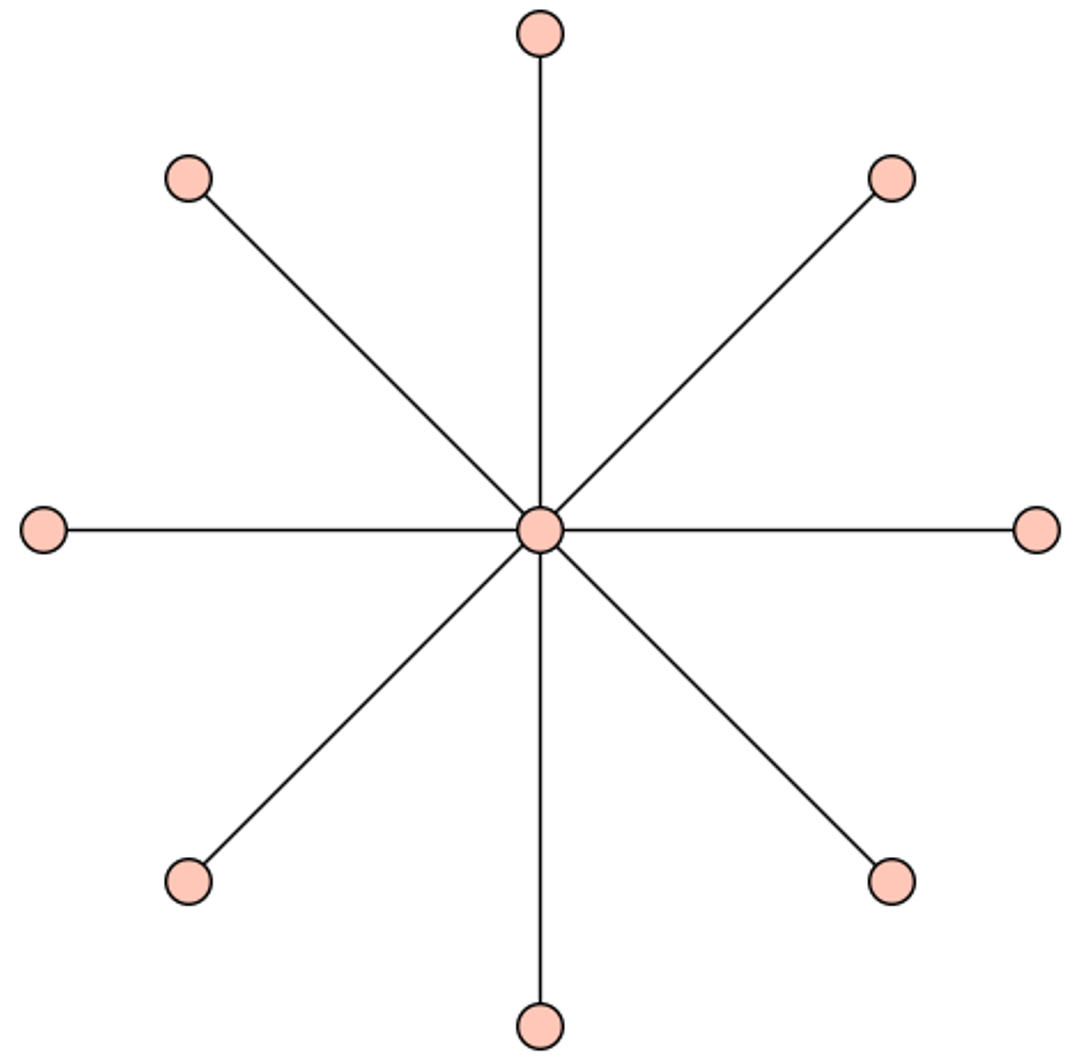}
\includegraphics[width=3.88cm]{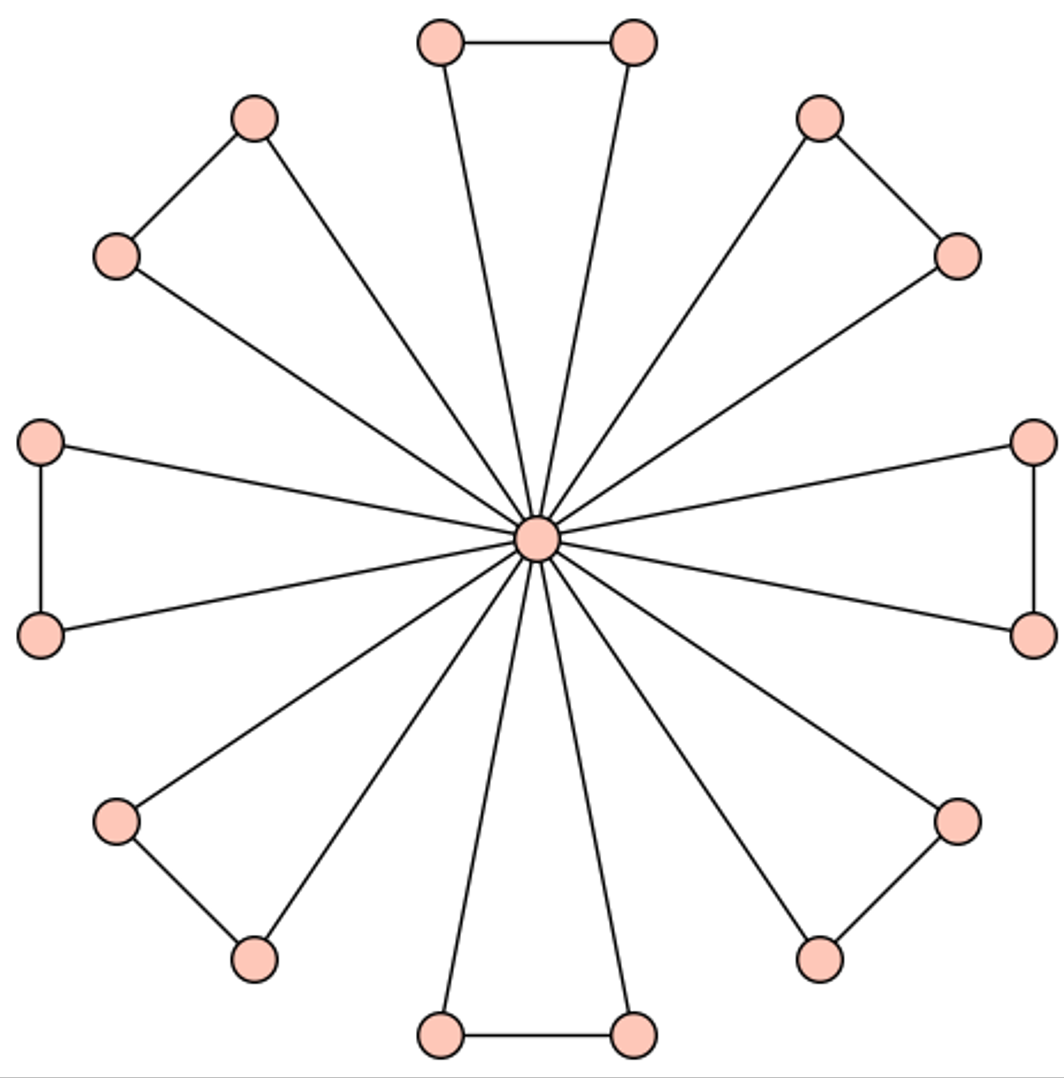}
\includegraphics[width=3.90cm]{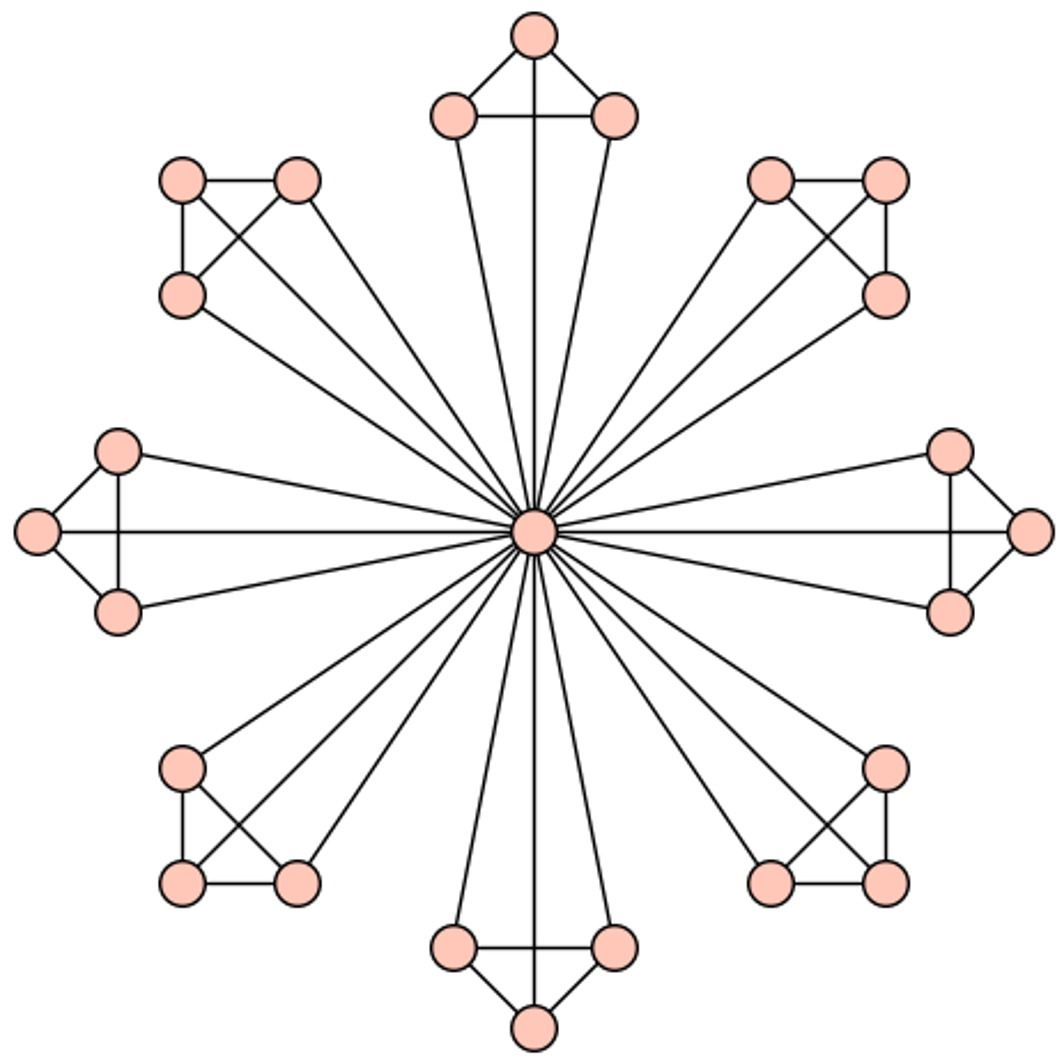}
\includegraphics[width=3.89cm]{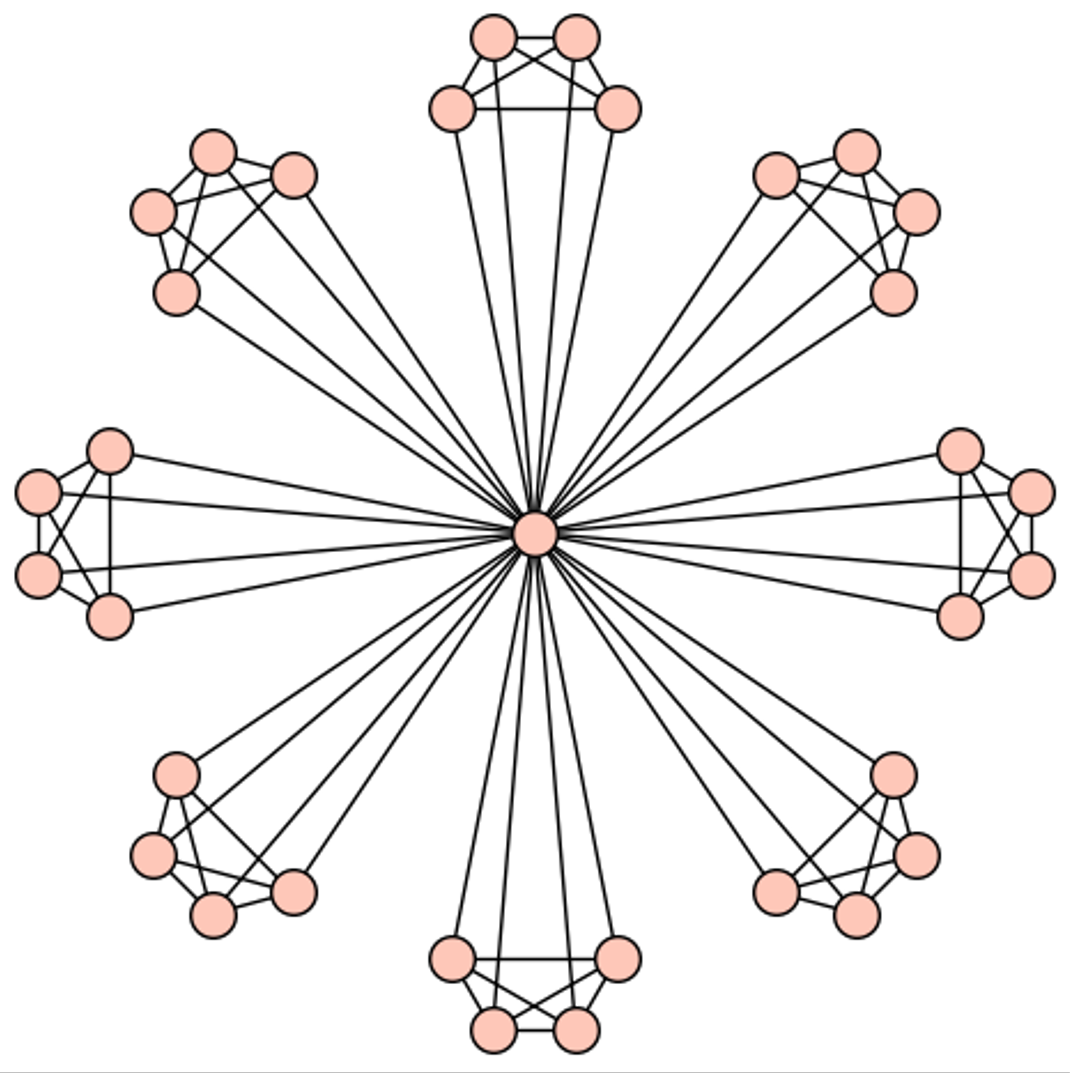}
\vspace*{0.2cm}
\caption{\label{fig:windmill graphs} \centering{From left to right: $\Gr{W}_{2,8}, \Gr{W}_{3,8}, \Gr{W}_{4,8}$, and $\Gr{W}_{5,8}$.}}
\end{figure}
\begin{table}[h!t!]
\caption{\label{Table: windmill graphs}
\centering{The chromatic and fractional chromatic numbers of windmill graphs, which are perfect graphs, and their lower bounds in \eqref{eq1:04.10.23}--\eqref{eq3:04.10.23}.
The graphs are shown in Figure~\ref{fig:windmill graphs}.}}
\renewcommand{\arraystretch}{1.5}
\vspace*{0.1cm}
\centering
\begin{tabular}{||c|c|c|c|c|c||}
\hline
\text{Graph} $\Gr{G}$ &$\chrnum{\Gr{G}}$ &$\fchrnum{\Gr{G}}$ &\eqref{eq1:04.10.23} & \eqref{eq2:04.10.23} & \eqref{eq3:04.10.23} \\[0.05cm] \hline
$\Gr{W}_{2,8}$ & $2$ & $2$ & $2$ & $2$ & $2$ \\
$\Gr{W}_{3,8}$ & $3$ & $3$ & $3$ & $2.2832$ & $2.3450$ \\
$\Gr{W}_{4,8}$ & $4$ & $4$ & $4$ & $2 \tfrac{1}{2} $ & $3$ \\
$\Gr{W}_{5,8}$ & $5$ & $5$ & $5$ & $2.6893$ & $3.7259$ \\
\hline
\end{tabular}
\end{table}
Table~\ref{Table: windmill graphs} compares the fractional chromatic numbers
of the windmill graphs $\Gr{W}_{k,n}$, for several values of $k,n \geq 2$, with their
corresponding lower bounds on the right-hand sides of \eqref{eq1:04.10.23},
\eqref{eq2:04.10.23}, and \eqref{eq3:04.10.23}.
That comparison supports the fact that, by Theorem~\ref{theorem: graph invariants of perfect graphs},
the lower bound on the right-hand side of \eqref{eq1:04.10.23} is tight for perfect graphs, in contrast
to the other two lower bounds on the right-hand sides of \eqref{eq2:04.10.23} and \eqref{eq3:04.10.23}
that are not necessarily tight for the examined windmill graphs.
\end{example}

\begin{example}[Independence, capacity and clique numbers of random graphs]
\label{example: random graphs}
The standard model of a random graph is the binomial model $\Gr{G}(n,p)$, pioneered by
Erd\"{o}s and R\'{e}nyi. We let $0 < p = p(n) < 1$ be a number that may depend on the
number of vertices $n$. Then, the random graph $\Gr{G}(n,p)$ is obtained by independently
including each of the $\binom{n}{2}$ possible edges with probability $p$. The
random graph $\Gr{G}(n,p)$ is said to have a certain property with high probability, if the
probability that such a random graph has that property tends to~1 as we let $n$ tend
to infinity.
If $n \to \infty$ and $p \in (0,1)$ is kept fixed, then with high probability (see
Theorem~2 in \cite{Juhasz82} and Example~11.17 in \cite{Lovasz19})
\begin{align}
\label{eq1:04.11.23}
\frac12 \sqrt{\frac{(1-p)n}{p}} + O\bigl(n^{\tfrac13} \ln n\bigr) \leq \vartheta \bigl(\Gr{G}(n,p)\bigr)
\leq 2 \sqrt{\frac{(1-p)n}{p}} + O\bigl(n^{\tfrac13} \ln n \bigr),
\end{align}
and, by replacing the probability $p$ with $1-p$ for the complement graph, with high probability
\begin{align}
\label{eq2:04.11.23}
\frac12 \sqrt{\frac{pn}{1-p}} + O\bigl(n^{\tfrac13} \ln n\bigr) \leq \vartheta \bigl(\overline{\Gr{G}(n,p)}\bigr)
\leq 2 \sqrt{\frac{pn}{1-p}} + O\bigl(n^{\tfrac13} \ln n\bigr).
\end{align}
The analysis in \cite{Juhasz82} is extended in \cite{Coja-Oghlan05} to the case where
$\frac{1}{n} \, (\ln n)^{\tfrac16} \leq p \leq 1-\frac{1}{n} \, (\ln n)^{\tfrac16}$,
with some concentration inequalities for the Lov\'{a}sz $\vartheta$-function of random graphs $\Gr{G}(n,p)$.
High-probability lower bounds on the independence number and clique number of a random graph $\Gr{G}(n,p)$
can be obtained by combining the bounds in \eqref{eq1:17.10.23}, \eqref{eq1b:17.10.23}, \eqref{eq1:04.11.23},
and \eqref{eq2:04.11.23}. These high-probability lower bounds consequently hold also for the Shannon capacity
of random graphs.

Using \eqref{eq1:04.11.23} and \eqref{eq2:04.11.23}, the inequalities $\indnum{\Gr{G}} \leq \vartheta(\Gr{G})$
and $\clnum{\Gr{G}} \leq \vartheta(\CGr{G})$ yield high-probability upper bounds on the independence and clique
numbers of random graphs. Since $\Theta(\Gr{G}) \leq \vartheta(\Gr{G})$, a high-probability upper bound on their
Shannon capacity is also given by the rightmost term in \eqref{eq1:04.11.23}.

The interested reader is referred to Section~6.1 in \cite{Coja-Oghlan05} in regard to an algorithmic approach
and its probabilistic analysis for the approximation of the independence numbers of random graphs. It
employs a certain greedy algorithm that, for an input graph $\Gr{G} = \Gr{G}(n,p)$, the algorithm most
probably finds an independent set of cardinality of at least $\dfrac{\ln(np)}{2p}$, thus providing a lower
bound on $\indnum{\Gr{G}}$. It then suggests an algorithm that involves the calculation of the Lov\'{a}sz
$\vartheta$-function $\vartheta(\Gr{G})$ as an upper bound on $\indnum{\Gr{G}}$. The expected running time of
the latter algorithm is polynomial in $n$ with high probability, and it provides an upper bound that scales
like $\sqrt{n/p}$.
\end{example}

\section{Summary and Outlook}
\label{section: summary}
The paper primarily focuses on deriving results that hinge on the Lov\'{a}sz $\vartheta$-function of
a graph. These results cover various aspects, including insights into the Shannon capacity of graphs,
exploration of cospectral and nonisomorphic graphs, and derivation of bounds on graph invariants.
Throughout the paper, numerical results are provided to illustrate the usefulness of the derived findings.
This research paper also serves as a tutorial on the subject of the Lov\'{a}sz $\vartheta$-function,
its variants, and its significant roles in algebraic graph theory and zero-error information theory.

This section concludes by posing open questions that are directly relevant to the discussed work.
\begin{enumerate}
\item By Theorem~3.2 of \cite{Alon19}, there exists a sequence of graphs $\{\Gr{H}_n\}$ such that
$\Gr{H}_n$ is a graph on $n$ vertices whose Shannon capacity is at most~3 and its Lov\'{a}sz
$\vartheta$-function is at least $(1+o(1)) \, n^{1/4}$. This shows, in particular, that the Lov\'{a}sz
$\vartheta$-function of a graph cannot be upper-bounded by any function of the independence number.
In light of that result, it is left for further research to study if Schrijver's variant of the
$\vartheta$-function of a graph can be upper-bounded by any function of the independence number.
In other words, by \eqref{eq1:17.11.23}, it is an open question whether the vector chromatic number of
a graph can be upper-bounded by a function of the clique number of the graph. These equivalent questions
are akin to an open question posed in Section~6 of \cite{Alon19}, which asks whether the Shannon capacity of
a graph $\Theta(\Gr{G})$ can be upper-bounded by any function of the independence number $\indnum{\Gr{G}}$.
In particular, it is unknown whether there exists a sequence of graphs whose independence numbers are
equal to~2, while their Shannon capacity is unbounded.

\item The self-complementary vertex-transitive graphs and self-complementary strongly regular graphs are two
distinct subclasses of self-complementary regular graphs. These subclasses have been studied in various works,
including \cite{Farrugia99, Harary69, Mathon88, Muzychuk99, Peisert01, Rao85, Rosenberg82, Ruiz81, Sason23, Zelinka79}.
Notably, self-complementary vertex-transitive graphs do not necessarily fall under the realm of strongly regular
graphs, as exemplified in \cite{Ruiz81}. It remains an open question whether there exists a self-complementary
strongly regular graph that is not vertex-transitive, as indicated on page~88 of \cite{Farrugia99}. This inquiry
is relevant to Item~\ref{item 4: extension of Thm. 12 by Lovasz} of Theorem~\ref{thm:extension of Thm. 12 by Lovasz}.
Furthermore, according to that item, the minimum Shannon capacity among all self-complementary graphs of a fixed
order $n$ is achieved by those that are vertex-transitive or strongly regular, and this minimum is equal to $\sqrt{n}$.
Another unresolved issue is determining the maximum Shannon capacity among all self-complementary graphs of a fixed
order $n$, and identifying which graphs achieve this maximum.

\item By Theorems~\ref{theorem: vchrnum and svchrnum of srg} and~\ref{theorem: vchrmum and svchrnum of vt+et graphs},
vector and strict vector chromatic numbers coincide for every graph that is either strongly regular
or both vertex- and edge-transitive. Additionally, Example~\ref{example: vchrnum and svchrnum, regular graphs}
illustrates that these numbers may also coincide for regular graphs that lack strong
regularity or both vertex- and edge-transitivity. Consequently, it is of interest to establish further
sufficient conditions for the coincidence of the vector and strict vector chromatic numbers of graphs.
\end{enumerate}

\section*{Acknowledgments}
The author gratefully acknowledges the timely and constructive reports of the anonymous referees.
Special thanks are extended to Clive Elphick for sharing the slides of his recent seminar talk and for pointing
out some relevant references, and to Emre Telatar for valuable advice on Latex issues.

\section*{Conflict of interest}
The author declares no conflicts of interest.

\end{document}